\theoremstyle{plain}
\newtheorem{theorem}{Theorem}[section]
\newtheorem{proposition}[theorem]{Proposition}
\newtheorem{lemma}[theorem]{Lemma}
\newtheorem{corollary}[theorem]{Corollary}
\theoremstyle{definition}
\newtheorem{definition}[theorem]{Definition}
\newtheorem{assumption}[theorem]{Assumption}
\theoremstyle{remark}
\newtheorem{remark}[theorem]{Remark}
\def\eqref#1{equation~\ref{#1}}
\def\1{\bm{1}}
\def\rw{{\textnormal{w}}}
\def\vg{{\bm{g}}}
\def\vs{{\bm{s}}}
\def\vu{{\bm{u}}}
\def\vv{{\bm{v}}}
\def\vw{{\bm{w}}}
\def\vx{{\bm{x}}}
\def\vy{{\bm{y}}}
\def\vz{{\bm{z}}}
\DeclareMathAlphabet{\mathsfit}{\encodingdefault}{\sfdefault}{m}{sl}
\SetMathAlphabet{\mathsfit}{bold}{\encodingdefault}{\sfdefault}{bx}{n}
\newcommand{\E}{\mathbb{E}}
\newcommand{\R}{\mathbb{R}}
\definecolor{mylightgray}{rgb}{0.9, 0.95, 0.975}
\definecolor{ao}{rgb}{0.0, 0.5, 0.0}
\definecolor{gray}{rgb}{0.85, 0.85, 0.85}
\definecolor{yama}{rgb}{0.98, 0.87, 0.68}
\definecolor{lightskyblue}{rgb}{0.53, 0.81, 0.98}
\definecolor{applegreen}{rgb}{0.55, 0.71, 0.0}
\definecolor{WarningColor}{rgb}{1.0, 0.3, 0.0} 
\let\algorithm\@undefined
\let\endalgorithm\@undefined
\newcolumntype{C}[1]{>{\centering\arraybackslash}m{#1}}
\newcommand{\supp}{\text{supp}}
\newcommand{\st}{\text{s.t. }}
\newcommand{\junk}[1]{{}}
\renewcommand{\H}{\mathcal{H}_k}
\newcommand{\setc}{\Gamma}
\newcommand{\sus}{s_2}  
\newcommand{\Bz}{\mathcal{B}_0(k)}
\newcommand{\wil}[1]{#1}
\newcommand{\inlabel}[1]{\stepcounter{equation}\tag{\theequation} \label{#1}}
\newcommand{\wili}[1]{#1}
\newcommand{\qw}[1]{{#1}}
\newcommand{\bblu}[1]{#1}
\newenvironment{proofsktch}{%
  \proof}{\endproof}
\icmltitlerunning{Optimization over Sparse Support-Preserving Sets: Two-Step Projection}
\begin{document}

\twocolumn[
\icmltitle{Optimization over Sparse Support-Preserving Sets: Two-Step Projection with Global Optimality Guarantees}



\icmlsetsymbol{equal}{*}

\begin{icmlauthorlist}
\icmlauthor{William de Vazelhes}{yyy}
\icmlauthor{Xiao-Tong Yuan}{zzz}
\icmlauthor{Bin Gu}{comp}
\end{icmlauthorlist}

\icmlaffiliation{zzz}{School of Intelligence Science and Technology, Nanjing University, Suzhou, China}
\icmlaffiliation{yyy}{GenBio AI, work done while at MBZUAI, Abu Dhabi, UAE}
\icmlaffiliation{comp}{School of
Artificial Intelligence, Jilin University, China}

\icmlcorrespondingauthor{Xiao-Tong Yuan}{xtyuan@nju.edu.cn
}
\icmlcorrespondingauthor{Bin Gu}{jsgubin@gmail.com}

\icmlkeywords{Machine Learning, Optimization, Sparse Optimization, ICML}

\vskip 0.3in
]



\printAffiliationsAndNotice{}  

\begin{abstract}
 In sparse optimization,  enforcing hard constraints using the $\ell_0$ pseudo-norm offers advantages like controlled sparsity compared to convex relaxations. However, many real-world applications demand not only sparsity constraints but also some extra constraints. While prior algorithms have been developed to address this complex scenario with mixed combinatorial and convex constraints, they typically require the closed form projection onto the mixed constraints which might not exist, and/or only provide local guarantees of convergence which is different from the global guarantees commonly sought in sparse optimization. To fill this gap, in this paper, we study the problem of sparse optimization with extra \qw{\textit{support-preserving}} constraints commonly encountered in the literature. We present a new variant of iterative hard-thresholding algorithm equipped with a two-step consecutive projection operator customized for these mixed constraints,  serving as a simple alternative to the Euclidean projection onto the mixed constraint. By introducing a novel trade-off between sparsity relaxation and sub-optimality, we provide global guarantees in objective value for the output of our algorithm, in the deterministic, stochastic, and zeroth-order settings, under the conventional restricted strong-convexity/smoothness assumptions.  As a fundamental contribution in  proof techniques, we develop a novel extension of the classic three-point lemma to the considered two-step non-convex projection operator, which allows us to analyze the convergence in objective value in an elegant way that has not been possible with existing techniques. In the zeroth-order case, such technique also improves upon the state-of-the-art result from \cite{de2022zeroth}, even in the case without additional constraints, by allowing us to remove a non-vanishing system error present in their work.
  \end{abstract}

\section{Introduction}
In sparse optimization, directly enforcing sparsity with the $\ell_0$ pseudo-norm has several advantages over its convex relaxation counterpart.
In compressive sensing for instance \citep{foucart2013}, one may seek to recover an unknown vector, which sparsity level is known to be at most $k$. Similarly, in portfolio optimization, due to transaction costs, one may seek to ensure hard constraints on the maximum number of assets invested in \citep{brodie2009sparse,demiguel2009generalized}.
However, in several use cases, one may also seek to enforce additional constraints, such as, for instance, a budget constraint in the case of portfolio optimization, which can be enforced through an extra $\ell_{1}$ constraint, as in \citet{takeda2013simultaneous}. \qw{As another example, in sparse non-negative matrix factorization, when estimating the hidden components, one seeks to enforce at the same time a norm constraint and a sparsity constraint \citep{hoyer2002non}}.
The problem of $\ell_0$ empirical risk minimization (ERM) with additional constraints can be formulated as follows, where $R$ is an empirical risk function, $\setc\subseteq \R^d$ denotes a convex constraint set, and $\|\cdot\|_0$ denotes the $\ell_0$ pseudo-norm (number of non-zero components of a vector):
\begin{equation}\label{eq:pb}
  \min_{\vw \in \mathbb{R}^p} R(\vw) , \ \ \st \|\vw\|_0\le k ~\text{and} ~\vw \in \setc.
\end{equation}
We assume throughout this paper that Problem (\ref{eq:pb}) is well-posed, with $R$ being bounded from below and the set of minimizers being non-empty. In the literature, several algorithms have been developed to address such a problem with mixed constraints, but they typically require the existence of a closed form for the projection onto the mixed constraint, and/or their convergence guarantees are only local, which makes it difficult to estimate the sub-optimality of the output of the algorithm. More precisely, on one hand, some works provide convergence analyses for variants of a (non-convex) projected gradient descent, explicitly for mixed sparse constraints \citep{metel2023sparse,pan2017convergent,lu2015optimization,beck2016minimization}, or for general proximal terms (which encompasses our mixed constraints) \cite{Frankel_2014, xu2019stochastic, attouch2013convergence, de2022interior, yang2020fast, gu2018inexact, yang2023projective, bolte2014proximal, boct2016inertial, xu2019non, li2015accelerated, bauschke2017descent, bauschke2019linear}, but such analyses are only local. On the other hand, several existing works on Iterative Hard Thresholding (IHT) provide global guarantees on sub-optimality gap \citep{Jain14,nguyen2017linear,li2016nonconvex,shen2017tight,de2022zeroth}, but they do not apply to the mixed constraint case we consider. In between the two approaches, one can also find \cite{barber2018gradient} and \cite{liu2020between} which, in the deterministic case, give global guarantees for general non-convex constraints or thresholding operators, but which do not provide explicit convergence guarantees for the particular mixed constraint setting that we consider: their rates depend on some constants (the relative concavity or the local concavity constant) for which, up to our knowledge, an explicit form is still unknown for the mixed constraints we consider.
We present a more detailed review of related works in Appendix~\ref{sec:related_works}, and an overview of them in Table~\ref{tab:compalgos}.
To fill this gap, we focus on solving problem \ref{eq:pb} in the case where $\setc$ belongs to a general family of \qw{\textit{support-preserving} sets}, which encompasses many usual sets encountered in the literature.
As will be described in more detail in Section~\ref{sec:prelim}, such sets are \qw{convex sets for which the projection of a $k$-sparse vector onto them gets its support preserved, such as for instance $\ell_p$ norm balls (for $p \geq 1$), or a broader family of \textit{sign--free} convex sets described for instance in \cite{lu2015optimization} and \cite{beck2016minimization}}.

Adapted to the properties of such constraints, we propose a new variant of IHT, with a two-step projection operator, which, as a first step, identifies the set $S$ of coordinates of the top $k$ components of a given vector and sets the other components to 0 (hard-thresholding), and as a second step projects the resulting vector onto $\setc$. 
This two-step projection can offer a simpler alternative to Euclidean projection onto the mixed constraint in the cases where there is a closed form for the latter projection, and handle the cases where there is not.
We then provide global sub-optimality guarantees without system error for the objective value, for such an algorithm as well as its stochastic and zeroth-order variants, under the restricted strong-convexity (RSC) and restricted smoothness (RSS) assumptions, in Theorems \ref{thrm:risk_convergence_additional}, \ref{thm:hsg_2SP}, and \ref{thm:zo_2SP}. Key to our analysis is a novel extension of the three-point lemma to such non-convex setting with mixed constraints, which also allows, as a byproduct, to simplify existing proofs of convergence in objective value for IHT and its variants. In the zeroth-order case, such technique also allows to obtain, up to our knowledge, the first convergence in risk result without system error for a zeroth-order hard-thresholding algorithm.  Additionally, our results highlight a compromise between sparsity and sub-optimality gap specific to the additional constraints setting: through a free parameter $\rho$, one can obtain smaller upper bounds in terms of risk but at the cost of relaxing further the sparsity level of the iterates, or, alternatively, enforce sparser iterates but at the cost of a larger upper bound on the risk. 




\textbf{Contributions.} We summarize the main contributions of our paper as follows:
\begin{enumerate}[leftmargin=1em,noitemsep,topsep=0pt]
  \item We present a variant of IHT to solve hard sparsity problems with additional \qw{support-preserving constraints}, using a novel two-step projection operator.
  \item We describe a novel extension of the three-point lemma to such constraint which allows to simplify existing proofs for IHT and to provide global  convergence guarantees in objective value without system error for the algorithm above, in the RSC/RSS setting, highlighting a novel trade-off between sparsity of iterates and sub-optimality gap in such mixed constraints setting. 
  \item We extend the above algorithm to the stochastic and zeroth-order optimization settings, obtaining similar global optimality guarantees in objective value (without system error) for such mixed constraints setting. In the zeroth-order case, this also provides, up to our knowledge, the first convergence result in objective value without system error for a zeroth-order hard-thresholding algorithm (with or without extra constraints).
\end{enumerate}
\begin{table*}[ht]
  \caption{Comparison of results for Iterative Hard Thresholding with/without additional constraints.
  {\small{
  $^1$ $\mathcal{S}$: symmetric convex sets being sign-free or non-negative \citep{lu2015optimization}, $\mathcal{A}$: sets verifying Definition \ref{assumpt:projectionbis}.
$^2$ If a paper reports both $\|\vw - \bar{\vw}\|$ and $R(\vw) - R(\bar{\vw})$, we report only the latter.
$\hat{T}$: time index of the $\vw$ returned by the method (e.g. $\hat{T} = \arg\min_{t \in [T]} R(\vw_t)$ ). 
$\bar{\vw}$: $\bar{k}$-sparse vector in $\setc$. 
\textcolor{WarningColor}{$\Delta$}: System error (non-vanishing term which depends on the gradient at optimality (e.g.  $\E_i \|\nabla R_i(\bar{\vw}) \|$, (see corresponding references))).
$^4$:  $\kappa_s = \frac{L_s}{\nu_s}$ and $\kappa_{s'}=\frac{L_{s'}}{\nu_s}$ (cf. corresponding refs. for defs. of $s$ and $s'$).
  $^3$ SM: Lipschitz-smooth, 
  D: Deterministic. 
  S: Stochastic,
  Z: Zeroth-Order,
L: Lipschitz continuous. $^5$: see also Thm.~\ref{thrm:risk_convergence}, $^6$: see also Thm.~\ref{thm:hsg}. $^{\clubsuit}$: Notably, we could eliminate \textcolor{WarningColor}{$\Delta$} from \cite{de2022zeroth}.
  }}
  }
  \label{sample-table}
\centering
    \begin{tabular}{ C{0.2\textwidth}  C{0.09\textwidth}  C{0.39\textwidth}  C{0.085\textwidth}  C{0.09\textwidth}}
      \toprule
      \textbf{Reference} & \textbf{$\setc^{1}$}  & \textbf{Convergence$^{2}$}  & $\bm{k}$  & \textbf{Setting$^{3}$} \\
      \toprule
      \cite{Jain14}$^{5}$ & $\R^d$  & $  R(\vw_{\hat{T}}) \leq R(\bar{\vw}) + \textcolor{ao}{\varepsilon}$ & $ \Omega(\kappa_s^2 \bar k)$ & D, RSS, RSC \\
      \hline
      \cite{nguyen2017linear} & $\R^d$  & $ \E \|\vw_{\hat{T}} - \bar{\vw} \| \leq \textcolor{ao}{\varepsilon} + \textcolor{WarningColor}{ \mathcal{O}\left( \Delta \right)} $ & $ \Omega(\kappa_s^2 \bar k)$ & S, RSS, RSC \\
      \hline
      \cite{li2016nonconvex} & $\R^d$  &  $\E R(\vw_{\hat{T}}) \leq  R(\bar{\vw}) + \textcolor{ao}{\varepsilon} +\textcolor{WarningColor}{\mathcal{O}(\Delta)}$ & $ \Omega(\kappa_s^2 \bar k)$ & S, RSS, RSC \\
      \hline
      \cite{Zhou18}$^6$ & $\R^d$  & $ \E R(\vw_{\hat{T}}) \leq R(\bar{\vw}) +  \textcolor{ao}{\varepsilon} $& $ \Omega(\kappa_s^2 \bar k)$ & S, RSS, RSC \\
      \hline
      \cite{de2022zeroth} & $\R^d$  &  $ \E \|\vw_{\hat{T}} - \bar{\vw} \| \leq \textcolor{ao}{\varepsilon} +  \textcolor{ao}{\mathcal{O}(\mu)} + \textcolor{WarningColor}{\mathcal{O}\left(\Delta \right) } $ & $\Omega(\kappa_{s'}^4 \bar k)$ & S, Z, RSS', RSC 
       \\
      \hline
      \cite{lu2015optimization}, \cite{beck2016minimization} & $\setc \in \mathcal{S} $ &  local convergence & - & D, SM \\
      \hline
      \cite{metel2023sparse} & \bblu{$\ell_{\infty}$ ball around $0$} & local convergence & - & S, Z, L \\
      \hline
     \rowcolor{gray!50} \textbf{IHT-2SP} (Thm. \ref{thrm:risk_convergence_additional}) & $ \bblu{\setc} \in \mathcal{A} $ & 
      \qw{$R \left(\vw_{\hat{T}}\right)\leq \bblu{(1+2 \textcolor{blue}{\rho})} R (\bar{\vw})+ \textcolor{ao}{\varepsilon}$} & $ \Omega \left(\frac{\kappa_s^2 \bar{k}}{\bblu{\textcolor{blue}{\rho}^2}}\right)$  & D, RSS, RSC 
      \\
      \hline
       \rowcolor{gray!50} \textbf{HSG-HT-2SP} (Thm.~\ref{thm:hsg_2SP}) & $\bblu{\setc} \in \mathcal{A} $ & \qw{$\E R(\vw_{\hat{T}}) \leq \bblu{(1 + 2\textcolor{blue}{\rho})} R(\bar{\vw}) + \textcolor{ao}{\varepsilon}$}  & $ \Omega \left(\frac{ \kappa_s^2 \bar{k}}{\bblu{\textcolor{blue}{\rho}^2}}\right)$  & S, RSS, RSC \\
      \hline
            \rowcolor{gray!50} \textbf{HZO-HT} (Thm.~\ref{thm:zo}) & $\R^d$ & $\E [R(\vw_{\hat{T}}) - R(\bar{\vw}) ]\leq \textcolor{ao}{\varepsilon} +\textcolor{ao}{\mathcal{O}(\mu)}  ^{{\clubsuit}}$ &   $\Omega(\kappa_{s'}^2 \bar k)$ & Z, RSS', RSC \\
      \hline
      \rowcolor{gray!50} \textbf{HZO-HT-2SP} (Thm. \ref{thm:zo_2SP}) & $\bblu{\setc} \in \mathcal{A}$ & \qw{$\E R(\vw_{\hat{T}}) \leq \bblu{(1 + 2\textcolor{blue}{\rho})} R(\bar{\vw}) + \textcolor{ao}{\varepsilon} + \textcolor{ao}{\mathcal{O}(\mu)}$} & $ \Omega\left(\frac{\kappa_{s'}^2 \bar{k}}{\bblu{\textcolor{blue}{\rho}^2}}\right)$ & Z, RSS', RSC \\
      \hline
    \end{tabular}
  \label{tab:compalgos}
\end{table*}
\section{Preliminaries} \label{sec:prelim}
Throughout this paper, we adopt the following notations. For any $\vw\in \R^d$, $\Pi_{\setc}(\vw)$ denotes a Euclidean projection of $\vw$ onto $\setc$, that is $\Pi_{\setc}(\vw) \in \arg\min_{\vz \in \setc} \|\vw - \vz \|_2$, and $w_i$ denotes the $i$-th component of $\vw$. $\Bz$ denotes the $\ell_0$ pseudo-ball of radius $k$, i.e. $\Bz = \{\vw \in \R^d: \| \vw\|_0 \leq k\}$, \qw{with $\| \cdot\|_0$ the $\ell_0$ pseudo-norm (i.e. the number of non-zero components of a vector)}. $\H$ denotes the Euclidean projection onto $\Bz$, also known as the hard-thresholding operator (which keeps the $k$ largest (in magnitude) components of a vector, and sets the others to 0  (if there are ties, we can break them e.g. lexicographically)).  $\| \cdot\|_p$ denotes the $\ell_p$ norm for $p\in [1, +\infty)$, and $\| \cdot\|$ the $\ell_2$ norm (unless otherwise specified). $[n]$ denotes the set $\{1, ..., n\}$ for $n \in \mathbb{N}^*$. For any $S\subseteq [d]$, $|S|$ denotes its number of elements.  For any $\vw \in \R^d$, $\text{supp}(\vw)$ denotes its support, i.e. the set of coordinates of its non-zero components. 
We also introduce below the usual assumptions on $R$ for IHT proofs, i.e. RSC \cite{Jain14,negahban2009unified,loh2013regularized,yuan2017gradient,li2016nonconvex,shen2017tight,nguyen2017linear}, and RSS  \cite{Jain14,li2016nonconvex,yuan2017gradient}. 
  \begin{assumption}[$(\nu_s, s)$-RSC]\label{ass:RSC}
    $R$ is  $\nu_s$ restricted strongly convex with sparsity parameter $s$, \qw{i.e.} it is differentiable, and there exists a generic constant $\nu_s$ such that for all $(\vx, \vy) \in \R^d$ with $\|\vx - \vy\|_0 \leq s$:
      $ R(\vy) \geq  R(\vx) +\langle \nabla R(\vx), \vy-\vx \rangle + \frac{\nu_s}{2} \| \vx- \vy\|^2 $ 
    \end{assumption}
    \begin{assumption}[$(L_{s}, s)$-RSS] 
    \label{ass:RSS}
      $R$ is $L_{s}$ restricted smooth with sparsity level $s$, \qw{i.e.} it is differentiable, and there exists a generic constant $L_{s}$ such that for all $(\vx, \vy) \in \R^d$ with $\|\vx-\vy\|_0\leq s$: \\
      $ R(\vy) \leq  R(\vx) +\langle \nabla R(\vx), \vy-\vx \rangle + \frac{L_s}{2} \| \vx- \vy\|^2 $     \end{assumption}
    We then define the notion of support-preserving set that we will use throughout the paper. It essentially requires that projecting any $k$-sparse vector $\bm{w}$ onto $\setc$ preserves its support. 
    That is, the convex constraint $\setc$ should be compatible with the sparsity level constraint $\|\bm{w}\|_0\le k$.
    \begin{definition}[$k$-support-preserving set]\label{assumpt:projectionbis}
\qw{$\setc \subseteq \R^d$ is  $k$-\textit{support-preserving} , \qw{i.e.} it is convex and for any $\bm{w} \in \R^d$ such that $\|\bm{w}\|_0 \leq k$, $\text{supp}(\Pi_{\Gamma}(\bm{w})) \subseteq \text{supp}(\bm{w})$.}
    \end{definition}
  \begin{remark}\label{rem:dec}
    Below we present some examples of usual sets that also verify Definition \ref{assumpt:projectionbis} (see Appendix~\ref{proof:equiv_cons} for a proof of such statements):
    \begin{itemize}[leftmargin=1em,noitemsep,topsep=0pt]
      \item Elementwise decomposable constraints, such as box constraints of the form $\{ \vw \in \R^d: \forall i \in [d], l_i \leq w_i \leq u_i  \}$, with $l_i \leq 0$ and $u_i \geq 0$.
      \item\qw{ Group-wise separable constraints where the constraint on each group is $k$-support-preserving (such as our constraints in Appendix~\ref{sec:xps} for the index tracking problem).}
          \item \qw{Sign-free convex sets \citep{lu2015optimization,beck2016minimization} (def. in App.~\ref{proof:equiv_cons}), e.g. $\ell_q$ norm-balls.}
    \end{itemize} 
  \end{remark}
\section{Deterministic Case}\label{sec:main}
\subsection{Algorithm} \label{sec:algorithm}
\begin{figure}
  \centering
  \includegraphics[width=0.45\textwidth]{./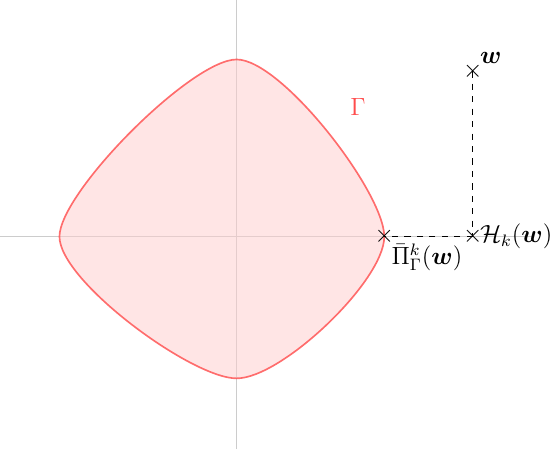}
  \caption{Support-preserving set and two-step projection ($d=2,k=1$).}
  \label{fig:proj}
\end{figure}
\paragraph{Two-Step Projection.} In all the algorithms of this paper, we will make use of a \textit{two-step projection} operator (2SP), which is different in general from the usual Euclidean projection (EP), in order to obtain, from an arbitrary vector $\vw \in \R^d$, a vector in $\vw \in \Bz \cap \setc$. We consider such a 2SP instead of EP since it enables the derivation of a variant of three-point lemma (Lemma~\ref{lemma:l0_three_point_additional}) which can handle our specific non-convex mixed constraints, and is key to obtaining the convergence analyses we present in Sections \ref{sec:main} and \ref{sec:extensions}. In addition, the 2SP can be more intuitive and efficient to implement than EP (see App.~\ref{sec:recall} for more discussions about 2SP vs EP).
The 2SP procedure, which we denote by $\bar{\Pi}^k_{\setc}$, is as follows: we first project $\vw$ onto $\Bz$  through the hard-thresholding operator $\H$, to obtain a $k$-sparse vector $\vv_k = \H(\vw)$. Then, we project $\vv_k$ onto $\setc$
, to obtain a final vector $\vw_S = \Pi_{\setc}(\vv_k)$, where $S = \text{supp}(\bm{v}_k)$. Note that consequently, the obtained $\vw_S$ is not necessarily the EP of $\vw$ onto $\Bz \cap \setc$, that is, we do not necessarily have $\vw_S = \Pi_{\Bz \cap \setc}(\vw)$. However, when $\Gamma$ is a k-support-preserving set, we have $\vw_S \in \Bz \cap \setc$ (\qw{since, by definition of a k-support-preserving set (Definition \ref{assumpt:projectionbis}), $\text{supp}(\vw_S) \subseteq \text{supp}(\vv_k)$ and hence $\| \vw_S \|_0 \leq \|\vv_k \|_0 \leq k$}), therefore each iteration remains feasible in the constraint.  We illustrate such a two-step projection on Figure \ref{fig:proj}. 
We now present our full algorithm in the case where $R$ is a deterministic function without further knowledge of its structure. It is similar to the usual (non-convex) projected gradient descent algorithm, that is, a gradient update step followed by a projection step, except that instead of projecting onto $\setc \cap \Bz$ using the Euclidean projection,
we obtain a vector $\vw_k \in \setc \cap \Bz$ through the two-step projection method described above. We describe the algorithm in Algorithm \ref{alg:gradient_descent} below.

\begin{algorithm}
  \caption{Deterministic IHT with extra constraints (IHT-2SP)}
  \label{alg:gradient_descent}
  \SetAlgoLined
  \KwIn{$\vw_0$: initial value, $\eta$: learning rate, $T$: number of iterations}
  \For{$t = 1$ to $T$}{
    $\vw_{t} \gets \bar{\Pi}^k_{\setc}(\vw_{t-1} - \eta \nabla R(\vw_{t-1}) )$\;
  }
  \KwOut{$\vw_T$}
\end{algorithm}


\begin{remark}\label{rem:sameproj}
  In the case where $\setc$ is a \textit{symmetric sign-free} convex set (we refer to \cite{lu2015optimization} for the definition of such sets, which include for instance any $\ell_p$ norm constraint set for $p \in [1, +\infty)$ ), then the two-step projection is actually the closed form of an Euclidean projection onto the mixed constraint $\setc \cap \Bz$ (see Theorem 2.1 from \cite{lu2015optimization}). Therefore, in such cases, Algorithm~\ref{alg:gradient_descent} is identical to a vanilla (non-convex) projected gradient descent algorithm (for which up to now there was still no global optimality guarantees in such a mixed constraints setting in the literature).
\end{remark}

\subsection{Convergence Analysis}

  Before proceeding with the convergence analysis, we first present below a variant of the usual three-point lemma from constrained convex optimization, which plays a key role in our proofs. The common three-point lemma for a projection onto a convex set $\mathcal{E}$ relates the distance between a point $\vw \in \R^d$, its projection $\Pi_{\mathcal{E}}(\vw)$, and any vector $\bar{\vw}$ from the set $\mathcal{E}$, through the relation $\|\vw - \bar{\vw} \|^2 \geq \|\Pi_{\mathcal{E}}(\vw) - \vw \|^2 + \|\Pi_{\mathcal{E}}(\vw) - \bar{\vw} \|^2 $. Such a three-point lemma is used for instance in a general Bregman divergence form to prove convergence of mirror descent for smooth functions in \cite{bubeck2015convex}. Indeed, although proving the convergence of projected gradient descent in the non-smooth case only needs the non-expansivity of projection onto a convex set, the proof for the smooth case usually needs such a three-point lemma, which can be seen as a stronger version of non-expansivity. However, due to the non-convexity of the $\ell_0$ pseudo-ball, the convex three-point lemma above does not hold. Fortunately, building upon Lemma~4.1 from \cite{liu2020between}, we can obtain a three-point lemma for projection onto the $\ell_0$ pseudo-ball.

\begin{lemma}[$\ell_0$ three-point lemma, proof in App.~\ref{sec:proof_new3p}]\label{lem:new3p}
      Consider $\vw, \bar{\vw} \in \mathbb{R}^p$ with $\| \bar{\vw}\|_0 \leq \bar{k}$. 
      For any $\bar{k} \leq k$, with $\beta := \frac{\bar{k}}{k}$, it holds that:
  $ \|\H(\vw) - \vw  \|^2  \leq  \|\vw - \bar{\vw} \|^2  - \left(1 - \sqrt{\beta} \right) \|\H(\vw) - \bar{\vw} \|^2.$
  \end{lemma}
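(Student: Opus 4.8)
The plan is to work entirely in terms of the support $S := \supp(\H(\vw))$ and of $\bar S := \supp(\bar\vw)$. If $\|\vw\|_0 \le k$ then $\H(\vw) = \vw$, so the left-hand side vanishes and the right-hand side equals $\sqrt\beta\,\|\vw - \bar\vw\|^2 \ge 0$ (using $\beta \le 1$), and we are done; similarly if $\beta = 1$ the claim reduces to $\|\H(\vw) - \vw\| \le \|\bar\vw - \vw\|$, which is immediate since $\H(\vw)$ is the Euclidean projection of $\vw$ onto $\Bz \ni \bar\vw$. So I may assume $|S| = k$ and $\beta < 1$. Writing, for a set $I$, $\vw_I$ for the vector agreeing with $\vw$ on $I$ and $0$ elsewhere, I decompose all three squared norms over $S$ and its complement:
$\|\H(\vw) - \vw\|^2 = \|\vw_{S^c}\|^2$,
$\|\vw - \bar\vw\|^2 = \|\vw_S - \bar\vw_S\|^2 + \|\vw_{S^c} - \bar\vw_{S^c}\|^2$, and
$\|\H(\vw) - \bar\vw\|^2 = \|\vw_S - \bar\vw_S\|^2 + \|\bar\vw_{S^c}\|^2$ (the last because $\H(\vw)$ vanishes off $S$). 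Substituting these into the target inequality and expanding $\|\vw_{S^c} - \bar\vw_{S^c}\|^2$, every term except an inner product cancels, and the claim becomes equivalent to the single scalar inequality
\[
2\langle \vw_{S^c}, \bar\vw_{S^c}\rangle \;\le\; \sqrt\beta\left(\|\vw_S - \bar\vw_S\|^2 + \|\bar\vw_{S^c}\|^2\right).
\]

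To bound the left-hand side, I note that $\bar\vw_{S^c}$ is supported on $T := \bar S \setminus S$, so $\langle \vw_{S^c}, \bar\vw_{S^c}\rangle = \sum_{j \in T} w_j \bar w_j \le \|\vw_T\|\,\|\bar\vw_T\|$ by Cauchy--Schwarz, and $\|\bar\vw_T\| = \|\bar\vw_{S^c}\|$. The key step is then to show $\|\vw_T\|^2 \le \beta\,\|\vw_S - \bar\vw_S\|^2$. Since $\bar\vw$ vanishes on $S \setminus \bar S$, we have $\|\vw_S - \bar\vw_S\|^2 \ge \|\vw_{S \setminus \bar S}\|^2$, so it suffices to prove $\|\vw_T\|^2 \le \beta\,\|\vw_{S\setminus\bar S}\|^2$. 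Here I use the defining ordering property of hard-thresholding: every index of $S$ carries a magnitude at least as large as every index of $S^c \supseteq T$, hence $w_j^2 \le \min_{i \in S\setminus\bar S} w_i^2 \le \frac{1}{|S\setminus\bar S|}\|\vw_{S\setminus\bar S}\|^2$ for each $j \in T$. Summing over $j \in T$ gives $\|\vw_T\|^2 \le \frac{|T|}{|S\setminus\bar S|}\,\|\vw_{S\setminus\bar S}\|^2$, and a counting argument with $a := |S \cap \bar S|$ shows $\frac{|T|}{|S\setminus\bar S|} \le \frac{\bar k - a}{k - a} \le \frac{\bar k}{k} = \beta$ (the middle quantity is non-increasing in $a$ because $\bar k \le k$). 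Combining with $2\|\vw_T\|\,\|\bar\vw_{S^c}\| \le 2\sqrt\beta\,\|\vw_S - \bar\vw_S\|\,\|\bar\vw_{S^c}\| \le \sqrt\beta(\|\vw_S - \bar\vw_S\|^2 + \|\bar\vw_{S^c}\|^2)$ (AM--GM) yields the scalar inequality above, completing the proof.

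I expect the main obstacle to be pinning down the correct reduction and getting the constant sharp: one must recognize that, after the cancellations, only $\|\vw_T\|^2 \le \beta\,\|\vw_S-\bar\vw_S\|^2$ is needed, and that the worst case of the counting ratio $\frac{|T|}{|S\setminus\bar S|}$ is attained at $a = 0$, giving precisely $\beta$ (and hence the $\sqrt\beta$ in the final bound rather than a weaker $\frac{\bar k}{k-\bar k}$-type factor). The remaining manipulations (expanding the squared norms, Cauchy--Schwarz, AM--GM) are routine. This argument is in the spirit of, and refines, Lemma~4.1 of \cite{liu2020between}.
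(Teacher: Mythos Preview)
Your proof is correct. The reduction to the scalar inequality
\[
2\langle \vw_{S^c}, \bar{\vw}_{S^c}\rangle \;\le\; \sqrt\beta\bigl(\|\vw_S - \bar{\vw}_S\|^2 + \|\bar{\vw}_{S^c}\|^2\bigr)
\]
is clean, and the chain Cauchy--Schwarz $\Rightarrow$ $\|\vw_T\|^2 \le \beta\,\|\vw_{S\setminus\bar S}\|^2$ via the ordering property of $\H$ and the counting bound $\frac{|T|}{|S\setminus\bar S|}\le\frac{\bar k - a}{k - a}\le\frac{\bar k}{k}$ $\Rightarrow$ AM--GM is airtight. The edge cases ($\|\vw\|_0\le k$, $\beta=1$, $T=\emptyset$, $S\setminus\bar S\neq\emptyset$ when $\beta<1$) are all handled or harmless.

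Your route differs genuinely from the paper's. The paper's proof is three lines: it expands $\|\vw-\bar\vw\|^2 = \|\vw-\H(\vw)\|^2 + \|\H(\vw)-\bar\vw\|^2 + 2\langle \vw-\H(\vw),\H(\vw)-\bar\vw\rangle$ and bounds the cross term by invoking Lemma~4.1 of \cite{liu2020between} (the relative-concavity bound $\gamma_{k,\beta}(\H)=\tfrac{\sqrt\beta}{2}$) as a black box. Your scalar inequality is in fact \emph{exactly} that cross-term bound written in coordinates (since $\vw-\H(\vw)=\vw_{S^c}$ and $(\H(\vw)-\bar\vw)_{S^c}=-\bar\vw_{S^c}$), so what you have done is supply a direct, self-contained proof of the upper bound in Liu's lemma rather than cite it. The paper's version is shorter but outsources the substantive step; yours is longer but elementary and exposes precisely where the $\sqrt\beta$ arises (the counting ratio $|T|/|S\setminus\bar S|\le\beta$ followed by AM--GM).
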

Note that if $k \gg \bar{k}$, $\beta \to 0$ and we approach the usual three-point lemma from convex optimization. This is coherent with the literature on IHT, in which relaxing sparsity (i.e. considering some $k \gg \bar{k}$) is known to make the problem easier to solve (see also Remark~\ref{rem:krelax} below). In addition, the inequality in Lemma~\ref{lem:new3p} is tight with respect to the  coefficient $\sqrt{\beta}$, as illustrated by the following lemma.
\begin{lemma}[Tightness, proof in App.~\ref{sec:proof_tightness}]\label{lem:tightness}
Consider an arbitrary pair of integers $(k,\bar k)$ with $k>\bar k$ and an arbitrary scalar $\rho\in(0,1)$. Then there exist $\vw$ and $\bar \vw$ with $\|\vw\|_0=k$ and $\|\bar \vw\|_0=\bar k$ such that the following holds:
$\|\vw- \mathcal{H}_k(\vw)\|^2 >\|\vw-\bar \vw\|^2 -\|\mathcal{H}_k(\vw) - \bar \vw\|^2 + \rho \sqrt{\frac{\bar k}{k}}\|\mathcal{H}_k(\vw) - \bar \vw\|^2.$
\end{lemma}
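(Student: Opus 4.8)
The plan is to read this as a near-tightness statement for the $\ell_0$ three-point lemma (Lemma~\ref{lem:new3p}). Writing $a := \|\mathcal{H}_k(\vw) - \vw\|^2$, $b := \|\vw - \bar{\vw}\|^2$, and $c := \|\mathcal{H}_k(\vw) - \bar{\vw}\|^2$, Lemma~\ref{lem:new3p} guarantees $a \le b - c + \sqrt{\beta}\,c$ with $\beta = \bar{k}/k$, so the desired inequality $a > b - c + \rho\sqrt{\beta}\,c$ asks precisely that the (nonnegative) slack $(b - c + \sqrt{\beta}c) - a$ be pushed strictly below $(1-\rho)\sqrt{\beta}\,c$. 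It therefore suffices to exhibit, for the given $(k,\bar k)$, a pair $(\vw,\bar{\vw})$ at which the three-point lemma is \emph{attained with equality}; for such a pair the slack is zero, hence below $(1-\rho)\sqrt\beta c$ for any $\rho<1$. Note that any witness must have $\|\vw\|_0 > k$, since $\|\vw\|_0\le k$ forces $\mathcal{H}_k(\vw)=\vw$, whence $a=0$ and the strict inequality is impossible; accordingly the construction below takes $\vw$ with $k+\bar k$ nonzero entries, so that $\mathcal{H}_k(\vw)$ is exactly $k$-sparse.

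For the construction I would split the coordinates into $T=\{1,\dots,k\}$ and $D=\{k+1,\dots,k+\bar k\}$, fix any $t\in(\rho,1)$, and set $s=\sqrt{k/\bar k}$. Define $\vw$ by $w_i=1$ for $i\in T$, $w_i=t$ for $i\in D$, and $w_i=0$ otherwise, and define $\bar{\vw}$ by $\bar w_i=s$ for $i\in D$ and $\bar w_i=0$ otherwise. Since $t<1$, the $k$ largest-magnitude entries of $\vw$ are exactly those indexed by $T$, so $\mathcal{H}_k(\vw)$ keeps $T$ and zeroes $D$ with no ties; in particular $\mathcal{H}_k(\vw)-\vw$ is supported on $D$ with all entries $-t$, and $\|\bar{\vw}\|_0=\bar k$ as required. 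The two delicate design choices are that $\vw$ and $\bar{\vw}$ carry the \emph{same sign} on $D$ (so the relevant cross term is positive and drives the configuration toward the bound rather than into its interior), and the calibrated magnitude $s=\sqrt{k/\bar k}$, which is exactly the value that saturates Lemma~\ref{lem:new3p} as $t\to 1$.

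The remaining step is direct computation. One finds $a=\bar k\,t^2$, $b=k+\bar k\,(t-s)^2$, and $c=k+\bar k\,s^2$; substituting $s^2=k/\bar k$ gives $c=2k$ and $\bar k\,s=\sqrt{k\bar k}$. Expanding,
\begin{equation*}
a-\bigl(b-c+\rho\sqrt{\beta}\,c\bigr)
= 2\bar k\,ts-2\rho\sqrt{k\bar k}
= 2\sqrt{k\bar k}\,(t-\rho),
\end{equation*}
where the cancellation $a-b+c=2\bar k\,ts$ comes from $(t-s)^2-s^2=t^2-2ts$. Since $t>\rho$ this quantity is strictly positive, which is exactly the claimed inequality $\|\vw-\mathcal{H}_k(\vw)\|^2>\|\vw-\bar{\vw}\|^2-\|\mathcal{H}_k(\vw)-\bar{\vw}\|^2+\rho\sqrt{\bar k/k}\,\|\mathcal{H}_k(\vw)-\bar{\vw}\|^2$.

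I expect the only genuine obstacle to be locating the construction, specifically guessing $s=\sqrt{k/\bar k}$ and aligning the signs on $D$; everything after that is routine algebra. The sign alignment is the subtle point flagged by the near-miss analysis: flipping the sign of $\bar{\vw}$ on $D$ turns the cross term $2\bar k\,ts$ negative, producing an example that comfortably \emph{satisfies} the three-point bound rather than saturating it, and hence proves nothing about tightness.
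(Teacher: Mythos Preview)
Your construction is essentially identical to the paper's: the paper sets $\vw=[1,\dots,1,b,\dots,b]$ and $\bar\vw=[0,\dots,0,a,\dots,a]$ with $a=\sqrt{k/\bar k}$ and the specific choice $b=(\rho+1)/2$, whereas you allow any $t\in(\rho,1)$ in place of $b$; the algebra and the key calibration $s=\sqrt{k/\bar k}$ coincide exactly. Your observation that $\|\vw\|_0=k$ is unattainable (forcing $\mathcal{H}_k(\vw)=\vw$ and hence left-hand side zero) is correct and in fact exposes a typo in the lemma statement---the paper's own witness has $\|\vw\|_0=k+\bar k$, just as yours does.
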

Lemma~\ref{lem:new3p} allows us to prove the following rate for convergence in risk of IHT without system error, which appeared first in \cite{Jain14}. Our proof, however, is simpler than the original proof from \citet{Jain14}, as we will discuss below.
  \begin{theorem}(Equivalent to Thm. 1 from \cite{Jain14}, see also Thm. 3.1 from \cite{liu2020between}. Proof in App.~\ref{proof:risk_convergence})
  \label{thrm:risk_convergence}
    Assume that $\setc=\R^d$. Suppose that Assumption~\ref{ass:RSC} and Assumption~\ref{ass:RSS} hold, for $s=2k$. Let $\eta=\frac{1}{L_s}$. Let $\bar \vw$ be an arbitrary $\bar k$-sparse vector. Suppose that $k\ge 4\kappa_s^2\bar k$ with $\kappa_s := \frac{L_s}{\nu_s}$. Then for any $\varepsilon>0$, the iterate of IHT satisfies $R(\vw_t) \le R(\bar \vw) + \varepsilon$ if
$    t \ge \left\lceil\frac{2L_s}{\nu_s} \log\left(\frac{(L_s-\nu_s)\|\vw_0 - \bar \vw\|^2}{2\varepsilon}\right)\right\rceil + 1.$
    \end{theorem}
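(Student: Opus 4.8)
The plan is to establish a one-step contraction in objective value and then iterate it, with Lemma~\ref{lem:new3p} replacing the ad hoc sparse-vector manipulations used in the original argument of \citet{Jain14}. Fix an iteration $t$ and write $\vu = \vw_{t-1} - \eta \nabla R(\vw_{t-1})$, so that $\vw_t = \H(\vu)$ (here $\setc = \R^d$, so the second projection step is vacuous). First I would apply the $(L_s,s)$-RSS assumption with $s = 2k$ to the pair $(\vw_{t-1}, \vw_t)$, which is legitimate because $\|\vw_{t-1} - \vw_t\|_0 \le 2k$: this gives
\begin{equation*}
  R(\vw_t) \le R(\vw_{t-1}) + \langle \nabla R(\vw_{t-1}), \vw_t - \vw_{t-1}\rangle + \frac{L_s}{2}\|\vw_t - \vw_{t-1}\|^2.
\end{equation*}
With $\eta = 1/L_s$, the right-hand side can be rewritten (completing the square) as $R(\vw_{t-1}) + \frac{L_s}{2}\|\vw_t - \vu\|^2 - \frac{L_s}{2}\|\vw_{t-1} - \vu\|^2$, i.e. as $R(\vw_{t-1}) + \frac{L_s}{2}\big(\|\H(\vu) - \vu\|^2 - \eta^2\|\nabla R(\vw_{t-1})\|^2\big)$.

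Next I would invoke Lemma~\ref{lem:new3p} with the $\bar k$-sparse vector $\bar\vw$ (and $\beta = \bar k/k$) to bound $\|\H(\vu) - \vu\|^2 \le \|\vu - \bar\vw\|^2 - (1-\sqrt\beta)\|\H(\vu) - \bar\vw\|^2$. Substituting and expanding $\|\vu - \bar\vw\|^2 = \|\vw_{t-1} - \bar\vw\|^2 - 2\eta\langle\nabla R(\vw_{t-1}), \vw_{t-1} - \bar\vw\rangle + \eta^2\|\nabla R(\vw_{t-1})\|^2$ causes the $\eta^2\|\nabla R\|^2$ terms to cancel, leaving
\begin{equation*}
  R(\vw_t) \le R(\vw_{t-1}) + \frac{L_s}{2}\|\vw_{t-1}-\bar\vw\|^2 - L_s\eta\langle\nabla R(\vw_{t-1}), \vw_{t-1}-\bar\vw\rangle - \frac{L_s(1-\sqrt\beta)}{2}\|\vw_t - \bar\vw\|^2.
\end{equation*}
Now I would use $(\nu_s,s)$-RSC on the pair $(\vw_{t-1}, \bar\vw)$ (again $\|\vw_{t-1}-\bar\vw\|_0\le 2k$) in the form $-\langle\nabla R(\vw_{t-1}),\vw_{t-1}-\bar\vw\rangle \le R(\bar\vw) - R(\vw_{t-1}) - \frac{\nu_s}{2}\|\vw_{t-1}-\bar\vw\|^2$, and since $L_s\eta = 1$ this yields, after collecting the $\|\vw_{t-1}-\bar\vw\|^2$ coefficient into $\frac{L_s - \nu_s}{2}$,
\begin{equation*}
  R(\vw_t) - R(\bar\vw) \le \frac{L_s - \nu_s}{2}\|\vw_{t-1}-\bar\vw\|^2 - \frac{L_s(1-\sqrt\beta)}{2}\|\vw_t - \bar\vw\|^2.
\end{equation*}
To turn this into a recursion in $\|\vw_t - \bar\vw\|^2$ I would drop the (nonnegative, by well-posedness we may assume $R(\vw_t)\ge R(\bar\vw)$ — or more carefully, keep $R(\vw_t)-R(\bar\vw)$ and note it only helps) left-hand term to get $\|\vw_t-\bar\vw\|^2 \le \frac{L_s-\nu_s}{L_s(1-\sqrt\beta)}\|\vw_{t-1}-\bar\vw\|^2$; the hypothesis $k \ge 4\kappa_s^2\bar k$ gives $\sqrt\beta \le \tfrac{1}{2\kappa_s}$, so the contraction factor $q := \frac{L_s-\nu_s}{L_s(1-\sqrt\beta)}$ satisfies $q \le \frac{1 - 1/\kappa_s}{1 - 1/(2\kappa_s)} < 1$, and a short estimate (e.g. $q \le 1 - \frac{1}{2\kappa_s}\cdot\frac{1}{1-1/(2\kappa_s)} \le 1 - \frac{1}{2\kappa_s}$, hence $\log(1/q) \ge \frac{1}{2\kappa_s}$, or more simply $-\log q \ge \nu_s/(2L_s)$ after bounding) shows geometric decay with rate governed by $\kappa_s$.

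Iterating, $\|\vw_{t-1}-\bar\vw\|^2 \le q^{t-1}\|\vw_0-\bar\vw\|^2$, and plugging back into the displayed one-step inequality gives $R(\vw_t) - R(\bar\vw) \le \frac{L_s-\nu_s}{2}q^{t-1}\|\vw_0-\bar\vw\|^2$. Requiring the right-hand side to be $\le \varepsilon$ and solving for $t$ using $-\log q \ge \frac{1}{2\kappa_s} = \frac{\nu_s}{2L_s}$ produces exactly the stated iteration count $t \ge \lceil \frac{2L_s}{\nu_s}\log(\frac{(L_s-\nu_s)\|\vw_0-\bar\vw\|^2}{2\varepsilon})\rceil + 1$. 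The main obstacle, and the place requiring the most care, is the bookkeeping that makes the $\eta^2\|\nabla R\|^2$ terms cancel exactly (which is why $\eta = 1/L_s$ is the right choice) and the quantitative step extracting the clean bound $-\log q \ge \nu_s/(2L_s)$ from $k \ge 4\kappa_s^2\bar k$; everything else is routine substitution. I would also double-check the edge cases where a minimizer coincides with $\bar\vw$ or where $R(\vw_t) < R(\bar\vw)$ already, in which case the bound holds trivially.
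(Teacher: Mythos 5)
Your derivation up through the one-step inequality $R(\vw_t) - R(\bar\vw) \le \frac{L_s-\nu_s}{2}\|\vw_{t-1}-\bar\vw\|^2 - \frac{L_s(1-\sqrt{\beta})}{2}\|\vw_t-\bar\vw\|^2$ matches the paper's argument exactly (RSS, completing the square with $\eta = 1/L_s$, Lemma~\ref{lem:new3p}, then RSC), and your constant bookkeeping ($\sqrt{\beta}\le \frac{1}{2\kappa_s}$, contraction factor at most $1-\frac{\nu_s}{2L_s}$, final value of $T$) is correct. The gap is in how you turn this into an unconditional recursion for $\|\vw_t-\bar\vw\|^2$. You drop the left-hand side, asserting $R(\vw_t)\ge R(\bar\vw)$ ``by well-posedness'' or that keeping it ``only helps.'' But $\bar\vw$ is an \emph{arbitrary} $\bar k$-sparse vector, not a minimizer, and the iterates range over the larger $k$-sparse ball, so $R(\vw_t)<R(\bar\vw)$ is entirely possible; when the left side is negative you cannot conclude $\|\vw_t-\bar\vw\|^2 \le q\,\|\vw_{t-1}-\bar\vw\|^2$, and once a single step fails the distance to $\bar\vw$ may grow, invalidating the iterated bound $\|\vw_{t-1}-\bar\vw\|^2\le q^{t-1}\|\vw_0-\bar\vw\|^2$ and hence the final estimate. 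Your closing remark treats $R(\vw_t)<R(\bar\vw)$ as an ``edge case where the bound holds trivially,'' but that only gives the bound at that one iteration, whereas the theorem asserts it for every $t\ge T+1$.

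The paper closes this hole with two ingredients your outline lacks: (i) a monotonicity argument, $R(\vw_t)\le R(\vw_{t-1})$, obtained because $\vw_t=\H(\vw_{t-1}-\frac{1}{L_s}\nabla R(\vw_{t-1}))$ is exactly the minimizer over the $\ell_0$ ball of the quadratic RSS upper bound at $\vw_{t-1}$, of which $\vw_{t-1}$ itself is a feasible point; and (ii) a case analysis: either some $t\in[T]$ already satisfies $R(\vw_t)\le R(\bar\vw)+\varepsilon$, and monotonicity propagates the bound to all later iterates, or else $R(\vw_t)>R(\bar\vw)+\varepsilon$ for every $t\in[T]$, in which case the left-hand side is strictly positive along the whole chain, the distance contraction does hold at each of those steps, giving $\|\vw_T-\bar\vw\|^2\le \frac{2\varepsilon}{L_s-\nu_s}$, and one further application of the one-step inequality yields $R(\vw_{T+1})\le R(\bar\vw)+\varepsilon$, again carried forward by monotonicity. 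With these two additions your plan becomes the paper's proof; without them the iteration step is not justified.
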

\begin{proofsktch}
    Using the $L_s$-RSS of $R$ and some algebraic manipulations, and denoting $\vg_{t} = \nabla R(\vw_{t})$ and $\vv_t := \H(\vw_{t-1} -\frac{1}{L_s} \vg_{t-1})$ ($=\vw_t$ when $\setc=\R^d$), we have: 
    \begin{align}\label{eq:leftside}
    R(\vv_t) &\leq R(\vw_{t-1}) +  \frac{L_s}{2}\|\vv_t - \vw_{t-1} + \frac{1}{L_s}\vg_{t-1}\|^2 \\
    &- \frac{1}{2L_s} \|\vg_{t-1}\|^2 \nonumber\\
    &\overset{(a)}{\le} R(\vw_{t-1}) + \frac{L_s}{2}\|\bar \vw - \vw_{t-1} + \frac{1}{L_s}\vg_{t-1}\|^2 \\
    &-  \frac{L_s}{2} (1 - \sqrt{\beta}) \|\vv_t - \bar \vw\|^2 - \frac{1}{2L_s} \|\vg_{t-1}\|^2 \nonumber\\
    &\overset{(b)}{\le} R(\bar \vw) + \frac{L_s - \nu_s}{2}\|\vw_{t-1} - \bar \vw\|^2 \\
    &- \frac{L_s}{2} (1 - \sqrt{\beta}) \|\vv_t - \bar \vw\|^2 ,
    \end{align}
where (a) follows from Lemma~\ref{lem:new3p}, and in (b) we used the RSC of $R$ with some rearrangements. The proof for Theorem~\ref{thrm:risk_convergence} can be concluded with telescopic sum arguments.\end{proofsktch}\begin{remark}[Necessity of $k = \Omega(\bar{k} \kappa^2)$]\label{rem:krelax}
    Note that the relaxation of $k$ to $\Omega (\bar k \kappa^2)$ in Theorem~\ref{thrm:risk_convergence} is unimprovable for IHT, as we detail in Appendix~\ref{sec:counterex} with a counter-example, similar to but slightly simpler than the counter-example from Appendix E.1 in \cite{axiotis2022iterative}). Therefore, we highlight that all of the following results in this paper will also be expressed in terms of such a relaxed $k$: this is a fundamental limitation of IHT, and not a limitation of our proof techniques. More details on such relaxation (which is widespread amongst IHT-type algorithms as can be seen in Table~\ref{tab:compalgos}) and how it is a natural way to obtain global guarantees for sparsity enforcing algorithms, can be found in \citet{liu2020between,axiotis2021sparse,axiotis2022iterative}.
\end{remark}
\paragraph{Comparison with Previous Proofs.}
Perhaps the original and most widespread proof framework for convergence in risk of IHT without system error is the one from \cite{Jain14} Theorem 1. Their proof framework is also used for instance in some stochastic extensions of IHT (see Theorem 2 in \cite{Zhou18}, or Theorems 1 and 2 in \cite{peste2021ac}, even if \citet{peste2021ac} assume $R$ to have a $\bar{k}$-sparse minimizer which is a strong requirement). The proof from \cite{Jain14} uses specific properties of the hard-thresholding operator to carefully bound the magnitude of the components of $\nabla R(\vw_t)$ on various sets of coordinates (the support of $\vw_t$, $\vw_{t+1}$, and $\bar{\vw}$, and some intersections and unions of such sets). Using such techniques, however, makes it difficult to derive proofs of IHT in other settings (stochastic, zeroth-order, extra constraints). However, recently, \citet{liu2020between} provided a proof of convergence for IHT which avoids such complex considerations about the support sets of the gradient, using their Lemma~4.1 on the \textit{relative concavity} of the hard-thresholding operator. Our work goes in a similar line of work, but we build upon their Lemma~4.1 to prove a three-point lemma for hard-thresholding (our Lemma~\ref{lem:new3p}) which allows us to obtain simple proof frameworks also for the stochastic case (retrieving the previous from \cite{Zhou18}) and the zeroth-order case (obtaining a new result). But perhaps more importantly, we are able to extend our Lemma~\ref{lem:new3p} to the case with extra constraints $\Gamma$ verifying Definition~\ref{assumpt:projectionbis} (Lemma~\ref{lemma:l0_three_point_additional} below). 
Such a lemma will allows us to obtain convergence results in the new extra constraints setting that we consider in this paper (providing three new results, in the deterministic, stochastic, and zeroth-order case).
It relates together the four points involved in the two step projection ($\vw \in \R^d$, $\H(\vw)$, $\bar{\Pi}^k_{\setc}(\vw)$, and $\bar{\vw} \in \setc \cap \Bz$ ).
  \begin{lemma}[Constrained $\ell_0$-Three-Point, proof in App.~\ref{proof:l0_three_point_additional_main}]\label{lemma:l0_three_point_additional}
    Suppose that Definition~\ref{assumpt:projectionbis} holds for a set $\Gamma$. Consider $\vw, \bar \vw\in \mathbb{R}^p$ with $\|\bar \vw\|_0\le \bar k$ and $\bar \vw \in \setc$. Then the following holds for any $k>\bar k$:
\begin{align*}
    \|\bar{\Pi}^k_{\setc}\left(\vw\right) -  \vw\|^2 &\le \|\vw - \bar \vw\|^2 - \|\bar{\Pi}^k_{\setc}\left(\vw\right) - \bar \vw\|^2 \\
    &~~~~+\sqrt{\beta} \| \mathcal{H}_{k}(\vw) - \bar \vw \|^2, ~\text{with}~ \beta := \frac{\bar{k}}{k}.
    \end{align*}
    \end{lemma}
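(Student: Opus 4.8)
The target combines two facts: the $\ell_0$ three-point lemma (Lemma~\ref{lem:new3p}) controlling $\|\mathcal{H}_k(\vw)-\vw\|^2$ relative to $\bar\vw$, and the ordinary convex three-point lemma applied to the \emph{second} projection $\Pi_\setc$. The key structural observation is that, since $\setc$ is $k$-support-preserving, once we have restricted to $S=\text{supp}(\mathcal{H}_k(\vw))$, the second-step projection $\Pi_\setc(\mathcal{H}_k(\vw))$ keeps its support inside $S$, so the whole second step effectively happens in the $|S|\le k$ coordinates indexed by $S$. My plan is therefore: (1) write $\vv_k:=\mathcal{H}_k(\vw)$ and $\vw_S:=\bar\Pi^k_\setc(\vw)=\Pi_\setc(\vv_k)$; (2) apply the convex three-point inequality to the projection $\vw_S=\Pi_\setc(\vv_k)$ against the competitor point $\bar\vw$ — but with a catch, described next.

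The catch is that $\bar\vw$ need not lie in the "same support slice" as $\vv_k$: we only know $\bar\vw\in\setc$, $\|\bar\vw\|_0\le\bar k$. The honest way to handle this is to invoke the convex three-point lemma for the projection onto $\setc$ at the point $\vv_k$ with the competitor $\bar\vw\in\setc$, giving
\[
\|\vv_k-\bar\vw\|^2 \ge \|\vw_S-\vv_k\|^2 + \|\vw_S-\bar\vw\|^2 .
\]
Rearranged, $\|\vw_S-\vv_k\|^2 \le \|\vv_k-\bar\vw\|^2 - \|\vw_S-\bar\vw\|^2$. Now I want to relate $\|\vw_S - \vw\|^2$ to $\|\vw_S - \vv_k\|^2$. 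Since $\text{supp}(\vw_S)\subseteq\text{supp}(\vv_k)=S$ (this is exactly the support-preserving property, invoked at the $k$-sparse vector $\vv_k$), both $\vw_S$ and $\vv_k$ are supported on $S$, while $\vv_k=\mathcal{H}_k(\vw)$ agrees with $\vw$ on $S$ and is zero off $S$. Hence $\vw - \vw_S = (\vw-\vv_k) + (\vv_k-\vw_S)$, and the two summands are orthogonal (the first is supported off $S$, the second on $S$), so by Pythagoras $\|\vw-\vw_S\|^2 = \|\vw-\vv_k\|^2 + \|\vv_k-\vw_S\|^2$. Substituting the rearranged three-point bound for $\|\vv_k-\vw_S\|^2$ gives
\[
\|\vw-\vw_S\|^2 \le \|\vw-\vv_k\|^2 + \|\vv_k-\bar\vw\|^2 - \|\vw_S-\bar\vw\|^2 .
\]

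Finally I bound $\|\vw-\vv_k\|^2=\|\vw-\mathcal{H}_k(\vw)\|^2$ using Lemma~\ref{lem:new3p}: $\|\vw-\vv_k\|^2 \le \|\vw-\bar\vw\|^2 - (1-\sqrt\beta)\|\vv_k-\bar\vw\|^2$. Plugging this in, the $\|\vv_k-\bar\vw\|^2$ terms combine as $-(1-\sqrt\beta)\|\vv_k-\bar\vw\|^2 + \|\vv_k-\bar\vw\|^2 = \sqrt\beta\,\|\vv_k-\bar\vw\|^2 = \sqrt\beta\,\|\mathcal{H}_k(\vw)-\bar\vw\|^2$, yielding exactly
\[
\|\bar\Pi^k_\setc(\vw)-\vw\|^2 \le \|\vw-\bar\vw\|^2 - \|\bar\Pi^k_\setc(\vw)-\bar\vw\|^2 + \sqrt\beta\,\|\mathcal{H}_k(\vw)-\bar\vw\|^2,
\]
as claimed. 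The two places that require care — and which I expect to be the main obstacles to get exactly right — are (i) justifying the Pythagorean split, which hinges precisely on the support-preserving hypothesis being applied to the $k$-sparse vector $\vv_k$ (not to $\vw$ itself), and (ii) making sure Lemma~\ref{lem:new3p} applies, i.e. that $\|\bar\vw\|_0\le\bar k\le k$ so that $\beta=\bar k/k\le 1$ and $\sqrt\beta$ is well-defined; the hypothesis $k>\bar k$ in the statement secures this. One should also double-check that the convex three-point lemma is being applied in the correct orientation (projection of $\vv_k$, competitor $\bar\vw\in\setc$), since $\bar\vw$ is guaranteed to be in $\setc$ by hypothesis. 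I would also remark that no assumption on whether $\bar\vw$ is supported on $S$ is needed — the argument is robust to that, which is what makes it usable in the IHT convergence proof where $\bar\vw$ is an arbitrary $\bar k$-sparse feasible point.
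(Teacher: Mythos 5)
Your proposal is correct and follows essentially the same route as the paper's proof: the Pythagorean split $\|\vw-\bar{\Pi}^k_{\setc}(\vw)\|^2=\|\vw-\mathcal{H}_k(\vw)\|^2+\|\mathcal{H}_k(\vw)-\bar{\Pi}^k_{\setc}(\vw)\|^2$ justified by the support-preserving property at the $k$-sparse point $\mathcal{H}_k(\vw)$, the convex three-point inequality for the projection onto $\setc$ with competitor $\bar\vw\in\setc$, and Lemma~\ref{lem:new3p} for the hard-thresholding step, combined so that the $\|\mathcal{H}_k(\vw)-\bar\vw\|^2$ terms leave exactly the $\sqrt{\beta}$ residual. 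The points you flag as requiring care (orthogonality of the two increments and applicability of Lemma~\ref{lem:new3p}) are precisely the ones the paper's proof relies on, and you handle them correctly.
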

Equipped with such lemma, we can now present the convergence analysis of Algorithm~\ref{alg:gradient_descent} below, using the assumptions from Section \ref{sec:prelim}, and we will describe how the results give rise to a trade-off between the sparsity of the iterates and the tightness of the sub-optimality bound, specific to our mixed constraints setting.
\begin{theorem}[Proof in App.~\ref{sec:prooffullrisk_convergence_additional}]\label{thrm:risk_convergence_additional}
  Suppose that Assumption~\ref{ass:RSC} and \ref{ass:RSS} hold with $s=2k$, \qw{that $R$ is non-negative (without loss of generality)}, and let $\Gamma$ be a set verifying Definition~\ref{assumpt:projectionbis} . Let $\eta=\frac{1}{L_s}$, and $\bar \vw$ be an arbitrary $\bar k$-sparse vector. Let $\rho\in (0,\frac{1}{2}]$ be an arbitrary scalar. Suppose that $k\ge \frac{4(1-\rho)^2L_s^2}{\rho^2\nu_s^2}\bar k$. Then for any $\varepsilon>0$, 
  for $~T\ge \left\lceil\frac{L_s}{\nu_s} \log\left(\frac{(L_s-\nu_s)\|\vw_0 - \bar \vw\|^2}{2\varepsilon (1 - \rho)}\right)\right\rceil + 1 = \mathcal{O}(\kappa_s \log(\frac{1}{\varepsilon})) $, the iterates of IHT-2SP satisfy:
   $$\min _{t \in[T]} R\left(\vw_{t}\right)\leq(1+2 \rho) R(\bar{\vw})+ \varepsilon. $$
Further, if $\bar{\vw}$ is a global minimizer of $R$ over $\mathcal{B}_{0}(k):=$ $\left\{\vw:\|\vw\|_{0} \leq k\right\}$, then, with $\rho=0.5$ in the expressions of $k$ and $T$ above:
$  \min _{t \in[T]} R\left(\vw_{t}\right) \leq R(\bar{\vw})+ \varepsilon.$
  \end{theorem}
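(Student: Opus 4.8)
The plan is to mimic the structure of the sketch given for Theorem~\ref{thrm:risk_convergence}, but replacing the use of Lemma~\ref{lem:new3p} with the constrained version Lemma~\ref{lemma:l0_three_point_additional}, and carefully tracking the extra $\sqrt{\beta}\|\mathcal{H}_k(\vw) - \bar{\vw}\|^2$ term that now appears. First I would write $\vw_t = \bar{\Pi}^k_{\setc}(\vw_{t-1} - \eta \vg_{t-1})$ with $\eta = 1/L_s$ and $\vg_{t-1} = \nabla R(\vw_{t-1})$, and apply $L_s$-RSS between $\vw_{t-1}$ and $\vw_t$ (which differ on at most $2k$ coordinates, hence $s=2k$ is the right sparsity level) to get
$R(\vw_t) \le R(\vw_{t-1}) + \frac{L_s}{2}\|\vw_t - \vw_{t-1} + \frac{1}{L_s}\vg_{t-1}\|^2 - \frac{1}{2L_s}\|\vg_{t-1}\|^2$.
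Then I would expand $\|\vw_t - \vw_{t-1} + \frac{1}{L_s}\vg_{t-1}\|^2 = \|\vw_t - (\vw_{t-1} - \eta\vg_{t-1})\|^2$ and apply Lemma~\ref{lemma:l0_three_point_additional} with $\vw \mapsto \vw_{t-1} - \eta\vg_{t-1}$, giving an upper bound $\|\vw_{t-1} - \eta\vg_{t-1} - \bar\vw\|^2 - \|\vw_t - \bar\vw\|^2 + \sqrt{\beta}\|\mathcal{H}_k(\vw_{t-1}-\eta\vg_{t-1}) - \bar\vw\|^2$.

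Next I would expand $\|\vw_{t-1} - \eta\vg_{t-1} - \bar\vw\|^2$ and use RSC between $\vw_{t-1}$ and $\bar\vw$ (again an $s=2k$-sparse difference) to convert the resulting linear term $\langle \vg_{t-1}, \bar\vw - \vw_{t-1}\rangle$ into $R(\bar\vw) - R(\vw_{t-1}) - \frac{\nu_s}{2}\|\vw_{t-1}-\bar\vw\|^2$, so that after cancellation of $\pm\frac{1}{2L_s}\|\vg_{t-1}\|^2$ we arrive at something of the form
$R(\vw_t) \le R(\bar\vw) + \frac{L_s - \nu_s}{2}\|\vw_{t-1} - \bar\vw\|^2 - \frac{L_s}{2}\|\vw_t - \bar\vw\|^2 + \frac{L_s\sqrt{\beta}}{2}\|\mathcal{H}_k(\vw_{t-1}-\eta\vg_{t-1}) - \bar\vw\|^2$.
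The key new difficulty, and what I expect to be the main obstacle, is controlling the extra term $\|\mathcal{H}_k(\vw_{t-1}-\eta\vg_{t-1}) - \bar\vw\|^2$: unlike in the unconstrained case it is not simply $\|\vw_t - \bar\vw\|^2$, so I would bound it via Lemma~\ref{lem:new3p} (the plain $\ell_0$ three-point lemma) applied to $\vw_{t-1}-\eta\vg_{t-1}$, which gives $\|\mathcal{H}_k(\vw_{t-1}-\eta\vg_{t-1}) - \bar\vw\|^2 \le \frac{1}{1-\sqrt\beta}\big(\|\vw_{t-1}-\eta\vg_{t-1}-\bar\vw\|^2 - \|\mathcal{H}_k(\vw_{t-1}-\eta\vg_{t-1}) - (\vw_{t-1}-\eta\vg_{t-1})\|^2\big)$; alternatively one can bound it in terms of $R(\bar\vw)$ and $R(\vw_{t-1})$ directly using RSS/RSC on the hard-thresholded iterate. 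Either way, one obtains an inequality where this term is absorbed into a combination of $R(\bar\vw)$, $\|\vw_{t-1}-\bar\vw\|^2$, and $\|\vw_t - \bar\vw\|^2$, with coefficients controlled by $\sqrt\beta$; the condition $k \ge \frac{4(1-\rho)^2 L_s^2}{\rho^2\nu_s^2}\bar k$ is exactly what makes $\sqrt\beta = \sqrt{\bar k/k}$ small enough that the contraction survives and the residual multiple of $R(\bar\vw)$ is at most $2\rho\, R(\bar\vw)$ (here non-negativity of $R$ is used so that terms like $\sqrt\beta\, R(\bar\vw)$ have the right sign).

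Finally I would telescope: summing the per-iteration inequality over $t=1,\dots,T$, the $\frac{L_s}{2}\|\vw_t-\bar\vw\|^2$ and $\frac{L_s-\nu_s}{2}\|\vw_{t-1}-\bar\vw\|^2$ terms collapse (using $L_s - \nu_s < L_s$ so the residual distance terms have a net negative sign after accounting for the $\nu_s$ gap, or more precisely yielding a geometric decay with ratio $1 - \nu_s/L_s$ as in Theorem~\ref{thrm:risk_convergence}), leaving $\min_{t\in[T]} R(\vw_t) \le (1+2\rho)R(\bar\vw) + \frac{(L_s-\nu_s)\|\vw_0-\bar\vw\|^2}{2T'}$ for an appropriate effective horizon, so that the stated $T = \mathcal{O}(\kappa_s\log(1/\varepsilon))$ suffices for the error to drop below $\varepsilon$; I would track the constant $(1-\rho)$ appearing inside the logarithm through the bookkeeping above. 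For the second claim, when $\bar\vw$ minimizes $R$ over $\mathcal{B}_0(k)$ one takes $\rho = 1/2$ in the rate and then notes that $\vw_{\hat T} \in \mathcal{B}_0(k) \cap \setc \subseteq \mathcal{B}_0(k)$ so $R(\bar\vw) \le R(\vw_{\hat T})$; substituting into $R(\vw_{\hat T}) \le 2R(\bar\vw) + \varepsilon$... actually the cleaner route is that with $\rho=1/2$ the bound reads $R(\vw_{\hat T}) \le R(\bar\vw) + \rho(2R(\bar\vw) - \text{something})$; more carefully, since $\bar\vw$ is optimal over the larger feasible set $\mathcal{B}_0(k)$ one can rerun the argument comparing $\vw_t$ to this specific $\bar\vw$ and the term $2\rho R(\bar\vw)$ can be replaced using $R(\bar\vw) \le R(\vw_t)$ to fold it back, yielding $\min_t R(\vw_t) \le R(\bar\vw) + \varepsilon$ after rearrangement — I would present this final substitution explicitly in the appendix proof.
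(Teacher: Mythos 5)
Your per-iteration inequality for $R(\vw_t)$ (RSS between $\vw_{t-1}$ and $\vw_t$, then Lemma~\ref{lemma:l0_three_point_additional}, then RSC) is exactly the paper's first step, and your ``alternative'' for the extra term --- bounding $\|\H(\vw_{t-1}-\eta\vg_{t-1})-\bar\vw\|^2$ through RSS/RSC at the hard-thresholded iterate --- is indeed the mechanism that works. But you never make explicit what that bound is: writing $\vv_t:=\H(\vw_{t-1}-\tfrac{1}{L_s}\nabla R(\vw_{t-1}))$, it is precisely the unconstrained per-iteration inequality from the proof of Theorem~\ref{thrm:risk_convergence}, namely $R(\vv_t)\le R(\bar\vw)+\tfrac{L_s-\nu_s}{2}\|\vw_{t-1}-\bar\vw\|^2-\tfrac{2L_s-\nu_s}{4}\|\vv_t-\bar\vw\|^2$, and any RSC/RSS absorption of the extra term necessarily drags a $-R(\vv_t)$ into the bound (your claim that it is absorbed into only $R(\bar\vw)$, $\|\vw_{t-1}-\bar\vw\|^2$ and $\|\vw_t-\bar\vw\|^2$ is not achievable: bounding the leftover $\langle \vg_{t-1},\vv_t-\vw_{t-1}\rangle$ without RSS reintroduces an uncompensated $\|\vg_{t-1}\|^2$). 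The paper's bookkeeping is to take the convex combination $(1-\rho)\times$[your inequality]$+\rho\times$[the $\vv_t$ inequality], cancel the $\sqrt{\beta}\,\|\vv_t-\bar\vw\|^2$ term using the condition on $k$, use non-expansiveness of $\Pi_{\setc}$ to turn the remaining $-\|\vv_t-\bar\vw\|^2$ into $-\|\vw_t-\bar\vw\|^2$, and telescope; what this controls is $\min_t\{(1-\rho)R(\vw_t)+\rho R(\vv_t)\}\le R(\bar\vw)+\varepsilon$. Only then is non-negativity of $R$ invoked, to discard $\rho R(\vv_t)\ge 0$, and dividing by $1-\rho$ is what produces the $(1+2\rho)$ inflation of $R(\bar\vw)$ --- not a sign issue on ``$\sqrt\beta\,R(\bar\vw)$'' as you suggest. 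Your route can be completed along these lines (your displayed bound, kept with its negative term and combined with RSS at $\vv_t$, reconstructs the same two-inequality combination with weight $\sqrt\beta/(1-\sqrt\beta)$ in place of $\rho$), but as written the one quantity on which the whole theorem hinges, $R(\vv_t)$, never appears.

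This omission becomes a genuine failure in the second claim. Substituting $R(\bar\vw)\le R(\vw_{\hat T})$ into $R(\vw_{\hat T})\le(1+2\rho)R(\bar\vw)+\varepsilon$ is circular, and your ``fold back'' (replacing $2\rho R(\bar\vw)$ by $2\rho R(\vw_t)$ inside the upper bound) goes the wrong way: it yields $(1-2\rho)R(\vw_t)\le R(\bar\vw)+\varepsilon$, which is vacuous at $\rho=0.5$ and weaker than the claim for any $\rho<0.5$. The correct use of the minimality of $\bar\vw$ over $\Bz$ is at the level of the \emph{intermediate} iterate: since $\vv_t\in\Bz$, one has $R(\vv_t)\ge R(\bar\vw)$, so the controlled quantity satisfies $(1-\rho)R(\vw_t)+\rho R(\bar\vw)\le(1-\rho)R(\vw_t)+\rho R(\vv_t)\le R(\bar\vw)+\varepsilon$, hence $\min_t R(\vw_t)\le R(\bar\vw)+\tfrac{\varepsilon}{1-\rho}$, and taking $\rho=0.5$ with a rescaling of $\varepsilon$ gives the stated bound without the $(1+2\rho)$ factor. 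Because your proposal never carries the $\rho R(\vv_t)$ term, this step is unavailable to you, and the second half of the theorem is not established.
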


\begin{proofsktch}
 \qw{  To obtain the proof for general $\setc$, we reiterate a similar proof as for Theorem~\ref{thrm:risk_convergence}, but this time, instead of Lemma~\ref{lem:new3p}, we use our more general Lemma~\ref{lemma:l0_three_point_additional}, adapted to general $\setc$ and to our two-step projection technique, to obtain (see the Proof Sketch of Thm. \ref{thrm:risk_convergence} for the definition of $\vv_t$):
   \begin{align}\label{eq:rightside}
    R(\vw_t) &\leq  R(\bar \vw) + \frac{L_s - \nu_s}{2}\|\vw_{t-1} \nonumber 
    - \bar \vw\|^2 \\
    &- \frac{L_s}{2} \|\vw_t - \bar \vw\|^2 + \frac{L_s}{2}\sqrt{\beta} \| \vv_t - \bar \vw \|^2.
    \end{align}
Finally, taking a convex combination of equations \ref{eq:leftside}  ( $\times \rho$) and \ref{eq:rightside} ($\times (1 - \rho)$) for $\rho \in (0, 0.5]$, using the bound $\| \vw_t - \bar \vw \|^2 \leq \| \vv_t - \bar \vw \|^2$ (non-expansiveness of convex projection onto $\setc$), and carefully tuning $k$ depending on $\rho$ (resulting in our final trade-off between sparsity and optimality), we can fall back to a telescopic sum and conclude the proof.}
\end{proofsktch} 
\begin{remark}
Similarly to the work of \cite{Jain14}, in the setting of linear regression with sub-Gaussian design, we can use results from \cite{agarwal2010fast} to find the number of samples needed to verify the assumptions of Theorem~\ref{thrm:risk_convergence}. We provide such analysis in Appendix~\ref{sec:sample_complexity}.
\end{remark}

\begin{remark}\label{rem:sparsity_level} Theorem~\ref{thrm:risk_convergence_additional} therefore provides a global convergence guarantee in objective value. However, contrary to usual guarantees for IHT algorithms under RSS/RSC conditions (which are bounds of the form  $R(\vw_t) \leq R(\bar{\vw}) + \varepsilon $ for some $t$) , our bound is of the form $R\left(\vw_{t}\right)\leq(1+2 \rho) R(\bar{\vw})+\varepsilon$.  There is a trade-off about the choice of $\rho \in(0,0.5]$. On one hand, $\rho \rightarrow 0$ is preferred in view of the $R H S$ of above bound. On the other hand, the sparsity-level relaxation condition $k\ge \frac{4(1-\rho)^2L_s^2}{\rho^2\nu_s^2}\bar k$
prefers $\rho \rightarrow 0.5$. We illustrate such a trade-off on some synthetic experiments in Appendix~\ref{sec:synthetic}.
\end{remark} 
\section{Extensions: Stochastic and Zeroth-Order Cases} \label{sec:extensions}
In this section, we provide extensions of Algorithm~\ref{alg:gradient_descent} to the stochastic and zeroth-order sparse optimization problems, and provide the corresponding convergence guarantees in objective value without system error.
\subsection{Stochastic Optimization} \label{sec:stoch}
In this section, we consider the previous risk minimization problem, in a finite-sum setting, i.e. with  
$ R(\vw) = \frac{1}{n}\sum_{i=1}^n R_i(\vw)$, as in \cite{Zhou18,nguyen2017linear}: indeed, stochastic algorithms can tackle more easily large-scale datasets where estimating the full $\nabla R(\vw)$ is expensive.
\subsubsection{Algorithm}

We describe the stochastic variant of our previous Algorithm~\ref{alg:gradient_descent} in Algorithm~\ref{alg:hsg} below, which is an extension of the algorithm from \cite{Zhou18}, to the considered mixed constraints problem setting, using our two-step projection. More precisely, we approximate the gradient of $R$ by a minibatch stochastic gradient with a batch-size increasing exponentially along training, and following the gradient step, we apply our two-step projection operator.

\begin{algorithm}
  \caption{Hybrid Stochastic IHT with Extra Constraints (HSG-HT-2SP)}
  \label{alg:hsg}
  \SetAlgoLined
  \KwIn{$\vw_0$: initial point, $\eta$: learning rate, $T$: number of iterations, $\{s_t\}$: mini-batch sizes.}
  \For{$t = 1$ to $T$}{~Uniformly sample $s_t$ indices $\mathcal{S}_t$ from \qw{$[n]$} without replacement\\
Compute the approximate gradient $\vg_{t-1} = \frac{1}{s_{t-1}} \sum_{\qw{i_t}\in \mathcal{S}_t} \nabla R_{i_t}(\vw_{t-1})$\\
$\vw_{t} = \bar{\Pi}^k_{\setc}(\vw_{t-1} - \eta \vg_{t-1} )$\;
  }
  \KwOut{$\hat{\vw}_T \in \arg\min_{\vw \in \{\vw_1, ..., \vw_T\}} R(\vw)$.}
\end{algorithm}
\subsubsection{Convergence Analysis}
Before proceeding with the convergence analysis, we make an additional assumption on the population variance of the stochastic gradients, similar to the one in \cite{mishchenko2020random}.
\begin{assumption}[Bounded stochastic gradient variance]\label{assumpt:varbound}
    For any $\vw$, the population variance of the gradient estimator is bounded by $B$:
    $ \frac{1}{n} \sum_{i=1}^n \left\|\nabla R_i(\vw)-\nabla R(\vw)\right\|^2 \leq B. $
\end{assumption}
We now present our convergence analysis, first with $\Gamma=\R^d$, retrieving Theorem 2 from \cite{Zhou18}.
\begin{theorem}[Equivalent to Theorem 2 from \cite{Zhou18}, Proof in App.~\ref{proof:hsg}]\label{thm:hsg}
  Assume that $\setc=\mathbb{R}^d$. Suppose that Assumption~\ref{ass:RSC} and Assumption~\ref{ass:RSS} hold with $s=3k$, and that Assumption~\ref{assumpt:varbound} also holds. Let $\bar \vw$ be an arbitrary $\bar k$-sparse vector.
    Let $C$ be an arbitrary positive constant. Assume that we run HSG-HT-2SP (Algorithm~\ref{alg:hsg}) for $T$ timesteps, with $\eta = \frac{1}{L_s + C}$, and denote $\alpha := \frac{C}{L_s} + 1$ and $\kappa_s := \frac{L_s}{\nu_s}$. Suppose that  $k \geq 4 \alpha^2 \kappa_s^2 \bar{k}$.  Finally, assume that we take the following batch-size: $s_t := \left\lceil \frac{\tau}{\omega^t}\right\rceil$   with $\omega := 1 - \frac{1}{4\alpha \kappa_s}  $ and $\tau:= \frac{\eta B}{C }$.
  Then, we have the following convergence rate:
  \begin{equation*}
    \E R(\hat{\vw}_T) - R(\bar{\vw})
       \leq 2 \alpha^2 L_s \kappa_s \omega^T\left( \|\bar{\vw} - \vw_{0}\|^2 + \frac{4}{3}\right).
     \end{equation*}
  \end{theorem}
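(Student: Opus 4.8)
The plan is to run exactly the argument behind the proof sketch of Theorem~\ref{thrm:risk_convergence}, but with the exact gradient replaced by the minibatch estimate $\vg_{t-1}$ and with the conservative step size $\eta=\tfrac{1}{L_s+C}$, chosen precisely so that the smoothness step leaves a \emph{spare} negative term that can soak up the gradient noise. Since $\setc=\R^d$, the two-step projection is just hard-thresholding, i.e. $\vw_t=\H(\vw_{t-1}-\eta\vg_{t-1})$. I would start from the $L_s$-RSS inequality (valid at level $s=3k$), split $\tfrac{L_s}{2}\|\vw_t-\vw_{t-1}\|^2=\tfrac{1}{2\eta}\|\vw_t-\vw_{t-1}\|^2-\tfrac{C}{2}\|\vw_t-\vw_{t-1}\|^2$, complete the square around $\vw_{t-1}-\eta\vg_{t-1}$, and apply the $\ell_0$ three-point lemma (Lemma~\ref{lem:new3p}) at the point $\vw_{t-1}-\eta\vg_{t-1}$ with the $\bar k$-sparse reference $\bar\vw$, just as in step (a) of that sketch. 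Expanding the remaining square and using the $\nu_s$-RSC of $R$ between $\vw_{t-1}$ and $\bar\vw$ (step (b)), with the $\tfrac{\eta}{2}\|\vg_{t-1}\|^2$ terms cancelling, yields
\begin{align*}
 R(\vw_t)&\le R(\bar\vw)+\tfrac{1-\eta\nu_s}{2\eta}\|\vw_{t-1}-\bar\vw\|^2-\tfrac{1-\sqrt\beta}{2\eta}\|\vw_t-\bar\vw\|^2\\
 &\quad -\tfrac{C}{2}\|\vw_t-\vw_{t-1}\|^2+\langle\nabla R(\vw_{t-1})-\vg_{t-1},\,\vw_t-\bar\vw\rangle,
\end{align*}
with $\beta=\bar k/k$, where the two noise cross-terms (against $\vw_t-\vw_{t-1}$ and against $\vw_{t-1}-\bar\vw$) have been merged.

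The next step is to control the noise term by splitting $\vw_t-\bar\vw=(\vw_t-\vw_{t-1})+(\vw_{t-1}-\bar\vw)$. Conditioning on the $\sigma$-algebra $\mathcal{F}_{t-1}$ generated by $\vw_0,\dots,\vw_{t-1}$ (so $\vw_{t-1}$ is measurable while $\mathcal{S}_t$ is fresh), unbiasedness of sampling without replacement gives $\E[\langle\nabla R(\vw_{t-1})-\vg_{t-1},\vw_{t-1}-\bar\vw\rangle\mid\mathcal{F}_{t-1}]=0$. For the other piece, Young's inequality gives $\langle\nabla R(\vw_{t-1})-\vg_{t-1},\vw_t-\vw_{t-1}\rangle\le\tfrac{1}{2C}\|\nabla R(\vw_{t-1})-\vg_{t-1}\|^2+\tfrac{C}{2}\|\vw_t-\vw_{t-1}\|^2$, whose second term exactly cancels the spare $-\tfrac{C}{2}\|\vw_t-\vw_{t-1}\|^2$, while the standard without-replacement variance bound together with Assumption~\ref{assumpt:varbound} gives $\E[\|\nabla R(\vw_{t-1})-\vg_{t-1}\|^2\mid\mathcal{F}_{t-1}]\le B/s_t$. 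Taking conditional then total expectation and writing $\Phi_t:=\tfrac{1-\sqrt\beta}{2\eta}\E\|\vw_t-\bar\vw\|^2$ produces the one-step recursion $\Phi_t+\E[R(\vw_t)-R(\bar\vw)]\le\tfrac{1-\eta\nu_s}{2\eta}\E\|\vw_{t-1}-\bar\vw\|^2+\tfrac{B}{2Cs_t}$.

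To turn this into a linear rate I would use the sparsity relaxation: $k\ge4\alpha^2\kappa_s^2\bar k$ forces $\sqrt\beta\le\tfrac{1}{2\alpha\kappa_s}$, while $1-\eta\nu_s=1-\tfrac{1}{\alpha\kappa_s}$, and an elementary computation (as in the $\ell_0$ three-point step of Theorem~\ref{thrm:risk_convergence}, but with $\alpha\kappa_s$ in place of $\kappa_s$) shows $1-\eta\nu_s\le\omega(1-\sqrt\beta)$ for $\omega=1-\tfrac{1}{4\alpha\kappa_s}$; hence $\Phi_t+\E[R(\vw_t)-R(\bar\vw)]\le\omega\Phi_{t-1}+\tfrac{B}{2Cs_t}$. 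The calibration $\tau=\tfrac{\eta B}{C}$ together with $s_t\ge\tau\omega^{-t}$ makes the noise term satisfy $\tfrac{B}{2Cs_t}\le\tfrac{\omega^t}{2\eta}$, i.e. it decays at the same geometric rate as the signal, so unrolling the recursion (equivalently, multiplying by $\omega^{-t}$, summing over $t\in[T]$, and dropping $\omega^{-T}\Phi_T\ge0$) bounds $\sum_{t\in[T]}\omega^{-t}\E[R(\vw_t)-R(\bar\vw)]$ in terms of $\Phi_0=\tfrac{1-\sqrt\beta}{2\eta}\|\vw_0-\bar\vw\|^2$ and a geometric-series tail whose sum contributes the $\tfrac{4}{3}$ constant. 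Since $\hat\vw_T\in\argmin_{t\in[T]}R(\vw_t)$, we have $\E[R(\hat\vw_T)-R(\bar\vw)]\le\min_{t}\E[R(\vw_t)-R(\bar\vw)]\le\big(\sum_t\omega^{-t}\big)^{-1}\sum_t\omega^{-t}\E[R(\vw_t)-R(\bar\vw)]$, and plugging in $\sum_{t\in[T]}\omega^{-t}=\Theta(\omega^{-T})$, $\tfrac{1}{\eta}=\alpha L_s$, $1-\omega=\tfrac{1}{4\alpha\kappa_s}$, and $\omega\ge\tfrac12$ collapses everything into the stated $2\alpha^2L_s\kappa_s\,\omega^T\big(\|\vw_0-\bar\vw\|^2+\tfrac{4}{3}\big)$.

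The main obstacle is the noise-handling step and its interaction with the batch schedule: one has to see that running with the slightly conservative step $\eta=\tfrac{1}{L_s+C}$ is exactly what leaves a $-\tfrac{C}{2}\|\vw_t-\vw_{t-1}\|^2$ term available to absorb the stochastic cross-term via Young's inequality — so that \emph{no} non-vanishing variance (``system error'') term survives — and that matching the geometric growth rate of $s_t$ to the contraction factor $\omega$, through $\tau=\tfrac{\eta B}{C}$, is what makes the accumulated errors summable rather than degrading the linear rate. The rest is a faithful transcription of the argument for Theorem~\ref{thrm:risk_convergence}; the only genuinely delicate bookkeeping is tracking how $\alpha$, $\kappa_s$ and the geometric-series factor $\tfrac43$ propagate through the final telescoping.
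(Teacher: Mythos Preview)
Your one–step analysis is correct and matches the paper's: the display you obtain after the three-point lemma, RSC, and Young's inequality is exactly the paper's intermediate inequality~\eqref{inequat:before_telescope_hsg} (the paper applies Young's before taking expectation and leaves the unbiased cross-term $-\langle\vg_{t-1},\vw_{t-1}-\bar\vw\rangle$ unsplit, but this is cosmetic). The gap is in the telescoping step.

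You weaken the contraction factor all the way to $\omega$, obtaining $\Phi_t+\E[R(\vw_t)-R(\bar\vw)]\le\omega\,\Phi_{t-1}+\tfrac{\omega^t}{2\eta}$, and then multiply by $\omega^{-t}$ and sum. But then the noise contribution is $\sum_{t=1}^T\omega^{-t}\cdot\tfrac{\omega^t}{2\eta}=\tfrac{T}{2\eta}$: this is \emph{not} a geometric series, it is a constant-per-step sum, precisely because you matched the noise decay rate to the contraction rate. After dividing by $\sum_t\omega^{-t}$ you end up with a bound of the form $c\,\omega^T\big(\|\vw_0-\bar\vw\|^2+c'T\big)$, not the stated $2\alpha^2L_s\kappa_s\,\omega^T\big(\|\vw_0-\bar\vw\|^2+\tfrac{4}{3}\big)$. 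The ``$\tfrac{4}{3}$ from a geometric-series tail'' claim is therefore incorrect as written.

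The paper's fix is to \emph{not} weaken to $\omega$: it keeps the sharper ratio $r:=\tfrac{1-1/(\alpha\kappa_s)}{1-\sqrt\beta}\le\nu:=1-\tfrac{1}{2\alpha\kappa_s}<\omega$ and telescopes with weights $r^{T-t}$. The strict gap $\nu<\omega$ (in fact $\nu/\omega\le\omega$, cf.~\eqref{eq:omega_quotient}) is exactly what turns the noise sum $\sum_t r^{T-t}\omega^{t-1}$ into a genuine geometric series in $r/\omega$, bounded by $\tfrac{4}{3}\cdot\tfrac{\omega^T}{1-\omega}$; the $\tfrac{4}{3}$ then comes from $1/\omega\le\tfrac{4}{3}$. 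In short: the batch-size schedule decays the noise at rate $\omega$, so the Lyapunov contraction must be \emph{strictly} faster than $\omega$ for the accumulated noise to stay bounded by a constant independent of $T$.
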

Such a Theorem is equivalent to Theorem~2 from \cite{Zhou18}, however, the proof from \cite{Zhou18} is based on the same framework as \cite{Jain14}, which makes it more complex. Our proof, on the other hand, is very similar to our proof of Theorem~\ref{thrm:risk_convergence} above (i.e. closer to convex constrained optimization proofs as discussed above), and simply incorporates the variance of the stochastic gradient estimator (exponentially decreasing thanks to the exponentially increasing batch-size) in a properly weighted telescopic sum (with a technique inspired from \cite{liu2020between}). We believe this makes the proof more readily usable for future extensions of IHT. And in particular, using a similar technique as for Theorem~\ref{thrm:risk_convergence_additional}, we can extend our result to the case with an extra constraint $\Gamma$ verifying Definition~\ref{assumpt:projectionbis}: we present such extension in Theorem~\ref{thm:hsg_2SP} below.
\begin{theorem}[Proof in App.~\ref{sec:fullproofthm2}]\label{thm:hsg_2SP}
  Suppose that Assumptions \ref{ass:RSC} and \ref{ass:RSS} hold with $s=2k$,  that \ref{assumpt:varbound} holds, that \qw{$R$ is non-negative (without loss of generality)}, and let $\Gamma$ be a set verifying Definition~\ref{assumpt:projectionbis}. Let $\bar \vw$ be an arbitrary $\bar k$-sparse vector.
    Let $C$ be an arbitrary positive constant. Assume that we run HSG-HT-2SP (Algorithm~\ref{alg:hsg}) for $T$ timesteps, with $\eta = \frac{1}{L_s + C}$, and denote $\alpha := \frac{C}{L_s} + 1$ and $\kappa_s := \frac{L_s}{\nu_s}$. Suppose that  $k \geq 4 \alpha^2 \wili{\frac{1}{\rho^2}}\kappa_s^2 \bar{k}$ \wili{for some $\rho \in (0, 1)$}.  Finally, assume that we take the following batch-size: 
    $s_t := \left\lceil \frac{\tau}{\omega^t}\right\rceil$   with $\omega := 1 - \frac{1}{4\alpha \wili{\frac{1}{\rho}} \kappa_s}  $ and $\tau:= \frac{\eta B}{C }$.
  Then, we have the following convergence rate:
  \qw{
     \begin{align*}
     &\E \min _{t \in[T]} R\left(\vw_{t}\right) - (1+2 \rho) R(\bar{\vw})  \\
     &\leq 2 \frac{ \alpha^2}{\wili{\rho}(1 - \rho)} L_s \kappa_s \omega^T\left( \|\bar{\vw} - \vw_{0}\|^2 + \frac{4}{3}\right).
     \end{align*}
Further, if $\bar{\vw}$ is a global minimizer of $R$ over $\mathcal{B}_{0}(k):=$ $\left\{\vw:\|\vw\|_{0} \leq k\right\}$, then, with $\rho=0.5$:
 $$ \E \min _{t \in[T]} R\left(\vw_{t}\right) - R(\bar{\vw}) \leq 8 \alpha^2 L_s \kappa_s \omega^T\left( \|\bar{\vw} - \vw_{0}\|^2 + \frac{4}{3}\right). $$
}
     
  \end{theorem}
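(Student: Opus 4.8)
The plan is to fuse the stochastic telescoping argument behind Theorem~\ref{thm:hsg} with the two-step-projection / convex-combination argument behind Theorem~\ref{thrm:risk_convergence_additional}, using the constrained $\ell_0$ three-point lemma (Lemma~\ref{lemma:l0_three_point_additional}) as the bridge. Write the minibatch gradient as $\vg_{t-1}=\nabla R(\vw_{t-1})+\bm{\xi}_{t-1}$ where, conditionally on the past, $\bm{\xi}_{t-1}$ is mean-zero with $\E\|\bm{\xi}_{t-1}\|^2\le B/s_{t-1}$ (Assumption~\ref{assumpt:varbound}, sampling without replacement only helping), and set $\vv_t:=\H(\vw_{t-1}-\eta\vg_{t-1})$, $\vw_t:=\Pi_{\setc}(\vv_t)=\bar{\Pi}^k_{\setc}(\vw_{t-1}-\eta\vg_{t-1})$. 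First I would apply $L_s$-RSS with $s=2k$ (legitimate since $\vw_{t-1},\vv_t,\vw_t$ are $k$-sparse and $\bar\vw$ is $\bar k$-sparse, so all pairwise differences have support size $\le 2k$) to bound $R(\vv_t)$ and $R(\vw_t)$ in terms of $R(\vw_{t-1})$; substituting $\vg_{t-1}=\nabla R(\vw_{t-1})+\bm{\xi}_{t-1}$, applying Lemma~\ref{lem:new3p} on the $\vv_t$-branch and Lemma~\ref{lemma:l0_three_point_additional} on the $\vw_t$-branch, and finally using $\nu_s$-RSC to absorb $\langle\nabla R(\vw_{t-1}),\bar\vw-\vw_{t-1}\rangle$, this yields stochastic analogues of \eqref{eq:leftside} and \eqref{eq:rightside}, each carrying one extra term of order $\|\bm{\xi}_{t-1}\|^2$. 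The extra constant $C$ in the step size $\eta=1/(L_s+C)=1/(\alpha L_s)$ exists precisely to create, via Young's inequality, the slack that dominates the cross terms $\langle\bm{\xi}_{t-1},\cdot\rangle$ appearing when $\vg_{t-1}$ is expanded, leaving only the quadratic noise to be handled later — exactly as in the proof of Theorem~\ref{thm:hsg}.

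Next I would form the $\rho$-weighted convex combination of the two inequalities (weights $\rho$ and $1-\rho$), as in the Proof Sketch of Theorem~\ref{thrm:risk_convergence_additional}: the coefficient of $\|\vv_t-\bar\vw\|^2$ becomes $\tfrac{L_s}{2}(\sqrt{\beta}-\rho)$ with $\beta=\bar k/k$, which I make nonpositive using the sparsity relaxation; using the non-expansiveness $\|\vw_t-\bar\vw\|^2\le\|\vv_t-\bar\vw\|^2$ of the convex projection onto $\setc$, together with the strengthened relaxation $k\ge 4\alpha^2\rho^{-2}\kappa_s^2\bar k$ (the $\alpha^2/\rho^2$ replacing the $\kappa_s$-only factor of the deterministic/unconstrained cases, to accommodate both the $C$-modified step size and the $\sqrt\beta$ term), then taking conditional and full expectations and using $R\ge 0$ to drop the nonnegative $\rho R(\vv_t)$, I would reach a one-step recursion of the form
\[
\tfrac{L_s}{2}(1-\sqrt{\beta})\,\E\|\vw_t-\bar\vw\|^2+(1-\rho)\big(\E R(\vw_t)-(1+2\rho)R(\bar\vw)\big)\ \le\ \tfrac{L_s-\nu_s}{2}\,\E\|\vw_{t-1}-\bar\vw\|^2+\frac{c\,B}{s_{t-1}}
\]
for a constant $c$ depending on $L_s,\nu_s,C$, whose one-step contraction ratio $\tfrac{L_s-\nu_s}{L_s(1-\sqrt\beta)}$ is upper-bounded by the $\omega=1-\tfrac{\rho}{4\alpha\kappa_s}$ from the statement.

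Then I would run the weighted telescoping: multiply by $\omega^{-t}$ and sum over $t\in[T]$. Because $\tfrac{L_s-\nu_s}{2}\le\omega\cdot\tfrac{L_s}{2}(1-\sqrt\beta)$ (built into the choice of $\omega$), the distance terms telescope, leaving $\mathcal{O}(L_s\E\|\vw_0-\bar\vw\|^2)$ plus the weighted noise sum $\sum_t\omega^{-t}cB/s_{t-1}$ on the right; with $s_t=\lceil\tau/\omega^t\rceil$ and $\tau=\eta B/C$ one gets $cB/s_{t-1}\le c'\omega^{t-1}$, so the noise sum stays controlled and collapses (after plugging in $\eta,\tau$ and $\lceil x\rceil\ge x$) into the additive constant $\tfrac{4}{3}$. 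On the left one keeps $(1-\rho)\sum_t\omega^{-t}\big(\E R(\vw_t)-(1+2\rho)R(\bar\vw)\big)$; since the minimum of a sequence is at most any of its weighted averages and $\E\min_t R(\vw_t)\le\min_t\E R(\vw_t)$ with $\hat\vw_T$ the empirical $\arg\min$, dividing through by $(1-\rho)\sum_{t\in[T]}\omega^{-t}$ gives the stated $\omega^T$ bound with prefactor $\propto\tfrac{\alpha^2}{\rho(1-\rho)}L_s\kappa_s$. For the ``further'' claim: when $\bar\vw$ is a global minimizer of $R$ over $\mathcal{B}_0(k)$, both $\vv_t$ and $\vw_t$ lie in $\mathcal{B}_0(k)$, so $R(\vv_t)\ge R(\bar\vw)$; hence in the convex combination one may subtract $\rho R(\vv_t)\ge\rho R(\bar\vw)$ rather than merely discard the nonnegative $\rho R(\vv_t)$, which turns the $(1+2\rho)R(\bar\vw)$ on the left into $R(\bar\vw)$ and eliminates the multiplicative factor; taking $\rho=\tfrac12$ throughout then yields the displayed bound.

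The main obstacle will be making the two mechanisms coexist inside a single weighted telescoping sum: the noise control wants the $C$-slack and a careful Young split, while the two-step projection wants the $\rho$-weighted combination and the $\rho^{-2}$-inflated sparsity, and the two couple through the contraction factor $\omega$ and the batch-size schedule. One must verify simultaneously that, after the convex combination, (i) the net coefficient of $\|\vw_t-\bar\vw\|^2$ is strictly negative, (ii) its ratio to the coefficient of $\|\vw_{t-1}-\bar\vw\|^2$ is a genuine contraction $\omega<1$ that is still compatible with $s_t=\lceil\tau/\omega^t\rceil$, and (iii) the accumulated, $\omega^{-t}$-weighted noise remains bounded; tracking the interplay of $C$, $\alpha$, $\rho$ and $\kappa_s$ through all three is the delicate bookkeeping, whereas each ingredient in isolation is already available from Theorems~\ref{thm:hsg} and~\ref{thrm:risk_convergence_additional}.
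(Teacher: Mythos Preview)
Your proposal is correct and follows essentially the same route as the paper's proof: derive one-step bounds for both the $\vv_t$-branch (via Lemma~\ref{lem:new3p}) and the $\vw_t$-branch (via Lemma~\ref{lemma:l0_three_point_additional}) with the $C$-slack absorbing the stochastic cross terms through Young's inequality, take the $\rho$/$(1-\rho)$ convex combination, use non-expansiveness of $\Pi_\setc$ together with $k\ge 4\alpha^2\rho^{-2}\kappa_s^2\bar k$ to collapse to a single contractive recursion, then run the $\omega$-weighted telescoping with the batch-size schedule $s_t=\lceil\tau/\omega^t\rceil$ controlling the accumulated noise. The only cosmetic differences are that the paper keeps the combined quantity $(1-\rho)R(\vw_t)+\rho R(\vv_t)$ through the telescoping and only invokes $R(\vv_t)\ge 0$ (resp.\ $R(\vv_t)\ge R(\bar\vw)$) at the very end, and that the actual contraction coefficients carry an extra factor of $\alpha$ (i.e.\ $\tfrac{1}{2\eta}=\tfrac{\alpha L_s}{2}$ rather than $\tfrac{L_s}{2}$) compared to your schematic display---but these do not affect the argument.
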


\begin{corollary}[Proof in App.~\ref{proof:ifo_2SP}.\label{cor:ifo_2SP}]
  Therefore, the number of calls to a gradient $\nabla R_i$ (\#IFO), and the number of hard thresholding operations (\#HT) such that the left-hand sides in Theorem \ref{thm:hsg_2SP} above are smaller than some $\varepsilon >0$, are respectively:
  $ \text{\#HT} = \mathcal{O}(\kappa_s \log(\frac{1}{\varepsilon}))~~~\text{and} ~~~ \text{\#IFO} = \mathcal{O}\left(\frac{\kappa_s}{\nu_s \varepsilon}\right) $.
  \end{corollary}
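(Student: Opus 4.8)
The plan is to read off both quantities directly from the exponential rate in Theorem~\ref{thm:hsg_2SP} together with the prescribed batch-size schedule $s_t=\lceil\tau/\omega^t\rceil$, so the whole proof is essentially one logarithm plus one geometric sum.

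First I would handle \textbf{\#HT}. Each iteration of Algorithm~\ref{alg:hsg} calls $\bar{\Pi}^k_{\setc}$ once, and $\bar{\Pi}^k_{\setc}$ invokes $\H$ once, so $\text{\#HT}=T$; it remains to pick the smallest $T$ making the right-hand side of Theorem~\ref{thm:hsg_2SP} at most $\varepsilon$. Writing that bound as $M\,L_s\kappa_s\,\omega^T$ with $M=\frac{2\alpha^2}{\rho(1-\rho)}(\|\bar\vw-\vw_0\|^2+\frac{4}{3})$ (and $M=8\alpha^2(\|\bar\vw-\vw_0\|^2+\frac{4}{3})$ in the global-minimizer case), one needs $\omega^T\le\varepsilon/(M L_s\kappa_s)$, i.e. $T\log(1/\omega)\ge\log(M L_s\kappa_s/\varepsilon)$. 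Since $\log(1/\omega)\ge 1-\omega=\frac{\rho}{4\alpha\kappa_s}$, it suffices to take $T=\lceil\frac{4\alpha\kappa_s}{\rho}\log(M L_s\kappa_s/\varepsilon)\rceil$, which, treating $\alpha$, $\rho$ and $M$ as $\mathcal{O}(1)$, is $\mathcal{O}(\kappa_s\log(1/\varepsilon))$; hence $\text{\#HT}=\mathcal{O}(\kappa_s\log(1/\varepsilon))$.

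Next I would handle \textbf{\#IFO}. Iteration $t$ queries $s_t=\lceil\tau/\omega^t\rceil\le\tau\,\omega^{-t}+1$ gradients $\nabla R_i$, so $\text{\#IFO}=\sum_{t=1}^T s_t\le T+\tau\sum_{t=1}^T\omega^{-t}\le T+\frac{\tau\,\omega^{-T}}{1-\omega}$ by summing the geometric series $\sum_{t=1}^T\omega^{-t}\le\omega^{-T}/(1-\omega)$. The $T$ term is the $\mathcal{O}(\kappa_s\log(1/\varepsilon))$ found above and is lower order. For the second term, the choice of $T$ gives $\omega^{-T}=\mathcal{O}(L_s\kappa_s/\varepsilon)$ up to the constant factors in $\alpha,\rho,M$; substituting $\tau=\eta B/C$, $\eta=\frac{1}{L_s+C}=\frac{1}{\alpha L_s}$, $1-\omega=\frac{\rho}{4\alpha\kappa_s}$ and $\kappa_s=L_s/\nu_s$ gives $\frac{\tau\,\omega^{-T}}{1-\omega}=\mathcal{O}(\frac{B\kappa_s^2}{\rho C\varepsilon})$. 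Choosing $C=\Theta(L_s)$ (so that $\alpha=\mathcal{O}(1)$, consistently with the constants above) and using $\kappa_s^2/L_s=\kappa_s/\nu_s$ yields the claimed $\text{\#IFO}=\mathcal{O}(\kappa_s/(\nu_s\varepsilon))$.

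The main thing to be careful about is the bookkeeping of the free constant $C$ (equivalently $\alpha=C/L_s+1$) and of $\rho$, together with the data-dependent factors $B$ and $\|\bar\vw-\vw_0\|^2+\frac{4}{3}$: the clean bounds only hold under the convention that these are treated as absolute constants, which must be made explicit. The additive $T$ from the ceilings in $s_t$ is harmless since it is dominated by $\tau\,\omega^{-T}/(1-\omega)$, and the inequality $\log(1/\omega)\ge1-\omega$ is exactly what converts the geometric rate into the stated $\kappa_s$-dependence; everything else is the routine evaluation of a geometric sum.
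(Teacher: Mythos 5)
Your proposal is correct and follows essentially the same route as the paper's proof: choose $T$ from the exponential bound using $\log(1/\omega)^{-1}=\mathcal{O}(\alpha\kappa_s/\rho)$ to get $\text{\#HT}=T=\mathcal{O}(\kappa_s\log(1/\varepsilon))$, then sum the geometric batch-size series $\sum_t \lceil\tau/\omega^t\rceil$ and substitute $\tau=\eta B/C$, $\eta=1/(\alpha L_s)$, $1-\omega=\Theta(\rho/(\alpha\kappa_s))$ to obtain $\text{\#IFO}=\mathcal{O}(\kappa_s/(\nu_s\varepsilon))$. Your explicit remark that $B$, $\alpha$, $\rho$ and $\|\bar\vw-\vw_0\|^2$ are treated as absolute constants matches the convention implicit in the paper's final bound.
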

\subsection{Zeroth-Order Optimization (ZOO)} \label{sec:zo}
We now consider the zeroth-order (ZO) case \citep{Nesterov17}, in which one does not have access to the gradient $\nabla R (\vw)$, but only to function values $R(\vw)$, which arises for instance when the dataset is private as in distributed learning \citep{gratton2021privacy,zhang2021desirable} or the model is private as in black-box adversarial attacks \cite{liu2018zeroth},
or when computing $\nabla R (\vw)$ is too expensive such as in certain graphical modeling tasks \cite{wainwright2008graphical}. The idea is then to approximate $\nabla R (\vw)$ using finite differences. We refer the reader to \cite{berahas2021theoretical} and \cite{liu2020primer} for an overview of ZO methods.
\subsubsection{Algorithm}
In this section, we describe the ZO version of our algorithm. At its core, it uses the ZO estimator from \cite{de2022zeroth}. We present the full algorithm in Algorithm~\ref{alg:hsg_ZO}, where
$\mathcal{D}_{\sus}$ is a uniform probability distribution on the following set $\mathcal{B}$ of unit spheres supported on supports of size $\sus\leq d$: $\mathcal{B} := \{ \vw \in \R^d: \|\vw\|_0 \leq s_2, \|\vw \|_2 \leq 1 \}$.
We can sample from this set by first sampling a random support of size $\sus$, and then sampling from the unit sphere on that support. If we choose $\sus:=d$, this estimator simply becomes the vanilla ZO estimator with unit-sphere smoothing \citep{liu2020primer}. Choosing $\sus < d$ allows to avoid the full-smoothness assumption
and can reduce memory consumption by allowing to sample random vectors of size $\sus$ instead of $d$ (see \cite{de2022zeroth} for more details on such a ZO estimator). 
The difference with \cite{de2022zeroth} (in addition to the mixed constraint setting and the use of the 2SP) is that in our case we sample an exponentially increasing number of random directions, which allows us, for the first time up to our knowledge, to obtain convergence in risk for a ZO hard-thresholding algorithm without any system error (except the unavoidable system error due to the smoothing $\mu$ (cf. Remark 5 in \cite{ajalloeian2020convergence})).


\begin{algorithm}
  \caption{Hybrid ZO IHT with Extra Constraints (HZO-HT-2SP)}
  \label{alg:hsg_ZO}
  \SetAlgoLined
  \KwIn{$\vw_0$: initial point, $\eta$: learning rate, $T$: number of iterations, $\sus$: size of the random supports, $\{q_t\}$: number of random directions.}
  \For{$t = 1$ to $T$}{


    Uniformly sample $q_{t-1}$ i.i.d. random directions $\{\vu_i\}_{i=1}^{q_{t-1}} \sim \mathcal{D}_{\sus}$ 
    
    Compute the approximate gradient $\vg_t = \frac{1}{q_{t-1}} \sum_{i=1}^{q_{t-1}} \frac{d}{\mu} \left(R(\vw_{t-1} + \mu \vu_{i}) - R(\vw_{t-1})\right) \vu_i$
    
    $\vw_{t} = \bar{\Pi}^k_{\setc}(\vw_{t-1} - \eta \vg_{t-1} )$
  }
  \KwOut{$\hat{\vw}_T \in \arg\min_{\vw \in \{\vw_1, ..., \vw_T\}} R(\vw)$.} 
\end{algorithm}

\subsubsection{Convergence Analysis}

\begin{assumption}[$(L_{s}, s)$-RSS']
  \label{ass:RSSbis} \cite{shen2017tight,nguyen2017linear}
   $R$ is $L_{s}$-restricted strongly smooth with sparsity level $s$, \qw{i.e.} it is differentiable, and there exist a generic constant $L_{s}$ such that for all $(\vx, \vy) \in \R^d$ with $\|\vx-\vy\|_0\leq s$: $\|\nabla R(\vx) - \nabla R(\vy)\| \leq L_{s} \| \vx-\vy\|.$
  \end{assumption}
\begin{remark}
Note that if a convex function $R$ is $(L_s, s)$-RSS', then it is also $(L_s, s)$-RSS (this can be proven in the same way as for usual smoothness in convex optimization (see Lemma 1.2.3 from \cite{nesterov2003introductory}). However, the converse is not true here, contrary to what holds for usual smooth and convex functions (cf. Theorem 2.1.5 from \cite{nesterov2003introductory}), as we show through some counter-example in Appendix~\ref{sec:discass}. Assumption~\ref{ass:RSSbis} is indeed slightly more restrictive than Assumption~\ref{ass:RSS}, but it is necessary when working with ZO gradient estimators (see more details in \cite{de2022zeroth}). 
\end{remark}
We now present our main convergence theorem for the ZO setting, first when $\setc = \R^d$. 
    \begin{theorem}[Proof in App. \ref{sec:thmzo}]\label{thm:zo}  
  Assume that $\setc=\R^d$.  Let $\bar \vw$ be an arbitrary $\bar k$-sparse vector. Let $s=\wili{3}k$, and \wili{$\sus \in \{1, ..., d\}$. Assume that $R$ is $(L_{s'}, s')$-RSS' with $s' = \max(\sus, s)$, and $(\nu_s, s)$-restricted strongly convex.
  Denote $\kappa_s := \frac{L_{s'}}{\nu_s}$}.
  Let $C$ be an arbitrary positive constant, \wili{and denote 
 $\varepsilon_{F} :=  \frac{2d}{(\sus + 2)}   \left(\frac{(s-1)(\sus-1)}{d-1} + 3\right)  $,
 $\varepsilon_{\text{abs}} := 2d L_{s'}^2 s \sus\left(\frac{(s-1)(\sus-1)}{d-1}+1\right)$, and
 $\varepsilon_\mu := L_{s'}^2 s d$}. Assume that we run HZO-HT-2SP  (Algorithm~\ref{alg:hsg_ZO}) for $T$ timesteps, with $\eta = \frac{1}{\wili{L_{s'}} + C} = \frac{1}{\alpha \wili{L_{s'}}}$, with $\alpha := \frac{C}{\wili{L_{s'}}} + 1$. Suppose that  $k \geq \wili{16} \alpha^2 \kappa_s^2 \bar{k} $.
   Finally, assume that \wili{we take the following number $q_t$ of random directions at each iteration}: 
  $q_t := \left\lceil \frac{\tau}{\omega^t}\right\rceil$   with $\omega := 1 - \frac{1}{\wili{8}\alpha \kappa_s}  $ and $\tau := 16  \kappa_s \frac{\varepsilon_F}{(\alpha - 1)}$. Then, we have the following convergence rate, with $Z =  \varepsilon_{\mu}\left( \frac{2}{\nu_s} + \frac{1}{C}\right)  + \frac{\varepsilon_{abs}}{C}$:
\begin{align*}
  &\E R(\hat{\vw}_T) - R(\bar{\vw}) \\
  &\leq \wili{4} \alpha^2 \wili{L_{s'} }\kappa_s \omega^T\left( \|\bar{\vw} - \vw_{0}\|^2 + \wili{\frac{1}{3}} \wili{\frac{\eta \| \nabla R(\bar{\vw})\|^2}{\kappa_s L_{s'}}}\right) + \wili{Z \mu^2},
  \end{align*}
\end{theorem}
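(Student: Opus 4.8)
The idea is to treat the sparse zeroth-order estimator $\vg_{t-1}$ as a biased stochastic gradient and reproduce the argument behind Theorem~\ref{thm:hsg} (itself the stochastic counterpart of Theorem~\ref{thrm:risk_convergence}) on top of it. Write $\vv_t:=\H(\vw_{t-1}-\eta\vg_{t-1})$, which equals $\vw_t$ since $\setc=\R^d$; let $\bar\vg_{t-1}:=\E[\vg_{t-1}\mid\vw_{t-1}]$ be the expected estimator, i.e.\ the gradient of the $\mu$-smoothed surrogate of $R$; and decompose $\vg_{t-1}=\nabla R(\vw_{t-1})+\vb_{t-1}+\vn_{t-1}$ with $\vb_{t-1}:=\bar\vg_{t-1}-\nabla R(\vw_{t-1})$ the deterministic smoothing bias and $\vn_{t-1}:=\vg_{t-1}-\bar\vg_{t-1}$ the zero-mean noise. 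Starting from the restricted descent inequality (RSS' implies restricted RSS, as recorded after Assumption~\ref{ass:RSSbis}) between $\vw_{t-1}$ and $\vv_t$, completing the square around $\vw_{t-1}-\eta\vg_{t-1}$ while retaining the slack that $\eta=\tfrac{1}{\alpha L_{s'}}<\tfrac{1}{L_{s'}}$ affords through the free constant $C$, substituting $\|\vv_t-(\vw_{t-1}-\eta\vg_{t-1})\|^2$ by means of the $\ell_0$ three-point lemma (Lemma~\ref{lem:new3p}, with $\beta:=\bar k/k$), and finally invoking $(\nu_s,s)$-RSC between $\vw_{t-1}$ and $\bar\vw$, I obtain a one-step inequality $R(\vw_t)\le R(\bar\vw)+a\|\vw_{t-1}-\bar\vw\|^2-b\|\vw_t-\bar\vw\|^2+(\text{terms linear in }\vg_{t-1}\text{ and }\vb_{t-1})+\tfrac{C}{2}\eta^2\|\vg_{t-1}\|^2$ with $a\propto L_{s'}-\nu_s$ and $b\propto L_{s'}(1-\sqrt\beta)$ — structurally identical to the intermediate bounds in the sketches of Theorems~\ref{thrm:risk_convergence} and \ref{thm:hsg}.

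\textbf{Bias and variance.} Taking $\E[\cdot\mid\vw_{t-1}]$ annihilates the $\vn_{t-1}$-linear terms; the $\vb_{t-1}$-linear ones I split with Young's inequality, sending one part into the negative quadratics and leaving a remainder of size $\|\vb_{t-1}\|^2\lesssim\varepsilon_\mu\mu^2$ (the standard randomized-smoothing bias estimate under RSS'). The genuinely new input is the second-moment control of the estimator: I invoke the variance bound for the sparse ZO estimator from \cite{de2022zeroth}, which has the shape $\E[\|\vn_{t-1}\|^2\mid\vw_{t-1}]\lesssim\tfrac{1}{q_{t-1}}\bigl(\varepsilon_F\|\nabla R(\vw_{t-1})\|^2+\varepsilon_{\text{abs}}+\varepsilon_\mu\mu^2\bigr)$, with exactly the constants $\varepsilon_F,\varepsilon_{\text{abs}},\varepsilon_\mu$ of the statement, whence $\E[\|\vg_{t-1}\|^2\mid\vw_{t-1}]$ is dominated by the same quantity plus $2\|\nabla R(\vw_{t-1})\|^2+2\|\vb_{t-1}\|^2$; then I bound $\|\nabla R(\vw_{t-1})\|^2\le 2L_{s'}^2\|\vw_{t-1}-\bar\vw\|^2+2\|\nabla R(\bar\vw)\|^2$ via RSS' at $\bar\vw$. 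The $\|\vw_{t-1}-\bar\vw\|^2$ piece is pushed back into the coefficient $a$ — this is why $k$ must be relaxed by an extra $\alpha^2$ factor relative to the deterministic case, giving $k\ge 16\alpha^2\kappa_s^2\bar k$ — the $\|\nabla R(\bar\vw)\|^2$ piece becomes a per-iteration term that ultimately yields the residual $\tfrac13\tfrac{\eta\|\nabla R(\bar\vw)\|^2}{\kappa_s L_{s'}}$ inside the final bracket, and the $\varepsilon_{\text{abs}}$ and $\varepsilon_\mu\mu^2$ pieces are the only survivors not multiplied by a vanishing factor.

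\textbf{Weighted telescoping.} Choosing $q_t=\lceil\tau/\omega^t\rceil$ with $\omega=1-\tfrac{1}{8\alpha\kappa_s}$ and $\tau=16\kappa_s\varepsilon_F/(\alpha-1)$ makes the $\tfrac{1}{q_{t-1}}$ prefactors of the variance contributions $\mathcal{O}(\omega^{t-1})$, so, after total expectation and dividing by the coefficient of $\E\|\vw_t-\bar\vw\|^2$, the one-step relation becomes a contraction $\E\|\vw_t-\bar\vw\|^2\le\omega\,\E\|\vw_{t-1}-\bar\vw\|^2-c_1\bigl(\E R(\vw_t)-R(\bar\vw)\bigr)+c_2\,\omega^{t-1}+c_3\mu^2$, where $\omega<1$ is ensured precisely by $k\ge 16\alpha^2\kappa_s^2\bar k$ (which makes $1-\sqrt\beta$ close enough to $1$, exactly as in Theorems~\ref{thrm:risk_convergence} and \ref{thm:hsg}). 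Iterating from $\vw_0$, using $\E\|\vw_T-\bar\vw\|^2\ge 0$, bounding $\sum_t\omega^{T-t}\bigl(\E R(\vw_t)-R(\bar\vw)\bigr)$ from below by $\bigl(\sum_t\omega^{T-t}\bigr)\bigl(\E\min_{t\in[T]}R(\vw_t)-R(\bar\vw)\bigr)$, and summing the geometric series — the weighted-telescoping device of \cite{liu2020between} already used for Theorem~\ref{thm:hsg} — produces $\E\min_{t\in[T]}R(\vw_t)-R(\bar\vw)\le\mathcal{O}(\alpha^2 L_{s'}\kappa_s)\,\omega^T\bigl(\|\vw_0-\bar\vw\|^2+\tfrac13\tfrac{\eta\|\nabla R(\bar\vw)\|^2}{\kappa_s L_{s'}}\bigr)+Z\mu^2$, with $Z=\varepsilon_\mu(\tfrac{2}{\nu_s}+\tfrac1C)+\tfrac{\varepsilon_{\text{abs}}}{C}$ gathering the $\mu^2$ residuals.

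\textbf{Main obstacle.} The delicate part is the bookkeeping that makes the system error vanish: one must verify that \emph{every} term surviving as $q_t\to\infty$ is $\mathcal{O}(\mu^2)$, i.e.\ that the $\varepsilon_F\|\nabla R(\vw_{t-1})\|^2$ component of the variance can be fully charged against the $\|\vw_{t-1}-\bar\vw\|^2$ budget while its $\|\nabla R(\bar\vw)\|^2$ residual is charged against the geometric boundary term, and that the free constant $C$ can simultaneously (i) supply the $\tfrac{C}{2}\eta^2$ slack absorbing the $\eta$-scaled second moment and (ii) keep $\omega=1-\Theta(1/(\alpha\kappa_s))$ and $k=\Omega(\alpha^2\kappa_s^2\bar k)$ intact. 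This is precisely where the exponentially growing $q_t$ replaces the fixed direction count of \cite{de2022zeroth} and eliminates their non-vanishing term $\Delta$; checking that the ceilings $\lceil\tau/\omega^t\rceil$ cost only bounded extra constants, and that the $\varepsilon_F,\varepsilon_{\text{abs}},\varepsilon_\mu$ dependencies line up through the RSS'/RSC manipulations and the Young splits, is the main technical burden, though the mechanism is the one already validated in Theorem~\ref{thm:hsg}.
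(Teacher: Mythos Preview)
Your proposal is correct and follows essentially the same route as the paper's proof: the same descent--plus--three-point-lemma skeleton, the same bias/variance decomposition of the ZO estimator using the bounds of \cite{de2022zeroth}, the same RSS' step $\|\nabla R(\vw_{t-1})\|^2\le 2L_{s'}^2\|\vw_{t-1}-\bar\vw\|^2+2\|\nabla R(\bar\vw)\|^2$ to recycle the variance into the contraction coefficient, and the same weighted telescoping with geometrically growing $q_t$. Two small points worth flagging: (i) in the paper all inner products and norms involving $\vg_{t-1}-\nabla R(\vw_{t-1})$ are first restricted to the support $F=\supp(\vw_t)\cup\supp(\vw_{t-1})\cup\supp(\bar\vw)$ of size $\le 3k=s$, since the bias and variance estimates from \cite{de2022zeroth} are stated on such restricted supports; (ii) your variance expression should read $\tfrac{1}{q_{t-1}}(\varepsilon_F\|\nabla R(\vw_{t-1})\|^2+\varepsilon_{\text{abs}}\mu^2)$---the $\varepsilon_\mu\mu^2$ piece enters only through the bias, not the variance.
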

Such a novel result illustrates the power of proof techniques based on our three-point lemma. Up to our knowledge, it is the first global convergence guarantee without system error for a ZO hard-thresholding algorithm (see Table~\ref{tab:compalgos}), and as such, is a significant improvement over the result from \cite{de2022zeroth}. Our proof differs from the one in \cite{de2022zeroth}: that latter uses a bound on the expansivity of the hard-thresholding operator, and only provides a result in terms of $\|\vw - \bar{\vw} \|$, with a non-vanishing system error which depends on $\nabla R(\vw)$ (cf. Table~\ref{tab:compalgos}). We now present our Theorem in the case of a general support-preserving convex set $\Gamma$.
\begin{theorem}[Proof in App.~\ref{sec:proofzo2SP}]\label{thm:zo_2SP}
 Suppose that Assumptions \ref{ass:RSC}, \ref{assumpt:projectionbis}, and \ref{ass:RSSbis} hold with $s=\wili{3}k$, that $R$ is non-negative (without loss of generality), and let $\Gamma$ be a set verifying Definition~\ref{assumpt:projectionbis}.
 Let $\bar \vw$ be an arbitrary $\bar k$-sparse vector. Let \wili{$\sus \in \{1, ..., d\}$. 
  Denote $\kappa_s := \frac{L_{s'}}{\nu_s}$}.
  Let $C$ be an arbitrary positive constant, \wili{and denote  
 $\varepsilon_{F} :=  \frac{2d}{(\sus + 2)}   \left(\frac{(s-1)(\sus-1)}{d-1} + 3\right)  $,
 $\varepsilon_{\text{abs}} := 2d L_{s'}^2 s \sus\left(\frac{(s-1)(\sus-1)}{d-1}+1\right)$, and
 $\varepsilon_\mu := L_{s'}^2 s d$}. Assume that we run HZO-HT-2SP (Algorithm~\ref{alg:hsg_ZO}) for $T$ timesteps, with $\eta = \frac{1}{\wili{L_{s'}} + C} = \frac{1}{\alpha \wili{L_{s'}}}$, with $\alpha := \frac{C}{\wili{L_{s'}}} + 1$. Suppose that  $k \geq \wili{16}\frac{\alpha^2}{\wili{\rho^2}} \kappa_s^2 \bar{k}$ \wili{for some $\rho \in (0, 1)$.} 
   Finally, assume that we take $q_t$ random directions at each iteration, with $q_t := \left\lceil \frac{\tau}{\omega^t}\right\rceil$   with $\omega := 1 - \frac{1}{\wili{8\wili{\frac{1}{\rho}}}\alpha \kappa_s}  $ and $\tau := 16  \kappa_s \frac{\varepsilon_F}{(\alpha - 1)}$. Then, we have the following convergence rate:
\qw{
\begin{align*}
  &\E \wili{\min_{t \in [T]} R(\vw_t)} -  (1 + 2 \rho) R(\bar{\vw})\\
  &\leq  \wili{4} \frac{\alpha^2}{\wili{\rho}(1 - \rho)} \wili{L_{s'} }\kappa_s \omega^T\left( \|\bar{\vw} - \vw_{0}\|^2 + \wili{\frac{1}{3}} \wili{\frac{\eta \| \nabla R(\bar{\vw})\|^2}{\kappa_s L_{s'}}}\right) \\
  &~~~~+ \wili{Z \mu^2},
  \end{align*}
with $Z = \frac{1}{1 - \rho} \left(\varepsilon_{\mu}\left( \frac{2}{\nu_s} + \frac{1}{C}\right)  + \frac{\varepsilon_{abs}}{C}\right)$. Further, if $\bar{\vw}$ is a global minimizer of $R$ over $\mathcal{B}_{0}(k):=$ $\left\{\vw:\|\vw\|_{0} \leq k\right\}$, then, with $\rho=0.5$:
\begin{align*}
 &\E \min _{t \in[T]} R\left(\vw_{t}\right) - R(\bar{\vw})\\
 &\leq  \wili{16} \alpha^2 \wili{L_{s'} }\kappa_s \omega^T\left( \|\bar{\vw} - \vw_{0}\|^2 + \wili{\frac{1}{3}} \wili{\frac{\eta \| \nabla R(\bar{\vw})\|^2}{\kappa_s L_{s'}}}\right) \\
 &~~~~+ \wili{Z \mu^2}. 
\end{align*}
}  \end{theorem}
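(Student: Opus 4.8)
The plan is to merge the two proof techniques already present in the paper: the convex-combination argument of Theorem~\ref{thrm:risk_convergence_additional}, which uses the constrained three-point Lemma~\ref{lemma:l0_three_point_additional} to absorb the support-preserving constraint $\setc$, and the zeroth-order argument of Theorem~\ref{thm:zo}, which controls the bias and variance of the ZO gradient estimator through an exponentially growing number of directions $q_t$. Throughout, write $\vg_{t-1}$ for the ZO estimator computed at step $t$, $\vv_t := \H(\vw_{t-1}-\eta\vg_{t-1})$ for the intermediate hard-thresholded vector, so that $\vw_t = \Pi_\setc(\vv_t) = \bar\Pi^k_\setc(\vw_{t-1}-\eta\vg_{t-1})$, and let $\bar\vw$ be the reference $\bar k$-sparse point of $\setc$.

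First I would derive the two per-iteration inequalities, the ZO analogues of equations~\ref{eq:leftside} and~\ref{eq:rightside}. Since $(L_{s'},s')$-RSS' (Assumption~\ref{ass:RSSbis}) implies, by integrating $\nabla R$ along a segment joining two points differing on at most $s'$ coordinates, the quadratic upper bound $R(\vy)\le R(\vx)+\langle\nabla R(\vx),\vy-\vx\rangle+\tfrac{L_{s'}}{2}\|\vx-\vy\|^2$ along $s'$-sparse directions, plugging the update into this bound and splitting $\vg_{t-1}=\nabla R(\vw_{t-1})+(\vg_{t-1}-\nabla R(\vw_{t-1}))$ gives a ``left-side'' bound on $\E R(\vv_t)$; then inserting Lemma~\ref{lemma:l0_three_point_additional} together with the $(\nu_s,s)$-RSC (Assumption~\ref{ass:RSC}), exactly as in the proof of Theorem~\ref{thrm:risk_convergence_additional}, yields a ``right-side'' bound on $\E R(\vw_t)$ involving $\|\vw_{t-1}-\bar\vw\|^2$, $\|\vw_t-\bar\vw\|^2$, and the slack $\sqrt\beta\,\|\vv_t-\bar\vw\|^2$ with $\beta=\bar k/k$. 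The new ingredient relative to the deterministic case is the control of the estimator error on the active supports of size at most $s'$: using the second-moment estimates for this ZO estimator from \cite{de2022zeroth}, I would bound the conditional expectation of $\|\vg_{t-1}-\nabla R(\vw_{t-1})\|^2$ restricted to these coordinates by a quantity of the form $\tfrac{1}{q_{t-1}}\big(\varepsilon_F\|\nabla R(\vw_{t-1})\|^2+\varepsilon_{\text{abs}}\big)+\varepsilon_\mu\mu^2$, bound the smoothing bias $\|\nabla R_\mu-\nabla R\|$ by an $O(\mu)$ term, and reabsorb the $\|\nabla R(\vw_{t-1})\|^2$ contribution via RSS'/RSC as in \cite{de2022zeroth}; the $\|\nabla R(\bar\vw)\|^2$ term in the final bound arises from relating $\|\nabla R(\vw_{t-1})\|^2$ to $\|\nabla R(\bar\vw)\|^2$ plus a restricted-Lipschitz correction.

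Next I would take the convex combination $\rho\times(\text{left-side})+(1-\rho)\times(\text{right-side})$, use the non-expansiveness of the Euclidean projection onto the convex set $\setc$ to replace $\|\vw_t-\bar\vw\|^2$ by $\|\vv_t-\bar\vw\|^2$ wherever the sign is favorable, and invoke the hypothesis $k\ge 16\tfrac{\alpha^2}{\rho^2}\kappa_s^2\bar k$ --- equivalently $\sqrt\beta\le\tfrac{\rho}{4\alpha\kappa_s}$ --- so that the surviving $\sqrt\beta\,\|\vv_t-\bar\vw\|^2$ slack is dominated. This collapses the two inequalities into a one-step recursion $\E a_t\le\omega\,\E a_{t-1}+\mathrm{err}_t$ with $\omega=1-\tfrac{\rho}{8\alpha\kappa_s}$, where $a_t$ bundles $\|\vw_t-\bar\vw\|^2$ together with a positive multiple of $R(\vw_t)-(1+2\rho)R(\bar\vw)$, and $\mathrm{err}_t$ is a variance term proportional to $1/q_{t-1}$ plus a fixed $O(\mu^2)$ term; with the prescribed schedule $q_t=\lceil\tau/\omega^t\rceil$, $\tau=16\kappa_s\varepsilon_F/(\alpha-1)$, the variance contribution at step $t$ is $O(\omega^t)$, and the tuning of $\tau$ makes the accumulated error telescope into a $\omega^T$ leading term plus the constant $Z\mu^2$ exactly as in Theorem~\ref{thm:zo}. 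Unrolling the recursion then gives the stated bound, with leading factor $\tfrac{4\alpha^2}{\rho(1-\rho)}L_{s'}\kappa_s\omega^T\big(\|\bar\vw-\vw_0\|^2+\tfrac13\tfrac{\eta\|\nabla R(\bar\vw)\|^2}{\kappa_s L_{s'}}\big)$ and additive term $Z\mu^2$ with $Z=\tfrac{1}{1-\rho}\big(\varepsilon_\mu(\tfrac{2}{\nu_s}+\tfrac1C)+\tfrac{\varepsilon_{\text{abs}}}{C}\big)$, the $\tfrac{1}{1-\rho}$ coming from dividing through by the weight $(1-\rho)$ of the right-side inequality, and using $\E\min_{t\in[T]}R(\vw_t)\le\E R(\vw_T)$ on the left-hand side. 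Finally, setting $\rho=\tfrac12$ and using $R(\bar\vw)\le R(\vw)$ for every $k$-sparse $\vw$ --- which is what legitimizes discarding the $2\rho R(\bar\vw)$ slack, hence is valid only when $\bar\vw$ is a global minimizer of $R$ over $\mathcal{B}_0(k)$ and is therefore stated as a separate case --- produces the final displayed inequality.

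The step I expect to be the main obstacle is, as in Theorem~\ref{thm:zo}, the constant bookkeeping: one must carry $\varepsilon_F,\varepsilon_{\text{abs}},\varepsilon_\mu$ (and the dimension factors hidden inside them) through the $\rho$-weighted convex combination so that $\tfrac{1}{1-\rho}$ lands in front of $Z$ and $\tfrac{1}{\rho(1-\rho)}$ in front of the leading term, while simultaneously checking that the modified contraction factor $\omega=1-\rho/(8\alpha\kappa_s)$ produced by the $k$-relaxation stays compatible with the batch-schedule parameter $\tau$, so that the variance series telescopes geometrically rather than accumulating. A secondary subtlety, absent from the unconstrained ZO proof, is that the three-point slack is attached to $\vv_t$ and not to $\vw_t$, so the inequality $\|\vw_t-\bar\vw\|\le\|\vv_t-\bar\vw\|$ must be tracked carefully and applied only where it preserves the direction of the bound.
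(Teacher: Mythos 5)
Your route is essentially the paper's: the same two per-iteration inequalities (the constrained one obtained from Lemma~\ref{lemma:l0_three_point_additional} applied to the two-step projection, and the unconstrained one at the hard-thresholded point $\vv_t$ as in Theorem~\ref{thm:zo}), the same $\rho$-weighted convex combination using non-expansiveness of $\Pi_{\setc}$ and the condition $\sqrt{\beta}\le \rho/(4\alpha\kappa_s)$ implied by $k\ge 16\alpha^2\kappa_s^2\bar k/\rho^2$, the same bias/variance control of the ZO estimator restricted to a support of size $3k$ with the schedule $q_t=\lceil \tau/\omega^t\rceil$, and the same geometric summation yielding the $\omega^T$ leading term plus $Z\mu^2$.

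The one step that would fail as written is your final passage to the minimum. You propose to unroll a recursion $\E a_t\le \omega\,\E a_{t-1}+\mathrm{err}_t$, where $a_t$ bundles $\|\vw_t-\bar\vw\|^2$ with the risk gap, so as to control the last iterate, and then use $\E\min_{t\in[T]}R(\vw_t)\le \E R(\vw_T)$. In the ZO setting the risks are not monotone, and the per-iteration quantity $(1-\rho)R(\vw_t)+\rho R(\vv_t)-R(\bar\vw)$ can be negative at intermediate times; when you unroll, these negative gaps enter the right-hand side with a plus sign and cannot be discarded, so the recursion does not yield a bound on $\E R(\vw_T)$ (the deterministic case avoids this by a case analysis on whether some iterate already meets the target, which does not survive taking expectations). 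The correct step, which is what the paper does and what the proof of Theorem~\ref{thm:zo} that you invoke actually uses, is to keep every risk gap on the left: take the weighted sum over $t$ with weights $\bigl(\tfrac{1-\tfrac{1}{2\alpha\kappa_s}}{1-\sqrt{\beta}}\bigr)^{T-t}$, normalize by the total weight (which is at least $1$), and bound $\E\min_{t\in[T]}\{(1-\rho)R(\vw_t)+\rho R(\vv_t)\}$ by the weighted average. A second, smaller omission: to reach the stated $(1+2\rho)R(\bar\vw)$ form for an arbitrary $\bar k$-sparse $\bar\vw$ you must discard $\rho\,R(\vv_t)\ge 0$ using the non-negativity hypothesis on $R$ and then divide by $1-\rho$; your sketch only invokes a comparison of $R(\vv_t)$ with $R(\bar\vw)$ in the $\rho=\tfrac12$ global-minimizer case, so the use of $R\ge 0$ in the general case should be made explicit.
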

\begin{corollary}[Proof in App.~\ref{proof:ifo_zo_2SP}.]\label{cor:ifo_zo_2SP}
  Additionally, the number of calls to $R$ (\#IZO), and the number of hard thresholding operations (\#HT) such that the left-hand sides in Theorem \ref{thm:zo_2SP} above are smaller than $\varepsilon + Z \mu^2$, for some $\varepsilon >0$ are respectively:
  $ \text{\#HT} = \mathcal{O}(\kappa_s \log(\frac{1}{\varepsilon})) ~~~\text{and} ~~~  \text{\#IZO} = \mathcal{O}\left(\varepsilon_F \frac{\kappa_s^3 L_s}{\varepsilon}\right) $.  Note that if $\sus=d$ (in which case Assumption~\ref{ass:RSSbis} becomes the usual (unrestricted) smoothness assumption), we have $\varepsilon_F = \mathcal{O}(s) = \mathcal{O}(k)$, and therefore we obtain a query complexity that is dimension independent.
  \end{corollary}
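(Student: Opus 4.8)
The plan is to convert the geometric-in-$T$ convergence rate of Theorem~\ref{thm:zo_2SP} into the two counts. The starting point is that each iteration of HZO-HT-2SP (Algorithm~\ref{alg:hsg_ZO}) applies the two-step projection $\bar{\Pi}^k_{\setc}$ exactly once, hence performs exactly one hard-thresholding call, so $\#\mathrm{HT}=T$; and iteration $t$ issues $q_{t-1}$ perturbed function evaluations $R(\vw_{t-1}+\mu\vu_i)$ plus one shared base evaluation $R(\vw_{t-1})$, so $\#\mathrm{IZO}=\sum_{t=1}^{T}(q_{t-1}+1)$.

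First I would pin down $T$. Dropping the unavoidable $Z\mu^2$ term, I require the leading term of the bound to be at most $\varepsilon$, i.e. $4\frac{\alpha^2}{\rho(1-\rho)}L_{s'}\kappa_s\,\omega^T\big(\|\bar{\vw}-\vw_0\|^2+\tfrac13\tfrac{\eta\|\nabla R(\bar{\vw})\|^2}{\kappa_s L_{s'}}\big)\le\varepsilon$. Writing the constant in front of $\omega^T$ as $\mathcal{O}(L_{s'}\kappa_s)$ (treating $\alpha,\rho$ and the parenthesised additive term as $\mathcal{O}(1)$) and inverting, the smallest admissible $T$ satisfies both $T=\mathcal{O}\big(\frac{\log(1/\varepsilon)}{1-\omega}\big)$ and $\omega^{-T}=\mathcal{O}(L_{s'}\kappa_s/\varepsilon)$. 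Using the elementary bound $-\log\omega\ge 1-\omega=\frac{\rho}{8\alpha\kappa_s}$ gives $\frac{1}{1-\omega}=\frac{8\alpha\kappa_s}{\rho}=\mathcal{O}(\kappa_s)$, so $\#\mathrm{HT}=T=\mathcal{O}(\kappa_s\log(1/\varepsilon))$, and I keep $\omega^{-T}=\mathcal{O}(L_{s'}\kappa_s/\varepsilon)$ as a byproduct for the next step.

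Next I would bound the exponentially growing batch sum. Since $q_{t-1}=\lceil\tau/\omega^{t-1}\rceil\le\tau/\omega^{t-1}+1$, the sum $\sum_{t=1}^{T}(q_{t-1}+1)$ splits into the lower-order term $\mathcal{O}(T)$ and a geometric series $\tau\sum_{t=0}^{T-1}\omega^{-t}=\tau\frac{\omega^{-T}-1}{\omega^{-1}-1}$ of ratio $1/\omega>1$, which is dominated by its final term, giving $\tau\sum_{t=0}^{T-1}\omega^{-t}=\mathcal{O}\big(\frac{\tau\,\omega^{-T}}{1-\omega}\big)$. Substituting $\tau=16\kappa_s\frac{\varepsilon_F}{\alpha-1}=\mathcal{O}(\kappa_s\varepsilon_F)$, the byproduct $\omega^{-T}=\mathcal{O}(L_{s'}\kappa_s/\varepsilon)$, and $\frac{1}{1-\omega}=\mathcal{O}(\kappa_s)$ then yields $\#\mathrm{IZO}=\mathcal{O}\big(\kappa_s\varepsilon_F\cdot\frac{L_{s'}\kappa_s}{\varepsilon}\cdot\kappa_s\big)=\mathcal{O}\big(\varepsilon_F\frac{\kappa_s^3 L_s}{\varepsilon}\big)$, matching the statement (the $\rho=0.5$ special case is identical). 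The dimension-independence remark follows by plugging $\sus=d$ into $\varepsilon_F=\frac{2d}{\sus+2}\big(\frac{(s-1)(\sus-1)}{d-1}+3\big)$, which collapses to $\mathcal{O}(s)=\mathcal{O}(k)$.

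The main obstacle is constant bookkeeping: I must justify absorbing $\alpha$, $\alpha-1$, $\rho$ and the additive factor $(\|\bar{\vw}-\vw_0\|^2+\ldots)$ into $\mathcal{O}(\cdot)$ while preserving exactly the advertised dependence on $\varepsilon,\kappa_s,L_s,\varepsilon_F$, and I must ensure the geometric-sum step keeps only the dominant last term so that the $\tau\,\omega^{-T}/(1-\omega)$ scaling is sharp rather than loose. I also need to confirm the lower-order contributions — the $+1$ from each ceiling and the shared base evaluation — are genuinely negligible against the exponentially dominant final batch, which holds because $\sum_{t}1=\mathcal{O}(T)=\mathcal{O}(\kappa_s\log(1/\varepsilon))$ is dwarfed by $\omega^{-T}/(1-\omega)=\mathcal{O}(\kappa_s/\varepsilon)$.
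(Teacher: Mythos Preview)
Your proposal is correct and follows essentially the same route as the paper: invert the geometric bound to fix $T$, use $-\log\omega\ge 1-\omega=\Theta(1/\kappa_s)$ to get $\#\mathrm{HT}=\mathcal{O}(\kappa_s\log(1/\varepsilon))$, then bound $\sum_t q_t$ by a geometric series dominated by its last term $\tau\omega^{-T}/(1-\omega)$ and substitute $\tau=\mathcal{O}(\kappa_s\varepsilon_F)$, $\omega^{-T}=\mathcal{O}(L_{s'}\kappa_s/\varepsilon)$, $1/(1-\omega)=\mathcal{O}(\kappa_s)$. The only cosmetic differences are that the paper works from the intermediate bound on $(1-\rho)R(\vw_t)+\rho R(\vv_t)$ (so the prefactor is $4\alpha^2/\rho$ rather than $4\alpha^2/(\rho(1-\rho))$) and counts $q_t$ rather than $q_{t-1}+1$ evaluations per step; neither affects the big-$\mathcal{O}$ outcome.
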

Such a query complexity result also holds when $\Gamma=\R^d$ (cf. Corollary~\ref{cor:ifo_zo} in Appendix). \cite{de2022zeroth} also achieved a dimension independent rate, but their convergence result exhibited a potentially large non-vanishing system error (cf. Table~\ref{tab:compalgos}), which we do not have in Theorems~\ref{thm:zo} and \ref{thm:zo_2SP}. In strongly convex and smooth ZOO, a dimension independent query complexity is impossible to achieve \citep{Jamieson12}, unless with additional assumptions \citep{Golovin19,sokolov2018sparse,Wang18,Cai20,cai2021zeroth,Balasubramanian18,Cai20,Liu21,Jamieson12,nozawa2024zeroth,yue2023zeroth}. Our work confirms that, instead of making extra assumptions, a possible way to obtain a dimension independent query complexity is to instead consider  optimization with $\ell_0$ constraints.

\section{Conclusion}

In this paper, we provided global optimality guarantees for variants of iterative hard thresholding that can handle extra \qw{convex support-preserving constraints} for sparse learning, via a two-step projection algorithm. We provided our analysis in  deterministic, stochastic, and zeroth-order settings. To that end, we used a variant of the three-point lemma, adapted to such mixed constraints, which allows to simplify existing proofs for vanilla constraints (and to provide a new kind of result in the ZO setting), as well as obtaining new proofs in such combined constraints setting. Finally, it would also be interesting to extend this work to a broader family of sparsity structures and constraints, for instance, to matrices or graphs. We leave this for future work.


\section*{Acknowledgements}
This work was supported by the Special Fund for Key
Program of Science and Technology of Jiangsu Province under
Grant No. BG2024042, the Natural Science Foundation of
China (NSFC) under Grant U21B2049, and Ant Group through
CCF-Ant Research Fund under Grant No.20240512. We are grateful to ICML reviewers for their valuable feedback and suggestions.





\section*{Impact Statement}

This paper presents work whose goal is to advance the field
of sparse optimization for machine learning. There are many
potential societal consequences of our work, none of which we
feel must be specifically highlighted here.



\bibliography{neurips_2024}
\bibliographystyle{icml2025}
\newpage
\appendix
\onecolumn


\vspace{10pt}
\makebox[\textwidth]{\huge \bfseries Appendix}


\vspace{-40pt}

\renewcommand{\contentsname}{}

\tableofcontents


\clearpage


\section{Notations}\label{sec:notations}

\qw{Below we aggregate the various notations used throughout the paper, for ease of reference.}

\begin{itemize}
\item \qw{$\Pi_{\setc}(\vw)$: Euclidean projection of $\vw$ onto a set $\setc$, i.e. $\Pi_{\setc}(\vw) \in \arg\min_{\vz \in \setc} \|\vw - \vz \|_2$.}
\item \qw{$w_i$: $i$-th component of $\vw$.}
\item \qw{$\| \cdot\|$: $\ell_0$ pseudo-norm (number of non-zero components of a vector).}
\item \qw{$\Bz$: $\ell_0$ pseudo-ball of radius $k$, i.e. $\Bz = \{\vw \in \R^d: \| \vw\|_0 \leq k\}$.}
\item \qw{$\H$: Euclidean projection onto $\Bz$, also known as the hard-thresholding operator (which keeps the $k$ largest (in magnitude) components of a vector, and sets the others to 0  (if there are ties, we can break them e.g. lexicographically)).}
\item \qw{$\bar{\Pi}^k_{\setc}$: Two-step projection of sparsity $k$ onto the set $\setc$, i.e. $\bar{\Pi}^k_{\setc} (\cdot) = \Pi_{\setc} (\H(\cdot)) $.}
\item \qw{$\| \cdot\|_p$: $\ell_p$ norm for $p\in [1, +\infty)$.}
\item \qw{$\| \cdot\|$: $\ell_2$ norm.}
\item \qw{$[n]$: set $\{1, ..., n\}$ for $n \in \mathbb{N}^*$.}
\item \qw{$|S|$: number of elements of a set $S\subseteq [d]$.}
\item \qw{$\text{supp}(\vw)$: support of a vector $\vw \in \R^d$, i.e. the set of coordinates of its non-zero components. }
\item \qw{2SP: two-step projection}
\item \qw{EP: Euclidean projection}
\end{itemize}

\section{Related Works}\label{sec:related_works}
Below we present a more detailed review of the related works.

\subsection{Local Guarantees for Combined Constraints} 
Among the works considering optimization over the intersection of the $\ell_0$ pseudo-ball of radius $k$ and a set $\setc$, \cite{metel2023sparse} analyze the convergence of a first-order and zeroth-order stochastic algorithm with a weighted $\ell_0$ group norm constraint (which generalizes the $\ell_0$ norm), combined with an $\ell_{\infty}$ ball constraint. \cite{pan2017convergent} provide a deterministic algorithm which can tackle extra positivity constraints. 
 \cite{lu2015optimization} and \cite{beck2016minimization} analyze the convergence of variants of hard-thresholding in the deterministic case, with extra constraints that are symmetric and sign-free or positive.
Other line of works such as \cite{Frankel_2014, xu2019stochastic, attouch2013convergence, de2022interior, yang2020fast, gu2018inexact, yang2023projective, bolte2014proximal, boct2016inertial, xu2019non, li2015accelerated, bauschke2017descent, bauschke2019linear} have a general approach, and analyze the convergence of general proximal algorithms, for composite problems of the form $\min_{\vw} R(\vw) + h(\vw)$ where $h$ is a more general non-convex regularizer which can include the $\ell_0$ constraint combined with an additional constraint, as long as the closed form for the projection onto the mixed constraint is known (or an approximation of it in the case of \cite{gu2018inexact}).
However, all of these works only provide guarantees of convergence towards a critical point, or at best, a local optimum. We provide an overview of those works in Table \ref{tab:compalgos}. More details about algorithms with local convergence specialized to $\ell_0$ optimization can also be found in Table 1 from \cite{damadi2022gradient}.

\subsection{Global Guarantees for IHT and RSC Functions} \label{sec:IHT}
On the other hand, in the case of restricted strongly convex (RSC) and restricted smooth (RSS) functions, existing approximate global guarantees for the IHT algorithm do not apply to problems with such combined constraints. Indeed, several works have considered global optimality guarantees for IHT in various settings: the full gradient (deterministic) setting (IHT \citep{Jain14}), the stochastic setting \citep{nguyen2017linear,li2016nonconvex,shen2017tight}, and the zeroth-order setting \citep{de2022zeroth}. However, they do not address the case where the extra constraint $\setc$ is added to the original sparsity constraint. The works of \cite{barber2018gradient,liu2020between} tackle respectively general non-convex thresholding operators, and general non-convex constraints, in the full gradient (deterministic) setting but however they do not provide explicit convergence rates for the particular type of sets that we consider in this paper: their rates depend on some constants (the relative concavity or the local concavity constant) for which, up to our knowledge, an explicit form is still unknown for the sets we consider.

\section{Sample complexity in sub-Gaussian design}
\label{sec:sample_complexity}

Similarly to \cite{Jain14}, we can use \cite{agarwal2010fast}, Theorem 22, to find the sample complexity needed to verify the Assumptions of Theorem~\ref{thrm:risk_convergence}, in the setting of sub-Gaussian design. Let us consider the sparse linear regression setting, which solves $\min_{\vw \in \Bz}$, with $R(\bm{w}) = \frac{1}{n}\| \bm{A} \bm{w} - \bm{y} \|^2$, and where $n$ denotes the number of samples in the dataset (i.e. number of rows of $\bm{A}$), and each row of $\bm{A}$ is a vector of dimension $d$ drawn from a sub-Gaussian distribution with covariance $\bm{\Sigma}$ with $\bm{\Sigma}_{ii} \leq 1$ for all $i$ in $\{1, ..., d\}$, and where we have for all $i$ in $\{1, ..., n\}$, $y_i = \langle \bm{A}_{i, \cdot}, \bm{w} \rangle + \xi_i$, where $\bm{A}_{i, \cdot}$ denotes the $i$-th row of $\bm{A}$, and where $\xi_i$ is some label noise, sampled from a normal distribution of standard deviation $\sigma$ (that is $\xi_i \sim \mathcal{N}(0, \sigma^2)$).
As described in \cite{Jain14} using Theorem 22 from \cite{agarwal2010fast}, $R$ is $(\nu_s, s)$-RSC and  $(L_s, s)$-RSS with probability at least $1 - e^{c_0 n}$, with $\nu_s = \frac{1}{2} \sigma_{\min}(\bm{\Sigma}) - c_1 \frac{s \log d}{n}$ and $L_s = 2 \sigma_{\max}(\bm{\Sigma}) + c_1 \frac{s \log d}{n}$, where $\sigma_{\min}(\bm{\Sigma})$ and $\sigma_{\max}(\bm{\Sigma})$ are the smallest and largest eigenvalues of $\bm{\Sigma}$ respectively, and $c_0$ and $c_1$ are universal constants. Let us set $s = 2k$ as required by Theorem~\ref{thrm:risk_convergence}, and take
$n > 4 c_1 s\log(d) / \sigma_{\min}(\bm{\Sigma})$ samples. We then obtain $\nu_s \geq \frac{1}{4}\sigma_{\min}(\bm{\Sigma})$ and $L_s \leq 2.25 \sigma_{\max}(\bm{\Sigma})$, which means that $L_s/(9 \nu_s) \leq \kappa(\bm{\Sigma}):= \sigma_{\max}(\bm{\Sigma})/\sigma_{\min}(\bm{\Sigma})$. Thus it is enough to choose $k = 4 (9 \kappa(\bm{\Sigma}))^2 \bar{k} = 324 \kappa(\bm{\Sigma})^2 \bar{k}$ to verify the assumptions of Theorem~\ref{thrm:risk_convergence} with high probability.

\section{Proof of Remark \ref{rem:dec}}\label{proof:equiv_cons} 

\qw{
Before proceeding with the proof of Remark \ref{rem:dec}, we recall the definition of sign-free convex sets from \cite{lu2015optimization} and \cite{beck2016minimization} below. Essentially, sign-free convex sets are convex sets that are closed by swapping the sign of any coordinate.
}


\begin{definition}[\cite{lu2015optimization},\cite{beck2016minimization}]
\qw{
    A convex set $\setc$ is \textit{sign-free} if for all $\vy \in \{-1, 1\}^d$ and for all $\vx \in \setc$, $\vx \odot \vy \in \setc$, where $\odot$ denotes the element-wise vector multiplication (Hadamard product for vectors).
    }
\end{definition}
We now proceed with the proof of Remark \ref{rem:dec}.
\begin{proof}[Proof of Remark \ref{rem:dec}]
\qw{
  It is easy to show that any elementwise decomposable constraint such as box constraint is support-preserving (as projection can be done component-wise, independently). Similarly, for group-wise separable constraints where the constraint on each group is $k$-support-preserving (such as the constraint for the index tracking problem in our Section \ref{sec:xps}), for a $k$-sparse vector $\vx \in \R^d$, one can project each group of coordinates independently, and each of such projection will have its support preserved (since each such group of coordinates also contains less than $k$ non-zero elements, i.e. they are $k$-sparse). Therefore, we analyze in more detail the case of sign-free convex sets.  Let $\setc$ be a sign-free convex set, and let $\vx \in \R^d$ be a $k$-sparse vector. Define $\vz = \Pi_{\setc}(\vx)$ and assume that $\text{supp}(\vz) \not\subseteq \text{supp}(\vx)$. This implies that there exist some non-empty set of coordinates $S \subseteq [d]$, such that for all $i \in S$: $z_i \neq 0$ and $x_i = 0$. Define $\vz'$ such that $z'_k = \begin{cases}
      - z_k ~\text{if}~k \in S\\
      z_k~\text{otherwise}
  \end{cases}.$
 Since $\setc$ is sign-free, $\vz' \in \setc$. Now, define $\vz''$ such that $z''_k = \begin{cases}
      0 ~\text{if}~k \in S\\
      z_k~\text{if}~\text{otherwise}
  \end{cases}.$
Since $\setc$ is convex and since $\vz'' = \frac{1}{2} \vz' + \frac{1}{2} \vz$, we have $\vz'' \in \setc$.
}
\qw{
Now, we have:
\begin{align*}
    \| \vx - \vz'' \|_2^2 &= \sum_{k=1}^d (x_k - z''_k)^2 = \sum_{k \in [d] \setminus S} (x_k - z_k)^2\\
    &< \sum_{k \in [d] \setminus S} (x_k - z_k)^2 + \sum_{k \in S} (x_k - z_k)^2 = \sum_{k=1}^d (x_k - z_k)^2 = \| \vx - \vz \|_2^2
\end{align*}
Therefore, we encounter a contradiction since we have defined $\vz = \Pi_{\setc}(\vx)$, and therefore, our assumption $\text{supp}(\vz) \not\subseteq \text{supp}(\vx)$ is wrong, which means that $\text{supp}(\vz) \subseteq \text{supp}(\vx)$.
  }
\end{proof}








\section{Proofs of Section~\ref{sec:main} (Deterministic Optimization)}


\subsection{Proof of Lemmas~\ref{lem:new3p} and ~\ref{lemma:l0_three_point_additional}}\label{proof:l0_three_point_additional}


\subsubsection{Useful Lemmas}\label{sec:usefullemmas3pl}

We first recall some useful definitions and lemmas from the literature.

\begin{definition}[Relative concavity \cite{liu2020between}] \label{def:def}The \textit{relative concavity} coefficient $\gamma_{k, \beta}$  of a $k$-sparse projection operator $\H$, of relative sparsity $\beta:=\frac{\bar{k}}{k}$ with $\bar{k} \leq k$  is defined as:

  $$
  \gamma_{k, \beta}\left(\H\right)=\sup \left\{\frac{\left\langle \vy-\H(\vz), \vz-\H(\vz)\right\rangle}{\left\|\vy-\H(\vz)\right\|_2^2} ~ ~\vy, \vz \in \mathbb{R}^d,\|\vy\|_0 \leq \beta k, \vy \neq \H(\vz)\right\} .
  $$
  
  \end{definition} 

  \begin{lemma}[Lemma 4.1 \cite{liu2020between}]\label{lem:lem}
      When $\H$ is the hard-thresholding operator at sparsity level $k$, we have:
      $$\gamma_{k, \beta}\left(\H\right) = \frac{\sqrt{\beta}}{2} = \frac{1}{2}\sqrt{\frac{\bar{k}}{k}}. $$
  \end{lemma}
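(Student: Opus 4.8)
The plan is to reduce the two-variable supremum defining $\gamma_{k,\beta}(\H)$ to a scalar optimization and then to an elementary order-statistics inequality; this essentially reproduces the argument of \citet{liu2020between}. First I would fix $\vz\in\R^d$ and let $S:=\supp(\H(\vz))$ be the set of indices of its $k$ largest coordinates in magnitude (ties broken as in $\H$), so that $\H(\vz)=\vz_S$ and $\vz-\H(\vz)=\vz_{S^c}$. If $\|\vz\|_0\le k$ then $\H(\vz)=\vz$, the numerator $\langle\vy-\H(\vz),\vz-\H(\vz)\rangle$ vanishes, and the ratio is $0$; so I may assume $\|\vz\|_0>k$, which forces $z_i\neq0$ for every $i\in S$. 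Since $\vz_S$ and $\vz_{S^c}$ have disjoint supports, the numerator equals $\langle\vy_{S^c},\vz_{S^c}\rangle$ and the denominator splits as $\|\vy_S-\vz_S\|^2+\|\vy_{S^c}\|^2$.

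Next, fixing the support $T:=\supp(\vy)$ with $|T|\le\bar k$ and writing $T_1:=T\cap S$, $T_2:=T\cap S^c$, I would observe that the numerator depends only on $\vy_{T_2}$, so replacing $\vy_{T_1}$ by $\vz_{T_1}$ can only shrink the denominator without affecting the numerator; after this reduction the denominator is $A+b^2$ with $A:=\|\vz_{S\setminus T_1}\|^2$ and $b:=\|\vy_{T_2}\|$, and Cauchy--Schwarz bounds the numerator by $bM$ with $M:=\|\vz_{T_2}\|$, with equality when $\vy_{T_2}$ is a positive multiple of $\vz_{T_2}$. Maximizing $bM/(A+b^2)$ over $b>0$ via AM--GM gives the value $M/(2\sqrt A)$ at $b=\sqrt A$ (and $A>0$ since $|S\setminus T_1|\ge k-\bar k\ge1$ and $z_i\neq0$ on $S$). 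Hence $\gamma_{k,\beta}(\H)=\tfrac12\sup\sqrt{M^2/A}$ over all admissible choices of $\vz$ and $T$.

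To finish the upper bound I would use that $|z_i|\le\mu:=\min_{j\in S}|z_j|$ for every $i\in S^c$, whereas $|z_i|\ge\mu$ for every $i\in S$; therefore $M^2\le|T_2|\,\mu^2\le(\bar k-|T_1|)\mu^2$ and $A\ge(k-|T_1|)\mu^2$, so $M^2/A\le\frac{\bar k-|T_1|}{k-|T_1|}\le\frac{\bar k}{k}=\beta$, the last step because $t\mapsto\frac{\bar k-t}{k-t}$ is non-increasing for $\bar k<k$ (the borderline case $\bar k=k$ being immediate). This gives $\gamma_{k,\beta}(\H)\le\frac{\sqrt\beta}{2}$. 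For the matching lower bound I would exhibit a tight instance: for $d\ge k+\bar k$ take $\vz=\vone_{[k+\bar k]}$, so $\H(\vz)=\vone_{[k]}$, and $\vy=c\,\vone_{\{k+1,\dots,k+\bar k\}}$, which is $\bar k$-sparse; then the ratio equals $\frac{c\bar k}{c^2\bar k+k}$, and optimizing over $c>0$ at $c=\sqrt{k/\bar k}$ yields exactly $\frac{\sqrt\beta}{2}$, so $\gamma_{k,\beta}(\H)\ge\frac{\sqrt\beta}{2}$, and the two bounds combine to the claim.

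The step I expect to be the main obstacle is the reduction in the second paragraph: carefully justifying that matching $\vy$ to $\vz$ on $T_1$ and aligning $\vy$ with $\vz$ on $T_2$ loses no generality, that the sparsity budget is best spent entirely on $S^c$ (i.e.\ that one may take $|T_1|=0$, which is precisely what the monotonicity of $\frac{\bar k-t}{k-t}$ encodes), together with the boundary and tie cases ($\|\vz\|_0$ just above $k$, repeated coordinate magnitudes, $d<k+\bar k$). The remaining ingredients --- Cauchy--Schwarz, AM--GM, and the monotonicity of $\frac{\bar k-t}{k-t}$ --- are routine.
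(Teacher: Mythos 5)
Your argument is correct: the split of the ratio over $S=\text{supp}(\H(\vz))$ and $S^c$, the reduction to $T_1=T\cap S$, $T_2=T\cap S^c$ (numerator unchanged, denominator only shrinks, and if the numerator is positive then $S\not\subseteq T$ so $A=\|\vz_{S\setminus T_1}\|^2>0$), Cauchy--Schwarz plus AM--GM giving $\tfrac{M}{2\sqrt{A}}$, the magnitude ordering $|z_i|\le\mu\le|z_j|$ for $i\in S^c$, $j\in S$ giving $M^2/A\le\tfrac{\bar k-|T_1|}{k-|T_1|}\le\tfrac{\bar k}{k}$, and the explicit instance attaining $\tfrac12\sqrt{\bar k/k}$ all check out. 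Note, however, that the paper does not prove this lemma at all: its ``proof'' is a pointer to Lemma~4.1 of \citet{liu2020between}, so there is no in-paper argument to compare against; your write-up is a self-contained reconstruction in the spirit of that reference, and your extremal configuration ($k$ unit entries in $\vz$ plus $\bar k$ entries of $\vy$ at height $\sqrt{k/\bar k}$ off the selected support) is essentially the same one the paper uses for its tightness statement in Lemma~\ref{lem:tightness}. Two small points worth making explicit if you were to include this: (i) the stated equality implicitly requires $d\ge k+\bar k$ (your own upper-bound half holds for every $d$, and that is the only direction the paper actually uses, via Lemma~\ref{lem:new3p}; e.g.\ for $d=k$ the supremum is $0$, so the equality genuinely needs the dimension assumption you flag); (ii) your sentence ``$\gamma_{k,\beta}(\H)=\tfrac12\sup\sqrt{M^2/A}$'' is only needed as an inequality $\le$ for the upper bound, since the matching lower bound is supplied separately by your explicit instance, so the attainment direction of that intermediate equality need not be justified.
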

  
  \begin{proof}[Proof of Lemma~\ref{lem:lem}]
    Proof in \cite{liu2020between}.
  \end{proof}

\subsubsection{Proof of Lemma~\ref{lem:new3p}}\label{sec:proof_new3p}
  \begin{proof}[Proof of Lemma~\ref{lem:new3p}]
    We have:
    \begin{align*}
        \|\vw - \bar{\vw} \|^2 &= \|\vw - \H(\vw) \|^2 + \|\H(\vw) - \bar{\vw} \|^2 + 2 \langle \vw - \H(\vw), \H(\vw) - \bar{\vw}\rangle \\
        &\overset{(a)}{\geq} \|\vw - \H(\vw) \|^2 + \|\H(\vw) - \bar{\vw} \|^2 - 2 \gamma_{k, \rho}  \|\H(\vw) - \bar{\vw} \|^2\\ 
            &= \|\vw - \H(\vw) \|^2 + (1 - 2 \gamma_{k, \rho} ) \|\H(\vw) - \bar{\vw} \|^2\\
            &\overset{(b)}{=} \|\vw - \H(\vw) \|^2 + \left(1 - \sqrt{\frac{\bar{k}}{k}}\right) \|\H(\vw) - \bar{\vw} \|^2,
    \end{align*}
    where (a) follows from Definition \ref{def:def} and (b) follows from Lemma \ref{lem:lem}. Therefore, rearranging, we obtain:
    $$ \|\H(\vw) - \vw  \|^2  \leq  \|\vw - \bar{\vw} \|^2  - \left(1 - \sqrt{\frac{\bar{k}}{k}} \right) \|\H(\vw) - \bar{\vw} \|^2.$$
    The proof is completed. 
    \end{proof}
\subsubsection{Proof of Lemma~\ref{lem:tightness}}\label{sec:proof_tightness}
\begin{proof}[Proof of Lemma~\ref{lem:tightness}]
Let $a=\sqrt{\frac{k}{\bar k}}$ and $b=\frac{\rho+1}{2}\in (\rho,1)$. Consider
\[
\vw=[\underbrace{1,...,1}_{k},\underbrace{b,...b}_{\bar k}]\in \mathbb{R}^{k+\bar k}, \ \ \ \bar \vw=[\underbrace{0,...,0}_{k},\underbrace{a,...a}_{\bar k}]\in \mathbb{R}^{k+\bar k}.
\]
Then we have $\mathcal{H}_k(\vw)=[\underbrace{1,...,1}_{k},\underbrace{0,...0}_{\bar k}]$ and
\[
\|\vw- \mathcal{H}_k(\vw)\|^2 = b^2 \bar k, \ \ \|\vw-\bar \vw\|^2=k+(a-b)^2\bar k, \ \ \|\mathcal{H}_k(\vw) - \bar \vw\|^2= k+a^2\bar k.
\]
It can be verified that
\[
\begin{aligned}
\frac{\|\vw- \mathcal{H}_k(\vw)\|^2 - \|\vw-\bar \vw\|^2 + \|\mathcal{H}_k(\vw) - \bar \vw\|^2 }{\|\mathcal{H}_k(\vw) - \bar \vw\|^2} =\frac{2ab\bar k}{k+a^2 \bar k} = b\sqrt{\frac{\bar k}{k}} > \rho\sqrt{\frac{\bar k}{k}}.
\end{aligned}
\]
This proves the desired inequality. 
\end{proof}

\subsubsection{Proof of Lemma~\ref{lemma:l0_three_point_additional}}\label{proof:l0_three_point_additional_main}
  \begin{proof}[Proof of Lemma~\ref{lemma:l0_three_point_additional}]
    Let us abbreviate $\vv_k:=\mathcal{H}_{k}(\vw)$. It can be verified that
    \begin{align*}
    \|\bar{\Pi}^k_{\setc}(\vw) - \vw\|^2 =& \left\|\bar{\Pi}^k_{\setc}(\vw) - \vv_k + \vv_k - \vw\right\|^2 \\
    \overset{(a)}{=}&\left\|\bar{\Pi}^k_{\setc}(\vw) - \vv_k \right\|^2 +\left\| \vv_k - \vw \right\|^2 \\
    \overset{(b)}{\le}& \| \vv_k - \bar \vw \|^2 - \|\bar{\Pi}^k_{\setc}(\vw) - \bar \vw\|^2 + \|\vw - \bar \vw\|^2 - \left( 1 - \sqrt{\beta}  \right) \|\vv_k -  \bar \vw\|^2  \\
    =& \|\vw - \bar \vw\|^2 - \|\bar{\Pi}^k_{\setc}(\vw) - \bar \vw\|^2 + \sqrt{\beta} \| \vv_k - \bar \vw \|^2,
    \end{align*}
    where (a) is due to Definition~\ref{assumpt:projectionbis} and the definition of the two-step projection, which imply that $\bar{\Pi}^k_{\setc}(\vw) - \vv_k$ and $ \vv_k - \vw$ have disjoint supporting sets,
    \qw{and (b) uses the three-point-lemma for projection onto a convex set $\setc$
    , as well as Lemma~\ref{lem:new3p}}. The proof is completed.
    \end{proof}
    \subsection{Proof of Theorems~\ref{thrm:risk_convergence} and \ref{thrm:risk_convergence_additional}} \label{proof:risk_convergence_additional}
  \subsubsection{Proof of Theorem~\ref{thrm:risk_convergence}}\label{proof:risk_convergence}
    In this section, we present the proof of Theorem~\ref{thrm:risk_convergence} for the convergence of Algorithm \ref{alg:gradient_descent} without the additional constraint, which as mentioned above, is needed for the proof of Theorem~\ref{thrm:risk_convergence_additional}, but also, as a byproduct, illustrates how the three-point lemma simplifies previous proofs of Iterative Hard-Thresholding.
  

  \begin{proof}[Proof of Theorem~\ref{thrm:risk_convergence}]

    The $L_s$- restricted smoothness of $R$ implies that
    \begin{align}
    &R(\vw_t) \nonumber\\
    \le& R(\vw_{t-1}) + \langle \nabla R(\vw_{t-1}), \vw_t - \vw_{t-1} \rangle + \frac{L_s}{2}\|\vw_t - \vw_{t-1}\|^2 \nonumber\\
    =& R(\vw_{t-1}) +  \frac{L_s}{2}\left\|\vw_t - \vw_{t-1} + \frac{1}{L_s}\nabla R(\vw_{t-1})\right\|^2 - \frac{1}{2L_s} \|\nabla R(\vw_{t-1})\|^2 \nonumber\\
    \overset{(a)}{\le}& R(\vw_{t-1}) + \frac{L_s}{2}\left\|\bar \vw - \vw_{t-1} + \frac{1}{L_s}\nabla R(\vw_{t-1})\right\|^2 -  \frac{L_s}{2} (1 - \sqrt{\beta}) \|\vw_t - \bar \vw\|^2 \nonumber\\
    &- \frac{1}{2L_s} \|\nabla R(\vw_{t-1})\|^2 \nonumber\\
    =& R(\vw_{t-1}) + \langle \nabla R(\vw_{t-1}), \bar \vw - \vw_{t-1} \rangle + \frac{L_s}{2}\|\vw_{t-1} - \bar \vw\|^2 - \frac{L_s}{2} (1 - \sqrt{\beta})\|\vw_t - \bar \vw\|^2 \nonumber\\
    \overset{(b)}{\le}& R(\bar \vw) + \frac{L_s - \nu_s}{2}\|\vw_{t-1} - \bar \vw\|^2 - \frac{L_s}{2} (1 - \sqrt{\beta}) \|\vw_t - \bar \vw\|^2 \nonumber\\
    \le& R(\bar \vw) + \frac{L_s - \nu_s}{2}\|\vw_{t-1} - \bar \vw\|^2 - \frac{2L_s - \nu_s}{4} \|\vw_t - \bar \vw\|^2, \label{inequat:telescope}
    \end{align}
    where (a) uses Lemma~\ref{lem:new3p}, (b) is due to the $\nu_s$-restricted strong-convexity of $R$, while the last step is implied by the condition on the sparsity level $k$ from the theorem ($k\ge \frac{4L_s^2}{\nu_s^2}\bar k$), and the definition of $\beta$ ($\beta = \sqrt{\frac{\bar{k}}{k}}$).
    
The update rule composed of the gradient step and the projection from Algorithm~\ref{alg:gradient_descent} can be rewritten into the following (given that the learning rate is $\eta = \frac{1}{L_s}$, and by definition of a projection):
\begin{align*}
     \vw_{t} &= \arg\min_{\vw~ \text{s.t.} \|\vw \|_0 \leq k}\left\| \vw - \left(\vw_{t-1} - \frac{1}{L_s} \nabla R(\vw_{t-1})\right) \right\|^2\\
     &= \arg\min_{\vw~ \text{s.t.} \|\vw \|_0 \leq k} \frac{2}{L_s} \langle  \nabla R(\vw_{t-1}), \vw - \vw_{t-1} \rangle + \| \vw - \vw_{t-1} \|^2 + \frac{1}{L_s^2}\| \nabla R(\vw_{t-1})\|^2\\
     &=\arg\min_{\vw ~\text{s.t.} \|\vw \|_0 \leq k} R(\vw_{t-1}) + \langle \nabla R(\vw_{t-1}), \vw - \vw_{t-1} \rangle + \frac{L_s}{2} \| \vw - \vw_{t-1} \|^2.
\end{align*}


Therefore, by definition of an $\arg\min$, we have: 
\begin{align}
    &R(\vw_{t-1}) + \langle \nabla R(\vw_{t-1}), \vw_t - \vw_{t-1} \rangle + \frac{L_s}{2} \| \vw_t - \vw_{t-1} \|^2 \nonumber \\
    &\leq R(\vw_{t-1}) + \langle \nabla R(\vw_{t-1}), \vw_{t-1} - \vw_{t-1} \rangle + \frac{L_s}{2} \| \vw_{t-1} - \vw_{t-1} \|^2 \nonumber \\
    &= R(\vw_{t-1}). \label{eq:descent1}
\end{align}
And from the $L_s$ smoothness of $R$, we also have: 
\begin{equation}\label{eq:descent2}
    R(\vw_t) \leq R(\vw_{t-1}) + \langle \nabla R(\vw_{t-1}), \vw_t - \vw_{t-1} \rangle + \frac{L_s}{2} \| \vw_t - \vw_{t-1} \|^2. 
\end{equation}

Therefore, combining equations \ref{eq:descent1} and \ref{eq:descent2}, we obtain: 
$$R(\vw_t) \leq R(\vw_{t-1}).$$
That is, the sequence $\{R(\vw_t)\}_{t\ge 0}$ of risk is non-increasing.

    Let us now consider
    \[
    T:=\left\lceil\frac{2L_s}{\nu_s} \log\left(\frac{(L_s-\nu_s)\|\vw_0 - \bar \vw\|^2}{2\varepsilon}\right)\right\rceil.
    \]
    We claim that $R(\vw_t)\le R(\bar \vw)+\varepsilon$ for $t\ge T+1$. To show this, suppose that $\exists t\in [T]$ such that $R(\vw_t)\le R(\bar \vw)+\varepsilon$. Then the claim is naturally true by monotonicity. Otherwise assume that $R(\vw_t) > R(\bar \vw)+\varepsilon$ for all $t\in [T]$.  Then in view of the inequality~\eqref{inequat:telescope} we know that

\begin{align*}
        \|\vw_T - \bar \vw\|^2 &\le \frac{2L_s - 2\nu_s}{2L_s -\nu_s}\|\vw_{T-1} - \bar \vw\|^2 \\
        &\le\left(1-\frac{\nu_s}{2L_s}\right)\|\vw_{T-1} - \bar \vw\|^2 \\
        &\le \left(1-\frac{\nu_s}{2L_s}\right)^T\|\vw_0 - \bar \vw\|^2\\
        &= \exp\left( T \log\left(1-\frac{\nu_s}{2L_s}\right) \right) \|\vw_0 - \bar \vw\|^2\\
        &\le \exp \left( \frac{2L_s}{\nu_s} \log\left(\frac{(L_s-\nu_s)\|\vw_0 - \bar \vw\|^2}{2\varepsilon} + 1\right) \log\left(1-\frac{\nu_s}{2L_s}\right) \right) \|\vw_0 - \bar \vw\|^2\\
        &= \left(1-\frac{\nu_s}{2L_s}\right)\exp \left( \frac{2L_s}{\nu_s} \log\left(\frac{(L_s-\nu_s)\|\vw_0 - \bar \vw\|^2}{2\varepsilon}\right) \log\left(1-\frac{\nu_s}{2L_s}\right) \right) \|\vw_0 - \bar \vw\|^2\\
        &\overset{(a)}{\le} \left(1-\frac{\nu_s}{2L_s}\right) \exp \left( \frac{2L_s}{\nu_s} \log\left(\frac{2\varepsilon}{(L_s-\nu_s)\|\vw_0 - \bar \vw\|^2}\right) \frac{\nu_s}{2L_s}\right)  \|\vw_0 - \bar \vw\|^2\\
        &= \left(1-\frac{\nu_s}{2L_s}\right)\frac{2\varepsilon}{L_s - \nu_s} \overset{(b)}{\leq} \frac{2\varepsilon}{L_s - \nu_s},
\end{align*}
where (a) follows from the fact that for all $x$ in $(-\infty, 1)$: $\log(1-x)\leq -x$, and (b) uses the fact that $\left(1-\frac{\nu_s}{2L_s}\right) \leq 1$.
    
    Then according to~\eqref{inequat:telescope} we must have
    \[
    R(\vw_{T+1}) \le R(\bar \vw) + \frac{L_s - \nu_s}{2}\|\vw_T - \bar \vw\|^2 \le R(\bar \vw) + \varepsilon,
    \]
    which implies the desired claim. The proof is completed.
    \end{proof}

    \begin{remark}
      Theorem~\ref{thrm:risk_convergence} recovers the result of~\citet[Theorem 1]{Jain14}. Our proof is shorter yet more intuitive than in that paper.
      \end{remark}

\subsubsection{Proof of Theorem~\ref{thrm:risk_convergence_additional}} \label{sec:prooffullrisk_convergence_additional}
Using the above results, we can now proceed to the full proof of convergence of Theorem~\ref{thrm:risk_convergence_additional} below.
\begin{proof}[Proof of Theorem \ref{thrm:risk_convergence_additional}]
  Denote $\vv_t = \H(\vw_{t-1} -\frac{1}{L_{s}} \nabla R(\vw_{t-1}))$ for any $t\in \mathbb{N}$. Similar to the arguments for~\eqref{inequat:telescope}, based on the $L_s$-restricted smoothness of $R$ we can show that:
  \begin{align*}
  &R(\vw_t) \\
  \le& R(\vw_{t-1}) + \langle \nabla R(\vw_{t-1}), \vw_t - \vw_{t-1} \rangle + \frac{L_s}{2}\|\vw_t - \vw_{t-1}\|^2 \\
  =& R(\vw_{t-1}) +  \frac{L_s}{2}\left\|\vw_t - \vw_{t-1} + \frac{1}{L_s}\nabla R(\vw_{t-1})\right\|^2 - \frac{1}{2L_s} \|\nabla R(\vw_{t-1})\|^2 \\
  \overset{(a)}{\le}& R(\vw_{t-1}) + \frac{L_s}{2}\left\|\bar \vw - \vw_{t-1} + \frac{1}{L_s}\nabla R(\vw_{t-1})\right\|^2 - \frac{L_s}{2}\|\vw_t - \bar \vw\|^2 \\
  &+ \frac{L_s}{2}\sqrt{\beta} \| \vv_t - \bar \vw \|^2 - \frac{1}{2L_s} \|\nabla R(\vw_{t-1})\|^2 \\
  =& R(\vw_{t-1}) + \langle \nabla R(\vw_{t-1}), \bar \vw - \vw_{t-1} \rangle + \frac{L_s}{2}\|\vw_{t-1} - \bar \vw\|^2 - \frac{L_s}{2} \|\vw_t - \bar \vw\|^2 \\
  &+ \frac{L_s}{2}\sqrt{\beta}  \| \vv_t - \bar \vw \|^2\\
  \overset{(b)}{\le}& R(\bar \vw) + \frac{L_s - \nu_s}{2}\|\vw_{t-1} - \bar \vw\|^2 - \frac{L_s}{2} \|\vw_t - \bar \vw\|^2 + \frac{L_s}{2}\sqrt{\beta} \| \vv_t - \bar \vw \|^2 \\
  \le& R(\bar \vw) + \frac{L_s - \nu_s}{2}\|\vw_{t-1} - \bar \vw\|^2 - \frac{L_s}{2} \|\vw_t - \bar \vw\|^2 + \frac{\rho\nu_s}{4(1-\rho)}\| \vv_t - \bar \vw \|^2 \inlabel{inequat:telescope_w},
  \end{align*}
  where (a) uses Lemma~\ref{lem:new3p}, (b) is due to the $\nu_s$-restricted strong-convexity of $R$, and the last step is due to the condition on sparsity level $k$ from the theorem ($k\ge \frac{4L_s^2 (1 - \rho)^2}{\nu_s^2\rho^2}\bar k$), and the definition of $\beta = \sqrt{\frac{\bar{k}}{k}}$. 
  
 In view of~\eqref{inequat:telescope}, which is valid under the given conditions, we know that
  \begin{equation}\label{inequat:telescope_v_bis}
  R(\vv_t)\le R(\bar \vw) + \frac{L_s - \nu_s}{2}\|\vw_{t-1} - \bar \vw\|^2 - \frac{2L_s - \nu_s}{4} \|\vv_t - \bar \vw\|^2.
  \end{equation}
  After proper scaling and summing both sides of~\eqref{inequat:telescope_w} and~~\eqref{inequat:telescope_v_bis} yields that
  \begin{align*}
  &(1-\rho)R(\vw_t) + \rho R(\vv_t) \\
  \le& R(\bar \vw) + \frac{L_s - \nu_s}{2} \|\vw_{t-1} - \bar \vw\|^2 - \frac{(1-\rho)L_s}{2} \|\vw_t - \bar \vw\|^2 - \frac{\rho (L_s - \nu_s)}{2} \|\vv_t - \bar \vw\|^2 \\
  =& R(\bar \vw) + \frac{L_s - \nu_s}{2} \|\vw_{t-1} - \bar \vw\|^2 - \frac{L_s - \rho \nu_s}{2} \|\vw_t - \bar \vw\|^2 \inlabel{inequat:telescope_wv_bis},
  \end{align*}
  where in the second inequality we have used $\bar \vw \in \setc$ and the non-expansiveness of projection over convex sets. 
  
  Let us now consider
  \begin{equation}\label{eq:formulaT}
  T:=\left\lceil\frac{2L_s}{\nu_s} \log\left(\frac{(L_s-\nu_s)\|\vw_0 - \bar \vw\|^2}{2\varepsilon}\right)\right\rceil.
  \end{equation}
  We claim that: 

\begin{equation}\label{eq:claim}
    \min_{t\in [T+1]}\left\{(1-\rho)R(\vw_t) + \rho R(\vv_t) \right\}\le R(\bar \vw)+\varepsilon.
\end{equation}

  To show this, suppose that $\exists t\in [T]$ such that $(1-\rho)R(\vw_t) + \rho R(\vv_t)\le R(\bar \vw)+\varepsilon$. Then the claim is naturally true. Otherwise assume that $(1-\rho)R(\vw_t) + \rho R(\vv_t) > R(\bar \vw)+\varepsilon$ for all $t\in [T]$.  Then in view of the inequality~\eqref{inequat:telescope_wv_bis} we know that
  
  \begin{align*}
  \|\vw_T - \bar \vw\|^2 \le& \frac{L_s - \nu_s}{L_s - \rho \nu_s}\|\vw_{T-1} - \bar \vw\|^2 \le\left(1-\frac{(1-\rho)\nu_s}{L_s}\right)\|\vw_{T-1} - \bar \vw\|^2 \\
  \le& \left(1-\frac{(1-\rho)\nu_s}{L_s}\right)^T\|\vw_0 - \bar \vw\|^2 \le \frac{2\varepsilon}{L_s - \nu_s}.
  \end{align*}
  
  Then according to~\eqref{inequat:telescope_wv_bis} we must have
  \begin{equation}\label{eq:almostfinal}
    (1-\rho)R(\vw_{T+1}) +\rho R(\vv_{T+1}) \le R(\bar \vw) + \frac{L_s - \nu_s}{2}\|\vw_T - \bar \vw\|^2 \le R(\bar \vw) + \varepsilon,
  \end{equation}

  which proves the claim from equation \ref{eq:claim}. \qw{Now, recall that we have assumed in the Assumptions of Theorem \ref{thrm:risk_convergence_additional}, without loss of generality, that $R$ is non-negative (if not, we can redefine $R$ by adding a constant, without modifying the gradient of $R$, keeping the algorithm untouched), which implies that $R\left(\vv_{t}\right) \geq 0$. Plugging this in equation \ref{eq:claim}, for $T \geq \left\lceil\frac{2L_s}{\nu_s} \log\left(\frac{(L_s-\nu_s)\|\vw_0 - \bar \vw\|^2}{2\varepsilon' (1 - \rho)}\right)\right\rceil + 1$  implies that:
  \begin{equation}\label{eq:finalbeforechangevar}
  \min _{t \in[T]} R\left(\vw_{t}\right) \leq \frac{1}{1-\rho} R(\bar{\vw})+\frac{\varepsilon}{1-\rho} \leq(1+2 \rho) R(\bar{\vw})+\frac{\varepsilon}{1-\rho}.
  \end{equation}
Plugging the change of variable $\varepsilon' = \frac{\varepsilon}{1 - \rho}$ into equation \ref{eq:finalbeforechangevar} above, and in \ref{eq:formulaT}, we obtain that when $T \geq \left\lceil\frac{2L_s}{\nu_s} \log\left(\frac{(L_s-\nu_s)\|\vw_0 - \bar \vw\|^2}{2\varepsilon' (1 - \rho)}\right)\right\rceil + 1$: 
$$  \min _{t \in[T]} R\left(\vw_{t}\right)  \leq(1+2 \rho) R(\bar{\vw})+\varepsilon'. $$
Further, consider an ideal case where $\bar{\vw}$ is a global minimizer of $R$ over $\mathcal{B}_{0}(k):=$ $\left\{\vw:\|\vw\|_{0} \leq k\right\}$. Then $R\left(\vv_{t}\right) \geq R(\bar{\vw})$ is always true for all $t \geq 1$. It follows that the bound in \eqref{eq:claim} yields,
for $T \geq \left\lceil\frac{2L_s}{\nu_s} \log\left(\frac{(L_s-\nu_s)\|\vw_0 - \bar \vw\|^2}{2\varepsilon }\right)\right\rceil + 1$:
$$
\min _{t \in[T]}\left\{(1-\rho) R\left(\vw_{t}\right)+\rho R(\bar{\vw})\right\} \leq \min _{t \in[T]}\left\{(1-\rho) R\left(\vw_{t}\right)+\rho R\left(\vv_{t}\right)\right\} \leq R(\bar{\vw})+\varepsilon,
$$
which implies:  $\min _{t \in[T]} R\left(\vw_{t}\right) \leq R(\bar{\vw})+\frac{\varepsilon}{1-\rho}$.
In this case, we can simply set $\rho=0.5$, and define $\varepsilon'=\frac{\varepsilon}{1-\rho} = 2\varepsilon$ similarly as above. This implies the desired claims. The proof is completed.
}

  \end{proof}

\subsection{Lower Bound on the Sparsity Relaxation} \label{sec:counterex}

Consider $\kappa>1$, $p=\bar k + \kappa^2 \bar k$ and the following defined diagonal matrix $A$ of size $p\times p$ and vector $\bm{b}$ of size $p$:  
\[
\bm{A} = \begin{bmatrix}
\kappa & 0 & \cdots & 0 \\
0 & 1 & \cdots & 0 \\
\vdots & & \ddots & 0 \\
0 & 0 & \cdots & 1
\end{bmatrix} \in \mathbb{R}^{p \times p}, \ \ \ 
\bm{b}=[\underbrace{1,\kappa,\ldots,\kappa}_{\bar k},\underbrace{1,\ldots,1}_{\kappa^2 \bar k}]^\top \in \mathbb{R}^{p}.
\]
Clearly, $\bm{A}$ is $\kappa$-smooth and $1$-strongly convex. Let us consider the following quadratic objective function: 
\[
f(\vw)=\frac{1}{2}(\vw-\bm{b})^\top \bm{A}(\vw-\bm{b}).
\] 
Let $k\in [\bar k, \kappa^2 \bar k]$ be the relaxed sparsity level used for IHT, and being an even number (without loss of generality). Consider the following defined $p$-dimensional sparse vectors such that $\|\bar x\|_0 = \bar k$ and $\|x\|_0 = k$: 
\[
\bar \vw=[\underbrace{1,\kappa,\ldots,\kappa}_{\bar k}, \underbrace{0, \ldots, 0}_{\kappa^2 \bar k}]^\top \in \mathbb{R}^{p}, \ \ \ \vw=[\underbrace{0,\ldots, 0}_{\bar k/2}, \underbrace{\kappa,\ldots,\kappa}_{\bar k/2},\underbrace{1,\ldots,1}_{k-\bar k/2},\underbrace{0,\dots,0}_{\kappa^2\bar k - k + \bar k/2}]^\top \in \mathbb{R}^{p}.
\]
We next prove the following theorem which shows that $k\ge \mathcal{O}(\kappa^2 \bar k)$ is indeed necessary for IHT to converge in some extreme cases for optimizing $f$. 
\begin{theorem}
If $\bar k \ge 4$ and $k\le \frac{\kappa^2 \bar{k}}{8}$, then it holds that
\[
f(\vw) \ge f(\bar \vw ) + \frac{\kappa^2 \bar k}{16},
\]
while $\vw$ is a fixed point of IHT with sparsity level $k$ and step-size $\eta=\frac{1}{2\kappa}$, i.e.,
\[
\vw = \mathcal{H}_k\left(\vw - \eta \nabla f(\vw) \right) .
\]
\end{theorem}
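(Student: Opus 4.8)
The plan is to verify both assertions by direct computation, exploiting that $\bm{A}$ is diagonal so that $f(\vw)=\tfrac12\sum_{i=1}^p \bm{A}_{ii}(w_i-b_i)^2$ decouples coordinate-wise. First I would evaluate $f(\bar\vw)$: since $\bar\vw$ agrees with $\bm{b}$ on the first $\bar k$ coordinates and is $0$ on the last $\kappa^2\bar k$ coordinates, where $\bm{A}_{ii}=1$ and $b_i=1$, one gets $f(\bar\vw)=\tfrac12\kappa^2\bar k$. Next I would evaluate $f(\vw)$ by splitting $[p]$ along the blocks induced by the definitions of $\vw$ and $\bm b$: coordinate $1$ contributes $\tfrac12\kappa$ (there $w_1=0$, $b_1=1$, $\bm{A}_{11}=\kappa$); coordinates $2,\dots,\bar k/2$ contribute $\tfrac12(\bar k/2-1)\kappa^2$; coordinates $\bar k/2+1,\dots,k+\bar k/2$ contribute $0$ since $\vw$ and $\bm b$ coincide there; and the last $\kappa^2\bar k-k+\bar k/2$ coordinates contribute $\tfrac12(\kappa^2\bar k-k+\bar k/2)$. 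Summing and subtracting $f(\bar\vw)$ gives $f(\vw)-f(\bar\vw)=\tfrac12\big(\kappa+(\bar k/2-1)\kappa^2-k+\bar k/2\big)$.

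The lower bound then follows from the two hypotheses: $\bar k\ge 4$ gives $\bar k/2-1\ge \bar k/4$, hence $(\bar k/2-1)\kappa^2\ge \kappa^2\bar k/4$, and $k\le \kappa^2\bar k/8$ gives $-k\ge -\kappa^2\bar k/8$; dropping the remaining nonnegative terms $\kappa$ and $\bar k/2$ yields $f(\vw)-f(\bar\vw)\ge \tfrac12(\kappa^2\bar k/4-\kappa^2\bar k/8)=\kappa^2\bar k/16$, which is the first claimed inequality.

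For the fixed-point identity I would compute $\nabla f(\vw)=\bm{A}(\vw-\bm{b})$ coordinate-wise: it equals $-\kappa$ on coordinates $1,\dots,\bar k/2$, equals $0$ on coordinates $\bar k/2+1,\dots,k+\bar k/2$, and equals $-1$ on the last $\kappa^2\bar k-k+\bar k/2$ coordinates. With $\eta=\tfrac1{2\kappa}$ the vector $\vw-\eta\nabla f(\vw)$ thus takes exactly four values: $\tfrac12$ on the first $\bar k/2$ coordinates, $\kappa$ on the next $\bar k/2$, $1$ on the following $k-\bar k/2$, and $\tfrac1{2\kappa}$ on the last block. Since $\kappa>1$ we get the strict ordering $\kappa>1>\tfrac12>\tfrac1{2\kappa}$, and the number of entries equal to $\kappa$ or to $1$ is exactly $\bar k/2+(k-\bar k/2)=k$. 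Hence $\mathcal{H}_k$ retains precisely those coordinates, with their values unchanged, and zeroes the rest, which reproduces $\vw$; that is, $\vw=\mathcal{H}_k(\vw-\eta\nabla f(\vw))$.

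There is no real obstacle here — the work is entirely bookkeeping of the block structure. The two points to handle carefully are: (i) that the hard-thresholding selects exactly $\mathrm{supp}(\vw)$, which hinges on the magnitude ordering being strict (using $\kappa>1$) and on the count $\bar k/2+(k-\bar k/2)$ being equal to $k$ exactly, so there is no overflow into the $\tfrac12$-entries; and (ii) that all block sizes appearing above, such as $k-\bar k/2$ and $\kappa^2\bar k-k+\bar k/2$, are nonnegative integers — which holds since the construction forces $k\ge\bar k$ (hence $k-\bar k/2\ge\bar k/2>0$) and $k\le\kappa^2\bar k$, and since $\bar k,k$ are taken even without loss of generality. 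I would carry out the steps in the stated order: evaluate $f(\bar\vw)$, then $f(\vw)$ and the difference, then apply the hypotheses for the $\kappa^2\bar k/16$ bound, and finally verify the fixed-point identity.
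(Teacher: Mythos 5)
Your proposal is correct and follows essentially the same route as the paper's proof: the same coordinate-wise evaluation of $f(\bar\vw)=\tfrac12\kappa^2\bar k$ and $f(\vw)$, the same use of $\bar k\ge 4$ and $k\le \kappa^2\bar k/8$ to get the $\kappa^2\bar k/16$ gap, and the same computation of $\nabla f(\vw)$ and of $\vw-\tfrac1{2\kappa}\nabla f(\vw)$ with the ordering $\kappa>1>\tfrac12>\tfrac1{2\kappa}$ to conclude the fixed-point identity. Your explicit remarks on the exact count of retained entries and the integrality/nonnegativity of the block sizes are careful additions but do not change the argument.
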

\begin{proof}
It can be seen that $f(\bar \vw) = \frac{1}{2} \kappa^2 \bar k$ and 
\[
f(\vw) = \frac{1}{2}\left(\kappa+\left(\frac{\bar k}{2}-1\right)\kappa^2+ \kappa^2\bar k + \frac{\bar k}{2} - k \right).
\]
Therefore
\[
\begin{aligned}
f(\vw) - f(\bar \vw) =&  \frac{1}{2}\left(\kappa+\left(\frac{\bar k}{2}-1\right)\kappa^2 + \frac{\bar k}{2} - k \right) \\
\ge& \frac{1}{2}\left( \left(\frac{\bar k}{2}-1\right)\kappa^2 - k\right) \\
\overset{\zeta_1}{\ge}&  \frac{1}{2}\left(\frac{\bar k}{4}\kappa^2 - k\right) \ge \frac{\kappa^2 \bar k}{16},
\end{aligned}
\]
where $\zeta_1$ uses $\bar k \ge 4$, and the last inequality is due to $k\le \frac{\kappa^2 \bar k}{8}$. Note that 
\[
\nabla f(\vw) = \bm{A}(\vw-\bm{b}) = [\underbrace{-\kappa,\ldots, -\kappa}_{\bar k/2}, \underbrace{0,\ldots,0}_{k},\underbrace{-1,\dots,-1}_{\kappa^2\bar k - k + \bar k/2}]^\top.
\]
Given $\eta=\frac{1}{2\kappa}$, we can show that
\[
\vw-\eta \nabla f(\vw) = [\underbrace{0.5,\ldots, 0.5}_{\bar k/2},  \underbrace{\kappa,\ldots,\kappa}_{\bar k/2},\underbrace{1,\ldots,1}_{k-\bar k/2},\underbrace{0.5/\kappa,\dots,0.5/\kappa}_{\kappa^2\bar k - k + \bar k/2}]^\top,
\]
which directly yields (as $\kappa>1$)
\[
\vw= \mathcal{H}_k(\vw-\eta \nabla f(\vw)),
\]
and thus $\vw$ is a fixed point of IHT with sparsity level $k$ and step-size $\eta=\frac{1}{2\kappa}$.
\end{proof}
\begin{remark}
The example is inspired by the one from~\cite{axiotis2022iterative}, though slightly simpler. A main difference is that in our example the supporting sets of $\vw$ and $\bar \vw$ are allowed to be significantly overlapped, while in theirs the supporting sets of the two vectors are constructed to be disjoint.   
\end{remark}

\section{Proofs of Section~\ref{sec:extensions} (Stochastic and Zeroth-Order Optimization)}

\subsection{Discussion on Restricted Smoothness Assumptions} \label{sec:discass}

In this section, we provide additional details on the difference between Assumptions~\ref{ass:RSS} and \ref{ass:RSSbis}. First, we recall the standard definition of smoothness:

\begin{definition}
    A differentiable function $f$ is $L$-smooth if for all $\vx, \vy \in (\R^d)^2$: 
    $$ \| \nabla f(\vx) - \nabla f(\vy)\| \leq L \| \vx - \vy \|  $$
\end{definition}
We now provide the counter-example below, illustrating that Assumptions~\ref{ass:RSS} and \ref{ass:RSSbis} are not always equivalent, even if $f$ is convex (and that those two assumptions are also different from the usual smoothness assumption).
\begin{lemma}
  Let us consider the following convex function $f: \mathbb{R}^2 \rightarrow \R$ defined as
  $$\forall (x_1, x_2) \in \R^2: f(x_1, x_2) = x_1^2 + x_2^2 + x_1x_2$$
  $f$ has the following regularity properties, with the given constants being each time the smallest possible:

  \begin{itemize}
  \item (i) 3-smooth 
  \item (ii) 2-restricted smooth (Assumption~\ref{ass:RSS}) with sparsity level 1
  \item (iii) $\sqrt{5}$-restricted strongly smooth (Assumption~\ref{ass:RSSbis}) with sparsity level 1
  \end{itemize}

\end{lemma}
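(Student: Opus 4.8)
The plan is to read everything off the constant Hessian $H=\nabla^2 f=\begin{pmatrix} 2 & 1 \\ 1 & 2\end{pmatrix}$ (it is constant because $f$ is a quadratic form, so the second-order Taylor expansion of $f$ is exact), and in each of (i)--(iii) to first establish the claimed number as a valid constant and then exhibit an explicit pair $(\vx,\vy)$ for which the corresponding inequality becomes an equality; the latter immediately gives optimality of the constant.

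For (i), the smallest Lipschitz constant of $\nabla f$ equals $\lambda_{\max}(H)$. The eigenvalues of $H$ are $3$ (eigenvector $(1,1)$) and $1$ (eigenvector $(1,-1)$), so $\|\nabla f(\vx)-\nabla f(\vy)\|=\|H(\vx-\vy)\|\le 3\|\vx-\vy\|$, i.e.\ $f$ is $3$-smooth; taking $\vx=\vzero$, $\vy=(1,1)$ yields $\|\nabla f(\vy)-\nabla f(\vx)\|=\|(3,3)\|=3\sqrt{2}=3\|\vy-\vx\|$, so $3$ cannot be lowered.

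For (ii) and (iii), sparsity level $1$ means $\vx$ and $\vy$ differ in at most one coordinate; since $d=2$ and $f(x_1,x_2)=f(x_2,x_1)$, after relabelling it suffices to treat the case where they differ only in the first coordinate, writing $\vx=(x_1,a)$, $\vy=(y_1,a)$. For (ii), a direct expansion gives $f(\vy)-f(\vx)-\langle\nabla f(\vx),\vy-\vx\rangle=(y_1-x_1)(y_1+x_1+a)-(2x_1+a)(y_1-x_1)=(y_1-x_1)^2$, which equals $\frac{L_s}{2}\|\vx-\vy\|^2=\frac{L_s}{2}(y_1-x_1)^2$ exactly when $L_s=2$; hence Assumption~\ref{ass:RSS} holds with $s=1$ and constant $2$, and since the relation is an equality for every such pair, no smaller constant is admissible (equivalently, $f$ restricted to any axis-aligned line is $t\mapsto t^2+(\text{linear})$, of one-dimensional curvature $2$). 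For (iii), $\nabla f(\vx)-\nabla f(\vy)=(2(x_1-y_1),\,x_1-y_1)=(x_1-y_1)(2,1)$, so $\|\nabla f(\vx)-\nabla f(\vy)\|=\sqrt{5}\,|x_1-y_1|=\sqrt{5}\,\|\vx-\vy\|$; this is again an equality for every admissible pair, so Assumption~\ref{ass:RSSbis} holds with $s=1$ and optimal constant $\sqrt{5}$.

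There is no genuine obstacle here: the statement is a worked counter-example and every bound is tight by a one-line computation. The only point needing care is the reduction ``differ in one coordinate $\Rightarrow$ WLOG the first coordinate'', together with the observation that, because $f$ is quadratic, the RSS inequality at sparsity $1$ is in fact an equality along every admissible pair — this is precisely what pins the optimal RSS constant at $2$ rather than at the global smoothness constant $3$, and, compared with (iii), shows that the optimal RSS and RSS$'$ constants ($2$ versus $\sqrt{5}$) genuinely differ for this convex $f$, which is the phenomenon the lemma is meant to illustrate.
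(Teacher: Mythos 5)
Your proposal is correct and follows essentially the same route as the paper: reduce by symmetry to pairs differing only in the first coordinate, expand the quadratic directly to see that the RSS inequality holds with equality (forcing the constant $2$) and that $\nabla f(\vx)-\nabla f(\vy)=(x_1-y_1)(2,1)$ (forcing $\sqrt{5}$), and read the global smoothness constant $3$ off the Hessian spectrum. The only cosmetic differences are that you verify (i) via the Lipschitz-gradient characterization with an explicit tight pair along $(1,1)$, whereas the paper invokes $\bm{H}\preceq L\bm{I}$, and you make the equality-for-every-admissible-pair observation explicit rather than running the paper's chain of equivalences; the content is identical.
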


\begin{proof}
  \subsubsection{Proof of (i)}
  The Hessian of $f$ is:
$$
\bm{H} = \left[  {\begin{array}{cc}
            2 & 1 \\
            1 & 2 \\
 \end{array}}\right],
$$
and its diagonalization is: $$\bm{H} = \bm{P} \bm{D} \bm{P}^{-1},$$
with:
$$\bm{P} = \left[  {\begin{array}{cc}
            1 & -1 \\
            1 & 1 \\
 \end{array}}\right],  \bm{P}^{-1} = \frac{1}{2}  \left[  {\begin{array}{cc}
            1 & 1 \\
            -1 & 1 \\
 \end{array}}\right], ~\text{and}~
 \bm{D} =  \left[{ \begin{array}{cc}
            3 & 0 \\
            0 & 1 \\
 \end{array} }\right].$$
Therefore, the smallest $L$ such that we have $\bm{H} \preceq L \bm{I}_{2 \times 2}$ is $3$, which implies from  Lemma 1.2.2 in \cite{nesterov2018lectures} that $f$ is smooth with smoothness constant $3$.

\subsubsection{Proof of (ii):}

Let us take two $\vx, \vy$ in $(\mathbb{R}^d)^2$ such that $\|\vx-\vy\|_0 \leq 1$, which therefore implies that:  $x_1=y_1$ or $x_2=y_2$ (or both). Let us suppose that (E): $x_2=y_2$. Note that  this implies that $\|\vx-\vy\|_2 = (x_1 - y_1)^2$.
We now need to find the smallest $L$ such that:
$$ f(\vy) \leq f(\vx) + \langle \nabla f(\vx) , \vy-\vx\rangle + \frac{L}{2} \| \vx- \vy\|_2^2$$
$$\Leftrightarrow$$
$$  y_1^2 + y_2^2 + y_1y_2 \leq    x_1^2 + x_2^2 + x_1x_2+ (2 x_1 +x_2)(y_1 - x_1) + (2 x_2 +x_1)(y_2 - x_2)  + \frac{L}{2}(x_1 - y_1)^2$$
$$\overset{(E)}{\Leftrightarrow}$$
$$  y_1^2 + {x_2^2} + y_1x_2 \leq    x_1^2 + x_2^2 + x_1x_2+ (2 x_1 +x_2)(y_1 - x_1) + (2 x_2 +x_1)({x_2} - x_2)  + \frac{L}{2}(x_1 - y_1)^2$$
$$\Leftrightarrow$$
$$  y_1^2 + x_1^2 - 2 y_1x_1 \leq   \frac{L}{2}(x_1 - y_1)^2$$
$$\Leftrightarrow$$
$$  (x_1 - y_1)^2 \leq   \frac{L}{2}(x_1 - y_1)^2$$
Therefore, the smallest $L$ possible which can verify the above is $L=2$. By symmetry, we would have the same chain of equivalence in the alternative case where we would replace $x_2=y_2$ by $x_1=y_1$. Therefore, we need some $L$ that will work for both cases, so again, such smallest $L$ is 2.

\subsubsection{Proof of (iii)}
\label{sec:proof-iii}

Let us take two $\vx, \vy$ such that $\|\vx-\vy\|_0 \leq 1$, which therefore implies that: $x_1=y_1$ or $x_2=y_2$ (or both). Let us suppose that (E): $x_2=y_2$. Note that  this means that $\|\vx-\vy\|_2 = (x_1 - y_1)^2$.
What we need to find is the smallest $L$ such that:

$$\|\nabla f(\vx) - \nabla f(\vy)\|^2 \leq L^2\|\vx - \vy\|_2^2 $$
$$\Leftrightarrow$$
$$(2x_1 + x_2 - (2y_1 + y_2))^2 + (2x_2 + x_1 - (2y_2 + y_1))^2 \leq L^2 (x_1 - y_1)^2  $$ 
$$\overset{(E)}{\Leftrightarrow}$$
$$(2x_1 + x_2 - (2y_1 + x_2))^2 + (2x_2 + x_1 - (2 x_2 + y_1))^2 \leq L^2 (x_1 - y_1)^2  $$ 
$$\Leftrightarrow$$
$$ 4(x_1 - y_1)^2 + (x_1 -  y_1)^2 \leq L^2 (x_1 - y_1)^2  $$
$$\Leftrightarrow$$
$$ 5(x_1 - y_1)^2  \leq L^2 (x_1 - y_1)^2  $$
Therefore, the smallest $L$ possible which can verify the above is $L=\sqrt{5}$. By symmetry, we would have the same chain of equivalence in the alternative case where we would replace $x_2=y_2$ by $x_1=y_1$. So therefore we need some $L$ that will work for both cases, so again, that smallest $L$ is $\sqrt{5}$.

\end{proof}

\subsection{Proof of Theorems~\ref{thm:hsg} and \ref{thm:hsg_2SP}}\label{proof:hsg_2SP}


\qw{For the proof of Theorem~\ref{thm:hsg_2SP}, we use a similar technique as in Theorem~\ref{thrm:risk_convergence_additional} to deal with the extra constraint, i.e. we start first from the case $\setc =\R^d$ (Theorem~\ref{thm:hsg}). Based on our $\ell_0$ three-point lemma (Lemma~\ref{lem:new3p}), such proof of Theorem~\ref{thm:hsg} is simpler than the corresponding proof of \cite{Zhou18} (Proof of Theorem 2, Appendix B.3). Also, compared to the deterministic setting, here, we need to carefully incorporate the exponentially decreasing error of the gradient estimator into a properly weighted telescopic sum containing terms in $\|\vw_t  - \bar{\vw}\|^2$.} Below we provide several intermediary results needed for the proof of Theorem~\ref{thm:hsg_2SP}. Then, the proof of Theorem~\ref{thm:hsg_2SP} will be provided in Section~\ref{sec:fullproofthm2}.

\subsubsection{Useful Lemma} Before starting the proof, we present the following lemma from \cite{mishchenko2020random}, which relates the batch-size $s_t$ and the error of the gradient estimator:

\begin{lemma}[\cite{mishchenko2020random}, Lemma 1]\label{lem:boundvar}
   Let $\vw_t \in \R^d$. Assume that $\vg_t$ is the sampled gradient in Algorithm \ref{alg:hsg} and that the population variance of $R_i(\vw_t)$ is bounded by $B$ as in Assumption~\ref{assumpt:varbound}. Then the gradient estimate $\vg_t$ is an unbiased estimate of $\nabla R(\vw_t)$, and its variance is as follows:
\begin{equation}\label{eq:boundvar_orig}
    \mathbb{E}\left\|\vg_t-\nabla R\left(\vw_t\right)\right\|^2 \leq \frac{n-s_t}{n-1} \frac{1}{s_t} B,
\end{equation}   
\end{lemma}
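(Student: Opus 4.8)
The plan is to follow the standard second-moment computation for sampling without replacement, as in \cite{mishchenko2020random}. First I would introduce the centered vectors $\va_i := \nabla R_i(\vw_t) - \nabla R(\vw_t)$ for $i \in [n]$, which by definition satisfy $\sum_{i=1}^n \va_i = \vzero$, and rewrite the estimator error as $\vg_t - \nabla R(\vw_t) = \frac{1}{s_t}\sum_{i=1}^n \1_{\{i \in \mathcal{S}_t\}}\, \va_i$. Unbiasedness is then immediate: by symmetry of uniform sampling without replacement, $\E[\1_{\{i\in\mathcal{S}_t\}}] = s_t/n$ for each $i$, so $\E[\vg_t - \nabla R(\vw_t)] = \frac{1}{n}\sum_i \va_i = \vzero$.

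Next I would expand the squared norm and separate diagonal from off-diagonal terms:
\[
\E\|\vg_t - \nabla R(\vw_t)\|^2 = \frac{1}{s_t^2}\sum_{i=1}^n\sum_{j=1}^n \E\big[\1_{\{i\in\mathcal{S}_t\}}\1_{\{j\in\mathcal{S}_t\}}\big]\,\langle \va_i, \va_j\rangle .
\]
The two inclusion probabilities for drawing $s_t$ items without replacement from $[n]$ are $\E[\1_{\{i\in\mathcal{S}_t\}}] = \frac{s_t}{n}$ for $i = j$, and $\E[\1_{\{i\in\mathcal{S}_t\}}\1_{\{j\in\mathcal{S}_t\}}] = \frac{s_t(s_t-1)}{n(n-1)}$ for $i \neq j$. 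I would then invoke the identity $\sum_{i\neq j}\langle\va_i,\va_j\rangle = \big\|\sum_i \va_i\big\|^2 - \sum_i\|\va_i\|^2 = -\sum_{i}\|\va_i\|^2$, which again uses $\sum_i\va_i = \vzero$, to collapse the double sum into a single multiple of $\sum_i \|\va_i\|^2$.

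Collecting coefficients gives
\[
\E\|\vg_t - \nabla R(\vw_t)\|^2 = \frac{1}{s_t}\left(\frac{1}{n} - \frac{s_t-1}{n(n-1)}\right)\sum_{i=1}^n\|\va_i\|^2 = \frac{n-s_t}{s_t(n-1)}\cdot\frac{1}{n}\sum_{i=1}^n\|\va_i\|^2 ,
\]
and the claim follows by bounding $\frac{1}{n}\sum_i\|\va_i\|^2 = \frac{1}{n}\sum_i\|\nabla R_i(\vw_t) - \nabla R(\vw_t)\|^2 \le B$ via Assumption~\ref{assumpt:varbound}. I do not expect a genuine obstacle here; the only point requiring care is correctly evaluating the pairwise inclusion probability $\frac{s_t(s_t-1)}{n(n-1)}$ for sampling \emph{without} replacement (rather than $\frac{s_t^2}{n^2}$, which would arise with replacement), since it is precisely this term that produces the improving factor $\frac{n-s_t}{n-1}$ over the with-replacement bound.
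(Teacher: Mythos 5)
Your proof is correct: the decomposition into centered vectors $\va_i$ with $\sum_i \va_i = \vzero$, the inclusion probabilities $\frac{s_t}{n}$ and $\frac{s_t(s_t-1)}{n(n-1)}$ for sampling without replacement, and the collapse of the cross terms via $\sum_{i\neq j}\langle\va_i,\va_j\rangle = -\sum_i\|\va_i\|^2$ yield exactly the stated factor $\frac{n-s_t}{s_t(n-1)}$, after which Assumption~\ref{assumpt:varbound} gives the bound. The paper does not reproduce an argument for this lemma (it simply defers to \cite{mishchenko2020random}), and your computation is essentially the standard one underlying that cited result, so there is nothing substantive to contrast.
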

Note that the original Lemma from \cite{mishchenko2020random} is written as an equality, in terms of the exact population variance of a random variable, denoted $\sigma^2$, but we rewrite it as an inequality here for simplicity, in order to have a general bound that applies at each iteration.
\begin{proof}[Proof of Lemma~\ref{lem:boundvar}]
    Proof in \cite{mishchenko2020random}.
\end{proof}

\subsubsection{Proof of Theorem~\ref{thm:hsg}}\label{proof:hsg}

Below we now first present a proof for the convergence of Algorithm \ref{alg:hsg} without the additional constraint (Theorem~\ref{thm:hsg}), which is needed for the proof of Theorem~\ref{thm:hsg_2SP}, and also, as a byproduct, illustrates how the three-point lemma simplifies such proof.

  \begin{proof}[Proof of Theorem~\ref{thm:hsg}]
    The $L_s$-smoothness of $R$ implies that
    \begin{align}
    &R(\vw_t) \nonumber \\
    \le& R(\vw_{t-1}) + \langle \nabla R(\vw_{t-1}), \vw_t - \vw_{t-1} \rangle + \frac{L_s}{2}\|\vw_t - \vw_{t-1}\|^2 \nonumber\\
    =&R(\vw_{t-1}) + \langle \vg_{t-1}, \vw_t - \vw_{t-1} \rangle + \frac{L_s}{2}\|\vw_t - \vw_{t-1}\|^2 + \langle \nabla R(\vw_{t-1})- \vg_{t-1}, \vw_t - \vw_{t-1}\rangle \nonumber\\
    =& R(\vw_{t-1}) + \frac{1}{2\eta}  \left[  \| \vw_t - (\vw_{t-1} - \eta \vg_{t-1})\|^2 - \eta^2 \|\vg_{t-1}\|^2 - \| \vw_t - \vw_{t-1} \|^2  \right] + \frac{L_s}{2}\|\vw_t - \vw_{t-1}\|^2 \nonumber\\
    &+ \langle \nabla R(\vw_{t-1})- \vg_{t-1}, \vw_t - \vw_{t-1}\rangle \nonumber\\
    =& R(\vw_{t-1}) + \frac{1}{2\eta}  \| \vw_t - (\vw_{t-1} - \eta \vg_{t-1})\|^2 - \frac{\eta}{2}\|\vg_{t-1}\|^2   + \left[ \frac{L_s - \frac{1}{\eta}}{2} \right] \| \vw_t - \vw_{t-1} \|^2 \nonumber\\
    &+ \langle \nabla R(\vw_{t-1})- \vg_{t-1}, \vw_t - \vw_{t-1}\rangle \nonumber\\
    \overset{(a)}{\le}& R(\vw_{t-1}) + \frac{1}{2\eta} \left[ \|\bar{\vw} - (\vw_{t-1} - \eta \vg_{t-1} ) \|^2 - (1 - \sqrt{\beta}) \|\vw_t - \bar{\vw}\|^2 \right] - \frac{\eta}{2}\|\vg_{t-1}\|^2   \nonumber\\
    &+ \left[ \frac{L_s - \frac{1}{\eta}}{2} \right] \| \vw_t - \vw_{t-1} \|^2 + \langle \nabla R(\vw_{t-1})- \vg_{t-1}, \vw_t - \vw_{t-1}\rangle \nonumber\\
=& R(\vw_{t-1}) + \frac{1}{2\eta} \left[ \|\bar{\vw} - \vw_{t-1}\|^2 + \eta^2 \|\vg_{t-1} \|^2 - 2 \langle \eta \vg_{t-1}, \vw_{t-1} - \bar{\vw} \rangle \right] - \frac{1}{2\eta}(1 - \sqrt{\beta}) \|\vw_t - \bar{\vw}\|^2 \nonumber\\
&- \frac{\eta}{2}\|\vg_{t-1}\|^2   +  \left[ \frac{L_s - \frac{1}{\eta}}{2} \right] \| \vw_t - \vw_{t-1} \|^2 + \langle \nabla R(\vw_{t-1})- \vg_{t-1}, \vw_t - \vw_{t-1}\rangle \nonumber\\
=& R(\vw_{t-1}) + \frac{1}{2\eta} \left[ \|\bar{\vw} - \vw_{t-1}\|^2 - 2 \langle \eta \vg_{t-1}, \vw_{t-1} - \bar{\vw} \rangle \right] - \frac{1}{2\eta}(1 - \sqrt{\beta}) \|\vw_t - \bar{\vw}\|^2  \nonumber \\
&+ \left[ \frac{L_s - \frac{1}{\eta}}{2} \right] \| \vw_t - \vw_{t-1} \|^2 + \langle \nabla R(\vw_{t-1})- \vg_{t-1}, \vw_t - \vw_{t-1}\rangle \nonumber\\
\overset{(b)}{=}& R(\vw_{t-1}) + \frac{1}{2\eta}  \|\bar{\vw} - \vw_{t-1}\|^2  - \langle \vg_{t-1}, \vw_{t-1} - \bar{\vw} \rangle - \frac{1}{2\eta}(1 - \sqrt{\beta}) \|\vw_t - \bar{\vw}\|^2  \nonumber \\
&+ \left[ \frac{L_s - \frac{1}{\eta} + C}{2} \right] \| \vw_t - \vw_{t-1} \|^2 + \frac{1}{2C}  \|\nabla R(\vw_{t-1})- \vg_{t-1}\|^2, \nonumber
\end{align}
where (a) follows from Lemma \ref{lem:new3p} and (b) follows from the inequality $\langle a, b\rangle \le \frac{C}{2} a^2 + \frac{1}{2C}b^2 $, for any $(a, b) \in (\R^d)^2$ with $C > 0$ an arbitrary strictly positive constant.

Let us now assume that $\eta = \frac{1}{L_s + C}$: therefore the term $\left[ \frac{L_s - \frac{1}{\eta} + C}{2} \right] \| \vw_t - \vw_{t-1} \|^2$ above is $0$. 
We now take the conditional expectation (conditioned on $\rw_{t-1}$, which is the random variable which realizations are $\vw_{t-1}$), on both sides, and  from Lemma~\ref{lem:boundvar} 
we obtain the inequality below (we slightly abuse notations and denote $\E[\cdot | \rw_{t-1} = \vw_{t-1}]$ by $\E[\cdot | \vw_{t-1}]$): 

\begin{align*}
\E [R(\vw_{t}) | \vw_{t-1}]\le & R(\vw_{t-1}) + \frac{1}{2\eta}  \|\bar{\vw} - \vw_{t-1}\|^2  - \langle \nabla R(\vw_{t-1}), \vw_{t-1} - \bar{\vw} \rangle \\
&- \frac{1}{2\eta}(1 - \sqrt{\beta}) \E \left[\|\vw_t - \bar{\vw}\|^2 |\vw_{t-1}\right] + \frac{B (n-s_{t-1})}{2C s_{t-1}(n-1)} \\
\overset{(a)}{\le}& R(\vw_{t-1}) + \frac{1}{2\eta}  \|\bar{\vw} - \vw_{t-1}\|^2  + \left[R(\bar{\vw}) - R(\vw_{t-1}) - \frac{\nu_s}{2}\| \vw_{t-1} - \bar{\vw}\|^2\right] \\
&- \frac{1}{2\eta}(1 - \sqrt{\beta}) \E \left[\|\vw_t - \bar{\vw}\|^2 |\vw_{t-1}\right]+ \frac{B}{2C s_{t-1}} \\
=& R(\bar{\vw}) + \left[ \frac{\frac{1}{\eta} - \nu_s}{2} \right] \|\bar{\vw} - \vw_{t-1}\|^2  - \frac{1}{2\eta}(1 - \sqrt{\beta}) \E \left[\|\vw_t - \bar{\vw}\|^2 |\vw_{t-1}\right] \\
&+ \frac{B}{2C s_{t-1}},
  \end{align*}
where (a) follows from the RSC condition, and the fact that $s_{t-1} \in \mathbb{N}^*$.

We recall that $\eta = \frac{1}{L_s + C}$.
Let us define $\alpha := \frac{C}{L_s} + 1$. Then $C = (\alpha - 1)L_s$, and $\eta = \frac{1}{\alpha L_s}$. Also recall that $\kappa_s = \frac{L_s}{\nu_s}$.

We can simplify the inequality above into:
\begin{align*}
  \E [R(\vw_{t}) | \vw_{t-1}] - R(\bar{\vw}) \le & \frac{1}{2 \eta}  \left[   \left( 1 - \frac{1}{\alpha \kappa_s} \right) \|\bar{\vw} - \vw_{t-1}\|^2  - (1 - \sqrt{\beta}) \E \left[\|\vw_t - \bar{\vw}\|^2 |\vw_{t-1}\right] \right.\\
 & \left. + \frac{\eta B}{C s_{t-1}} \right].
\end{align*}
We now take the expectation over $\rw_{t-1}$ of the above inequality (i.e. we take $\E_{\rw_{t-1}}[\cdot] $): using the law of total expectation ($\E_{}[\cdot] =\E_{\rw_{t-1}} [\E[\cdot | \vw_{t-1}]]$)  we obtain:  

\begin{align}\label{inequat:before_telescope_hsg}
  \E R(\vw_{t})  - R(\bar{\vw}) \le & \frac{1}{2 \eta}  \left[   \left( 1 - \frac{1}{\alpha \kappa_s} \right) \E \|\bar{\vw} - \vw_{t-1}\|^2  - (1 - \sqrt{\beta}) \E \|\vw_t - \bar{\vw}\|^2  + \frac{\eta B}{C s_{t-1}} \right]
\end{align}

Similarly as in \cite{liu2020between}, we now take a weighted sum over $t=1, ..., T$, to obtain:  

  \begin{align*}
   &\sum_{t = 1}^T 2 \eta \left( \frac{1 - \frac{1}{\alpha \kappa_s}}{1 - \sqrt{\beta}} \right)^{T-t} \E [R(\vw_{t}) - R(\bar{\vw})]\\ 
    \le&   \sum_{t = 1}^T \left( \frac{1 - \frac{1}{\alpha \kappa_s}}{1 - \sqrt{\beta}} \right)^{T-t}   \left[   \left( 1 - \frac{1}{\alpha \kappa_s} \right) \E \|\bar{\vw} - \vw_{t-1}\|^2  - (1 - \sqrt{\beta}) \E \|\vw_t - \bar{\vw}\|^2  + \frac{\eta B}{C s_{t-1}} \right]\\
    =&   \sum_{t = 1}^T \left( \frac{1 - \frac{1}{\alpha \kappa_s}}{1 - \sqrt{\beta}} \right)^{T-t}   \left[   \left( 1 - \frac{1}{\alpha \kappa_s} \right) \E \|\bar{\vw} - \vw_{t-1}\|^2  - (1 - \sqrt{\beta}) \E \|\vw_t - \bar{\vw}\|^2  \right]\\
    &+\sum_{t = 1}^T \left( \frac{1 - \frac{1}{\alpha \kappa_s}}{1 - \sqrt{\beta}} \right)^{T-t}   \frac{\eta B}{C s_{t-1}} \\
    =&   (1 - \sqrt{\beta}) \sum_{t=1}^T  \left[  \left( \frac{1 - \frac{1}{\alpha \kappa_s}}{1 - \sqrt{\beta}} \right)^{T-t+1} \E \|\bar{\vw} - \vw_{t-1}\|^2 - \left( \frac{1 - \frac{1}{\alpha \kappa_s}}{1 - \sqrt{\beta}} \right)^{T-t} \E \|\vw_t - \bar{\vw}\|^2  \right]\\
    &+\sum_{t = 1}^T \left( \frac{1 - \frac{1}{\alpha \kappa_s}}{1 - \sqrt{\beta}} \right)^{T-t}   \frac{\eta B}{C s_{t-1}} \\
    \overset{(a)}{=}&   (1 - \sqrt{\beta})  \left[  \left( \frac{1 - \frac{1}{\alpha \kappa_s}}{1 - \sqrt{\beta}} \right)^{T} \|\bar{\vw} - \vw_{0}\|^2 - \E \|\vw_T - \bar{\vw}\|^2  \right]+\sum_{t = 1}^T \left( \frac{1 - \frac{1}{\alpha \kappa_s}}{1 - \sqrt{\beta}} \right)^{T-t}  \frac{\eta B}{C s_{t-1}}\\
    \leq&   (1 - \sqrt{\beta})   \left( \frac{1 - \frac{1}{\alpha \kappa_s}}{1 - \sqrt{\beta}} \right)^{T}  \|\bar{\vw} - \vw_{0}\|^2  +\sum_{t = 1}^T \left( \frac{1 - \frac{1}{\alpha \kappa_s}}{1 - \sqrt{\beta}} \right)^{T-t}  \frac{\eta B}{C s_{t-1}} \\
    \leq&   \left( \frac{1 - \frac{1}{\alpha \kappa_s}}{1 - \sqrt{\beta}} \right)^{T} \|\bar{\vw} - \vw_{0}\|^2  +\sum_{t = 1}^T \left( \frac{1 - \frac{1}{\alpha \kappa_s}}{1 - \sqrt{\beta}} \right)^{T-t}   \frac{\eta B}{C s_{t-1}} \inlabel{eq:tel_hsg},
  \end{align*}
where (a) follows from simplifying the telescopic sum.

We now choose $k$ and $s_t$ as follows: we choose $k \geq 4 \alpha^2 \kappa_s^2 \bar{k}$, which implies that: 
\begin{align}
&  \sqrt{\beta} \leq  \frac{1}{2 \alpha \kappa_s}\nonumber\\
& \implies \sqrt{\beta} \leq  \frac{1}{2 \alpha \kappa_s - 1} \nonumber\\
& \implies 1 - \sqrt{\beta} \geq 1 -  \frac{1}{2 \alpha \kappa_s - 1} = \frac{2 \alpha \kappa_s - 2}{2 \alpha \kappa_s - 1} = \frac{1- \frac{1}{\alpha \kappa_s}}{1 - \frac{1}{2\alpha \kappa_s}}\nonumber\\
&\implies  \left( \frac{1 - \frac{1}{\alpha \kappa_s}}{1 - \sqrt{\beta}} \right) \leq 1 - \frac{1}{2\alpha \kappa_s}.  
 \end{align}
And we choose $s_t := \left\lceil \frac{\tau}{\omega^t}\right\rceil$ with $\omega := 1 - \frac{1}{4\alpha \kappa_s}  $ and $\tau:= \frac{\eta B}{C}$.

Let us call $ \nu := 1 - \frac{1}{2\alpha \kappa_s}$. Note that we have: 

\begin{equation}\label{eq:compnuomega}
  \nu \leq \omega.
\end{equation}

And that we have the inequality below: 

\begin{equation}\label{eq:omega_quotient}
  \frac{\nu}{\omega} =  \frac{1 - \frac{1}{2\alpha \kappa_s}}{1 - \frac{1}{4\alpha \kappa_s}} = \frac{4 \alpha \kappa_s - 2}{4 \alpha \kappa_s - 1}  = 1 - \frac{1}{4 \alpha \kappa_s - 1}  \leq 1 - \frac{1}{4 \alpha \kappa_s} = \omega. 
\end{equation}

This allows us to simplify \eqref{eq:tel_hsg} into:

\begin{align*}
  \E \sum_{t = 1}^T 2 \eta \left( \frac{1 - \frac{1}{\alpha \kappa_s}}{1 - \sqrt{\beta}} \right)^{T-t}  [R(\vw_{t}) - R(\bar{\vw})]
   &\leq   \nu^{T}   \|\bar{\vw} - \vw_{0}\|^2 + \sum_{t = 1}^T  \nu^{T-t} \omega^{t-1}   \\
    &=   \nu^{T} \|\bar{\vw} - \vw_{0}\|^2 + \frac{\omega^T}{\omega}\sum_{t = 1}^T   \left( \frac{\nu}{\omega} \right)^{T-t}  \\
    &=  \nu^{T}  \|\bar{\vw} - \vw_{0}\|^2 + \frac{\omega^{T}}{\omega}\frac{1 - \left( \frac{\nu}{\omega}\right)^T}{1 - \left( \frac{\nu}{\omega}\right)}   \\
    &\leq  \nu^{T}  \|\bar{\vw} - \vw_{0}\|^2 + \frac{\omega^{T}}{\omega}\frac{1 }{1 - \left( \frac{\nu}{\omega}\right)}   \\
    &\overset{(a)}{\leq}  \nu^{T}  \|\bar{\vw} - \vw_{0}\|^2 + \frac{\omega^{T}}{\omega}\frac{1 }{1 -\omega}   \\
    &\overset{(b)}{\leq}  \nu^{T}  \|\bar{\vw} - \vw_{0}\|^2 + \frac{4}{3}\omega^{T}\frac{1 }{1 -\omega}   \\
    &\overset{(c)}{\leq}  \omega^{T}  \|\bar{\vw} - \vw_{0}\|^2 + \frac{4}{3}\omega^{T}\frac{1 }{1 -\omega}   \\
    &\overset{(d)}{\leq}  \frac{\omega^{T}}{1 - \omega}  \|\bar{\vw} - \vw_{0}\|^2 + \frac{4}{3}\omega^{T}\frac{1 }{1 -\omega}   \\
    &=  \frac{\omega^T}{1 - \omega}\left( \|\bar{\vw} - \vw_{0}\|^2 + \frac{4}{3}\right)  \\
    &=  4 \alpha \kappa_s \omega^T\left( \|\bar{\vw} - \vw_{0}\|^2 + \frac{4}{3}\right),
  \end{align*}
where in the left hand side we have used the linearity of expectation, and where (a) uses \eqref{eq:omega_quotient}, 
  (b) uses the fact that $\frac{1}{\omega} = \frac{1}{1 - \frac{1}{4\alpha\kappa_s}} \leq \frac{1}{1 - \frac{1}{4}} = \frac{4}{3}$ (since $\kappa_s \geq 1$ and $\alpha \geq 1$ (indeed, from the theorem's assumption $\alpha = \frac{C}{L_s} + 1$ with $C>0$)), (c) uses equation \ref{eq:compnuomega}, and (d) uses the fact that $\omega < 1$ so $1 < \frac{1}{1 - \omega}$.
  

Let us now normalize the above inequality:

\begin{equation*}
 \E \frac{\sum_{t = 1}^T 2 \eta \left( \frac{1 - \frac{1}{\alpha \kappa_s}}{1 - \sqrt{\beta}} \right)^{T-t} R(\vw_{t}) }{\sum_{t = 1}^T 2 \eta \left( \frac{1 - \frac{1}{\alpha \kappa_s}}{1 - \sqrt{\beta}} \right)^{T-t} }
   \leq  R(\bar{\vw}) + \frac{4 \alpha \kappa_s \omega^T\left( \|\bar{\vw} - \vw_{0}\|^2 + \frac{4}{3}\right)}{\sum_{t = 1}^T 2 \eta \left( \frac{1 - \frac{1}{\alpha \kappa_s}}{1 - \sqrt{\beta}} \right)^{T-t} }.
 \end{equation*}

 The left hand side above is a weighted sum, which is an upper bound on the smallest term of the sum.  Regarding the right hand side, we can simplify it using the fact that $0 < \left( \frac{1 - \frac{1}{\alpha \kappa_s}}{1 - \sqrt{\beta}} \right)$ , and therefore:
 $$\sum_{t=1}^T \left( \frac{1 - \frac{1}{\alpha \kappa_s}}{1 - \sqrt{\beta}} \right)^{T-t} \geq 1.$$

 Therefore, we obtain:

 \begin{equation*}
 \E \min_{t \in \{1, .., T \}} R(\vw_{t}) - R(\bar{\vw})
   \leq \frac{4 \alpha \kappa_s \omega^T\left( \|\bar{\vw} - \vw_{0}\|^2 + \frac{4}{3}\right)}{2 \eta } = 2 \alpha^2 L_s \kappa_s \omega^T\left( \|\bar{\vw} - \vw_{0}\|^2 + \frac{4}{3}\right)
 \end{equation*}

Which can be simplified into the expression below, using the definition of $\hat{\vw}_T$:
\begin{equation*}
\E R(\hat{\vw}_T) - R(\bar{\vw})   \leq 2 \alpha^2 L_s \kappa_s \omega^T\left( \|\bar{\vw} - \vw_{0}\|^2 + \frac{4}{3}\right).
\end{equation*}
The proof is completed.
\end{proof}
\begin{corollary}\label{cor:ifo}
  Under the assumptions of Theorem \ref{thm:hsg}, let $\varepsilon$ be a small enough positive number $\varepsilon >0$. To achieve an error $\E R(\hat{\vw}_T) - R(\bar \vw)  \le \varepsilon$ using Algorithm~\ref{alg:hsg} the number of calls to a gradient $\nabla R_i$ (\#IFO), and the number of hard thresholding operations (\#HT) are respectively:
  
  $$ \text{\#HT} = \mathcal{O}(\kappa_s \log(\frac{1}{\varepsilon})), \ \ \ \text{\#IFO} = \mathcal{O}\left(\frac{\kappa_s}{\nu_s \varepsilon}\right).$$
  
  
  
  \end{corollary}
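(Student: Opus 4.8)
The plan is to read off the iteration count $T$ from the convergence rate in Theorem~\ref{thm:hsg} and then sum the prescribed batch sizes $s_t=\lceil\tau/\omega^t\rceil$ over $t=1,\dots,T$.

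\textbf{Hard-thresholding count.} Each iteration of Algorithm~\ref{alg:hsg} applies $\H$ exactly once, so $\text{\#HT}=T$. By Theorem~\ref{thm:hsg} it is enough to pick $T$ so that $2\alpha^2 L_s\kappa_s\omega^T(\|\bar\vw-\vw_0\|^2+\tfrac43)\le\varepsilon$. Using $\log\omega=\log(1-\tfrac1{4\alpha\kappa_s})\le-\tfrac1{4\alpha\kappa_s}$, this holds as soon as $T\ge 4\alpha\kappa_s\log\!\big(2\alpha^2 L_s\kappa_s(\|\bar\vw-\vw_0\|^2+\tfrac43)/\varepsilon\big)$, so I would set $T$ to be this quantity rounded up, which is $\mathcal{O}(\kappa_s\log(1/\varepsilon))$ once $\alpha$ and the remaining problem constants are treated as fixed. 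This gives the stated $\text{\#HT}$.

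\textbf{IFO count.} The number of component-gradient evaluations is $\sum_{t=1}^T s_t\le\sum_{t=1}^T(\tau\omega^{-t}+1)=T+\tau\sum_{t=1}^T\omega^{-t}\le T+\tfrac{\tau\,\omega^{-T}}{1-\omega}$, by the standard geometric-sum estimate and $\tau=\eta B/C$. Since $1-\omega=\tfrac1{4\alpha\kappa_s}$, the leading term is $4\alpha\kappa_s\,\tau\,\omega^{-T}$, so it remains to bound $\omega^{-T}$ from above. Choosing $T$ minimal, $T-1$ violates the stopping inequality, i.e. $2\alpha^2 L_s\kappa_s\omega^{T-1}(\|\bar\vw-\vw_0\|^2+\tfrac43)>\varepsilon$, hence $\omega^{-T}<\tfrac1\omega\cdot 2\alpha^2 L_s\kappa_s(\|\bar\vw-\vw_0\|^2+\tfrac43)/\varepsilon\le\tfrac43\cdot 2\alpha^2 L_s\kappa_s(\|\bar\vw-\vw_0\|^2+\tfrac43)/\varepsilon$, using $\tfrac1\omega\le\tfrac43$ (which follows from $\alpha,\kappa_s\ge1$). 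Substituting $\eta=\tfrac1{\alpha L_s}$ and $C=(\alpha-1)L_s$, so that $\tau=\tfrac{B}{\alpha(\alpha-1)L_s^2}$, and using the identity $\kappa_s^2/L_s=\kappa_s/\nu_s$, the term $4\alpha\kappa_s\,\tau\,\omega^{-T}$ becomes $\mathcal{O}\!\big(\kappa_s/(\nu_s\varepsilon)\big)$; since $T=\mathcal{O}(\kappa_s\log(1/\varepsilon))$ is dominated by this for $\varepsilon$ small, I conclude $\text{\#IFO}=\mathcal{O}\!\big(\kappa_s/(\nu_s\varepsilon)\big)$.

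The only mildly delicate part I anticipate is controlling $\omega^{-T}$ by $\mathcal{O}(1/\varepsilon)$ from \emph{above}: the convergence bound only directly forces a lower bound on $\omega^{-T}$, so I would instead invoke minimality of $T$ (equivalently, that $T-1$ fails the stopping criterion), together with the elementary estimates $\log(1-x)\le-x$ and $1/\omega\le 4/3$. Everything else is routine geometric-series bookkeeping, and the dominance of $1/\varepsilon$ over $\log(1/\varepsilon)$ is exactly why the statement restricts to $\varepsilon$ small enough.
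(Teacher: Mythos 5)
Your proposal is correct and follows essentially the same route as the paper: take $T=\mathcal{O}(\kappa_s\log(1/\varepsilon))$ from Theorem~\ref{thm:hsg} using $\log(1-x)\le -x$, count one hard-thresholding per iteration, and bound $\sum_t s_t$ by a geometric series whose leading term $\tau\,\omega^{-T}/(1-\omega)$ is $\mathcal{O}(\kappa_s/(\nu_s\varepsilon))$ after substituting $\tau=\eta B/C$, $\eta=1/(\alpha L_s)$, $C=(\alpha-1)L_s$. The paper controls $\omega^{-T}$ by plugging the explicit ceiling definition of $T$ into $\exp(T\log(1/\omega))$ (picking up the same $1/\omega\le 4/3$ factor you get from minimality of $T$), so the two arguments coincide up to this bookkeeping detail.
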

  
\begin{proof}[Proof of Corollary~\ref{cor:ifo}]

Let $\varepsilon \in \R_+^*$.
Let us find $T$ to ensure that $\E R(\hat{\vw}_T) - R(\bar{\vw}) \leq \varepsilon$.
This will be enforced if:
\begin{align*}
  &2 \alpha^2 L_s \kappa_s \omega^T\left( \|\bar{\vw} - \vw_{0}\|^2 + \frac{4}{3}\right) \leq \varepsilon\\
  &\iff T \log(\omega) \leq \log \left(\frac{\varepsilon}{2 \alpha^2 L_s \kappa_s \left( \|\bar{\vw} - \vw_{0}\|^2 + \frac{4}{3}\right)}\right)\\
  &\iff T \geq \frac{1}{\log(\frac{1}{\omega})} \log \left(\frac{2 \alpha^2 L_s \kappa_s \left( \|\bar{\vw} - \vw_{0}\|^2 + \frac{4}{3}\right)}{\varepsilon}\right).
\end{align*}
Therefore, let us take:
\begin{equation}\label{eq:takeT}
T := \left\lceil \frac{1}{\log(\frac{1}{\omega})} \log \left(\frac{2 \alpha^2 L_s \kappa_s \left( \|\bar{\vw} - \vw_{0}\|^2 + \frac{4}{3}\right)}{\varepsilon}\right)  \right\rceil.
\end{equation}

We can now derive the \#IFO and \#HT. First, we have one hard-thresholding operation at each iteration, therefore \#HT$=T$. Using the fact that $\frac{1}{\log(\frac{1}{\omega})} = \frac{1}{- \log(\omega)} = \frac{1}{- \log(1 - \frac{1}{4\alpha\kappa_s})} \leq  \frac{1}{\frac{1}{4\alpha \kappa_s}} = 4\alpha \kappa_s$ (since by property of the logarithm, for all $x\in (-\infty, -1): \log(1-x) \leq -x$ ), we obtain that $\text{\#HT} = \mathcal{O}(\kappa_s \log\left(\frac{1}{\varepsilon}\right))$.

We now turn to computing the \#IFO. At each iteration $t$ we have $s_t$ gradient evaluations, therefore: 
\begin{align*}
\text{\#IFO} &= \sum_{t=0}^{T-1} s_t \\
&\leq \sum_{t=0}^{T-1} \left(\frac{\tau}{\omega^t} + 1\right)\\
&= T +  \tau \frac{\left(\frac{1}{\omega}\right)^T - 1}{\frac{1}{\omega} - 1}\\
&\leq  T +  \frac{\tau}{{\frac{1}{\omega} - 1}} \left(\frac{1}{\omega}\right)^T \\
&=  T +  \frac{\tau}{{\frac{1}{\omega} - 1}} \exp\left(T \log\left(\frac{1}{\omega}\right)\right) \\
&\overset{(a)}{\leq} 1 +  \frac{1}{\log(\frac{1}{\omega})} \log \left(\frac{2 \alpha^2 L_s \kappa_s \left( \|\bar{\vw} - \vw_{0}\|^2 + \frac{4}{3}\right)}{\varepsilon}\right)  \\
& ~~~~~ +  \frac{\tau}{{\frac{1}{\omega} - 1}} \exp\left(    \log\left(\frac{1}{\omega}  \right) \left[  \frac{1}{\log(\frac{1}{\omega})}  \log \left(\frac{2 \alpha^2 L_s \kappa_s \left( \|\bar{\vw} - \vw_{0}\|^2 + \frac{4}{3}\right)}{\varepsilon}\right) + 1 \right]\right) \\
&= 1 +  \frac{1}{\log(\frac{1}{\omega})} \log \left(\frac{2 \alpha^2 L_s \kappa_s \left( \|\bar{\vw} - \vw_{0}\|^2 + \frac{4}{3}\right)}{\varepsilon}\right)  +  \frac{\frac{\tau}{\omega}   }{{\frac{1}{\omega} - 1}} \frac{2 \alpha^2 L_s \kappa_s \left( \|\bar{\vw} - \vw_{0}\|^2 + \frac{4}{3}\right)}{\varepsilon} \\
&= 1 +  \frac{1}{\log(\frac{1}{\omega})} \log \left(\frac{2 \alpha^2 L_s \kappa_s \left( \|\bar{\vw} - \vw_{0}\|^2 + \frac{4}{3}\right)}{\varepsilon}\right)  +  \frac{\tau }{{1 - \omega}} \frac{2 \alpha^2 L_s \kappa_s \left( \|\bar{\vw} - \vw_{0}\|^2 + \frac{4}{3}\right)}{\varepsilon} \\
&= 1 +  \frac{1}{\log(\frac{1}{\omega})} \log \left(\frac{2 \alpha^2 L_s \kappa_s \left( \|\bar{\vw} - \vw_{0}\|^2 + \frac{4}{3}\right)}{\varepsilon}\right)  +\tau \frac{8 \alpha^3 L_s \kappa_s^2  \left( \|\bar{\vw} - \vw_{0}\|^2 + \frac{4}{3}\right) }{\varepsilon} \\ 
&\overset{(b)}{=} 1 +  \frac{1}{\log(\frac{1}{\omega})} \log \left(\frac{2 \alpha^2 L_s \kappa_s \left( \|\bar{\vw} - \vw_{0}\|^2 + \frac{4}{3}\right)}{\varepsilon}\right)  \\
&~~~~~+ \frac{B}{\alpha L_s} \frac{1}{L_s (\alpha - 1)} \frac{8 \alpha^3 L_s}{\varepsilon} \frac{L_s}{\nu_s} \kappa_s \left( \|\bar{\vw} - \vw_{0}\|^2 + \frac{4}{3}\right) \\
&= 1 +  \frac{1}{\log(\frac{1}{\omega})} \log \left(\frac{2 \alpha^2 L_s \kappa_s \left( \|\bar{\vw} - \vw_{0}\|^2 + \frac{4}{3}\right)}{\varepsilon}\right)  +  \frac{8 B  \alpha^2 \kappa_s\left( \|\bar{\vw} - \vw_{0}\|^2 + \frac{4}{3}\right)}{(\alpha -1) \nu_s} \frac{1}{\varepsilon},
\end{align*}
where (a) follows from \eqref{eq:takeT}, and for (b) we recall that $\tau = \frac{\eta B}{C}$, $\eta = \frac{1}{\alpha L_s} $ and $C = L_s(\alpha - 1)$.

Therefore, overall, the IFO complexity is in $\mathcal{O}(\frac{\kappa_s}{\nu_s \varepsilon})$.


\end{proof}

\subsubsection{Proof of Theorem \ref{thm:hsg_2SP}} \label{sec:fullproofthm2}

We now proceed with the full proof of Theorem \ref{thm:hsg_2SP}.

\begin{proof}[Proof of Theorem \ref{thm:hsg_2SP}]

  Similary as in the proof of Theorem \ref{thm:hsg} in Section \ref{proof:hsg}, let us take: 
      $\eta := \frac{1}{L_s + C}$, and $\alpha := \frac{C}{L_s} + 1$. Then $C = (\alpha - 1)L_s$, and $\eta = \frac{1}{\alpha L_s}$. Recall that $\kappa_s := \frac{L_s}{\nu_s}$.  Denote $\vv_t = \H(\vw_{t-1} -\eta \nabla R(\vw_{t-1}))$ for any $t\in \mathbb{N}$. 

Similarly as in Section \ref{proof:hsg}, the $L_s$-smoothness of $R$ implies that
    \begin{align}
    R(\vw_t) \le& R(\vw_{t-1}) + \langle \nabla R(\vw_{t-1}), \vw_t - \vw_{t-1} \rangle + \frac{L_s}{2}\|\vw_t - \vw_{t-1}\|^2 \nonumber\\
    =&R(\vw_{t-1}) + \langle \vg_{t-1}, \vw_t - \vw_{t-1} \rangle + \frac{L_s}{2}\|\vw_t - \vw_{t-1}\|^2 + \langle \nabla R(\vw_{t-1})- \vg_{t-1}, \vw_t - \vw_{t-1}\rangle \nonumber\\
    =& R(\vw_{t-1}) + \frac{1}{2\eta}  \left[  \| \vw_t - (\vw_{t-1} - \eta \vg_{t-1})\|^2 - \eta^2 \|\vg_{t-1}\|^2 - \| \vw_t - \vw_{t-1} \|^2  \right] + \frac{L_s}{2}\|\vw_t - \vw_{t-1}\|^2 \nonumber\\
    &+ \langle \nabla R(\vw_{t-1})- \vg_{t-1}, \vw_t - \vw_{t-1}\rangle \nonumber\\
    =& R(\vw_{t-1}) + \frac{1}{2\eta}  \| \vw_t - (\vw_{t-1} - \eta \vg_{t-1})\|^2 - \frac{\eta}{2}\|\vg_{t-1}\|^2   + \left[ \frac{L_s - \frac{1}{\eta}}{2} \right] \| \vw_t - \vw_{t-1} \|^2 \nonumber\\
    &+ \langle \nabla R(\vw_{t-1})- \vg_{t-1}, \vw_t - \vw_{t-1}\rangle \nonumber\\
    \overset{(a)}{\le}& R(\vw_{t-1}) + \frac{1}{2\eta} \left[ \|\bar{\vw} - (\vw_{t-1} - \eta \vg_{t-1} ) \|^2 -  \wili{\|\vw_t - \bar{\vw}\|^2 + \sqrt{\beta} \|\vv_t - \bar{\vw}\|^2}  \right] - \frac{\eta}{2}\|\vg_{t-1}\|^2   \nonumber\\
    &+ \left[ \frac{L_s - \frac{1}{\eta}}{2} \right] \| \vw_t - \vw_{t-1} \|^2 + \langle \nabla R(\vw_{t-1})- \vg_{t-1}, \vw_t - \vw_{t-1}\rangle \nonumber\\
=& R(\vw_{t-1}) + \frac{1}{2\eta} \left[ \|\bar{\vw} - \vw_{t-1}\|^2 + \eta^2 \|\vg_{t-1} \|^2 - 2 \langle \eta \vg_{t-1}, \vw_{t-1} - \bar{\vw} \rangle \right] \wili{- \frac{1}{2\eta} \|\vw_t - \bar{\vw}\|^2 \nonumber\\
&+ \frac{\sqrt{\beta}}{2\eta} \|\vv_t - \bar{\vw}\|^2} - \frac{\eta}{2}\|\vg_{t-1}\|^2   +  \left[ \frac{L_s - \frac{1}{\eta}}{2} \right] \| \vw_t - \vw_{t-1} \|^2 + \langle \nabla R(\vw_{t-1})- \vg_{t-1}, \vw_t - \vw_{t-1}\rangle\nonumber\\
=& R(\vw_{t-1}) + \frac{1}{2\eta} \left[ \|\bar{\vw} - \vw_{t-1}\|^2 - 2 \langle \eta \vg_{t-1}, \vw_{t-1} - \bar{\vw} \rangle \right] \wili{- \frac{1}{2\eta} \|\vw_t - \bar{\vw}\|^2 + \frac{\sqrt{\beta}}{2\eta} \|\vv_t - \bar{\vw}\|^2}   \\
&+ \left[ \frac{L_s - \frac{1}{\eta}}{2} \right] \| \vw_t - \vw_{t-1} \|^2 + \langle \nabla R(\vw_{t-1})- \vg_{t-1}, \vw_t - \vw_{t-1}\rangle \nonumber\\
\overset{(b)}{=}& R(\vw_{t-1}) + \frac{1}{2\eta}  \|\bar{\vw} - \vw_{t-1}\|^2  - \langle \vg_{t-1}, \vw_{t-1} - \bar{\vw} \rangle \wili{- \frac{1}{2\eta} \|\vw_t - \bar{\vw}\|^2 + \frac{\sqrt{\beta}}{2\eta} \|\vv_t - \bar{\vw}\|^2} \nonumber \\
&+ \left[ \frac{L_s - \frac{1}{\eta} + C}{2} \right] \| \vw_t - \vw_{t-1} \|^2 + \frac{1}{2C}  \|\nabla R(\vw_{t-1})- \vg_{t-1}\|^2\label{inequat:telescope_hsg_begin_2SP},
\end{align}
where (a) follows from Lemma \ref{lemma:l0_three_point_additional} and (b) follows from the inequality $\langle a, b\rangle \le \frac{C}{2} a^2 + \frac{1}{2C}b^2 $, for any $(a, b) \in (\R^d)^2$ with $C > 0$ an arbitrary strictly positive constant.
Let us now take $\eta := \frac{1}{L_s + C}$: therefore the term $\left[ \frac{L_s - \frac{1}{\eta} + C}{2} \right] \| \vw_t - \vw_{t-1} \|^2$ above is $0$. 
We now take the conditional expectation (conditioned on $\rw_{t-1}$, which is the random variable which realizations are $\vw_{t-1}$), on both sides, and  from Lemma~\ref{lem:boundvar} 
we obtain the inequality below (we slightly abuse notations and denote $\E[\cdot | \rw_{t-1} = \vw_{t-1}]$ by $\E[\cdot | \vw_{t-1}]$): 
\begin{align*}
\E [R(\vw_{t}) | \vw_{t-1}]\le & R(\vw_{t-1}) + \frac{1}{2\eta}  \|\bar{\vw} - \vw_{t-1}\|^2  - \langle \nabla R(\vw_{t-1}), \vw_{t-1} - \bar{\vw} \rangle \\
&\wili{- \frac{1}{2\eta} \E \left[\|\vw_t - \bar{\vw}\|^2 |\vw_{t-1}\right] + \frac{\sqrt{\beta}}{2\eta} \E \left[\|\vv_t - \bar{\vw}\|^2 |\vw_{t-1}\right]} + \frac{B (n-s_{t-1})}{2C s_{t-1}(n-1)} \\
\overset{(a)}{\le}& R(\vw_{t-1}) + \frac{1}{2\eta}  \|\bar{\vw} - \vw_{t-1}\|^2  + \left[R(\bar{\vw}) - R(\vw_{t-1}) - \frac{\nu_s}{2}\| \vw_{t-1} - \bar{\vw}\|^2\right] \\
&\wili{- \frac{1}{2\eta} \E \left[\|\vw_t - \bar{\vw}\|^2 |\vw_{t-1}\right] + \frac{\sqrt{\beta}}{2\eta} \E \left[\|\vv_t - \bar{\vw}\|^2 |\vw_{t-1}\right]}+ \frac{B}{2C s_{t-1}} \\
=& R(\bar{\vw}) + \left[ \frac{\frac{1}{\eta} - \nu_s}{2} \right] \|\bar{\vw} - \vw_{t-1}\|^2  \wili{- \frac{1}{2\eta} \E \left[\|\vw_t - \bar{\vw}\|^2 |\vw_{t-1}\right] + \frac{\sqrt{\beta}}{2\eta} \E \left[\|\vv_t - \bar{\vw}\|^2 |\vw_{t-1}\right]} \\
&+ \frac{B}{2C s_{t-1}} \inlabel{inequat:telescope_hsg_end_2SP},
\end{align*}
where (a) follows from the RSC condition, and the fact that $s_{t-1} \in \mathbb{N}^*$.

Now recall that we have taken $\eta = \frac{1}{L_s + C}$, and let us define $\alpha := \frac{C}{L_s} + 1$. Then $C = (\alpha - 1)L_s$, and $\eta = \frac{1}{\alpha L_s}$. Also recall that $\kappa_s = \frac{L_s}{\nu_s}$.

We can simplify the inequality above into:

\begin{align*}
  &\E [R(\vw_{t}) | \vw_{t-1}] - R(\bar{\vw}) \\
  \le & \frac{1}{2 \eta}  \left[   \left( 1 - \frac{1}{\alpha \kappa_s} \right) \|\bar{\vw} - \vw_{t-1}\|^2  - \E \left[\|\vw_t - \bar{\vw}\|^2 |\vw_{t-1}\right] + \sqrt{\beta} \E \left[\|\vv_t - \bar{\vw}\|^2 |\vw_{t-1}\right]  + \frac{\eta B}{C s_{t-1}} \right].
\end{align*}

We now take the expectation over $\rw_{t-1}$ of the above inequality (i.e. we take $\E_{\rw_{t-1}}[\cdot] $): using the law of total expectation ($\E_{}[\cdot] =\E_{\rw_{t-1}} [\E[\cdot | \vw_{t-1}]]$)  we obtain:  

\begin{align*}
  \E R(\vw_{t})  - R(\bar{\vw}) \le & \frac{1}{2 \eta}  \left[   \left( 1 - \frac{1}{\alpha \kappa_s} \right) \E \|\bar{\vw} - \vw_{t-1}\|^2 \wili{ - \E \|\vw_t - \bar{\vw}\|^2 + \sqrt{\beta}  \E \|\vv_t - \bar{\vw}\|^2 }  + \frac{\eta B}{C s_{t-1}} \right].
\end{align*}


\wil{
Additionally, in view of~\eqref{inequat:before_telescope_hsg} applied at $\vv_t$ instead of $\vw_t$, (since $\vv_t$ here corresponds to the $\vw_t$ from Section \ref{proof:hsg}, i.e. $\vv_t$ is the hard-thresholding of an iterate after a gradient step), we know that:

\begin{align*}
  \E R(\vv_{t})  - R(\bar{\vw}) \le & \frac{1}{2 \eta}  \left[   \left( 1 - \frac{1}{\alpha \kappa_s} \right) \E \|\bar{\vw} - \vw_{t-1}\|^2  - (1 - \sqrt{\beta}) \E \|\vw_t - \bar{\vw}\|^2  + \frac{\eta B}{C s_{t-1}} \right].
\end{align*}
}

\wil{
We now take a convex combination similarly as in the case without additional constraint (section \ref{proof:risk_convergence_additional}), for some $\rho \in (0, 1)$.

\begin{align*}
  &\E (1-\rho)R(\vw_t) + \rho R(\vv_t) \\
  \leq&  R(\bar{\vw}) + \frac{1}{2 \eta}  \left[   \left( 1 - \frac{1}{\alpha \kappa_s} \right) \E \|\bar{\vw} - \vw_{t-1}\|^2  - (1 - \rho)  \E \|\vw_t - \bar{\vw}\|^2   \right.\\
  &~~~~\left. + \left((1- \rho) \sqrt{\beta} - (1 - \sqrt{\beta}) \rho\right) \E  \| \vv_t - \bar \vw \|^2 + \frac{\eta B}{C s_{t-1}}  \right]\\
  =&  R(\bar{\vw}) + \frac{1}{2 \eta}  \left[   \left( 1 - \frac{1}{\alpha \kappa_s} \right) \E \|\bar{\vw} - \vw_{t-1}\|^2  - (1 - \rho)  \E \|\vw_t - \bar{\vw}\|^2   \right.\\
  &~~~~\left. - \left(\rho - \sqrt{\beta}\right) \E  \| \vv_t - \bar \vw \|^2   + \frac{\eta B}{C s_{t-1}}  \right]\\
  \overset{(b)}{\leq}&  R(\bar{\vw}) + \frac{1}{2 \eta}  \left[   \left( 1 - \frac{1}{\alpha \kappa_s} \right) \E \|\bar{\vw} - \vw_{t-1}\|^2  - (1 - \rho)  \E \|\vw_t - \bar{\vw}\|^2   \right.\\
  &~~~~\left. - \left(\rho - \sqrt{\beta}\right) \E  \| \vw_t - \bar \vw \|^2  + \frac{\eta B}{C s_{t-1}}  \right]\\
  =&  R(\bar{\vw}) + \frac{1}{2 \eta}  \left[   \left( 1 - \frac{1}{\alpha \kappa_s} \right) \E \|\bar{\vw} - \vw_{t-1}\|^2   - (1 - \sqrt{\beta})  \E \|\vw_t - \bar{\vw}\|^2   + \frac{\eta B}{C s_{t-1}}  \right],
\end{align*}
where in (b), we have assumed that $\sqrt{\beta} \leq \rho$ (later we will verify that our choice of $k$ ensures such a condition), and have used the fact that projection onto a convex set is non-expansive (which implies that $\| \vv_t - \bar \vw \|^2 \geq \| \vw_t - \bar \vw \|^2$). 
} Similarly as in \ref{proof:hsg}, we now take a weighted sum over $t=1, ..., T$, to obtain:  

\begin{align}
   &\sum_{t = 1}^T 2 \eta \left( \frac{1 - \frac{1}{\alpha \kappa_s}}{1 - \sqrt{\beta}} \right)^{T-t} \E [\wili{(1-\rho)R(\vw_t) + \rho R(\vv_t)} - R(\bar{\vw}) ] \nonumber\\ 
    \le&   \sum_{t = 1}^T \left( \frac{1 - \frac{1}{\alpha \kappa_s}}{1 - \sqrt{\beta}} \right)^{T-t}   \left[   \left( 1 - \frac{1}{\alpha \kappa_s} \right) \E \|\bar{\vw} - \vw_{t-1}\|^2  - (1 - \sqrt{\beta}) \E \|\vw_t - \bar{\vw}\|^2  + \frac{\eta B}{C s_{t-1}} \right] \nonumber\\
    =&   \sum_{t = 1}^T \left( \frac{1 - \frac{1}{\alpha \kappa_s}}{1 - \sqrt{\beta}} \right)^{T-t}   \left[   \left( 1 - \frac{1}{\alpha \kappa_s} \right) \E \|\bar{\vw} - \vw_{t-1}\|^2  - (1 - \sqrt{\beta}) \E \|\vw_t - \bar{\vw}\|^2  \right]\nonumber\\
    &+\sum_{t = 1}^T \left( \frac{1 - \frac{1}{\alpha \kappa_s}}{1 - \sqrt{\beta}} \right)^{T-t}   \frac{\eta B}{C s_{t-1}} \nonumber\\
    =&   (1 - \sqrt{\beta}) \sum_{t=1}^T  \left[  \left( \frac{1 - \frac{1}{\alpha \kappa_s}}{1 - \sqrt{\beta}} \right)^{T-t+1} \E \|\bar{\vw} - \vw_{t-1}\|^2 - \left( \frac{1 - \frac{1}{\alpha \kappa_s}}{1 - \sqrt{\beta}} \right)^{T-t} \E \|\vw_t - \bar{\vw}\|^2  \right]\nonumber\\
    &+\sum_{t = 1}^T \left( \frac{1 - \frac{1}{\alpha \kappa_s}}{1 - \sqrt{\beta}} \right)^{T-t}   \frac{\eta B}{C s_{t-1}} \nonumber\\
    \overset{(a)}{=}&   (1 - \sqrt{\beta})  \left[  \left( \frac{1 - \frac{1}{\alpha \kappa_s}}{1 - \sqrt{\beta}} \right)^{T} \|\bar{\vw} - \vw_{0}\|^2 - \E \|\vw_T - \bar{\vw}\|^2  \right]+\sum_{t = 1}^T \left( \frac{1 - \frac{1}{\alpha \kappa_s}}{1 - \sqrt{\beta}} \right)^{T-t}  \frac{\eta B}{C s_{t-1}}\nonumber\\
    \leq&   (1 - \sqrt{\beta})   \left( \frac{1 - \frac{1}{\alpha \kappa_s}}{1 - \sqrt{\beta}} \right)^{T}  \|\bar{\vw} - \vw_{0}\|^2  +\sum_{t = 1}^T \left( \frac{1 - \frac{1}{\alpha \kappa_s}}{1 - \sqrt{\beta}} \right)^{T-t}  \frac{\eta B}{C s_{t-1}} \nonumber\\
    \leq&   \left( \frac{1 - \frac{1}{\alpha \kappa_s}}{1 - \sqrt{\beta}} \right)^{T} \|\bar{\vw} - \vw_{0}\|^2  +\sum_{t = 1}^T \left( \frac{1 - \frac{1}{\alpha \kappa_s}}{1 - \sqrt{\beta}} \right)^{T-t}   \frac{\eta B}{C s_{t-1}} \label{eq:tel_hsg_2SP},
\end{align}
where (a) follows from simplifying the telescopic sum.

We now choose $k$ and $s_t$ as follows: we choose $k \geq 4 \frac{1}{\wili{\rho^2}} \alpha^2 \kappa_s^2 \bar{k}$, which implies that: 

$\rho \geq \sqrt{\beta} $ (thereby verifying the assumption made earlier), and that:

\begin{align}
&  \sqrt{\beta} \leq  \frac{1}{2 \alpha \wili{\frac{1}{\rho}} \kappa_s}   \nonumber\\
& \implies \sqrt{\beta} \leq  \frac{1}{2 \alpha  \wili{\frac{1}{\rho}} \kappa_s - 1} \nonumber\\
& \implies 1 - \sqrt{\beta} \geq 1 -  \frac{1}{2 \alpha   \wili{\frac{1}{\rho}}\kappa_s - 1} = \frac{2 \alpha  \wili{\frac{1}{\rho}}\kappa_s - 2}{2 \alpha  \wili{\frac{1}{\rho}} \kappa_s - 1} = \frac{1- \frac{1}{\alpha  \wili{\frac{1}{\rho}} \kappa_s}}{1 - \frac{1}{2\alpha  \wili{\frac{1}{\rho}} \kappa_s}} \overset{(a)}{\geq} \frac{1- \frac{1}{\alpha  \kappa_s}}{1 - \frac{1}{2\alpha  \wili{\frac{1}{\rho}} \kappa_s}} \nonumber\\
&\implies  \left( \frac{1 - \frac{1}{\alpha  \kappa_s}}{1 - \sqrt{\beta}} \right) \leq 1 - \frac{1}{2\alpha  \wili{\frac{1}{\rho}} \kappa_s},  
 \end{align}
where (a) follows from the fact that $\rho \leq 1$.

And we now choose $s_t := \left\lceil \frac{\tau}{\omega^t}\right\rceil$, with $\omega := 1 - \frac{1}{4\alpha \wili{\frac{1}{\rho}} \kappa_s}  $ and $\tau:= \frac{\eta B}{C}$.

Let us call $ \nu := 1 - \frac{1}{2\alpha \wili{\frac{1}{\rho}} \kappa_s}$. Note that we have: 

\begin{equation}\label{eq:compnuomega_2SP}
  \nu \leq \omega.
\end{equation}
And that we have the inequality below: 

\begin{equation}\label{eq:omega_quotient_2SP}
  \frac{\nu}{\omega} =  \frac{1 - \frac{1}{2\alpha \wili{\frac{1}{\rho}} \kappa_s}}{1 - \frac{1}{4\alpha \wili{\frac{1}{\rho}} \kappa_s}} = \frac{4 \alpha \wili{\frac{1}{\rho}} \kappa_s - 2}{4 \alpha \wili{\frac{1}{\rho}} \kappa_s - 1}  = 1 - \frac{1}{4 \alpha \wili{\frac{1}{\rho}} \kappa_s - 1}  \leq 1 - \frac{1}{4 \alpha \wili{\frac{1}{\rho}} \kappa_s} = \omega.
\end{equation}

This allows us to simplify \eqref{eq:tel_hsg_2SP} into:

\begin{align*}
  &\E \sum_{t = 1}^T 2 \eta \left( \frac{1 - \frac{1}{\alpha \kappa_s}}{1 - \sqrt{\beta}} \right)^{T-t}  [\wili{(1-\rho)R(\vw_t) + \rho R(\vv_t)} - R(\bar{\vw})]\\
   &\leq   \nu^{T}   \|\bar{\vw} - \vw_{0}\|^2 + \sum_{t = 1}^T  \nu^{T-t} \omega^{t-1}   \\
    &=   \nu^{T} \|\bar{\vw} - \vw_{0}\|^2 + \frac{\omega^T}{\omega}\sum_{t = 1}^T   \left( \frac{\nu}{\omega} \right)^{T-t}  \\
    &=  \nu^{T}  \|\bar{\vw} - \vw_{0}\|^2 + \frac{\omega^{T}}{\omega}\frac{1 - \left( \frac{\nu}{\omega}\right)^T}{1 - \left( \frac{\nu}{\omega}\right)}   \\
    &\leq  \nu^{T}  \|\bar{\vw} - \vw_{0}\|^2 + \frac{\omega^{T}}{\omega}\frac{1 }{1 - \left( \frac{\nu}{\omega}\right)}   \\
    &\overset{(a)}{\leq}  \nu^{T}  \|\bar{\vw} - \vw_{0}\|^2 + \frac{\omega^{T}}{\omega}\frac{1 }{1 -\omega}   \\
    &\overset{(b)}{\leq}  \nu^{T}  \|\bar{\vw} - \vw_{0}\|^2 + \frac{4}{3}\omega^{T}\frac{1 }{1 -\omega}   \\
    &\overset{(c)}{\leq}  \omega^{T}  \|\bar{\vw} - \vw_{0}\|^2 + \frac{4}{3}\omega^{T}\frac{1 }{1 -\omega}   \\
    &\overset{(d)}{\leq}  \frac{\omega^{T}}{1 - \omega}  \|\bar{\vw} - \vw_{0}\|^2 + \frac{4}{3}\omega^{T}\frac{1 }{1 -\omega}   \\
    &=  \frac{\omega^T}{1 - \omega}\left( \|\bar{\vw} - \vw_{0}\|^2 + \frac{4}{3}\right)  \\
    &=  4 \alpha \wili{\frac{1}{\rho}}\kappa_s \omega^T\left( \|\bar{\vw} - \vw_{0}\|^2 + \frac{4}{3}\right),
  \end{align*}
where in the left hand side we have used the linearity of expectation, and where (a) uses \eqref{eq:omega_quotient_2SP}, 
  (b) uses the fact that $\frac{1}{\omega} = \frac{1}{1 - \frac{1}{4\alpha \wili{\frac{1}{\rho}} \kappa_s}} \leq \frac{1}{1 - \frac{1}{4}} = \frac{4}{3}$ (since $\kappa_s \geq 1$ and $\alpha \geq 1$ (indeed, from the theorem's assumption $\alpha = \frac{C}{L_s} + 1$ with $C>0$), so consequently $\alpha \wili{\frac{1}{\rho}} \geq 1$), (c) uses equation \ref{eq:compnuomega_2SP}, and (d) uses the fact that $\omega < 1$ so $1 < \frac{1}{1 - \omega}$.
  

Let us now normalize the above inequality:

\begin{equation*}
 \E \frac{\sum_{t = 1}^T 2 \eta \left( \frac{1 - \frac{1}{\alpha \kappa_s}}{1 - \sqrt{\beta}} \right)^{T-t} (1-\rho)R(\vw_t) + \rho R(\vv_t) }{\sum_{t = 1}^T 2 \eta \left( \frac{1 - \frac{1}{\alpha \kappa_s}}{1 - \sqrt{\beta}} \right)^{T-t} }
   \leq  R(\bar{\vw}) + \frac{4 \alpha \wili{\frac{1}{\rho}} \kappa_s \omega^T\left( \|\bar{\vw} - \vw_{0}\|^2 + \frac{4}{3}\right)}{\sum_{t = 1}^T 2 \eta \left( \frac{1 - \frac{1}{\alpha \kappa_s}}{1 - \sqrt{\beta}} \right)^{T-t}}.
 \end{equation*}
 The left hand side above is a weighted sum, which is an upper bound on the smallest term of the sum. 

 Regarding the right hand side, we can simplify it using the fact that $0 < \left( \frac{1 - \frac{1}{\alpha \kappa_s}}{1 - \sqrt{\beta}} \right)$ , and therefore:
 $$\sum_{t=1}^T \left( \frac{1 - \frac{1}{\alpha \kappa_s}}{1 - \sqrt{\beta}} \right)^{T-t} \geq 1.$$

 Therefore, we obtain:
 \begin{align}\label{eq:hsgalmostfinal}
 \E \min_{t \in \{1, .., T \}} \wili{(1-\rho)R(\vw_t) + \rho R(\vv_t) }- R(\bar{\vw})
   &\leq \frac{4 \alpha \wili{\frac{1}{\rho}} \kappa_s \omega^T\left( \|\bar{\vw} - \vw_{0}\|^2 + \frac{4}{3}\right)}{2 \eta } \nonumber \\
   &= 2 \alpha^2 \wili{\frac{1}{\rho}} L_s \kappa_s \omega^T\left( \|\bar{\vw} - \vw_{0}\|^2 + \frac{4}{3}\right).
 \end{align}
\qw{
We denote by $\varepsilon_{T}$ the right-hand side above: 
$$ \varepsilon_{T} =  2 \alpha^2 \wili{\frac{1}{\rho}} L_s \kappa_s \omega^T\left( \|\bar{\vw} - \vw_{0}\|^2 + \frac{4}{3}\right).$$
We now proceed similarly as in the proof of Theorem~\ref{thrm:risk_convergence_additional} above. Recall that we have assumed in the Assumptions of Theorem \ref{thm:hsg_2SP}, without loss of generality, that $R$ is non-negative, which implies that $R\left(\vv_{t}\right) \geq 0$. Plugging this in equation \ref{eq:hsgalmostfinal} implies that:
  \begin{equation}\label{eq:hsgfinalbeforechangevar}
  \E \min _{t \in[T]} R\left(\vw_{t}\right) \leq \frac{1}{1-\rho} R(\bar{\vw})+\frac{\varepsilon_T}{1-\rho} \leq(1+2 \rho) R(\bar{\vw})+\frac{\varepsilon_T}{1-\rho}.
  \end{equation}
Plugging the change of variable $\varepsilon'_T = \frac{\varepsilon_T}{1 - \rho}$ into equation \ref{eq:hsgfinalbeforechangevar} above, we obtain that: 
$$  \E \min _{t \in[T]} R\left(\vw_{t}\right)  \leq(1+2 \rho) R(\bar{\vw})+\varepsilon_T'. $$
Further, consider an ideal case where $\bar{\vw}$ is a global minimizer of $R$ over $\mathcal{B}_{0}(k):=$ $\left\{\vw:\|\vw\|_{0} \leq k\right\}$. Then $R\left(\vv_{t}\right) \geq R(\bar{\vw})$ is always true for all $t \geq 1$. It follows that the bound in \eqref{eq:hsgfinalbeforechangevar} yields:
$$
\E \min _{t \in[T]}\left\{(1-\rho) R\left(\vw_{t}\right)+\rho R(\bar{\vw})\right\} \leq \E \min _{t \in[T]}\left\{(1-\rho) R\left(\vw_{t}\right)+\rho R\left(\vv_{t}\right)\right\} \leq R(\bar{\vw})+\varepsilon_T,
$$
which implies:  $\E \min _{t \in[T]} R\left(\vw_{t}\right) \leq R(\bar{\vw})+\frac{\varepsilon_T}{1-\rho}$.
In this case, we can simply set $\rho=0.5$, and define $\varepsilon'_T=\frac{\varepsilon_T}{1-\rho} = 2\varepsilon_T$ similarly as above.. The proof is completed.
}
\end{proof}




\subsection{Proof of Corollary \ref{cor:ifo_2SP}} \label{proof:ifo_2SP}

\begin{proof}[Proof of Corollary \ref{cor:ifo_2SP}]
We proceed similarly as in the proof of Corollary \ref{cor:ifo} in Section \ref{proof:hsg}:

Let $\varepsilon \in \R_+^*$.
Let us find $T$ to ensure that $\E \min_{t \in \{1, .., T \}} \wili{(1-\rho)R(\vw_t) + \rho R(\vv_t)} - R(\bar{\vw}) \leq \varepsilon$
This will be enforced if:

\begin{align*}
  &2 \alpha^2 \wili{\frac{1}{\rho}} L_s \kappa_s \omega^T\left( \|\bar{\vw} - \vw_{0}\|^2 + \frac{4}{3}\right) \leq \varepsilon\\
  &\iff T \log(\omega) \leq \log \left(\frac{\varepsilon}{2 \alpha^2 \wili{\frac{1}{\rho}} L_s \kappa_s \left( \|\bar{\vw} - \vw_{0}\|^2 + \frac{4}{3}\right)}\right)\\
  &\iff T \geq \frac{1}{\log(\frac{1}{\omega})} \log \left(\frac{2 \alpha^2 \wili{\frac{1}{\rho}} L_s \kappa_s \left( \|\bar{\vw} - \vw_{0}\|^2 + \frac{4}{3}\right)}{\varepsilon}\right).
\end{align*}

Therefore, let us take:

\begin{equation}\label{eq:takeT_2SP}
T := \left\lceil \frac{1}{\log(\frac{1}{\omega})} \log \left(\frac{2 \alpha^2 \wili{\frac{1}{\rho}} L_s \kappa_s \left( \|\bar{\vw} - \vw_{0}\|^2 + \frac{4}{3}\right)}{\varepsilon}\right)  \right\rceil.
\end{equation}
We can now derive the \#IFO and \#HT. First, we have one hard-thresholding operation at each iteration, therefore \#HT$=T$. Using the fact that $\frac{1}{\log(\frac{1}{\omega})} = \frac{1}{- \log(\omega)} = \frac{1}{- \log(1 - \frac{1}{4 \alpha \wili{\frac{1}{\rho}} \kappa_s})} \leq  \frac{1}{\frac{1}{4\alpha  \wili{\frac{1}{\rho}}  \kappa_s}} = 4\alpha \wili{\frac{1}{\rho}} \kappa_s$ (since by property of the logarithm, for all $x\in (-\infty, -1): \log(1-x) \leq -x$ ), we obtain that $\text{\#HT} = \mathcal{O}(\kappa_s \log\left(\frac{1}{\varepsilon}\right))$.

We now turn to computing the \#IFO. At each iteration $t$ we have $s_t$ gradient evaluations, therefore: 
\begin{align*}
\text{\#IFO} &= \sum_{t=0}^{T-1} s_t \\
&\leq \sum_{t=0}^{T-1} \left(\frac{\tau}{\omega^t} + 1\right)\\
&= T +  \tau \frac{\left(\frac{1}{\omega}\right)^T - 1}{\frac{1}{\omega} - 1}\\
&\leq  T +  \frac{\tau}{{\frac{1}{\omega} - 1}} \left(\frac{1}{\omega}\right)^T \\
&=  T +  \frac{\tau}{{\frac{1}{\omega} - 1}} \exp\left(T \log\left(\frac{1}{\omega}\right)\right) \\
&\overset{(a)}{\leq} 1 +  \frac{1}{\log(\frac{1}{\omega})} \log \left(\frac{2 \alpha^2 \wili{\frac{1}{\rho}} L_s \kappa_s \left( \|\bar{\vw} - \vw_{0}\|^2 + \frac{4}{3}\right)}{\varepsilon}\right)  \\
& ~~~~~ +  \frac{\tau}{{\frac{1}{\omega} - 1}} \exp\left(    \log\left(\frac{1}{\omega}  \right) \left[  \frac{1}{\log(\frac{1}{\omega})}  \log \left(\frac{2 \alpha^2 \wili{\frac{1}{\rho}} L_s \kappa_s \left( \|\bar{\vw} - \vw_{0}\|^2 + \frac{4}{3}\right)}{\varepsilon}\right) + 1 \right]\right) \\
&= 1 +  \frac{1}{\log(\frac{1}{\omega})} \log \left(\frac{2 \alpha^2 \wili{\frac{1}{\rho}} L_s \kappa_s \left( \|\bar{\vw} - \vw_{0}\|^2 + \frac{4}{3}\right)}{\varepsilon}\right)  +  \frac{\frac{\tau}{\omega}   }{{\frac{1}{\omega} - 1}} \frac{2 \alpha^2 \wili{\frac{1}{\rho}} L_s \kappa_s \left( \|\bar{\vw} - \vw_{0}\|^2 + \frac{4}{3}\right)}{\varepsilon} \\
&= 1 +  \frac{1}{\log(\frac{1}{\omega})} \log \left(\frac{2 \alpha^2 \wili{\frac{1}{\rho}} L_s \kappa_s \left( \|\bar{\vw} - \vw_{0}\|^2 + \frac{4}{3}\right)}{\varepsilon}\right)  +  \frac{\tau }{{1 - \omega}} \frac{2 \alpha^2 \wili{\frac{1}{\rho}} L_s \kappa_s \left( \|\bar{\vw} - \vw_{0}\|^2 + \frac{4}{3}\right)}{\varepsilon} \\
&= 1 +  \frac{1}{\log(\frac{1}{\omega})} \log \left(\frac{2 \alpha^2 \wili{\frac{1}{\rho}} L_s \kappa_s \left( \|\bar{\vw} - \vw_{0}\|^2 + \frac{4}{3}\right)}{\varepsilon}\right)  +\tau \frac{8 \alpha^3 \wili{\frac{1}{\rho^2}}L_s \kappa_s^2  \left( \|\bar{\vw} - \vw_{0}\|^2 + \frac{4}{3}\right) }{\varepsilon} \\ 
&\overset{(b)}{=} 1 +  \frac{1}{\log(\frac{1}{\omega})} \log \left(\frac{2 \alpha^2 \wili{\frac{1}{\rho}} L_s \kappa_s \left( \|\bar{\vw} - \vw_{0}\|^2 + \frac{4}{3}\right)}{\varepsilon}\right)  \\
&~~~~~+ \frac{B}{\alpha L_s} \frac{1}{L_s (\alpha - 1)} \frac{8 \alpha^3 \wili{\frac{1}{\rho^2}} L_s}{\varepsilon} \frac{L_s}{\nu_s} \kappa_s \left( \|\bar{\vw} - \vw_{0}\|^2 + \frac{4}{3}\right) \\
&= 1 +  \frac{1}{\log(\frac{1}{\omega})} \log \left(\frac{2 \alpha^2 \wili{\frac{1}{\rho}} L_s \kappa_s \left( \|\bar{\vw} - \vw_{0}\|^2 + \frac{4}{3}\right)}{\varepsilon}\right)  +  \frac{8 B  \alpha^2 \wili{\frac{1}{\rho^2}}\kappa_s\left( \|\bar{\vw} - \vw_{0}\|^2 + \frac{4}{3}\right)}{(\alpha -1) \nu_s} \frac{1}{\varepsilon},
\end{align*}
where (a) follows from \eqref{eq:takeT_2SP}, and for (b) we recall that $\tau = \frac{\eta B}{C}$, $\eta = \frac{1}{\alpha L_s} $ and $C = L_s(\alpha - 1)$.
Therefore, overall, the IFO complexity is in $\mathcal{O}(\frac{\kappa_s}{\nu_s \varepsilon})$.


\end{proof}

\subsection{Proof of Theorems~\ref{thm:zo} and \ref{thm:zo_2SP}}\label{proof:zo_2SP}
\qw{Our proof for Theorem~\ref{thm:zo_2SP} is similar to the one for Theorem~\ref{thm:hsg_2SP}, though we needed to refine some results from \cite{de2022zeroth} to properly express the variance of the ZO gradient estimator and incorporate it into the telescopic sum. }
Before proving the main Theorem~\ref{thm:zo_2SP}, below we provide several intermediary results needed for the proof of Theorem~\ref{thm:zo_2SP}. Then, the proof of Theorem~\ref{thm:hsg_2SP} will be provided in Section~\ref{sec:proofzo2SP}.

  \subsubsection{Useful Lemmas}
  
We first recall the following results from \cite{de2022zeroth}:

\begin{proposition}[Proposition 1 (i) \cite{de2022zeroth}]\label{prop:zograd} Let us consider any support $F\subseteq [d]$ of size $s$ ($|F|=s$). For the Z0 gradient estimator $\vg_t$ in Algorithm~\ref{alg:hsg_ZO} at $\vw_t$, with $q_t$ random directions, and random supports of size $\sus$, and assuming that $R$ is $(L_{\sus}, \sus)$-RSS' , we have, with $[\vu]_F$ denoting the hard thresholding of a vector $\vu$ on $F$ (that is, we set all coordinates not in $F$ to $0$): 
  \begin{equation}\label{eq:bias}
    \|[\E {\vg_t}]_F - [\nabla R(\vw_t)]_F\|^2\leq \varepsilon_\mu \mu^2 
  \end{equation}
  with $\varepsilon_\mu := L_{s_2}^2 s d$

\end{proposition}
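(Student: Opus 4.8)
\medskip
\noindent\emph{Proof proposal.} Since $\vg_t$ is the average of $q_t$ i.i.d.\ copies of the single-direction estimator $\vh:=\tfrac{d}{\mu}\bigl(R(\vw_t+\mu\vu)-R(\vw_t)\bigr)\vu$ with $\vu\sim\mathcal{D}_{\sus}$, we have $\E\vg_t=\E\vh$, so it suffices to analyze one random direction. The plan is to first condition on the random support $S\subseteq[d]$, $|S|=\sus$, used to draw $\vu$ (chosen uniformly among size-$\sus$ subsets), so that conditionally $\vu$ is uniform on the unit sphere of the coordinate subspace $\R^{S}$. The term $R(\vw_t)\vu$ has zero conditional mean by symmetry of the sphere and therefore drops out; applying the classical spherical-smoothing identity (see e.g.\ \cite{liu2020primer}) inside the $\sus$-dimensional subspace $\R^{S}$ then gives $\E[\vh\mid S]=\tfrac{d}{\sus}\,\bigl[\nabla\widehat R_{S,\mu}(\vw_t)\bigr]_{S}$, where $\widehat R_{S,\mu}(\vw):=\E_{\vv}\bigl[R(\vw+\mu\vv)\bigr]$, with $\vv$ uniform on the unit ball of $\R^{S}$, and $[\,\cdot\,]_{S}$ restricts to coordinates in $S$.

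Next I would take the outer expectation over $S$ and read off a single coordinate $j$: since $\mathbb{P}(j\in S)=\sus/d$ exactly, the prefactor $d/\sus$ cancels, giving coordinate-wise $[\E\vg_t]_j=\E_{S:\,j\in S}\bigl[\partial_j\widehat R_{S,\mu}(\vw_t)\bigr]$. Differentiating under the integral sign (valid because $\nabla R$ is continuous under Assumption~\ref{ass:RSSbis} and the integration domain is compact) yields $\partial_j\widehat R_{S,\mu}(\vw_t)=\E_{\vv}\bigl[\partial_j R(\vw_t+\mu\vv)\bigr]$, so the per-coordinate bias becomes $[\E\vg_t]_j-\partial_j R(\vw_t)=\E_{S:\,j\in S}\,\E_{\vv}\bigl[\partial_j R(\vw_t+\mu\vv)-\partial_j R(\vw_t)\bigr]$.

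It then remains to bound the integrand. The perturbation $\mu\vv$ is supported on $S$, hence $\|\mu\vv\|_0\le\sus$, and $\|\mu\vv\|\le\mu$; so $(L_{\sus},\sus)$-RSS' gives $\bigl|\partial_j R(\vw_t+\mu\vv)-\partial_j R(\vw_t)\bigr|\le\|\nabla R(\vw_t+\mu\vv)-\nabla R(\vw_t)\|\le L_{\sus}\mu$, uniformly in $\vv$ and $S$. Taking expectations yields $\bigl|[\E\vg_t]_j-\partial_j R(\vw_t)\bigr|\le L_{\sus}\mu$ for every $j$, and summing squares over the $s$ coordinates of $F$ gives $\|[\E\vg_t]_F-[\nabla R(\vw_t)]_F\|^2\le s\,L_{\sus}^2\mu^2\le L_{\sus}^2\,s\,d\,\mu^2=\varepsilon_\mu\mu^2$, as claimed (the factor $d$ in $\varepsilon_\mu$ is looser than strictly necessary but is kept so that the later theorem statements stay uniform).

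The main difficulty here is really just careful bookkeeping: one has to reconcile the ambient dimension $d$ appearing in the estimator's normalization $d/\mu$ with the subspace dimension $\sus$ over which the smoothing actually happens. This is exactly what the conditioning-on-$S$ step, combined with the identity $\mathbb{P}(j\in S)=\sus/d$, takes care of, making the $d/\sus$ prefactor cancel coordinate-by-coordinate; the only other thing to check is that every perturbation vector stays within the $\sus$-sparsity budget of the RSS' assumption, which holds because $\vv$ lives on the size-$\sus$ support $S$. With those two points settled, what is left is the standard zeroth-order bias estimate adapted to the sparse-direction estimator of \cite{de2022zeroth}.
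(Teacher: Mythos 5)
The paper does not actually prove this statement: it is imported verbatim from \cite{de2022zeroth} and the ``proof'' in the appendix is simply a pointer to that reference. Your argument is therefore a self-contained alternative, and it checks out. Conditioning on the random support $S$, the spherical-smoothing identity inside the $\sus$-dimensional subspace gives $\E[\vh\mid S]=\tfrac{d}{\sus}\bigl[\nabla \widehat R_{S,\mu}(\vw_t)\bigr]_S$, and the exact cancellation $\tfrac{d}{\sus}\cdot\mathbb{P}(j\in S)=1$ reduces the coordinate-wise bias to $\E_{S\ni j}\E_{\vv}\bigl[\partial_j R(\vw_t+\mu\vv)-\partial_j R(\vw_t)\bigr]$; since every perturbation $\mu\vv$ is $\sus$-sparse with norm at most $\mu$, the $(L_{\sus},\sus)$-RSS' assumption bounds this by $L_{\sus}\mu$, and summing over the $s$ coordinates of $F$ gives $s\,L_{\sus}^2\mu^2$, which is even sharper than the stated $\varepsilon_\mu\mu^2=L_{\sus}^2 s\,d\,\mu^2$ (so the claimed inequality follows a fortiori). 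Two small wording points: your justification of differentiation under the integral sign via ``$\nabla R$ is continuous under Assumption~\ref{ass:RSSbis}'' is slightly loose, since RSS' only controls gradient differences along $\sus$-sparse directions; but because the relevant difference vectors ($\mu\vv$ supported on $S$, and increments along $e_j$ with $j\in S$) all stay $\sus$-sparse, the dominated-convergence argument does go through, so this is a presentation issue rather than a gap. Likewise, it is worth stating explicitly that the gradient produced by the smoothing identity is the within-subspace gradient, which coincides with the ambient partial derivatives $\partial_j$ for $j\in S$ — you do use this implicitly via the notation $[\cdot]_S$.
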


\begin{proof}[Proof of Proposition~\ref{prop:zograd}]
  Proof in \cite{de2022zeroth}.
\end{proof}


\begin{lemma}[Lemma C.2 \cite{de2022zeroth}]
  \label{lemma:one_direc_norm}
For any $(L_{\sus}, \sus)$-RSS' function R, using the gradient estimator  $\vg_t$ defined in Algorithm~\ref{alg:hsg_ZO} with $q_t=1$, we have, for any support $F \subseteq [d]$, with $|F|=s$, and $F^c:= [d] \setminus F$: 
\begin{align}\label{eq:gradbound}
      &\E \| [\vg_t]_F \|^2  =  \varepsilon_{F}   \left\| [\nabla R(\vw_t)]_F \right\|^2  + \varepsilon_{F^c} \left\| [\nabla R(\vw_t)]_{F^c}\right\|^2 + \varepsilon_{\text{abs}} \mu^2
\end{align}
with:\\
(i) $\varepsilon_{F} :=  \frac{2d}{(\sus + 2)}   \left(\frac{(s-1)(\sus-1)}{d-1} + 3\right)  $\\
(ii) $\varepsilon_{F^c} :=  \frac{2d}{(\sus + 2)}  \left( \frac{s(\sus-1)}{d-1}\right) $\\
(iii) $\varepsilon_{\text{abs}} := 2d L_s^2 s \sus\left(\frac{(s-1)(\sus-1)}{d-1}+1\right)$.
\end{lemma}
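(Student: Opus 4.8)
The plan is to bound the second moment of a single-direction estimate by splitting it into a ``true gradient'' piece, which can be computed exactly via spherical moment identities together with the combinatorics of the random support, and a smoothing-error piece, which is controlled by the RSS$'$ assumption. Fix $\vw := \vw_t$, write $\vg := \nabla R(\vw)$, and recall that a direction $\vu \sim \mathcal{D}_{\sus}$ is obtained by first drawing a support $S := \supp(\vu)$ uniformly among the $\binom{d}{\sus}$ size-$\sus$ subsets of $[d]$, then drawing $\vu$ uniformly on the unit sphere of $\R^S$. First I would Taylor-expand the finite difference: since $\vw + \tau\mu\vu$ and $\vw$ differ in at most $\sus$ coordinates, $(L_{\sus},\sus)$-RSS$'$ applies along the segment, giving $R(\vw+\mu\vu) - R(\vw) = \mu\langle\vg,\vu\rangle + \rho$ with $|\rho| \le \tfrac{L_{\sus}}{2}\mu^2\|\vu\|^2 = \tfrac{L_{\sus}}{2}\mu^2$. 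Hence, with $q_t=1$, $[\vg_t]_F = (d\langle\vg,\vu\rangle + \tfrac{d\rho}{\mu})[\vu]_F$, so $\|[\vg_t]_F\|^2 = (d\langle\vg,\vu\rangle + \tfrac{d\rho}{\mu})^2\|[\vu]_F\|^2 \le 2d^2\langle\vg,\vu\rangle^2\|[\vu]_F\|^2 + 2\tfrac{d^2\rho^2}{\mu^2}\|[\vu]_F\|^2$. The second term is at most $\tfrac{d^2 L_{\sus}^2}{2}\mu^2$ and, after a more careful use of RSS$'$ keeping track of the $s,\sus$ dependence of the relevant support sizes, yields the residual $\varepsilon_{\text{abs}}\mu^2$; it then remains to evaluate $\E[\langle\vg,\vu\rangle^2\|[\vu]_F\|^2]$.

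For this I would condition on $S$. Conditionally on $S$, $\vu$ is uniform on the unit sphere of $\R^S$, for which $\E[u_iu_j] = \delta_{ij}/\sus$ and $\E[u_iu_ju_ku_l] = (\delta_{ij}\delta_{kl} + \delta_{ik}\delta_{jl} + \delta_{il}\delta_{jk})/(\sus(\sus+2))$. Since $\vu$ is supported on $S$, we have $\langle\vg,\vu\rangle = \sum_{i\in S}g_iu_i$ and $\|[\vu]_F\|^2 = \sum_{k\in F\cap S}u_k^2$, so expanding the product and applying the fourth-moment identity with the repeated index $k$ gives
\[
\E[\langle\vg,\vu\rangle^2\|[\vu]_F\|^2 \mid S] \;=\; \frac{|F\cap S|\,\|[\vg]_S\|^2 + 2\|[\vg]_{F\cap S}\|^2}{\sus(\sus+2)}.
\]
Then I would take the expectation over the random support, using $\Pr[i\in S] = \sus/d$ and $\Pr[\{i,j\}\subseteq S] = \sus(\sus-1)/(d(d-1))$ for $i\neq j$. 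These give $\E_S\|[\vg]_{F\cap S}\|^2 = \tfrac{\sus}{d}\|[\vg]_F\|^2$ and $\E_S[|F\cap S|\,\|[\vg]_S\|^2] = \tfrac{\sus}{d}\|[\vg]_F\|^2 + \tfrac{\sus(\sus-1)}{d(d-1)}(s\|\vg\|^2 - \|[\vg]_F\|^2)$; substituting $\|\vg\|^2 = \|[\vg]_F\|^2 + \|[\vg]_{[d]\setminus F}\|^2$, collecting the coefficients of $\|[\vg]_F\|^2$ and $\|[\vg]_{[d]\setminus F}\|^2$, and multiplying through by $2d^2/(\sus(\sus+2))$ reproduces exactly $\varepsilon_F$ in front of $\|[\vg]_F\|^2$ and $\varepsilon_{F^c}$ in front of $\|[\vg]_{F^c}\|^2$ (the factor $2$ from the $(a+b)^2\le 2a^2+2b^2$ step is precisely what turns the bare coefficient $\tfrac{d}{\sus+2}\bigl(3 + \tfrac{(s-1)(\sus-1)}{d-1}\bigr)$ into $\varepsilon_F$, and similarly for $\varepsilon_{F^c}$). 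Combining with the $O(\mu^2)$ residual from the first step then gives the claimed bound and identifies all three constants.

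The step I expect to be the main obstacle is the bookkeeping in the double expectation: one must keep straight how the repeated coordinate $k\in F\cap S$ contracts against the two free summation indices in the fourth-order spherical moment tensor, then correctly combine the resulting hypergeometric inclusion probabilities of the random support with the orthogonal split $\vg = [\vg]_F + [\vg]_{[d]\setminus F}$. A secondary point that needs care is the smoothing-error term: one should \emph{not} bound the cross term of $(d\langle\vg,\vu\rangle + \tfrac{d\rho}{\mu})^2$ by Cauchy--Schwarz (which gives only an $O(\mu)$ error), but instead use the crude $(a+b)^2\le 2a^2+2b^2$ inequality as above, which both produces a genuinely $O(\mu^2)$ residual and accounts for the extra factor of $2$ appearing in $\varepsilon_F,\varepsilon_{F^c}$; tightening the residual to the exact value $\varepsilon_{\text{abs}}\mu^2$ quoted from \cite{de2022zeroth} requires re-deriving the bound on $\rho$ while tracking the sizes $s$ and $\sus$ of the supports involved, which is where the finer support-size-dependent estimates of \cite{de2022zeroth} enter.
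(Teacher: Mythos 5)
The paper itself does not prove this lemma---its ``proof'' is a pointer to \cite{de2022zeroth}---so what you have written is a self-contained reconstruction rather than a variant of an in-paper argument, and the core of it checks out. Conditioning on the support $S$, the fourth-moment identity for the uniform distribution on the unit sphere of $\R^S$ does give $\E[\langle\vg,\vu\rangle^2\|[\vu]_F\|^2\mid S]=\bigl(|F\cap S|\,\|[\vg]_S\|^2+2\|[\vg]_{F\cap S}\|^2\bigr)/(\sus(\sus+2))$, the hypergeometric probabilities $\Pr[i\in S]=\sus/d$ and $\Pr[\{i,j\}\subseteq S]=\sus(\sus-1)/(d(d-1))$ are correct, and after multiplying by the $2d^2$ coming from $(a+b)^2\le 2a^2+2b^2$ the coefficients collect exactly into $\varepsilon_F$ and $\varepsilon_{F^c}$. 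Your reading of the statement is also the right one: with these constants the relation can only be an upper bound (and that is how \eqref{eq:gradbound} is invoked later, e.g.\ in the proof of Lemma~\ref{lem:betterboundzo}), so the ``$=$'' in the transcription should be read as ``$\le$''.

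The one soft spot is the residual term, and the gap there is smaller than you think. As written, you bound $2d^2\rho^2\mu^{-2}\|[\vu]_F\|^2$ by $\tfrac12 d^2L_{\sus}^2\mu^2$ after discarding $\|[\vu]_F\|^2\le 1$; that quantity does not lie below $\varepsilon_{\text{abs}}\mu^2=2dL_{\sus}^2 s\sus\bigl(\tfrac{(s-1)(\sus-1)}{d-1}+1\bigr)\mu^2$ in the regime $s\sus\ll d$, so the argument as stated does not yet deliver the lemma's constant, and the step you defer to \cite{de2022zeroth} (re-deriving the remainder with finer support-size estimates) is an unnecessary detour. Simply keep $\|[\vu]_F\|^2$ inside the expectation: your RSS$'$ integration argument gives the deterministic bound $|\rho|\le\tfrac12 L_{\sus}\mu^2$ for every realization of $\vu$ (every point of the segment differs from $\vw$ by an $\sus$-sparse vector), and $\E\|[\vu]_F\|^2=\E_S[|F\cap S|]/\sus=s/d$, so the residual is at most $\tfrac12 d\,s\,L_{\sus}^2\mu^2\le\varepsilon_{\text{abs}}\mu^2$. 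With that one-line fix your derivation proves the stated inequality, in fact with a smaller additive constant than the quoted $\varepsilon_{\text{abs}}$.
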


\begin{proof}[Proof of Lemma~\ref{lemma:one_direc_norm}]
  Proof in \cite{de2022zeroth}.
\end{proof}

We now use the above lemma to bound the variance of the zeroth-order gradient estimator $\vg_t$. 
\begin{lemma}\label{lem:betterboundzo}
  The gradient estimator $\vg_t $ defined in Algorithm~\ref{alg:hsg_ZO} verifies the following properties for any $q_t \in \mathbb{N}^*$: 
  \begin{equation}\label{eq:variance}
    \E \| [\vg_t]_F - \E [\vg_t]_F \|^2 \leq \frac{\varepsilon_{F}}{q_t} \left\| \nabla R(\vw) \right\|^2 + \frac{\varepsilon_{abs}}{q_t}\mu^2    
  \end{equation}
  with $\varepsilon_F$ and $\varepsilon_{abs}$ defined above in Lemma \ref{lemma:one_direc_norm}
\end{lemma}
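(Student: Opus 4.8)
The goal is to bound $\E \| [\vg_t]_F - \E [\vg_t]_F \|^2$ for $q_t$ random directions, given Lemma C.2 from de Vazelhes et al. which handles the $q_t=1$ case.

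The plan is to first reduce to the single-direction case. The estimator $\vg_t$ with $q_t$ directions is an average of $q_t$ i.i.d. single-direction estimators $\vg_t^{(i)}$ (each built from one $\vu_i \sim \mathcal{D}_{s_2}$). Restricting to coordinates in $F$ is a linear operation, so $[\vg_t]_F = \frac{1}{q_t}\sum_{i=1}^{q_t}[\vg_t^{(i)}]_F$, and these summands are i.i.d. Hence by the standard fact that the variance of an average of $q_t$ i.i.d. vectors is $1/q_t$ times the variance of one of them:
\[
\E \big\| [\vg_t]_F - \E[\vg_t]_F \big\|^2 = \frac{1}{q_t}\, \E \big\| [\vg_t^{(1)}]_F - \E[\vg_t^{(1)}]_F \big\|^2.
\]

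**Main steps**

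First I would establish the i.i.d. averaging identity above carefully, using that $\E[\vg_t^{(i)}]_F$ is the same for all $i$ and that cross terms vanish in expectation. Second, I would bound the single-direction variance by the single-direction second moment: $\E \| [\vg_t^{(1)}]_F - \E[\vg_t^{(1)}]_F \|^2 \le \E \| [\vg_t^{(1)}]_F \|^2$ (variance is at most the raw second moment). Third, I would invoke Lemma~\ref{lemma:one_direc_norm} (the $q_t=1$ result) to get
\[
\E \| [\vg_t^{(1)}]_F \|^2 = \varepsilon_F \|[\nabla R(\vw_t)]_F\|^2 + \varepsilon_{F^c}\|[\nabla R(\vw_t)]_{F^c}\|^2 + \varepsilon_{\text{abs}}\mu^2.
\]
Fourth, I would crudely upper bound the first two terms: since $\varepsilon_{F^c} \le \varepsilon_F$ (one can check $\frac{s(s_2-1)}{d-1} \le \frac{(s-1)(s_2-1)}{d-1}+3$ — indeed $\frac{s(s_2-1)}{d-1} - \frac{(s-1)(s_2-1)}{d-1} = \frac{s_2-1}{d-1} \le 1 \le 3$), we have $\varepsilon_F\|[\nabla R]_F\|^2 + \varepsilon_{F^c}\|[\nabla R]_{F^c}\|^2 \le \varepsilon_F(\|[\nabla R]_F\|^2 + \|[\nabla R]_{F^c}\|^2) = \varepsilon_F\|\nabla R(\vw_t)\|^2$. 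Combining these gives
\[
\E \| [\vg_t]_F - \E[\vg_t]_F \|^2 \le \frac{\varepsilon_F}{q_t}\|\nabla R(\vw_t)\|^2 + \frac{\varepsilon_{\text{abs}}}{q_t}\mu^2,
\]
which is the claimed inequality (matching the statement's $\nabla R(\vw)$ notation for the current iterate).

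**Main obstacle**

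The most delicate point is justifying the i.i.d. decomposition rigorously: one must confirm that the $q_t$ directions in Algorithm~\ref{alg:hsg_ZO} are genuinely sampled i.i.d. from $\mathcal{D}_{s_2}$ (they are, per the algorithm's description "i.i.d. random directions"), and that the single-direction second-moment formula from Lemma~\ref{lemma:one_direc_norm} is stated for an arbitrary fixed support $F$ of size $s$, so it applies verbatim to each summand. The rest is routine. A minor subtlety is that we discard the bias term when passing from variance to second moment, but this only loosens the bound, which is fine since the statement is an inequality. The bound $\varepsilon_{F^c} \le \varepsilon_F$ should be noted explicitly as it is what lets us collapse the two gradient-norm terms into a single $\|\nabla R(\vw)\|^2$.
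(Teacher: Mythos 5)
Your proposal is correct and follows essentially the same route as the paper's proof: bound the single-direction variance by its raw second moment, invoke Lemma~\ref{lemma:one_direc_norm}, collapse the two gradient terms via $\varepsilon_{F^c} \leq \varepsilon_F$, and divide by $q_t$ using the i.i.d.\ averaging of the $q_t$ directions. The only difference is cosmetic (you reduce to one direction first, the paper treats $q_t=1$ first and then averages), and your explicit check $\frac{\sus-1}{d-1} \leq 1 \leq 3$ for $\varepsilon_{F^c} \leq \varepsilon_F$ matches the paper's argument.
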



\wil{
\begin{proof}[Proof of Lemma~\ref{lem:betterboundzo}]

  If $q_t=1$, we have: 
\begin{align*}
  \E \| [\vg_t]_F - \E [\vg_t]_F \|^2  &\overset{(a)}{=} \E \| [\vg_t]_F \|^2 - \| [\E \vg]_F \|^2\\
  &\leq  \E \| [\vg_t]_F \|^2\\
  &\overset{(\ref{eq:gradbound})}{\leq} \varepsilon_F \| [\nabla R(\vw)]_F \|^2 + \varepsilon_{F^c} \| [\nabla R(\vw)]_{F^c} \|^2 + \varepsilon_{\text{abs}} \mu^2 \\
  &\overset{(b)}{\leq} \varepsilon_{F} \| \nabla R(\vw) \|^2 + \varepsilon_{abs} \mu^2,
\end{align*}


where (a) follows from the bias-variance formula $\E \|X - E[X]\|_2^2 = \E \|X\|_2^2 - \|\E X\|_2^2$ for a multidimensional random variable $X$, and (b) follows from the fact that 

$$\varepsilon_F = \frac{2d}{\sus + 2} \left( \frac{s(s_2-1)}{d-1} + 3 - \frac{s_2-1}{d} \right) > \frac{2d}{\sus + 2} \left( \frac{s(s_2-1)}{d-1} \right) = \varepsilon_{F^c} $$ (since $\sus \leq d$), and since $\| [\nabla R(\vw)]_F \|^2 + \| [\nabla R(\vw)]_{F^c} \|^2 = \| \nabla R(\vw)\|^2$ (by definition of the Euclidean norm).

Now, if $q_t \geq 1$, we know that the variance of an average of $q_t$ i.i.d. realizations of a random variable of total variance $\sigma^2$ is $\frac{\sigma^2}{q_t}$ (and its expected value remains the same by linearity of expectation): indeed, for any random multidimensional random variable $X$, for which we consider the $q$ i.i.d. random variables $X_i$ of same distribution, we have: 
\begin{align*}
  \E \left\|\frac{1}{q_t} \sum_{i=1}^{q_t}X_i - \E\left[ \frac{1}{q_t} \sum_{i=1}^{q_t}X_i\right]\right\|_2^2 &=  \E \left\|\frac{1}{q_t} \sum_{i=1}^{q_t} \left(X_i - \E X_i\right) \right\|_2^2\\
&= \frac{1}{q_t^2} \left(\sum_{i=1}^{q_t} \left(X_i - \E X_i\right) \right)^{\top} \left(  \sum_{i=1}^{q_t} \left(X_i - \E X_i\right)\right)\\
&\overset{(a)}{=} \frac{1}{q_t^2} \sum_{i=1}^{q_t} \|X_i - \E X_i\|_2^2\\
&= \frac{1}{q_t^2} \sum_{i=1}^{q_t} \|X - \E X\|_2^2\\
&= \frac{1}{q_t^2} q_t \|X - \E X\|_2^2\\
&= \frac{1}{q_t} \|X - \E X\|_2^2,
\end{align*}
where (a) follows from the fact that $X_i$ are i.i.d hence for $i \neq j$: $\text{Cov}(X_i, X_j) = \E (X_i - \E X_i)^{\top} (X_j - \E X_j) = 0$.
Applying this to the random variable which realizations are $[\vg_t]_F$, this concludes the proof.
\end{proof}}

\subsubsection{Proof of Theorem~\ref{thm:zo}}\label{sec:thmzo}


Below we now first present some results (and their proofs) for the convergence of Algorithm \ref{alg:hsg_ZO} without the additional constraint, which is needed for the proof of Theorem~\ref{thm:zo_2SP}, and also, as a byproduct, provides, up to our knowledge, the first convergence guarantee in objective value without system error for a zeroth-order hard-thresholding algorithm.

\begin{proof}[Proof of Theorem~\ref{thm:zo}]
\wili{
Let us denote for simplicity: 
$C_1 := \frac{\varepsilon_F}{q_t}$, $C_2 := \frac{\varepsilon_{abs}}{q_t}$, and $C_3:= \varepsilon_{\mu} \mu^2$. Moreover, let us denote $F := \text{\supp}(\vw_t) \cup \text{supp}(\vw_{t-1}) \cup \text{supp}(\bar{\vw})$, where $\text{supp}$ denotes the support of a vector, i.e. the set of coordinates of its non-zero components. Note that therefore we have $|F| \leq 2k + \bar{k} \leq 3k$. In addition $[\vu]_F$ denotes the thresholding of $\vu$ to the support $F$, that is, the vector $\vu$ with its components that are not in $F$ set to 0.
}

    The fact that $R$ is $(L_{s'}, s')$-RSS', therefore also $(L_{s'}, s)$-RSS', implies from the remark in \ref{ass:RSSbis} that it is also $(L_{s'}, s)$-RSS, therefore:
    \begin{align*}
    &R(\vw_t) \\
    \le& R(\vw_{t-1}) + \langle \nabla R(\vw_{t-1}), \vw_t - \vw_{t-1} \rangle + \frac{L_{s'}}{2}\|\vw_t - \vw_{t-1}\|^2 \\
    =&R(\vw_{t-1}) + \langle \vg_{t-1}, \vw_t - \vw_{t-1} \rangle + \frac{L_{s'}}{2}\|\vw_t - \vw_{t-1}\|^2 + \langle \nabla R(\vw_{t-1})- \vg_{t-1}, \vw_t - \vw_{t-1}\rangle \\
    =& R(\vw_{t-1}) + \frac{1}{2\eta}  \left[  \| \vw_t - (\vw_{t-1} - \eta \vg_{t-1})\|^2 - \eta^2 \|\vg_{t-1}\|^2 - \| \vw_t - \vw_{t-1} \|^2  \right] + \frac{L_{s'}}{2}\|\vw_t - \vw_{t-1}\|^2 \\
    &+ \langle \nabla R(\vw_{t-1})- \vg_{t-1}, \vw_t - \vw_{t-1}\rangle \\
    =& R(\vw_{t-1}) + \frac{1}{2\eta}  \| \vw_t - (\vw_{t-1} - \eta \vg_{t-1})\|^2 - \frac{\eta}{2}\|\vg_{t-1}\|^2   + \left[ \frac{L_{s'} - \frac{1}{\eta}}{2} \right] \| \vw_t - \vw_{t-1} \|^2 \\
    &+ \langle [\nabla R(\vw_{t-1})- \vg_{t-1}]_F, \vw_t - \vw_{t-1}\rangle \\
    \overset{(a)}{\le}& R(\vw_{t-1}) + \frac{1}{2\eta} \left[ \|\bar{\vw} - (\vw_{t-1} - \eta \vg_{t-1} ) \|^2 - (1 - \sqrt{\beta}) \|\vw_t - \bar{\vw}\|^2 \right] - \frac{\eta}{2}\|\vg_{t-1}\|^2   \\
    &+ \left[ \frac{L_{s'} - \frac{1}{\eta}}{2} \right] \| \vw_t - \vw_{t-1} \|^2 + \langle [\nabla R(\vw_{t-1})- \vg_{t-1}]_F, \vw_t - \vw_{t-1}\rangle \\
=& R(\vw_{t-1}) + \frac{1}{2\eta} \left[ \|\bar{\vw} - \vw_{t-1}\|^2 + \eta^2 \|\vg_{t-1} \|^2 - 2 \langle \eta \vg_{t-1}, \vw_{t-1} - \bar{\vw} \rangle \right] - \frac{1}{2\eta}(1 - \sqrt{\beta}) \|\vw_t - \bar{\vw}\|^2 \\
&- \frac{\eta}{2}\|\vg_{t-1}\|^2   +  \left[ \frac{L_{s'} - \frac{1}{\eta}}{2} \right] \| \vw_t - \vw_{t-1} \|^2 + \langle [\nabla R(\vw_{t-1})- \vg_{t-1}]_F, \vw_t - \vw_{t-1}\rangle \\
=& R(\vw_{t-1}) + \frac{1}{2\eta} \left[ \|\bar{\vw} - \vw_{t-1}\|^2 - 2 \langle \eta \vg_{t-1}, \vw_{t-1} - \bar{\vw} \rangle \right] - \frac{1}{2\eta}(1 - \sqrt{\beta}) \|\vw_t - \bar{\vw}\|^2   \\
&+ \left[ \frac{L_{s'} - \frac{1}{\eta}}{2} \right] \| \vw_t - \vw_{t-1} \|^2 + \langle [\nabla R(\vw_{t-1})- \vg_{t-1}]_F, \vw_t - \vw_{t-1}\rangle \\
\overset{(b)}{=}& R(\vw_{t-1}) + \frac{1}{2\eta}  \|\bar{\vw} - \vw_{t-1}\|^2  - \langle \vg_{t-1}, \vw_{t-1} - \bar{\vw} \rangle - \frac{1}{2\eta}(1 - \sqrt{\beta}) \|\vw_t - \bar{\vw}\|^2   \\
&+ \left[ \frac{L_{s'} - \frac{1}{\eta} + C}{2} \right] \| \vw_t - \vw_{t-1} \|^2 + \frac{1}{2C}  \|[\nabla R(\vw_{t-1})- \vg_{t-1}]_F\|^2\\
=& R(\vw_{t-1}) + \frac{1}{2\eta}  \|\bar{\vw} - \vw_{t-1}\|^2  - \langle \nabla R(\vw_{t-1}), \vw_{t-1} - \bar{\vw} \rangle +  \wili{\langle [\nabla R(\vw_{t-1}) - \vg_{t-1}]_F, \vw_{t-1} - \bar{\vw} \rangle }\\
&- \frac{1}{2\eta}(1 - \sqrt{\beta}) \|\vw_t - \bar{\vw}\|^2  + \left[ \frac{L_{s'} - \frac{1}{\eta} + C}{2} \right] \| \vw_t - \vw_{t-1} \|^2 + \frac{1}{2C}  \|[\nabla R(\vw_{t-1})- \vg_{t-1}]_F\|^2,
\end{align*}
where (a) follows from Lemma \ref{lem:new3p} and (b) follows from the inequality $\langle a, b\rangle \le \frac{C}{2} a^2 + \frac{1}{2C}b^2 $, for any $(a, b) \in (\R^d)^2$ with $C > 0$ an arbitrary strictly positive constant.

Let us now choose $\eta := \frac{1}{L_{s'} + C}$: therefore the term $\left[ \frac{L_{s'} - \frac{1}{\eta} + C}{2} \right] \| \vw_t - \vw_{t-1} \|^2$ above is $0$. 
We now take the conditional expectation (conditioned on $\rw_{t-1}$, which is the random variable which realizations are $\vw_{t-1}$), on both sides, and  from Lemma~\ref{lem:boundvar} 
we obtain the inequality below (we slightly abuse notations and denote $\E[\cdot | \rw_{t-1} = \vw_{t-1}]$ by $\E[\cdot | \vw_{t-1}]$): 

\begin{align*}
&\E [R(\vw_{t}) | \vw_{t-1}] \\
\le & R(\vw_{t-1}) + \frac{1}{2\eta}  \|\bar{\vw} - \vw_{t-1}\|^2  - \langle \nabla R(\vw_{t-1}), \vw_{t-1} - \bar{\vw} \rangle \\
&- \frac{1}{2\eta}(1 - \sqrt{\beta}) \E \left[\|\vw_t - \bar{\vw}\|^2 |\vw_{t-1}\right] +  \wili{\langle [\nabla R(\vw_{t-1}) - \E \left[ \vg_{t-1} | \vw_{t-1}]\right]_F, \vw_{t-1} - \bar{\vw} \rangle}  \\
&\wili{ + \E \left[ \frac{1}{2C}  \|[\nabla R(\vw_{t-1})- \vg_{t-1}]_F\|^2  | \vw_{t-1} \right] } \\
\overset{(a)}{\le} & R(\vw_{t-1}) + \frac{1}{2\eta}  \|\bar{\vw} - \vw_{t-1}\|^2  - \langle \nabla R(\vw_{t-1}), \vw_{t-1} - \bar{\vw} \rangle \\
&- \frac{1}{2\eta}(1 - \sqrt{\beta}) \E \left[\|\vw_t - \bar{\vw}\|^2 |\vw_{t-1}\right] +  \wili{\frac{G}{2}\| [\nabla R(\vw_{t-1}) - \E [\vg_{t-1}| \vw_{t-1}]]_F\|^2  }  \\
&\wili{ + \frac{1}{2G} \| \vw_{t-1} - \bar{\vw} \|^2 + \frac{1}{2C}  \E \left[ \|\nabla R(\vw_{t-1})- \vg_{t-1}\|^2 | \vw_{t-1}\right] } \\
= & R(\vw_{t-1}) + \left[\frac{1}{2\eta} +  \wili{\frac{1}{2G}} \right]  \|\bar{\vw} - \vw_{t-1}\|^2  - \langle \nabla R(\vw_{t-1}), \vw_{t-1} - \bar{\vw} \rangle \\
&- \frac{1}{2\eta}(1 - \sqrt{\beta}) \E \left[\|\vw_t - \bar{\vw}\|^2 |\vw_{t-1}\right] +  \wili{\frac{G}{2}\| [\nabla R(\vw_{t-1}) - \E [\vg_{t-1}| \vw_{t-1}]]_F\|^2  }  \\
&\wili{ + \frac{1}{2C}  \E \left[ \| [\nabla R(\vw_{t-1})- \vg_{t-1}]_F\|^2 | \vw_{t-1}\right] } \\
\overset{(b)}{\le} & R(\vw_{t-1}) + \left[\frac{1}{2\eta} +  \wili{\frac{1}{2G}} \right]  \|\bar{\vw} - \vw_{t-1}\|^2  - \langle \nabla R(\vw_{t-1}), \vw_{t-1} - \bar{\vw} \rangle \\
&- \frac{1}{2\eta}(1 - \sqrt{\beta}) \E \left[\|\vw_t - \bar{\vw}\|^2 |\vw_{t-1}\right] +  \wili{\frac{G}{2}\| [\nabla R(\vw_{t-1}) - \E [\vg_{t-1}| \vw_{t-1}]]_F\|^2  } \\
&\wili{ + \frac{1}{2C} \left(2 \|[\nabla R(\vw_{t-1}) - \E [\vg_{t-1} |\vw_{t-1}]]_F \|^2 + 2\| [\vg_{t-1} - \E [\vg_{t-1} |\vw_{t-1}]]_F \|^2 \right)  } \\
\overset{(\ref{eq:bias}) + (\ref{eq:variance})}{\le} & R(\vw_{t-1}) + \left[\frac{1}{2\eta} +  \wili{\frac{1}{2G}} \right]  \|\bar{\vw} - \vw_{t-1}\|^2  - \langle \nabla R(\vw_{t-1}), \vw_{t-1} - \bar{\vw} \rangle \\
&- \frac{1}{2\eta}(1 - \sqrt{\beta}) \E \left[\|\vw_t - \bar{\vw}\|^2 |\vw_{t-1}\right] +  \wili{\frac{G}{2} C_3  }  \\
&\wili{ + \frac{1}{2C} \left(2C_3  + 2 C_1 \| \nabla R(\vw_{t-1}) \|^2 + 2 C_2 \mu^2 \right)  } \\
\overset{(c)}{\le} & R(\vw_{t-1}) + \left[\frac{1}{2\eta} +  \wili{\frac{1}{2G}} \right]  \|\bar{\vw} - \vw_{t-1}\|^2  - \langle \nabla R(\vw_{t-1}), \vw_{t-1} - \bar{\vw} \rangle \\
&- \frac{1}{2\eta}(1 - \sqrt{\beta}) \E \left[\|\vw_t - \bar{\vw}\|^2 |\vw_{t-1}\right] +  \wili{\frac{G}{2} C_3  }  \\
&\wili{ + \frac{1}{2C} \left(2 C_1 \left( 2 \| \nabla R(\vw_{t-1}) -  \nabla R(\bar{\vw})  \|^2 + 2 \| \nabla R(\bar{\vw}) \|^2 \right) + 2 C_2 \mu^2 + 2C_3 \right)  } \\
\overset{(d)}{\le} & R(\vw_{t-1}) + \left[\frac{1}{2\eta} +  \wili{\frac{1}{2G}} \right]  \|\bar{\vw} - \vw_{t-1}\|^2  - \langle \nabla R(\vw_{t-1}), \vw_{t-1} - \bar{\vw} \rangle \\
&- \frac{1}{2\eta}(1 - \sqrt{\beta}) \E \left[\|\vw_t - \bar{\vw}\|^2 |\vw_{t-1}\right] +  \wili{\frac{G}{2} C_3  }  \\
&\wili{ + \frac{1}{2C} \left(2 C_1 \left( 2 L_{s'}^2 \| \vw_{t-1} - \bar{\vw} \|^2 + 2 \| \nabla R(\bar{\vw}) \|^2 \right) + 2 C_2 \mu^2  + 2C_3 \right)  } \\
= & R(\vw_{t-1}) + \left[\frac{1}{2\eta} +  \wili{\frac{1}{2G} + \frac{2 C_1 L_{s'}^2}{C}}  \right]  \|\bar{\vw} - \vw_{t-1}\|^2  - \langle \nabla R(\vw_{t-1}), \vw_{t-1} - \bar{\vw} \rangle \\
&- \frac{1}{2\eta}(1 - \sqrt{\beta}) \E \left[\|\vw_t - \bar{\vw}\|^2 |\vw_{t-1}\right] +  \wili{\frac{G}{2} C_3   + \frac{1}{C} \left(2 C_1 \| \nabla R(\bar{\vw}) \|^2 + C_2 \mu^2 + C_3 \right)  } \\
 \overset{(e)}{\leq} & R(\vw_{t-1}) + \left[\frac{1}{2\eta} +  \wili{\frac{1}{2G} + \frac{2 C_1 L_{s'}^2}{C}}  \right]  \|\bar{\vw} - \vw_{t-1}\|^2  + \left[R(\bar{\vw}) - R(\vw_{t-1}) - \frac{\nu_s}{2}\| \vw_{t-1} - \bar{\vw}\|^2\right] \\
&- \frac{1}{2\eta}(1 - \sqrt{\beta}) \E \left[\|\vw_t - \bar{\vw}\|^2 |\vw_{t-1}\right] +  \wili{\frac{G}{2} C_3   + \frac{1}{C} \left(2 C_1 \|\nabla R(\bar{\vw})\|^2 + C_2 \mu^2 + C_3 \right)  } \\
= & R(\bar{\vw}) + \left[\frac{\frac{1}{\eta}  - \nu_s}{2} +  \wili{\frac{1}{2G} + \frac{2 C_1 L_{s'}^2}{C}}  \right]  \|\bar{\vw} - \vw_{t-1}\|^2  \\
&- \frac{1}{2\eta}(1 - \sqrt{\beta}) \E \left[\|\vw_t - \bar{\vw}\|^2 |\vw_{t-1}\right] +  \wili{\frac{G}{2} C_3   + \frac{1}{C} \left(2 C_1 \|\nabla R(\bar{\vw})\|^2 + C_2 \mu^2 + C_3 \right)  } \\
\overset{(f)}{\leq} & R(\bar{\vw}) + \left[\frac{\frac{1}{\eta}  - \nu_s}{2} +  \wili{\frac{1}{2G} + \frac{2 \varepsilon_F L_{s'}^2}{\tau C}}  \right]  \|\bar{\vw} - \vw_{t-1}\|^2  \\
&- \frac{1}{2\eta}(1 - \sqrt{\beta}) \E \left[\|\vw_t - \bar{\vw}\|^2 |\vw_{t-1}\right] +  \wili{\frac{G}{2} C_3   + \frac{1}{C} \left(2 C_1 \|\nabla R(\bar{\vw})\|^2 + C_2 \mu^2 + C_3 \right)  }, \inlabel{eq:before_simpl_zo}
  \end{align*}
\wili{where (a) follows from the inequality $\langle a, b\rangle \le \frac{G}{2} a^2 + \frac{1}{2G}b^2 $, for any $(a, b) \in (\R^d)^2$ with $G > 0$ an arbitrary strictly positive constant,
(b) and (c) follow from the inequality $ \| a + b \|^2 \leq 2 \|a \|^2 + 2 \|b\|^2$ for any $(a, b) \in (\R^d)^2$, (d) follows from the fact that $R$ is $(L_{s'}, s')$-RSS' (Assumption \ref{ass:RSSbis} with sparsity level $s'$), therefore it is also ($L_{s'}$, $\sus$)-RSS', (e) follows from the RSC condition, and for (f), we recall that $C_1 = \frac{\varepsilon_F}{q_t}$, and we define  $\wili{q_t} = \left\lceil \frac{\tau}{\omega^t}\right\rceil$, for some $\omega > 1$ and $\tau>0$ that will be chosen later in the proof.
}
Recall that we have chosen $\eta = \frac{1}{L_{s'} + C}$.
Let us define $\alpha := \frac{C}{L_{s'}} + 1$. Then $C = (\alpha - 1)L_{s'}$, and $\eta = \frac{1}{\alpha L_{s'}}$. Also recall that $\kappa_s = \frac{L_{s'}}{\nu_s}$.

\wil{

We will now choose the constant $G$ and $C$, in order to simplify the inequality above, such that it matches as much as possible the structure of the previous proofs:

We will seek to rewrite: 

$\left[\frac{\frac{1}{\eta}  - \nu_s}{2} +  \wili{\frac{1}{2G} + \frac{2 \frac{\varepsilon_F}{\tau} L_{s'}^2}{C}}  \right] \left( = \frac{1}{2 \eta}\left[ 1 + \wili{\frac{1}{G \alpha L_{s'}}  + \frac{4 L_{s'}^2 \frac{\varepsilon_F}{\tau}}{ (\alpha - 1) \alpha L_{s'}^2}} - \frac{1}{\alpha \kappa_s}\right] \right)$ , into : 

$\frac{1}{2\eta} \left[ 1 - \frac{1}{\alpha' \kappa_s}\right] $ for some $\alpha' > 0$ (we will seek $\alpha' \propto \alpha$, with a dimensionless proportionality constant for simplicity).

Therefore, let us choose $G := \frac{4}{\nu_s}$, which implies:
\begin{equation}\label{eq:G_term_2SP}
  \frac{1}{G \alpha L_{s'}} = \frac{1}{4 \alpha \kappa_s}.
\end{equation}
And let us choose $\tau := \frac{16 \kappa_s \varepsilon_F}{(\alpha - 1)}$, which implies: 
\begin{equation}\label{eq:tau_and_C_term_2SP}
 \frac{4 L_{s'}^2 \frac{\varepsilon_F}{\tau}}{ (\alpha - 1) \alpha L_{s'}^2}  =  \frac{1}{4\alpha \kappa_s}.
\end{equation}
Therefore, using equations \ref{eq:G_term_2SP} and \ref{eq:tau_and_C_term_2SP}, we obtain:
\begin{align*}
\left[\frac{\frac{1}{\eta}  - \nu_s}{2} +  \wili{\frac{1}{2G} + \frac{2 \frac{\varepsilon_F}{\tau} L_{s'}^2}{C}}  \right]  &= \frac{1}{2 \eta}\left[ 1 + \wili{\frac{1}{G \alpha L_{s'}}  + \frac{4 L_{s'}^2 \frac{\varepsilon_F}{\tau}}{ (\alpha - 1) \alpha L_{s'}^2}} - \frac{1}{\alpha \kappa_s}\right]\\
&= \frac{1}{2\eta} \left[ 1 + \frac{1}{4 \alpha \kappa_s} + \frac{1}{4 \alpha \kappa_s} - \frac{1}{\alpha \kappa_s}\right] \\
&= \frac{1}{2\eta} \left[ 1 - \frac{1}{2 \alpha \kappa_s}\right] = \frac{1}{2\eta} \left[ 1 - \frac{1}{\alpha' \kappa_s}\right], 
\end{align*}
where for simplicity we have denoted $\alpha'= 2\alpha$. 
} We can therefore simplify (\ref{eq:before_simpl_zo}) into:
\begin{align*}
  \E [R(\vw_{t}) | \vw_{t-1}] - R(\bar{\vw}) \le & \frac{1}{2 \eta}  \left[   \left( 1 - \frac{1}{\wili{\alpha'} \kappa_s} \right) \|\bar{\vw} - \vw_{t-1}\|^2  - (1 - \sqrt{\beta}) \E \left[\|\vw_t - \bar{\vw}\|^2 |\vw_{t-1}\right] \right.\\
 & \left.  \wili{+ 2 \eta \left( \frac{G}{2} C_3   + \frac{1}{C} \left(2 C_1 \|\nabla R(\bar{\vw})\|^2 + C_2 \mu^2 + C_3 \right) \right) }    \right].
\end{align*}
We now take the expectation over $\rw_{t-1}$ of the above inequality (i.e. we take $\E_{\rw_{t-1}}[\cdot] $): using the law of total expectation ($\E_{}[\cdot] =\E_{\rw_{t-1}} [\E[\cdot | \vw_{t-1}]]$)  we obtain:  
\begin{align}\label{inequat:before_telescope_hsg_zo}
  \E R(\vw_{t})  - R(\bar{\vw}) \le & \frac{1}{2 \eta}  \left[   \left( 1 - \frac{1}{\wili{\alpha'} \kappa_s} \right) \E \|\bar{\vw} - \vw_{t-1}\|^2  - (1 - \sqrt{\beta}) \E \|\vw_t - \bar{\vw}\|^2 \right. \\
 &\left.\wili{+  2 \eta \left( \frac{G}{2} C_3   + \frac{1}{C} \left(2 C_1 \|\nabla R(\bar{\vw})\|^2 + C_2 \mu^2 + C_3 \right) \right) }   \right]
\end{align}
\wili{Let us call $A := 2 \eta \left( \frac{G}{2} C_3   + \frac{1}{C} \left(2 C_1 \|\nabla R(\bar{\vw})\|^2 + C_2 \mu^2 + C_3 \right) \right) $ for simplicity.}
Similarly as in \cite{liu2020between}, we now take a weighted sum over $t=1, ..., T$, to obtain:  
  \begin{align*}
   &\sum_{t = 1}^T 2 \eta \left( \frac{1 - \frac{1}{\wili{\alpha'} \kappa_s}}{1 - \sqrt{\beta}} \right)^{T-t} \E [R(\vw_{t}) - R(\bar{\vw})]\\ 
    \le&   \sum_{t = 1}^T \left( \frac{1 - \frac{1}{\wili{\alpha'} \kappa_s}}{1 - \sqrt{\beta}} \right)^{T-t}   \left[   \left( 1 - \frac{1}{\wili{\alpha'} \kappa_s} \right) \E \|\bar{\vw} - \vw_{t-1}\|^2  - (1 - \sqrt{\beta}) \E \|\vw_t - \bar{\vw}\|^2  + \wili{A} \right]\\
    =&   \sum_{t = 1}^T \left( \frac{1 - \frac{1}{\wili{\alpha'} \kappa_s}}{1 - \sqrt{\beta}} \right)^{T-t}   \left[   \left( 1 - \frac{1}{\wili{\alpha'} \kappa_s} \right) \E \|\bar{\vw} - \vw_{t-1}\|^2  - (1 - \sqrt{\beta}) \E \|\vw_t - \bar{\vw}\|^2  \right]\\
    &+\sum_{t = 1}^T \left( \frac{1 - \frac{1}{\wili{\alpha'} \kappa_s}}{1 - \sqrt{\beta}} \right)^{T-t}   \wili{A} \\
    =&   (1 - \sqrt{\beta}) \sum_{t=1}^T  \left[  \left( \frac{1 - \frac{1}{\wili{\alpha'} \kappa_s}}{1 - \sqrt{\beta}} \right)^{T-t+1} \E \|\bar{\vw} - \vw_{t-1}\|^2 - \left( \frac{1 - \frac{1}{\wili{\alpha'} \kappa_s}}{1 - \sqrt{\beta}} \right)^{T-t} \E \|\vw_t - \bar{\vw}\|^2  \right]\\
    &+\sum_{t = 1}^T \left( \frac{1 - \frac{1}{\wili{\alpha'} \kappa_s}}{1 - \sqrt{\beta}} \right)^{T-t}   \wili{A} \\
    \overset{(a)}{=}&   (1 - \sqrt{\beta})  \left[  \left( \frac{1 - \frac{1}{\wili{\alpha'} \kappa_s}}{1 - \sqrt{\beta}} \right)^{T} \|\bar{\vw} - \vw_{0}\|^2 - \E \|\vw_T - \bar{\vw}\|^2  \right]+\sum_{t = 1}^T \left( \frac{1 - \frac{1}{\wili{\alpha'} \kappa_s}}{1 - \sqrt{\beta}} \right)^{T-t}  \wili{A}\\
    \leq&   (1 - \sqrt{\beta})   \left( \frac{1 - \frac{1}{\wili{\alpha'} \kappa_s}}{1 - \sqrt{\beta}} \right)^{T}  \|\bar{\vw} - \vw_{0}\|^2  +\sum_{t = 1}^T \left( \frac{1 - \frac{1}{\wili{\alpha'} \kappa_s}}{1 - \sqrt{\beta}} \right)^{T-t}  \wili{A} \\
    \leq&   \left( \frac{1 - \frac{1}{\wili{\alpha'} \kappa_s}}{1 - \sqrt{\beta}} \right)^{T} \|\bar{\vw} - \vw_{0}\|^2  +\sum_{t = 1}^T \left( \frac{1 - \frac{1}{\wili{\alpha'} \kappa_s}}{1 - \sqrt{\beta}} \right)^{T-t}  \wili{A} \\
    =&   \left( \frac{1 - \frac{1}{\wili{\alpha'} \kappa_s}}{1 - \sqrt{\beta}} \right)^{T} \|\bar{\vw} - \vw_{0}\|^2  +\sum_{t = 1}^T \left( \frac{1 - \frac{1}{\wili{\alpha'} \kappa_s}}{1 - \sqrt{\beta}} \right)^{T-t}  \wili{2 \eta \left( \frac{G}{2} C_3   + \frac{1}{C} \left(2 C_1 \|\nabla R(\bar{\vw})\|^2 + C_2 \mu^2 + C_3 \right) \right)} \\
    =&   \left( \frac{1 - \frac{1}{\wili{\alpha'} \kappa_s}}{1 - \sqrt{\beta}} \right)^{T} \|\bar{\vw} - \vw_{0}\|^2  +\sum_{t = 1}^T \left( \frac{1 - \frac{1}{\wili{\alpha'} \kappa_s}}{1 - \sqrt{\beta}} \right)^{T-t}  \wili{2 \eta \left( \frac{G}{2} C_3   + \frac{1}{C} \left(2 \frac{\varepsilon_F}{q_t} \|\nabla R(\bar{\vw})\|^2 + \frac{\varepsilon_{abs} \mu^2 }{q_t} + C_3 \right) \right)} \\
    =&   \left( \frac{1 - \frac{1}{\wili{\alpha'} \kappa_s}}{1 - \sqrt{\beta}} \right)^{T} \|\bar{\vw} - \vw_{0}\|^2  + \sum_{t = 1}^T \left( \frac{1 - \frac{1}{\wili{\alpha'} \kappa_s}}{1 - \sqrt{\beta}} \right)^{T-t}  \wili{\frac{2\eta}{q_t} \left(\frac{2 \varepsilon_F \|\nabla R(\bar{\vw})\|^2 + \varepsilon_{abs}\mu^2}{C} \right)} \\
    &+ \sum_{t = 1}^T \left( \frac{1 - \frac{1}{\wili{\alpha'} \kappa_s}}{1 - \sqrt{\beta}} \right)^{T-t} \wili{2 \eta C_3 \left( \frac{G}{2}   + \frac{1}{C}  \right)} \\
    =&   \left( \frac{1 - \frac{1}{\wili{\alpha'} \kappa_s}}{1 - \sqrt{\beta}} \right)^{T} \|\bar{\vw} - \vw_{0}\|^2  + \sum_{t = 1}^T \left( \frac{1 - \frac{1}{\wili{\alpha'} \kappa_s}}{1 - \sqrt{\beta}} \right)^{T-t}  \wili{\frac{2\eta}{q_t} \left(\frac{2 \varepsilon_F \|\nabla R(\bar{\vw})\|^2}{C} \right)} \\
    &+ \sum_{t = 1}^T \left( \frac{1 - \frac{1}{\wili{\alpha'} \kappa_s}}{1 - \sqrt{\beta}} \right)^{T-t} \wili{2 \eta  \mu^2 \left(\varepsilon_{\mu}\left( \frac{G}{2}   + \frac{1}{C}  \right) + \frac{\varepsilon_{abs}}{C q_t} \right)} \\
    \leq&\wili{   \left( \frac{1 - \frac{1}{\alpha' \kappa_s}}{1 - \sqrt{\beta}} \right)^{T} \|\bar{\vw} - \vw_{0}\|^2  + \sum_{t = 1}^T \left( \frac{1 - \frac{1}{\alpha' \kappa_s}}{1 - \sqrt{\beta}} \right)^{T-t}  \frac{2\eta}{q_t} \left(\frac{2 \varepsilon_F \|\nabla R(\bar{\vw})\|^2}{C} \right)} \\
    &\wili{+ \sum_{t = 1}^T \left( \frac{1 - \frac{1}{\alpha' \kappa_s}}{1 - \sqrt{\beta}} \right)^{T-t} 2 \eta  \mu^2 \left(\varepsilon_{\mu}\left( \frac{G}{2}   + \frac{1}{C}  \right) + \frac{\varepsilon_{abs}}{C} \right)} \inlabel{eq:tel_zo},
\end{align*}
where (a) follows from simplifying the telescopic sum.
\wili{Let us denote for simplicity $\zeta := \frac{2 \eta(2 \varepsilon_F \| \nabla R(\bar{\vw})\|^2)}{C}=  \frac{4 \eta \varepsilon_F \| \nabla R(\bar{\vw})\|^2}{C}$  and $Z :=  \varepsilon_{\mu}\left( \frac{G}{2}   + \frac{1}{C}  \right) + \frac{\varepsilon_{abs}}{C} $.} 

We now choose $k$ and \wili{$q_t$} as follows: we choose $k \geq 4 \wili{\alpha'}^2 \kappa_s^2 \bar{k}$, which implies that: 
\begin{align}
&  \sqrt{\beta} \leq  \frac{1}{2 \wili{\alpha'} \kappa_s}\nonumber\\
& \implies \sqrt{\beta} \leq  \frac{1}{2 \wili{\alpha'} \kappa_s - 1} \nonumber\\
& \implies 1 - \sqrt{\beta} \geq 1 -  \frac{1}{2 \wili{\alpha'} \kappa_s - 1} = \frac{2 \wili{\alpha'} \kappa_s - 2}{2 \wili{\alpha'} \kappa_s - 1} = \frac{1- \frac{1}{\wili{\alpha'} \kappa_s}}{1 - \frac{1}{2\wili{\alpha'} \kappa_s}}\nonumber\\
&\implies  \left( \frac{1 - \frac{1}{\wili{\alpha'} \kappa_s}}{1 - \sqrt{\beta}} \right) \leq 1 - \frac{1}{2\wili{\alpha'} \kappa_s}.  
 \end{align}
 We recall that we previously defined $\wili{q_t} = \left\lceil \frac{\tau}{\omega^t}\right\rceil$, with $\tau := \frac{16 \kappa_s \varepsilon_F}{(\alpha - 1)}$. We now set the value of $\omega$, to $\omega := 1 - \frac{1}{4\wili{\alpha'} \kappa_s}  $ .

Let us call $ \nu := 1 - \frac{1}{2\wili{\alpha'} \kappa_s}$. Note that we have: 
\begin{equation}\label{eq:compnuomega_zo}
  \nu \leq \omega.
\end{equation}
And that we have the inequality below: 

\begin{equation}\label{eq:omega_quotient_zo}
  \frac{\nu}{\omega} =  \frac{1 - \frac{1}{2\wili{\alpha'} \kappa_s}}{1 - \frac{1}{4\wili{\alpha'} \kappa_s}} = \frac{4 \wili{\alpha'} \kappa_s - 2}{4 \wili{\alpha'} \kappa_s - 1}  = 1 - \frac{1}{4 \wili{\alpha'} \kappa_s - 1}  \leq 1 - \frac{1}{4 \wili{\alpha'} \kappa_s} = \omega. 
\end{equation}

This allows us to simplify \eqref{eq:tel_zo} into:

\begin{align*}
  &\E \left[\sum_{t = 1}^T 2 \eta \left( \frac{1 - \frac{1}{\wili{\alpha'} \kappa_s}}{1 - \sqrt{\beta}} \right)^{T-t}  [R(\vw_{t}) - R(\bar{\vw})]\right]\\
   &\leq   \nu^{T}   \|\bar{\vw} - \vw_{0}\|^2 + \wili{\frac{\zeta}{\tau}} \sum_{t = 1}^T  \nu^{T-t} \omega^{t-1}  \wili{+ \sum_{t = 1}^T \left( \frac{1 - \frac{1}{\wili{\alpha'} \kappa_s}}{1 - \sqrt{\beta}} \right)^{T-t} 2 \eta Z \mu^2} \\
    &=   \nu^{T} \|\bar{\vw} - \vw_{0}\|^2 +  \wili{\frac{\zeta}{\tau}} \frac{\omega^T}{\omega}\sum_{t = 1}^T   \left( \frac{\nu}{\omega} \right)^{T-t} \wili{+ \sum_{t = 1}^T \left( \frac{1 - \frac{1}{\wili{\alpha'} \kappa_s}}{1 - \sqrt{\beta}} \right)^{T-t} 2 \eta Z \mu^2} \\
    &=  \nu^{T}  \|\bar{\vw} - \vw_{0}\|^2 + \wili{\frac{\zeta}{\tau}} \frac{\omega^{T}}{\omega}\frac{1 - \left( \frac{\nu}{\omega}\right)^T}{1 - \left( \frac{\nu}{\omega}\right)} \wili{+ \sum_{t = 1}^T \left( \frac{1 - \frac{1}{\wili{\alpha'} \kappa_s}}{1 - \sqrt{\beta}} \right)^{T-t} 2 \eta Z \mu^2}  \\
    &\leq  \nu^{T}  \|\bar{\vw} - \vw_{0}\|^2 + \wili{\frac{\zeta}{\tau}} \frac{\omega^{T}}{\omega}\frac{1 }{1 - \left( \frac{\nu}{\omega}\right)} \wili{+\sum_{t = 1}^T \left( \frac{1 - \frac{1}{\wili{\alpha'} \kappa_s}}{1 - \sqrt{\beta}} \right)^{T-t} 2 \eta Z \mu^2}  \\
    &\overset{(a)}{\leq}  \nu^{T}  \|\bar{\vw} - \vw_{0}\|^2 + \wili{\frac{\zeta}{\tau}} \frac{\omega^{T}}{\omega}\frac{1 }{1 -\omega} \wili{+ \sum_{t = 1}^T \left( \frac{1 - \frac{1}{\wili{\alpha'} \kappa_s}}{1 - \sqrt{\beta}} \right)^{T-t}2 \eta Z \mu^2}  \\
    &\overset{(b)}{\leq}  \nu^{T}  \|\bar{\vw} - \vw_{0}\|^2 + \wili{\frac{\zeta}{\tau}} \frac{4}{3}\omega^{T}\frac{1 }{1 -\omega}  \wili{+ \sum_{t = 1}^T \left( \frac{1 - \frac{1}{\wili{\alpha'} \kappa_s}}{1 - \sqrt{\beta}} \right)^{T-t} 2 \eta Z \mu^2} \\
    &\overset{(c)}{\leq}  \omega^{T}  \|\bar{\vw} - \vw_{0}\|^2 + \wili{\frac{\zeta}{\tau}} \frac{4}{3}\omega^{T}\frac{1 }{1 -\omega}  \wili{+ \sum_{t = 1}^T \left( \frac{1 - \frac{1}{\wili{\alpha'} \kappa_s}}{1 - \sqrt{\beta}} \right)^{T-t} 2 \eta Z \mu^2} \\
    &\overset{(d)}{\leq}  \frac{\omega^{T}}{1 - \omega}  \|\bar{\vw} - \vw_{0}\|^2 + \wili{\frac{\zeta}{\tau}} \frac{4}{3}\omega^{T}\frac{1 }{1 -\omega}  \wili{+ \sum_{t = 1}^T \left( \frac{1 - \frac{1}{\wili{\alpha'} \kappa_s}}{1 - \sqrt{\beta}} \right)^{T-t} 2 \eta Z \mu^2} \\
    &=  \frac{\omega^T}{1 - \omega}\left( \|\bar{\vw} - \vw_{0}\|^2 + \wili{\frac{\zeta}{\tau}} \frac{4}{3}\right) \wili{+ \sum_{t = 1}^T \left( \frac{1 - \frac{1}{\wili{\alpha'} \kappa_s}}{1 - \sqrt{\beta}} \right)^{T-t} 2 \eta Z \mu^2}  \\
    &=  4 \wili{\alpha'} \kappa_s \omega^T\left( \|\bar{\vw} - \vw_{0}\|^2 + \wili{\frac{\zeta}{\tau}} \frac{4}{3}\right) \wili{+\sum_{t = 1}^T \left( \frac{1 - \frac{1}{\wili{\alpha'} \kappa_s}}{1 - \sqrt{\beta}} \right)^{T-t} 2 \eta Z \mu^2},
  \end{align*}
where in the left hand side we have used the linearity of expectation, and where (a) uses \eqref{eq:omega_quotient_zo}, 
  (b) uses the fact that $\frac{1}{\omega} = \frac{1}{1 - \frac{1}{4\wili{\alpha'}\kappa_s}} \leq \frac{1}{1 - \frac{1}{4}} = \frac{4}{3}$ (since $\kappa_s \geq 1$ and $\wili{\alpha'} \geq 1$ (indeed, we have $\wili{\alpha'} = \wili{2 \alpha}  = \wili{2} ( \frac{C}{L_{s'}} + 1)$ with $C>0$)), (c) uses equation \ref{eq:compnuomega_zo}, and (d) uses the fact that $\omega < 1$ so $1 < \frac{1}{1 - \omega}$.
  

Let us now normalize the above inequality:

\begin{equation*}
 \E \frac{\sum_{t = 1}^T 2 \eta \left( \frac{1 - \frac{1}{\wili{\alpha'} \kappa_s}}{1 - \sqrt{\beta}} \right)^{T-t} R(\vw_{t}) }{\sum_{t = 1}^T 2 \eta \left( \frac{1 - \frac{1}{\wili{\alpha'} \kappa_s}}{1 - \sqrt{\beta}} \right)^{T-t} }
   \leq  R(\bar{\vw}) + \frac{4 \wili{\alpha'} \kappa_s \omega^T\left( \|\bar{\vw} - \vw_{0}\|^2 + \frac{4}{3} \wili{\frac{\zeta}{\tau}}\right)}{\sum_{t = 1}^T 2 \eta \left( \frac{1 - \frac{1}{\wili{\alpha'} \kappa_s}}{1 - \sqrt{\beta}} \right)^{T-t} } \wili{+ Z \mu^2}.
 \end{equation*}

 The left hand side above is a weighted sum, which is an upper bound on the smallest term of the sum. 

 Regarding the right hand side, we can simplify it using the fact that $0 < \left( \frac{1 - \frac{1}{\wili{\alpha'} \kappa_s}}{1 - \sqrt{\beta}} \right)$ , and therefore:
 $$\sum_{t=1}^T \left( \frac{1 - \frac{1}{\wili{\alpha'} \kappa_s}}{1 - \sqrt{\beta}} \right)^{T-t} \geq 1.$$

 Therefore, we obtain:

 \begin{align*}
 \E \min_{t \in \{1, .., T \}} R(\vw_{t}) - R(\bar{\vw})
   &\leq \frac{4 \wili{\alpha'} \kappa_s \omega^T\left( \|\bar{\vw} - \vw_{0}\|^2 + \frac{4}{3} \wili{\frac{\zeta}{\tau}}\right)}{2 \eta } \wili{+ Z \mu^2} \\
   &= \wili{4} \alpha^2 L_{s'} \kappa_s \omega^T\left( \|\bar{\vw} - \vw_{0}\|^2 + \frac{4}{3} \wili{\frac{\zeta}{\tau}}\right) \wili{+ Z \mu^2}.
 \end{align*}

Which can be simplified into the expression below, using the definition of $\hat{\vw}_T$:

\begin{align}
\E R(\hat{\vw}_T) - R(\bar{\vw})   &\leq \wili{4} \alpha^2 L_{s'} \kappa_s \omega^T\left( \|\bar{\vw} - \vw_{0}\|^2 + \frac{4}{3} \wili{\frac{\zeta}{\tau}} \right) + \wili{Z \mu^2}.
\end{align}



\wil{
To simplify the above result, we recall the assumptions made earlier on: 
we have chosen  
$\tau = \frac{16 \kappa_s \varepsilon_F}{(\alpha - 1)}$, and  $G =  \frac{4}{\nu_s} $ .

Therefore, to sum up, we have: 

$$Z =   \varepsilon_{\mu}\left( \frac{G}{2}   + \frac{1}{C}  \right) + \frac{\varepsilon_{abs}}{C} = \varepsilon_{\mu}\left( \frac{2}{\nu_s} + \frac{1}{C}\right)  + \frac{\varepsilon_{abs}}{C}.$$
$$\omega = 1 - \frac{1}{4\wili{\alpha'} \kappa_s} = 1 - \frac{1}{\wili{8}\alpha \kappa_s}  $$



$$\zeta = \frac{4 \eta \varepsilon_F \| \nabla R(\bar{\vw})\|^2}{C}$$
The last inequality implies: $\frac{\zeta}{\tau} = \frac{\frac{4 \eta \varepsilon_F \| \nabla R(\bar{\vw})\|^2}{C}}{16 \kappa_s L_{s'} \frac{\varepsilon_F}{C}} = \frac{\eta \| \nabla R(\bar{\vw})\|^2}{4 \kappa_s L_{s'}}$.
}
\end{proof}

\begin{corollary}\label{cor:ifo_zo}
  Additionally, the number of calls to the function $R$ (\#IZO), and the number of hard thresholding operations (\#HT) such that the upper bound in Theorem \ref{thm:hsg_2SP} above is smaller than $\varepsilon + Z \mu$, with $\varepsilon >0$ are respectively:
  $ \text{\#HT} = \mathcal{O}(\kappa_s \log(\frac{1}{\varepsilon})) ~~~\text{and} ~~~ \text{\#IZO} = \mathcal{O}\left(\frac{\varepsilon_F\kappa_s^3 L_s}{\varepsilon}\right) $. Note that if $\sus=d$, we have $\varepsilon_F = \mathcal{O}(s) = \mathcal{O}(k)$, and therefore we obtain a query complexity that is dimension independent.
  
  
  
  \end{corollary}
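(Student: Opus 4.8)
The plan is to reuse, almost verbatim, the two-part "solve the geometric term for $T$, then estimate the geometric sum of batch sizes" argument already carried out in the proofs of Corollaries~\ref{cor:ifo} and~\ref{cor:ifo_2SP}, now applied to the rate of Theorem~\ref{thm:zo}. Write $D := \|\bar{\vw} - \vw_0\|^2 + \tfrac13\tfrac{\eta\|\nabla R(\bar{\vw})\|^2}{\kappa_s L_{s'}}$ for the (constant) bracket appearing in that theorem, so that $\E R(\hat{\vw}_T) - R(\bar{\vw}) \le 4\alpha^2 L_{s'}\kappa_s\,\omega^T D + Z\mu^2$ with $\omega = 1 - \tfrac{1}{8\alpha\kappa_s}$ and $q_t = \lceil \tau/\omega^t\rceil$, $\tau = 16\kappa_s\varepsilon_F/(\alpha-1)$. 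First I would force the first (vanishing) term below $\varepsilon$: it suffices to take
\[
T \;:=\; \left\lceil \frac{1}{\log(1/\omega)}\,\log\!\Big(\frac{4\alpha^2 L_{s'}\kappa_s D}{\varepsilon}\Big)\right\rceil ,
\]
and since $\log(1/\omega) = -\log\!\big(1 - \tfrac{1}{8\alpha\kappa_s}\big) \ge \tfrac{1}{8\alpha\kappa_s}$ by the inequality $\log(1-x)\le -x$, we get $\tfrac{1}{\log(1/\omega)}\le 8\alpha\kappa_s$, hence $T = \mathcal{O}(\kappa_s\log(1/\varepsilon))$. As Algorithm~\ref{alg:hsg_ZO} performs exactly one hard-thresholding per iteration, this already yields $\#\mathrm{HT} = T = \mathcal{O}(\kappa_s\log(1/\varepsilon))$.

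For $\#\mathrm{IZO}$, iteration $t$ uses $q_{t-1}$ random directions plus one shared base evaluation, so $\#\mathrm{IZO} \le \sum_{t=0}^{T-1}(q_t+1) \le 2T + \tau\,\frac{\omega^{-T}-1}{\omega^{-1}-1} \le 2T + \frac{\tau}{\omega^{-1}-1}\,\omega^{-T}$ after bounding $q_t \le \tau\omega^{-t}+1$ and summing the geometric series. Plugging in the chosen $T$ gives $\omega^{-T} \le \tfrac1\omega\cdot\tfrac{4\alpha^2 L_{s'}\kappa_s D}{\varepsilon}$; combined with $\tfrac{1}{\omega^{-1}-1} = \tfrac{\omega}{1-\omega} = 8\alpha\kappa_s\omega$ and $\tau = 16\kappa_s\varepsilon_F/(\alpha-1)$, the dominant term is of order $\tfrac{\kappa_s\varepsilon_F}{\alpha-1}\cdot\alpha\kappa_s\cdot\alpha^2 L_{s'}\kappa_s\cdot\tfrac1\varepsilon$. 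Treating $\alpha$ (hence $\alpha/(\alpha-1)$) and $D$ as fixed constants, exactly as in Corollary~\ref{cor:ifo}, this collapses to $\mathcal{O}\!\big(\varepsilon_F\kappa_s^3 L_{s'}/\varepsilon\big)$, which is the stated $\mathcal{O}(\varepsilon_F\kappa_s^3 L_s/\varepsilon)$ up to the $s$ versus $s'$ subscript convention; together with the $T$ bound this establishes both claims.

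For the dimension-independence remark I would just evaluate $\varepsilon_F$ from Lemma~\ref{lemma:one_direc_norm}(i) at $\sus = d$: there $\varepsilon_F = \tfrac{2d}{d+2}\big(\tfrac{(s-1)(d-1)}{d-1}+3\big) = \tfrac{2d}{d+2}(s+2) \le 2(s+2) = \mathcal{O}(s) = \mathcal{O}(k)$, so the explicit $d$ cancels and $\#\mathrm{IZO} = \mathcal{O}(k\kappa_s^3 L_s/\varepsilon)$. The only genuine obstacle here is purely bookkeeping: keeping the powers of $\alpha$, $\kappa_s$ and $L_{s'}$ correctly aligned through the geometric-sum estimate (the $\tfrac{1}{\omega^{-1}-1}$ factor contributes one $\kappa_s$, the $\tau$ another, and $\omega^{-T}$ the rest), so that the final exponent on $\kappa_s$ comes out as $3$; everything else is the same "set $\omega^T \le \varepsilon/\mathrm{const}$ and substitute back" computation already used twice in the appendix, and no new inequality is needed.
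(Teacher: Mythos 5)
Your proposal is correct and follows essentially the same route as the paper's own proof of this corollary: choose $T$ so that $4\alpha^2 L_{s'}\kappa_s\omega^T D\le\varepsilon$, bound $\tfrac{1}{\log(1/\omega)}\le 8\alpha\kappa_s$ to get $\#\mathrm{HT}=T=\mathcal{O}(\kappa_s\log(1/\varepsilon))$, sum the geometric series $\sum_t q_t$ with $q_t\le\tau\omega^{-t}+1$, and substitute $\tau=16\kappa_s\varepsilon_F/(\alpha-1)$ and $\omega^{-T}\lesssim\alpha^2L_{s'}\kappa_s D/\varepsilon$ to obtain $\mathcal{O}(\varepsilon_F\kappa_s^3L_{s'}/\varepsilon)$, with the same evaluation $\varepsilon_F=\mathcal{O}(s)$ at $\sus=d$. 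The only cosmetic differences (counting one extra base evaluation per iteration and folding the bracket $D$ into a constant, where the paper keeps an explicit $\kappa_s/\nu_s$ term before absorbing it) do not affect the stated complexities.
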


\begin{proof}[Proof of Corollary~\ref{cor:ifo_zo}]

Let $\varepsilon \in \R_+^*$.
Let us find $T$ to ensure that $\E R(\hat{\vw}_T) - R(\bar{\vw})  \leq \varepsilon  \wili{+ Z \mu^2}$
This will be enforced if:

\begin{align*}
  &\wili{4} \alpha^2 L_{s'} \kappa_s \omega^T\left( \|\bar{\vw} - \vw_{0}\|^2 + \frac{4}{3} \wili{\frac{\eta \| \nabla R(\bar{\vw})\|^2}{4 \kappa_s L_{s'}}}\right) \leq \varepsilon\\
  &\iff T \log(\omega) \leq \log \left(\frac{\varepsilon}{\wili{4} \alpha^2 L_{s'} \kappa_s \left( \|\bar{\vw} - \vw_{0}\|^2 + \frac{4}{3} \wili{\frac{\eta \| \nabla R(\bar{\vw})\|^2}{4 \kappa_s L_{s'}}} \right)}\right)\\
  &\iff T \geq \frac{1}{\log(\frac{1}{\omega})} \log \left(\frac{\wili{4} \alpha^2 L_{s'} \kappa_s \left( \|\bar{\vw} - \vw_{0}\|^2 + \frac{4}{3} \wili{\frac{\eta \| \nabla R(\bar{\vw})\|^2}{4 \kappa_s L_{s'}}}\right)}{\varepsilon}\right).
\end{align*}

Therefore, let us take:

\begin{equation}\label{eq:takeTzo}
T := \left\lceil \frac{1}{\log(\frac{1}{\omega})} \log \left(\frac{\wili{4} \alpha^2 L_{s'} \kappa_s \left( \|\bar{\vw} - \vw_{0}\|^2 + \frac{4}{3} \wili{\frac{\eta \| \nabla R(\bar{\vw})\|^2}{4 \kappa_s L_{s'}}}\right)}{\varepsilon}\right)  \right\rceil.
\end{equation}

We can now derive the \wili{\#IZO }and \#HT. First, we have one hard-thresholding operation at each iteration, therefore \#HT$=T$. Using the fact that $\frac{1}{\log(\frac{1}{\omega})} = \frac{1}{- \log(\omega)} = \frac{1}{- \log(1 - \frac{1}{\wili{8}\alpha\kappa_s})} \leq  \frac{1}{\frac{1}{\wili{8}\alpha \kappa_s}} = \wili{8}\alpha \kappa_s$ (since by property of the logarithm, for all $x\in (-\infty, -1): \log(1-x) \leq -x$ ), \wili{and the fact that $\alpha = \frac{C}{\wili{L_{s'}}}$ is independent of $\kappa_s$,} we obtain that $\text{\#HT} = \mathcal{O}(\wili{\kappa_s} \log\left(\frac{1}{\varepsilon}\right))$.

We now turn to computing the \wili{\#IZO}. At each iteration $t$ we have \wili{$q_t$} function evaluations, therefore: 
\begin{align*}
&\text{\#IFO} = \sum_{t=0}^{T-1} q_t\\
&\leq \sum_{t=0}^{T-1} \left(\frac{\tau}{\omega^t} + 1\right)\\
&= T +  \tau \frac{\left(\frac{1}{\omega}\right)^T - 1}{\frac{1}{\omega} - 1}\\
&\leq  T +  \frac{\tau}{{\frac{1}{\omega} - 1}} \left(\frac{1}{\omega}\right)^T \\
&=  T +  \frac{\tau}{{\frac{1}{\omega} - 1}} \exp\left(T \log\left(\frac{1}{\omega}\right)\right) \\
&\overset{(a)}{\leq} 1 +  \frac{1}{\log(\frac{1}{\omega})} \log \left(\frac{\wili{4} \alpha^2 L_{s'} \kappa_s \left( \|\bar{\vw} - \vw_{0}\|^2 + \frac{4}{3} \wili{\frac{\eta \| \nabla R(\bar{\vw})\|^2}{4 \kappa_s L_{s'}}}\right)}{\varepsilon}\right)  \\
& ~~~~~ +  \frac{\tau}{{\frac{1}{\omega} - 1}} \exp\left(    \log\left(\frac{1}{\omega}  \right) \left[  \frac{1}{\log(\frac{1}{\omega})}  \log \left(\frac{\wili{4} \alpha^2 L_{s'} \kappa_s \left( \|\bar{\vw} - \vw_{0}\|^2 + \frac{4}{3} \wili{\frac{\eta \| \nabla R(\bar{\vw})\|^2}{4 \kappa_s L_{s'}}}\right)}{\varepsilon}\right) + 1 \right]\right) \\
&= 1 +  \frac{1}{\log(\frac{1}{\omega})} \log \left(\frac{\wili{4} \alpha^2 L_{s'} \kappa_s \left( \|\bar{\vw} - \vw_{0}\|^2 + \frac{4}{3} \wili{\frac{\eta \| \nabla R(\bar{\vw})\|^2}{4 \kappa_s L_{s'}}}\right)}{\varepsilon}\right)  +  \frac{\frac{\tau}{\omega}   }{{\frac{1}{\omega} - 1}} \frac{\wili{4} \alpha^2 L_{s'} \kappa_s \left( \|\bar{\vw} - \vw_{0}\|^2 + \frac{4}{3} \wili{\frac{\eta \| \nabla R(\bar{\vw})\|^2}{4 \kappa_s L_{s'}}}\right)}{\varepsilon} \\
&= 1 +  \frac{1}{\log(\frac{1}{\omega})} \log \left(\frac{\wili{4} \alpha^2 L_{s'} \kappa_s \left( \|\bar{\vw} - \vw_{0}\|^2 + \frac{4}{3} \wili{\frac{\eta \| \nabla R(\bar{\vw})\|^2}{4 \kappa_s L_{s'}}}\right)}{\varepsilon}\right)  +  \frac{\tau }{{1 - \omega}} \frac{\wili{4} \alpha^2 L_{s'} \kappa_s \left( \|\bar{\vw} - \vw_{0}\|^2 + \frac{4}{3} \wili{\frac{\eta \| \nabla R(\bar{\vw})\|^2}{4 \kappa_s L_{s'}}}\right)}{\varepsilon} \\
&= 1 +  \frac{1}{\log(\frac{1}{\omega})} \log \left(\frac{\wili{4} \alpha^2 L_{s'} \kappa_s \left( \|\bar{\vw} - \vw_{0}\|^2 + \frac{4}{3} \wili{\frac{\eta \| \nabla R(\bar{\vw})\|^2}{4 \kappa_s L_{s'}}}\right)}{\varepsilon}\right)  +\tau \frac{\wili{32} \alpha^3 L_{s'} \kappa_s^2  \left( \|\bar{\vw} - \vw_{0}\|^2 + \frac{4}{3} \wili{\frac{\eta \| \nabla R(\bar{\vw})\|^2}{4 \kappa_s L_{s'}}}\right) }{\varepsilon}, 
%
\end{align*}
where (a) follows from \eqref{eq:takeTzo}.

\wil{
And we recall that $\tau := \frac{16 \kappa_s \varepsilon_F}{(\alpha - 1)}$, which implies that: 



  $$ \tau \frac{\wili{32} \alpha^3 L_{s'} \kappa_s^2  \left( \|\bar{\vw} - \vw_{0}\|^2 + \frac{4}{3}\wili{\frac{\eta \| \nabla R(\bar{\vw})\|^2}{2 \gamma \kappa_s L_{s'}}} \right) }{\varepsilon} = \mathcal{O}\left(\frac{\varepsilon_F}{\varepsilon} \left(\kappa_s^3 L_{s'} + \frac{\kappa_s}{ \nu_s }\right)\right). 
  $$
}
Therefore, overall, the \wili{\# IZO} complexity is in $\mathcal{O}\left(\frac{\varepsilon_F}{\varepsilon} 
 \kappa_s^3 L_{s'} \right) $.


\end{proof}

\subsubsection{Proof of Theorem \ref{thm:zo_2SP}}\label{sec:proofzo2SP}

Using the results above, we can now proceed to the proof of Theorem \ref{thm:zo_2SP}.
\begin{proof}[Proof of Theorem \ref{thm:zo_2SP}]
\wili{
Let us denote for simplicity: 
$C_1 := \frac{\varepsilon_F}{q_t}$, $C_2 := \frac{\varepsilon_{abs}}{q_t}$, and $C_3:= \varepsilon_{\mu} \mu^2$. Moreover, let us denote $F := \text{supp}(\vw_t) \cup \text{supp}(\vw_{t-1}) \cup \text{supp}(\bar{\vw})$, where $\text{supp}$ denotes the support of a vector, i.e. the set of coordinates of its non-zero components. Note that therefore we have $|F| \leq 2k + \bar{k} \leq 3k$. In addition $[\vu]_F$ denotes the thresholding of $\vu$ to the support $F$, that is, the vector $\vu$ with its components that are not in $F$ set to 0.
Since $R$ is $L_{s'}$-RSS', with $s' = \max(\sus, s)$, $R$ is also $s$-RSS' and $\sus$-RSS', with Lipschitz constant $L_{s'}$ .
}

Denote $\vv_t = \H(\vw_{t-1} -\eta \nabla R(\vw_{t-1}))$ for any $t\in \mathbb{N}$. The fact that $R$ is $(L_{s'}, s')$-RSS', therefore also $(L_{s'}, s)$-RSS', implies from the remark in Assumption~\ref{ass:RSSbis} that it is also $(L_{s'}, s)$-RSS, therefore:
    \begin{align*}
      &R(\vw_t) \\
      \le& R(\vw_{t-1}) + \langle \nabla R(\vw_{t-1}), \vw_t - \vw_{t-1} \rangle + \frac{L_s}{2}\|\vw_t - \vw_{t-1}\|^2 \\
      =&R(\vw_{t-1}) + \langle \vg_{t-1}, \vw_t - \vw_{t-1} \rangle + \frac{L_s}{2}\|\vw_t - \vw_{t-1}\|^2 + \langle \nabla R(\vw_{t-1})- \vg_{t-1}, \vw_t - \vw_{t-1}\rangle \\
      =& R(\vw_{t-1}) + \frac{1}{2\eta}  \left[  \| \vw_t - (\vw_{t-1} - \eta \vg_{t-1})\|^2 - \eta^2 \|\vg_{t-1}\|^2 - \| \vw_t - \vw_{t-1} \|^2  \right] + \frac{L_s}{2}\|\vw_t - \vw_{t-1}\|^2 \\
      &+ \langle \nabla R(\vw_{t-1})- \vg_{t-1}, \vw_t - \vw_{t-1}\rangle \\
      =& R(\vw_{t-1}) + \frac{1}{2\eta}  \| \vw_t - (\vw_{t-1} - \eta \vg_{t-1})\|^2 - \frac{\eta}{2}\|\vg_{t-1}\|^2   + \left[ \frac{L_s - \frac{1}{\eta}}{2} \right] \| \vw_t - \vw_{t-1} \|^2 \\
      &+ \langle [\nabla R(\vw_{t-1})- \vg_{t-1}]_F, \vw_t - \vw_{t-1}\rangle \\
      \overset{(a)}{\le}& R(\vw_{t-1}) + \frac{1}{2\eta} \left[ \|\bar{\vw} - (\vw_{t-1} - \eta \vg_{t-1} ) \|^2 -  \wili{\|\vw_t - \bar{\vw}\|^2 + \sqrt{\beta} \|\vv_t - \bar{\vw}\|^2}  \right] - \frac{\eta}{2}\|\vg_{t-1}\|^2   \\
      &+ \left[ \frac{L_s - \frac{1}{\eta}}{2} \right] \| \vw_t - \vw_{t-1} \|^2 + \langle [\nabla R(\vw_{t-1})- \vg_{t-1}]_F, \vw_t - \vw_{t-1}\rangle \\
  =& R(\vw_{t-1}) + \frac{1}{2\eta} \left[ \|\bar{\vw} - \vw_{t-1}\|^2 + \eta^2 \|\vg_{t-1} \|^2 - 2 \langle \eta \vg_{t-1}, \vw_{t-1} - \bar{\vw} \rangle \right] \wili{- \frac{1}{2\eta} \|\vw_t - \bar{\vw}\|^2 + \frac{\sqrt{\beta}}{2\eta} \|\vv_t - \bar{\vw}\|^2} \\
  &- \frac{\eta}{2}\|\vg_{t-1}\|^2   +  \left[ \frac{L_s - \frac{1}{\eta}}{2} \right] \| \vw_t - \vw_{t-1} \|^2 + \langle [\nabla R(\vw_{t-1})- \vg_{t-1}]_F, \vw_t - \vw_{t-1}\rangle \\
  =& R(\vw_{t-1}) + \frac{1}{2\eta} \left[ \|\bar{\vw} - \vw_{t-1}\|^2 - 2 \langle \eta \vg_{t-1}, \vw_{t-1} - \bar{\vw} \rangle \right] \wili{- \frac{1}{2\eta} \|\vw_t - \bar{\vw}\|^2 + \frac{\sqrt{\beta}}{2\eta} \|\vv_t - \bar{\vw}\|^2}   \\
  &+ \left[ \frac{L_s - \frac{1}{\eta}}{2} \right] \| \vw_t - \vw_{t-1} \|^2 + \langle [\nabla R(\vw_{t-1})- \vg_{t-1}]_F, \vw_t - \vw_{t-1}\rangle \\
  \overset{(b)}{=}& R(\vw_{t-1}) + \frac{1}{2\eta}  \|\bar{\vw} - \vw_{t-1}\|^2  - \langle \vg_{t-1}, \vw_{t-1} - \bar{\vw} \rangle \wili{- \frac{1}{2\eta} \|\vw_t - \bar{\vw}\|^2 + \frac{\sqrt{\beta}}{2\eta} \|\vv_t - \bar{\vw}\|^2}  \\
  &+ \left[ \frac{L_s - \frac{1}{\eta} + C}{2} \right] \| \vw_t - \vw_{t-1} \|^2 + \frac{1}{2C}  \|[\nabla R(\vw_{t-1})- \vg_{t-1}]_F\|^2\\
=& R(\vw_{t-1}) + \frac{1}{2\eta}  \|\bar{\vw} - \vw_{t-1}\|^2  - \langle \nabla R(\vw_{t-1}), \vw_{t-1} - \bar{\vw} \rangle +  \wili{\langle \nabla R(\vw_{t-1}) - \vg_{t-1}, \vw_{t-1} - \bar{\vw} \rangle }\\
&- \wili{- \frac{1}{2\eta} \|\vw_t - \bar{\vw}\|^2 + \frac{\sqrt{\beta}}{2\eta} \|\vv_t - \bar{\vw}\|^2}  + \left[ \frac{L_{s'} - \frac{1}{\eta} + C}{2} \right] \| \vw_t - \vw_{t-1} \|^2 + \frac{1}{2C}  \|[\nabla R(\vw_{t-1})- \vg_{t-1}]_F\|^2,
\end{align*}
where (a) follows from Lemma \ref{lem:new3p} and (b) follows from the inequality $\langle a, b\rangle \le \frac{C}{2} a^2 + \frac{1}{2C}b^2 $, for any $(a, b) \in (\R^d)^2$ with $C > 0$ an arbitrary strictly positive constant.

Let us now assume that $\eta := \frac{1}{L_{s'} + C}$: therefore the term $\left[ \frac{L_{s'} - \frac{1}{\eta} + C}{2} \right] \| \vw_t - \vw_{t-1} \|^2$ above is $0$. 
We now take the conditional expectation (conditioned on $\rw_{t-1}$, which is the random variable which realizations are $\vw_{t-1}$), on both sides, and  from Lemma~\ref{lem:boundvar} 
we obtain the inequality below (we slightly abuse notations and denote $\E[\cdot | \rw_{t-1} = \vw_{t-1}]$ by $\E[\cdot | \vw_{t-1}]$): 

\begin{align*}
&\E [R(\vw_{t}) | \vw_{t-1}]\\
\le& R(\vw_{t-1}) + \frac{1}{2\eta}  \|\bar{\vw} - \vw_{t-1}\|^2  - \langle \nabla R(\vw_{t-1}), \vw_{t-1} - \bar{\vw} \rangle \\
&\wili{- \frac{1}{2\eta} \E \left[\|\vw_t - \bar{\vw}\|^2 |\vw_{t-1}\right] + \frac{\sqrt{\beta}}{2\eta} \E \left[\|\vv_t - \bar{\vw}\|^2 |\vw_{t-1}\right]} +  \wili{\langle [\nabla R(\vw_{t-1}) - \E \left[ \vg_{t-1} | \vw_{t-1}]\right]_F, \vw_{t-1} - \bar{\vw} \rangle}  \\
&\wili{ + \E \left[ \frac{1}{2C}  \|[\nabla R(\vw_{t-1})- \vg_{t-1}]_F\|^2  | \vw_{t-1} \right] } \\
\overset{(a)}{\le} & R(\vw_{t-1}) + \frac{1}{2\eta}  \|\bar{\vw} - \vw_{t-1}\|^2  - \langle \nabla R(\vw_{t-1}), \vw_{t-1} - \bar{\vw} \rangle \\
&\wili{- \frac{1}{2\eta} \E \left[\|\vw_t - \bar{\vw}\|^2 |\vw_{t-1}\right] + \frac{\sqrt{\beta}}{2\eta} \E \left[\|\vv_t - \bar{\vw}\|^2 |\vw_{t-1}\right]} + \\
&\wili{\frac{G}{2}\| [\nabla R(\vw_{t-1}) - \E [\vg_{t-1}| \vw_{t-1}]]_F\|^2  + \frac{1}{2G} \| \vw_{t-1} - \bar{\vw} \|^2}  
\wili{ + \frac{1}{2C}  \E \left[ \|\nabla R(\vw_{t-1})- \vg_{t-1}\|^2 | \vw_{t-1}\right] } \\
= & R(\vw_{t-1}) + \left[\frac{1}{2\eta} +  \wili{\frac{1}{2G}} \right]  \|\bar{\vw} - \vw_{t-1}\|^2  - \langle \nabla R(\vw_{t-1}), \vw_{t-1} - \bar{\vw} \rangle \\
&\wili{- \frac{1}{2\eta} \E \left[\|\vw_t - \bar{\vw}\|^2 |\vw_{t-1}\right] + \frac{\sqrt{\beta}}{2\eta} \E \left[\|\vv_t - \bar{\vw}\|^2 |\vw_{t-1}\right]} +  \wili{\frac{G}{2}\| [\nabla R(\vw_{t-1}) - \E [\vg_{t-1}| \vw_{t-1}]]_F\|^2  }  \\
&\wili{ + \frac{1}{2C}  \E \left[ \| [\nabla R(\vw_{t-1})- \vg_{t-1}]_F\|^2 | \vw_{t-1}\right] } \\
\overset{(b)}{\le} & R(\vw_{t-1}) + \left[\frac{1}{2\eta} +  \wili{\frac{1}{2G}} \right]  \|\bar{\vw} - \vw_{t-1}\|^2  - \langle \nabla R(\vw_{t-1}), \vw_{t-1} - \bar{\vw} \rangle \\
&\wili{- \frac{1}{2\eta} \E \left[\|\vw_t - \bar{\vw}\|^2 |\vw_{t-1}\right] + \frac{\sqrt{\beta}}{2\eta} \E \left[\|\vv_t - \bar{\vw}\|^2 |\vw_{t-1}\right]} +  \wili{\frac{G}{2}\| [\nabla R(\vw_{t-1}) - \E [\vg_{t-1}| \vw_{t-1}]]_F\|^2  } \\
&\wili{ + \frac{1}{2C} \left(2 \|[\nabla R(\vw_{t-1}) - \E [\vg_{t-1} |\vw_{t-1}]]_F \|^2 + 2\| [\vg_{t-1} - \E [\vg_{t-1} |\vw_{t-1}]]_F \|^2 \right)  } \\
\overset{(\ref{eq:bias}) + (\ref{eq:variance})}{\le} & R(\vw_{t-1}) + \left[\frac{1}{2\eta} +  \wili{\frac{1}{2G}} \right]  \|\bar{\vw} - \vw_{t-1}\|^2  - \langle \nabla R(\vw_{t-1}), \vw_{t-1} - \bar{\vw} \rangle \\
&\wili{- \frac{1}{2\eta} \E \left[\|\vw_t - \bar{\vw}\|^2 |\vw_{t-1}\right] + \frac{\sqrt{\beta}}{2\eta} \E \left[\|\vv_t - \bar{\vw}\|^2 |\vw_{t-1}\right]} +  \wili{\frac{G}{2} C_3  }  \\
&\wili{ + \frac{1}{2C} \left(2C_3  + 2 C_1 \| \nabla R(\vw_{t-1}) \|^2 + 2 C_2 \mu^2 \right)  } \\
\overset{(c)}{\le} & R(\vw_{t-1}) + \left[\frac{1}{2\eta} +  \wili{\frac{1}{2G}} \right]  \|\bar{\vw} - \vw_{t-1}\|^2  - \langle \nabla R(\vw_{t-1}), \vw_{t-1} - \bar{\vw} \rangle \\
&\wili{- \frac{1}{2\eta} \E \left[\|\vw_t - \bar{\vw}\|^2 |\vw_{t-1}\right] + \frac{\sqrt{\beta}}{2\eta} \E \left[\|\vv_t - \bar{\vw}\|^2 |\vw_{t-1}\right]} +  \wili{\frac{G}{2} C_3  }  \\
&\wili{ + \frac{1}{2C} \left(2 C_1 \left( 2 \| \nabla R(\vw_{t-1}) -  \nabla R(\bar{\vw})  \|^2 + 2 \| \nabla R(\bar{\vw}) \|^2 \right) + 2 C_2 \mu^2 + 2C_3 \right)  } \\
\overset{(d)}{\le} & R(\vw_{t-1}) + \left[\frac{1}{2\eta} +  \wili{\frac{1}{2G}} \right]  \|\bar{\vw} - \vw_{t-1}\|^2  - \langle \nabla R(\vw_{t-1}), \vw_{t-1} - \bar{\vw} \rangle \\
&\wili{- \frac{1}{2\eta} \E \left[\|\vw_t - \bar{\vw}\|^2 |\vw_{t-1}\right] + \frac{\sqrt{\beta}}{2\eta} \E \left[\|\vv_t - \bar{\vw}\|^2 |\vw_{t-1}\right]} +  \wili{\frac{G}{2} C_3  }  \\
&\wili{ + \frac{1}{2C} \left(2 C_1 \left( 2 L_{s'}^2 \| \vw_{t-1} - \bar{\vw} \|^2 + 2 \| \nabla R(\bar{\vw}) \|^2 \right) + 2 C_2 \mu^2  + 2C_3 \right)  } \\
= & R(\vw_{t-1}) + \left[\frac{1}{2\eta} +  \wili{\frac{1}{2G} + \frac{2 C_1 L_{s'}^2}{C}}  \right]  \|\bar{\vw} - \vw_{t-1}\|^2  - \langle \nabla R(\vw_{t-1}), \vw_{t-1} - \bar{\vw} \rangle \\
&\wili{- \frac{1}{2\eta} \E \left[\|\vw_t - \bar{\vw}\|^2 |\vw_{t-1}\right] + \frac{\sqrt{\beta}}{2\eta} \E \left[\|\vv_t - \bar{\vw}\|^2 |\vw_{t-1}\right]} +  \wili{\frac{G}{2} C_3   + \frac{1}{C} \left(2 C_1 \| \nabla R(\bar{\vw}) \|^2 + C_2 \mu^2 + C_3 \right)  } \\
 \overset{(e)}{\leq} & R(\vw_{t-1}) + \left[\frac{1}{2\eta} +  \wili{\frac{1}{2G} + \frac{2 C_1 L_{s'}^2}{C}}  \right]  \|\bar{\vw} - \vw_{t-1}\|^2  + \left[R(\bar{\vw}) - R(\vw_{t-1}) - \frac{\nu_s}{2}\| \vw_{t-1} - \bar{\vw}\|^2\right] \\
&\wili{- \frac{1}{2\eta} \E \left[\|\vw_t - \bar{\vw}\|^2 |\vw_{t-1}\right] + \frac{\sqrt{\beta}}{2\eta} \E \left[\|\vv_t - \bar{\vw}\|^2 |\vw_{t-1}\right]} +  \wili{\frac{G}{2} C_3   + \frac{1}{C} \left(2 C_1 \|\nabla R(\bar{\vw})\|^2 + C_2 \mu^2 + C_3 \right)  } \\
= & R(\bar{\vw}) + \left[\frac{\frac{1}{\eta}  - \nu_s}{2} +  \wili{\frac{1}{2G} + \frac{2 C_1 L_{s'}^2}{C}}  \right]  \|\bar{\vw} - \vw_{t-1}\|^2  \\
&\wili{- \frac{1}{2\eta} \E \left[\|\vw_t - \bar{\vw}\|^2 |\vw_{t-1}\right] + \frac{\sqrt{\beta}}{2\eta} \E \left[\|\vv_t - \bar{\vw}\|^2 |\vw_{t-1}\right]} +  \wili{\frac{G}{2} C_3   + \frac{1}{C} \left(2 C_1 \|\nabla R(\bar{\vw})\|^2 + C_2 \mu^2 + C_3 \right)  } \\
\overset{(f)}{\leq} & R(\bar{\vw}) + \left[\frac{\frac{1}{\eta}  - \nu_s}{2} +  \wili{\frac{1}{2G} + \frac{2 \varepsilon_F L_{s'}^2}{\tau C}}  \right]  \|\bar{\vw} - \vw_{t-1}\|^2  \\
&\wili{- \frac{1}{2\eta} \E \left[\|\vw_t - \bar{\vw}\|^2 |\vw_{t-1}\right] + \frac{\sqrt{\beta}}{2\eta} \E \left[\|\vv_t - \bar{\vw}\|^2 |\vw_{t-1}\right]} +  \wili{\frac{G}{2} C_3   + \frac{1}{C} \left(2 C_1 \|\nabla R(\bar{\vw})\|^2 + C_2 \mu^2 + C_3 \right)  } \inlabel{eq:before_simpl_zo_2SP}
\end{align*}

\wili{Where (a) follows from the inequality $\langle a, b\rangle \le \frac{G}{2} a^2 + \frac{1}{2G}b^2 $, for any $(a, b) \in (\R^d)^2$ with $G > 0$ an arbitrary strictly positive constant,
(b) and (c) follow from the inequality $ \| a + b \|^2 \leq 2 \|a \|^2 + 2 \|b\|^2$ for any $(a, b) \in (\R^d)^2$, (d) follows from the fact that $R$ is $(L_{s'}, s')$-RSS' (Assumption \ref{ass:RSSbis} with sparsity level $s'$), therefore it is also ($L_{s'}$, $s$)-RSS', (e) follows from the RSC condition, and for (f), we recall that $C_1 = \frac{\varepsilon_F}{q_t}$, and we define  $\wili{q_t} = \left\lceil \frac{\tau}{\omega^t}\right\rceil$, for some $\omega > 1$ and $\tau>0$ that will be chosen later in the proof.
}

Recall that we have chosen $\eta := \frac{1}{L_{s'} + C}$.
Let us define $\alpha := \frac{C}{L_{s'}} + 1$. Then $C = (\alpha - 1)L_{s'}$, and $\eta = \frac{1}{\alpha L_{s'}}$. Also recall that $\kappa_s = \frac{L_{s'}}{\nu_s}$.

\wil{

We will now choose the constant $G$ and $C$, in order to simplify the inequality above, such that it matches as much as possible the structure of the previous proofs:

We will seek to rewrite: 

$\left[\frac{\frac{1}{\eta}  - \nu_s}{2} +  \wili{\frac{1}{2G} + \frac{2 \frac{\varepsilon_F}{\tau} L_{s'}^2}{C}}  \right] \left( = \frac{1}{2 \eta}\left[ 1 + \wili{\frac{1}{G \alpha L_{s'}}  + \frac{4 L_{s'}^2 \frac{\varepsilon_F}{\tau}}{ (\alpha - 1) \alpha L_{s'}^2}} - \frac{1}{\alpha \kappa_s}\right] \right)$ , into : 

$\frac{1}{2\eta} \left[ 1 - \frac{1}{\alpha' \kappa_s}\right] $ for some $\alpha' > 0$ (we will seek $\alpha' \propto \alpha$, with a dimensionless proportionality constant for simplicity).

Therefore, let us choose $G := \frac{4}{\nu_s}$, which implies:

\begin{equation}\label{eq:G_term}
  \frac{1}{G \alpha L_{s'}} = \frac{1}{4 \alpha \kappa_s}.
\end{equation}
And let us choose $\tau := \frac{16 \kappa_s \varepsilon_F}{(\alpha - 1)}$, which implies: 
\begin{equation}\label{eq:tau_and_C_term}
 \frac{4 L_{s'}^2 \frac{\varepsilon_F}{\tau}}{ (\alpha - 1) \alpha L_{s'}^2}  =  \frac{1}{4\alpha \kappa_s}.
\end{equation}

Therefore, using equations \ref{eq:G_term} and \ref{eq:tau_and_C_term}, we obtain:

\begin{align*}
\left[\frac{\frac{1}{\eta}  - \nu_s}{2} +  \wili{\frac{1}{2G} + \frac{2 \frac{\varepsilon_F}{\tau} L_{s'}^2}{C}}  \right]  &= \frac{1}{2 \eta}\left[ 1 + \wili{\frac{1}{G \alpha L_{s'}}  + \frac{4 L_{s'}^2 \frac{\varepsilon_F}{\tau}}{ (\alpha - 1) \alpha L_{s'}^2}} - \frac{1}{\alpha \kappa_s}\right]\\
&= \frac{1}{2\eta} \left[ 1 + \frac{1}{4 \alpha \kappa_s} + \frac{1}{4 \alpha \kappa_s} - \frac{1}{\alpha \kappa_s}\right] \\
&= \frac{1}{2\eta} \left[ 1 - \frac{1}{2 \alpha \kappa_s}\right] = \frac{1}{2\eta} \left[ 1 - \frac{1}{\alpha' \kappa_s}\right], 
\end{align*}
where for simplicity we denote $\alpha'= 2\alpha$.

}
We can therefore simplify (\ref{eq:before_simpl_zo_2SP}) into:
\begin{align*}
  \E [R(\vw_{t}) | \vw_{t-1}] - R(\bar{\vw}) \le & \frac{1}{2 \eta}  \left[   \left( 1 - \frac{1}{\wili{\alpha'} \kappa_s} \right) \|\bar{\vw} - \vw_{t-1}\|^2  - \frac{1}{2\eta} \E \left[\|\vw_t - \bar{\vw}\|^2 |\vw_{t-1}\right] \right. \\
  &+ \frac{\sqrt{\beta}}{2\eta} \E \left[\|\vv_t - \bar{\vw}\|^2 |\vw_{t-1}\right]\\
 & \left.  \wili{+ 2 \eta \left( \frac{G}{2} C_3   + \frac{1}{C} \left(2 C_1 \|\nabla R(\bar{\vw})\|^2 + C_2 \mu^2 + C_3 \right) \right) }    \right].
\end{align*}
We now take the expectation over $\rw_{t-1}$ of the above inequality (i.e. we take $\E_{\rw_{t-1}}[\cdot] $): using the law of total expectation ($\E_{}[\cdot] =\E_{\rw_{t-1}} [\E[\cdot | \vw_{t-1}]]$)  we obtain:  
\begin{align}
  \E R(\vw_{t})  - R(\bar{\vw}) \le & \frac{1}{2 \eta}  \left[   \left( 1 - \frac{1}{\wili{\alpha'} \kappa_s} \right) \E \|\bar{\vw} - \vw_{t-1}\|^2  - \frac{1}{2\eta} \E \left[\|\vw_t - \bar{\vw}\|^2 \right] \right.\\
  &+ \frac{\sqrt{\beta}}{2\eta} \E \left[\|\vv_t - \bar{\vw}\|^2\right] \\
 &\left.\wili{+  2 \eta \left( \frac{G}{2} C_3   + \frac{1}{C} \left(2 C_1 \|\nabla R(\bar{\vw})\|^2 + C_2 \mu^2 + C_3 \right) \right) }   \right].
\end{align}
\wili{Let us call $A := 2 \eta \left( \frac{G}{2} C_3   + \frac{1}{C} \left(2 C_1 \|\nabla R(\bar{\vw})\|^2 + C_2 \mu^2 + C_3 \right) \right) $ for simplicity.}

This gives: 
\begin{equation}
  \E R(\vw_{t})  - R(\bar{\vw}) \le \frac{1}{2 \eta}  \left[   \left( 1 - \frac{1}{\wili{\alpha'} \kappa_s} \right) \E \|\bar{\vw} - \vw_{t-1}\|^2  - \wili{ \frac{1}{2\eta} \E \|\vw_t - \bar{\vw}\|^2  + \frac{\sqrt{\beta}}{2\eta} \E \|\vv_t - \bar{\vw}\|^2} \wili{+  A }   \right].
\end{equation}

\wil{
Additionally, in view of~\eqref{inequat:before_telescope_hsg_zo} applied at $\vv_t$ instead of $\vw_t$, (since $\vv_t$ here corresponds to the $\vw_t$ from Section \ref{proof:hsg}, i.e. $\vv_t$ is the hard-thresholding of an iterate after a gradient step), we know that:
\begin{align*}
  \E R(\vv_{t})  - R(\bar{\vw}) \le & \frac{1}{2 \eta}  \left[   \left( 1 - \frac{1}{\wili{\alpha'} \kappa_s} \right) \E \|\bar{\vw} - \vw_{t-1}\|^2  - (1 - \sqrt{\beta}) \E \|\vw_t - \bar{\vw}\|^2  + \wili{A} \right].
\end{align*}
}

\wil{
We now take a convex combination similarly as in the case without additional constraint (section \ref{proof:risk_convergence_additional}), for some $\rho \in (0, 1)$.

\begin{align*}
  &\E (1-\rho)R(\vw_t) + \rho R(\vv_t) \\
  \leq&  R(\bar{\vw}) + \frac{1}{2 \eta}  \left[   \left( 1 - \frac{1}{\wili{\alpha'} \kappa_s} \right) \E \|\bar{\vw} - \vw_{t-1}\|^2  - (1 - \rho)  \E \|\vw_t - \bar{\vw}\|^2   \right.\\
  &\left. + \left((1- \rho) \sqrt{\beta} - (1 - \sqrt{\beta}) \rho\right) \E  \| \vv_t - \bar \vw \|^2 + \wili{A}  \right]\\
  =&  R(\bar{\vw}) + \frac{1}{2 \eta}  \left[   \left( 1 - \frac{1}{\wili{\alpha'} \kappa_s} \right) \E \|\bar{\vw} - \vw_{t-1}\|^2  - (1 - \rho)  \E \|\vw_t - \bar{\vw}\|^2   \right.\\
  &\left. - \left(\rho - \sqrt{\beta}\right) \E  \| \vv_t - \bar \vw \|^2  + \wili{A}  \right]\\
  \overset{(b)}{\leq}&  R(\bar{\vw}) + \frac{1}{2 \eta}  \left[   \left( 1 - \frac{1}{\wili{\alpha'} \kappa_s} \right) \E \|\bar{\vw} - \vw_{t-1}\|^2  - (1 - \rho)  \E \|\vw_t - \bar{\vw}\|^2   \right.\\
  &\left. - \left(\rho - \sqrt{\beta}\right) \E  \| \vw_t - \bar \vw \|^2  + \wili{A}  \right]\\
  =&  R(\bar{\vw}) + \frac{1}{2 \eta}  \left[   \left( 1 - \frac{1}{\wili{\alpha'} \kappa_s} \right) \E \|\bar{\vw} - \vw_{t-1}\|^2   - (1 - \sqrt{\beta})  \E \|\vw_t - \bar{\vw}\|^2  + A  \right].
\end{align*}
where in (b), we have assumed that $\sqrt{\beta} \leq \rho$ (later we will verify that our choice of $k$ ensures such a condition), and have used the fact that projection onto a convex set is non-expansive (which implies that $\| \vv_t - \bar \vw \|^2 \geq \| \vw_t - \bar \vw \|^2$). 
}

Similarly as in \cite{liu2020between}, we now take a weighted sum over $t=1, ..., T$, to obtain:  
\begin{align*}
   &\sum_{t = 1}^T 2 \eta \left( \frac{1 - \frac{1}{\wili{\alpha'} \kappa_s}}{1 - \sqrt{\beta}} \right)^{T-t} \E [\wili{(1-\rho)R(\vw_t) + \rho R(\vv_t)} - R(\bar{\vw}) ]\\ 
    \le&   \sum_{t = 1}^T \left( \frac{1 - \frac{1}{\wili{\alpha'} \kappa_s}}{1 - \sqrt{\beta}} \right)^{T-t}   \left[   \left( 1 - \frac{1}{\wili{\alpha'} \kappa_s} \right) \E \|\bar{\vw} - \vw_{t-1}\|^2  - (1 - \sqrt{\beta}) \E \|\vw_t - \bar{\vw}\|^2  + \wili{A} \right]\\
    =&   \sum_{t = 1}^T \left( \frac{1 - \frac{1}{\wili{\alpha'} \kappa_s}}{1 - \sqrt{\beta}} \right)^{T-t}   \left[   \left( 1 - \frac{1}{\wili{\alpha'} \kappa_s} \right) \E \|\bar{\vw} - \vw_{t-1}\|^2  - (1 - \sqrt{\beta}) \E \|\vw_t - \bar{\vw}\|^2  \right]\\
    &+\sum_{t = 1}^T \left( \frac{1 - \frac{1}{\wili{\alpha'} \kappa_s}}{1 - \sqrt{\beta}} \right)^{T-t}   \wili{A} \\
    =&   (1 - \sqrt{\beta}) \sum_{t=1}^T  \left[  \left( \frac{1 - \frac{1}{\wili{\alpha'} \kappa_s}}{1 - \sqrt{\beta}} \right)^{T-t+1} \E \|\bar{\vw} - \vw_{t-1}\|^2 - \left( \frac{1 - \frac{1}{\wili{\alpha'} \kappa_s}}{1 - \sqrt{\beta}} \right)^{T-t} \E \|\vw_t - \bar{\vw}\|^2  \right]\\
    &+\sum_{t = 1}^T \left( \frac{1 - \frac{1}{\wili{\alpha'} \kappa_s}}{1 - \sqrt{\beta}} \right)^{T-t}   \wili{A} \\
    \overset{(a)}{=}&   (1 - \sqrt{\beta})  \left[  \left( \frac{1 - \frac{1}{\wili{\alpha'} \kappa_s}}{1 - \sqrt{\beta}} \right)^{T} \|\bar{\vw} - \vw_{0}\|^2 - \E \|\vw_T - \bar{\vw}\|^2  \right]+\sum_{t = 1}^T \left( \frac{1 - \frac{1}{\wili{\alpha'} \kappa_s}}{1 - \sqrt{\beta}} \right)^{T-t}  \wili{A}\\
    \leq&   (1 - \sqrt{\beta})   \left( \frac{1 - \frac{1}{\wili{\alpha'} \kappa_s}}{1 - \sqrt{\beta}} \right)^{T}  \|\bar{\vw} - \vw_{0}\|^2  +\sum_{t = 1}^T \left( \frac{1 - \frac{1}{\wili{\alpha'} \kappa_s}}{1 - \sqrt{\beta}} \right)^{T-t}  \wili{A} \\
    \leq&   \left( \frac{1 - \frac{1}{\wili{\alpha'} \kappa_s}}{1 - \sqrt{\beta}} \right)^{T} \|\bar{\vw} - \vw_{0}\|^2  +\sum_{t = 1}^T \left( \frac{1 - \frac{1}{\wili{\alpha'} \kappa_s}}{1 - \sqrt{\beta}} \right)^{T-t}  \wili{A} \\
    =&   \left( \frac{1 - \frac{1}{\wili{\alpha'} \kappa_s}}{1 - \sqrt{\beta}} \right)^{T} \|\bar{\vw} - \vw_{0}\|^2  +\sum_{t = 1}^T \left( \frac{1 - \frac{1}{\wili{\alpha'} \kappa_s}}{1 - \sqrt{\beta}} \right)^{T-t}  \wili{2 \eta \left( \frac{G}{2} C_3   + \frac{1}{C} \left(2 C_1 \|\nabla R(\bar{\vw})\|^2 \right.\right.}\\
    &~~~~\wili{\left.\left.+ C_2 \mu^2 + C_3 \right) \right)} \\
    =&   \left( \frac{1 - \frac{1}{\wili{\alpha'} \kappa_s}}{1 - \sqrt{\beta}} \right)^{T} \|\bar{\vw} - \vw_{0}\|^2  +\sum_{t = 1}^T \left( \frac{1 - \frac{1}{\wili{\alpha'} \kappa_s}}{1 - \sqrt{\beta}} \right)^{T-t}  \wili{2 \eta \left( \frac{G}{2} C_3   + \frac{1}{C} \left(2 \frac{\varepsilon_F}{q_t} \|\nabla R(\bar{\vw})\|^2 \right.\right.}\\
    &~~~~\wili{\left.\left. +\frac{\varepsilon_{abs} \mu^2 }{q_t} + C_3 \right) \right)} \\
    =&   \left( \frac{1 - \frac{1}{\wili{\alpha'} \kappa_s}}{1 - \sqrt{\beta}} \right)^{T} \|\bar{\vw} - \vw_{0}\|^2  + \sum_{t = 1}^T \left( \frac{1 - \frac{1}{\wili{\alpha'} \kappa_s}}{1 - \sqrt{\beta}} \right)^{T-t}  \wili{\frac{2\eta}{q_t} \left(\frac{2 \varepsilon_F \|\nabla R(\bar{\vw})\|^2 + \varepsilon_{abs}\mu^2}{C} \right)} \\
    &+ \sum_{t = 1}^T \left( \frac{1 - \frac{1}{\wili{\alpha'} \kappa_s}}{1 - \sqrt{\beta}} \right)^{T-t} \wili{2 \eta C_3 \left( \frac{G}{2}   + \frac{1}{C}  \right)} \\
    =&   \left( \frac{1 - \frac{1}{\wili{\alpha'} \kappa_s}}{1 - \sqrt{\beta}} \right)^{T} \|\bar{\vw} - \vw_{0}\|^2  + \sum_{t = 1}^T \left( \frac{1 - \frac{1}{\wili{\alpha'} \kappa_s}}{1 - \sqrt{\beta}} \right)^{T-t}  \wili{\frac{2\eta}{q_t} \left(\frac{2 \varepsilon_F \|\nabla R(\bar{\vw})\|^2}{C} \right)} \\
    &+ \sum_{t = 1}^T \left( \frac{1 - \frac{1}{\wili{\alpha'} \kappa_s}}{1 - \sqrt{\beta}} \right)^{T-t} \wili{2 \eta  \mu^2 \left(\varepsilon_{\mu}\left( \frac{G}{2}   + \frac{1}{C}  \right) + \frac{\varepsilon_{abs}}{C q_t} \right)} \\
    \leq&\wili{   \left( \frac{1 - \frac{1}{\alpha' \kappa_s}}{1 - \sqrt{\beta}} \right)^{T} \|\bar{\vw} - \vw_{0}\|^2  + \sum_{t = 1}^T \left( \frac{1 - \frac{1}{\alpha' \kappa_s}}{1 - \sqrt{\beta}} \right)^{T-t}  \frac{2\eta}{q_t} \left(\frac{2 \varepsilon_F \|\nabla R(\bar{\vw})\|^2}{C} \right)} \\
    &\wili{+ \sum_{t = 1}^T \left( \frac{1 - \frac{1}{\alpha' \kappa_s}}{1 - \sqrt{\beta}} \right)^{T-t} 2 \eta  \mu^2 \left(\varepsilon_{\mu}\left( \frac{G}{2}   + \frac{1}{C}  \right) + \frac{\varepsilon_{abs}}{C} \right)},
    \inlabel{eq:tel_zo_2SP}
  \end{align*}
where (a) follows from simplifying the telescopic sum.
\wili{Let us denote for simplicity $\zeta := \frac{2 \eta(2 \varepsilon_F \| \nabla R(\bar{\vw})\|^2)}{C}=  \frac{4 \eta \varepsilon_F \| \nabla R(\bar{\vw})\|^2}{C}$  and $Z :=  \varepsilon_{\mu}\left( \frac{G}{2}   + \frac{1}{C}  \right) + \frac{\varepsilon_{abs}}{C} $.} 

We now choose $k$ and \wili{$s_t$} as follows: we choose $k \geq 4 \wili{\frac{\alpha'^2}{\rho}} \kappa_s^2 \bar{k}$, which implies that: 
\begin{align}
&  \sqrt{\beta} \leq  \frac{1}{2 \wili{\frac{\alpha'}{\rho}} \kappa_s}\nonumber\\
& \implies \sqrt{\beta} \leq  \frac{1}{2 \wili{\frac{\alpha'}{\rho}} \kappa_s - 1} \nonumber\\
& \implies 1 - \sqrt{\beta} \geq 1 -  \frac{1}{2 \wili{\frac{\alpha'}{\rho}} \kappa_s - 1} = \frac{2 \wili{\frac{\alpha'}{\rho}} \kappa_s - 2}{2 \wili{\frac{\alpha'}{\rho}} \kappa_s - 1} = \frac{1- \frac{1}{\wili{\frac{\alpha'}{\rho}} \kappa_s}}{1 - \frac{1}{2\wili{\frac{\alpha'}{\rho}} \kappa_s}}\nonumber\\
&\implies  \left( \frac{1 - \frac{1}{\wili{\frac{\alpha'}{\rho}} \kappa_s}}{1 - \sqrt{\beta}} \right) \leq 1 - \frac{1}{2\wili{\frac{\alpha'}{\rho}} \kappa_s}.  
 \end{align}
We recall that we previously defined $\wili{q_t} = \left\lceil \frac{\tau}{\omega^t}\right\rceil$, with $\tau = 16 \kappa_s \frac{\varepsilon_F}{(\alpha - 1)}$. We now set the value of $\omega$, to $\omega := 1 - \frac{1}{\wili{\frac{\alpha'}{\rho}}\kappa_s}  $ .



Let us call $ \nu := 1 - \frac{1}{2\wili{\frac{\alpha'}{\rho}} \kappa_s}$. Note that we have: 

\begin{equation}\label{eq:compnuomega_zo_2SP}
  \nu \leq \omega.
\end{equation}

And that we have the inequality below: 

\begin{equation}\label{eq:omega_quotient_zo_2SP}
  \frac{\nu}{\omega} =  \frac{1 - \frac{1}{2\wili{\frac{\alpha'}{\rho}} \kappa_s}}{1 - \frac{1}{4\wili{\frac{\alpha'}{\rho}} \kappa_s}} = \frac{4 \wili{\frac{\alpha'}{\rho}} \kappa_s - 2}{4 \wili{\frac{\alpha'}{\rho}} \kappa_s - 1}  = 1 - \frac{1}{4 \wili{\frac{\alpha'}{\rho}} \kappa_s - 1}  \leq 1 - \frac{1}{4 \wili{\frac{\alpha'}{\rho}} \kappa_s} = \omega .
\end{equation}

This allows us to simplify \eqref{eq:tel_zo_2SP} into:

\begin{align*}
  &\E \sum_{t = 1}^T 2 \eta \left( \frac{1 - \frac{1}{\wili{\alpha'} \kappa_s}}{1 - \sqrt{\beta}} \right)^{T-t}  [\wili{(1-\rho)R(\vw_t) + \rho R(\vv_t)} - R(\bar{\vw}) ]\\
   &\leq   \nu^{T}   \|\bar{\vw} - \vw_{0}\|^2 + \sum_{t = 1}^T  \nu^{T-t} \omega^{t-1}  \wili{ +  \frac{\zeta}{\tau} \sum_{t = 1}^T \left( \frac{1 - \frac{1}{\wili{\alpha'} \kappa_s}}{1 - \sqrt{\beta}} \right)^{T-t} 2 \eta Z \mu^2} \\
    &=   \nu^{T} \|\bar{\vw} - \vw_{0}\|^2 + \frac{\omega^T}{\omega}\sum_{t = 1}^T   \left( \frac{\nu}{\omega} \right)^{T-t} \wili{+ \frac{\zeta}{\tau} \sum_{t = 1}^T \left( \frac{1 - \frac{1}{\wili{\alpha'} \kappa_s}}{1 - \sqrt{\beta}} \right)^{T-t} 2 \eta Z \mu^2} \\
    &=  \nu^{T}  \|\bar{\vw} - \vw_{0}\|^2 + \frac{\omega^{T}}{\omega}\frac{1 - \left( \frac{\nu}{\omega}\right)^T}{1 - \left( \frac{\nu}{\omega}\right)} \wili{+ \frac{\zeta}{\tau} \sum_{t = 1}^T \left( \frac{1 - \frac{1}{\wili{\alpha'} \kappa_s}}{1 - \sqrt{\beta}} \right)^{T-t} 2 \eta Z \mu^2}  \\
    &\leq  \nu^{T}  \|\bar{\vw} - \vw_{0}\|^2 + \frac{\omega^{T}}{\omega}\frac{1 }{1 - \left( \frac{\nu}{\omega}\right)} \wili{+ \frac{\zeta}{\tau} \sum_{t = 1}^T \left( \frac{1 - \frac{1}{\wili{\alpha'} \kappa_s}}{1 - \sqrt{\beta}} \right)^{T-t} 2 \eta Z \mu^2}  \\
    &\overset{(a)}{\leq}  \nu^{T}  \|\bar{\vw} - \vw_{0}\|^2 + \frac{\omega^{T}}{\omega}\frac{1 }{1 -\omega} \wili{+ \frac{\zeta}{\tau} \sum_{t = 1}^T \left( \frac{1 - \frac{1}{\wili{\alpha'} \kappa_s}}{1 - \sqrt{\beta}} \right)^{T-t}2 \eta Z \mu^2}  \\
    &\overset{(b)}{\leq}  \nu^{T}  \|\bar{\vw} - \vw_{0}\|^2 + \frac{4}{3}\omega^{T}\frac{1 }{1 -\omega}  \wili{+  \frac{\zeta}{\tau} \sum_{t = 1}^T \left( \frac{1 - \frac{1}{\wili{\alpha'} \kappa_s}}{1 - \sqrt{\beta}} \right)^{T-t} 2 \eta Z \mu^2} \\
    &\overset{(c)}{\leq}  \omega^{T}  \|\bar{\vw} - \vw_{0}\|^2 + \frac{4}{3}\omega^{T}\frac{1 }{1 -\omega}  \wili{+ \frac{\zeta}{\tau} \sum_{t = 1}^T \left( \frac{1 - \frac{1}{\wili{\alpha'} \kappa_s}}{1 - \sqrt{\beta}} \right)^{T-t} 2 \eta Z \mu^2} \\
    &\overset{(d)}{\leq}  \frac{\omega^{T}}{1 - \omega}  \|\bar{\vw} - \vw_{0}\|^2 + \frac{4}{3}\omega^{T}\frac{1 }{1 -\omega}  \wili{+ \frac{\zeta}{\tau} \sum_{t = 1}^T \left( \frac{1 - \frac{1}{\wili{\alpha'} \kappa_s}}{1 - \sqrt{\beta}} \right)^{T-t} 2 \eta Z \mu^2} \\
    &=  \frac{\omega^T}{1 - \omega}\left( \|\bar{\vw} - \vw_{0}\|^2 + \frac{4}{3}\right) \wili{+ \frac{\zeta}{\tau} \sum_{t = 1}^T \left( \frac{1 - \frac{1}{\wili{\alpha'} \kappa_s}}{1 - \sqrt{\beta}} \right)^{T-t} 2 \eta Z \mu^2}  \\
    &=  4 \wili{\frac{\alpha'}{\rho}} \kappa_s \omega^T\left( \|\bar{\vw} - \vw_{0}\|^2 + \frac{4}{3}\right) \wili{+ \frac{\zeta}{\tau} \sum_{t = 1}^T \left( \frac{1 - \frac{1}{\wili{\alpha'} \kappa_s}}{1 - \sqrt{\beta}} \right)^{T-t} 2 \eta Z \mu^2},
  \end{align*}
where in the left hand side we have used the linearity of expectation, and where (a) uses \eqref{eq:omega_quotient_zo_2SP}, 
  (b) uses the fact that $\frac{1}{\omega} = \frac{1}{1 - \frac{1}{4\wili{\frac{\alpha'}{\rho}}\kappa_s}} \leq \frac{1}{1 - \frac{1}{4}} = \frac{4}{3}$ (since $\kappa_s \geq 1$ and $\wili{\alpha'} \geq 1$ (indeed, we have $\wili{\alpha'} = \wili{2 \alpha}  = \wili{2} ( \frac{C}{L_{s'}} + 1)$ with $C>0$), \wili{so consequently $\frac{\alpha'}{\rho} \geq 1$}), (c) uses equation \ref{eq:compnuomega_zo_2SP}, and (d) uses the fact that $\omega < 1$ so $1 < \frac{1}{1 - \omega}$.
  

Let us now normalize the above inequality:

\begin{equation*}
 \E \frac{\sum_{t = 1}^T 2 \eta \left( \frac{1 - \frac{1}{\wili{\alpha'} \kappa_s}}{1 - \sqrt{\beta}} \right)^{T-t} [\wili{(1-\rho)R(\vw_t) + \rho R(\vv_t)}] }{\sum_{t = 1}^T 2 \eta \left( \frac{1 - \frac{1}{\wili{\alpha'} \kappa_s}}{1 - \sqrt{\beta}} \right)^{T-t} }
   \leq  R(\bar{\vw}) + \frac{4 \wili{\frac{\alpha'}{\rho}} \kappa_s \omega^T\left( \|\bar{\vw} - \vw_{0}\|^2 + \frac{4}{3} \frac{\zeta}{\tau}\right)}{\sum_{t = 1}^T 2 \eta \left( \frac{1 - \frac{1}{\wili{\alpha'} \kappa_s}}{1 - \sqrt{\beta}} \right)^{T-t} } \wili{+ Z \mu^2}.
 \end{equation*}

 The left hand side above is a weighted sum, which is an upper bound on the smallest term of the sum. 

 Regarding the right hand side, we can simplify it using the fact that $0 < \left( \frac{1 - \frac{1}{\wili{\alpha'} \kappa_s}}{1 - \sqrt{\beta}} \right)$ , and therefore:
 $$\sum_{t=1}^T \left( \frac{1 - \frac{1}{\wili{\alpha'} \kappa_s}}{1 - \sqrt{\beta}} \right)^{T-t} \geq 1.$$

 Therefore, we obtain:

 \begin{align*}
 \E \min_{t \in \{1, .., T \}} [\wili{(1-\rho)R(\vw_t) + \rho R(\vv_t)} - R(\bar{\vw}) ]
   &\leq \frac{4 \wili{\frac{\alpha'}{\rho}} \kappa_s \omega^T\left( \|\bar{\vw} - \vw_{0}\|^2 + \frac{4}{3} \wili{\frac{\zeta}{\tau}} \right)}{2 \eta } \wili{+ Z \mu^2} \\
   &= \wili{4} \frac{\alpha^2}{\wili{\rho}} L_{s'} \kappa_s \omega^T\left( \|\bar{\vw} - \vw_{0}\|^2 + \frac{4}{3} \wili{\frac{\zeta}{\tau}} \right) \wili{+ Z \mu^2},
 \end{align*}
which can be simplified into the expression below, using the definition of $\hat{\vw}_T$:

\begin{align}\label{eq:finalcv_2SP_zo}
\E[\min_{t \in [T]} \wili{(1-\rho)R(\vw_t) + \rho R(\vv_t)} - R(\bar{\vw}) ]  &\leq \wili{4} \frac{\wili{\alpha}^2}{\wili{\rho}} L_{s'} \kappa_s \omega^T\left( \|\bar{\vw} - \vw_{0}\|^2 + \frac{4}{3} \wili{\frac{\zeta}{\tau}} \right) + \wili{Z \mu^2}.
\end{align}


\wil{
To simplify the above result, we recall the assumptions made earlier on: 
we have chosen


$\tau = \frac{16 \kappa_s \varepsilon_F}{(\alpha - 1)}$, and  $G =  \frac{4}{\nu_s} $ .

Therefore, to sum up, we have: 

$$Z =   \varepsilon_{\mu}\left( \frac{G}{2}   + \frac{1}{C}  \right) + \frac{\varepsilon_{abs}}{C} = \varepsilon_{\mu}\left( \frac{2}{\nu_s} + \frac{1}{C}\right)  + \frac{\varepsilon_{abs}}{C}$$
$$\omega = 1 - \frac{1}{4\wili{\frac{\alpha'}{\wili{\rho}}} \kappa_s} = 1 - \frac{1}{8\wili{\frac{\alpha}{\wili{\rho}}} \kappa_s}  $$

$$\zeta = \frac{4 \eta \varepsilon_F \| \nabla R(\bar{\vw})\|^2}{C}$$
The last inequality implies: $\frac{\zeta}{\tau} = \frac{\frac{4 \eta \varepsilon_F \| \nabla R(\bar{\vw})\|^2}{C}}{16 \kappa_s L_{s'} \frac{\varepsilon_F}{C}} = \frac{\eta \| \nabla R(\bar{\vw})\|^2}{4 \kappa_s L_{s'}}$.
}

Let us denote by $\varepsilon_T$ the right-hand side term from equation \ref{eq:finalcv_2SP_zo}: 

$$\varepsilon_T = \wili{4} \frac{\wili{\alpha}^2}{\wili{\rho}} L_{s'} \kappa_s \omega^T\left( \|\bar{\vw} - \vw_{0}\|^2 + \frac{4}{3} \frac{\eta \| \nabla R(\bar{\vw})\|^2}{4 \kappa_s L_{s'}} \right) + \wili{Z \mu^2}.$$

\qw{We now proceed similarly as in the proof of Theorem~\ref{thm:hsg_2SP} above. Recall that we have assumed in the Assumptions of Theorem \ref{thm:zo_2SP}, without loss of generality, that $R$ is non-negative, which implies that $R\left(\vv_{t}\right) \geq 0$. Plugging this in equation \ref{eq:finalcv_2SP_zo} implies that:
  \begin{equation}\label{eq:finalfinalzo}
  \E \min _{t \in[T]} R\left(\vw_{t}\right) \leq \frac{1}{1-\rho} R(\bar{\vw})+\frac{\varepsilon_T}{1-\rho} + \frac{Z}{(1 - \rho)} \mu^2\leq(1+2 \rho) R(\bar{\vw})+\frac{\varepsilon_T}{1-\rho}  + \frac{Z}{1 - \rho} \mu^2.
  \end{equation}
Plugging the change of variable $\varepsilon'_T = \frac{\varepsilon_T}{1 - \rho}$ into equation \ref{eq:finalfinalzo} above, and redefining $Z$ into $Z  := \frac{1}{1 - \rho}\left(\varepsilon_{\mu}\left( \frac{2}{\nu_s} + \frac{1}{C}\right)  + \frac{\varepsilon_{abs}}{C}\right)$, we obtain that: 
$$  \E \min _{t \in[T]} R\left(\vw_{t}\right)  \leq(1+2 \rho) R(\bar{\vw})+\varepsilon_T' + Z \mu^2. $$
Further, consider an ideal case where $\bar{\vw}$ is a global minimizer of $R$ over $\mathcal{B}_{0}(k):=$ $\left\{\vw:\|\vw\|_{0} \leq k\right\}$. Then $R\left(\vv_{t}\right) \geq R(\bar{\vw})$ is always true for all $t \geq 1$. It follows that the bound in \eqref{eq:finalcv_2SP_zo} yields:
$$
\E \min _{t \in[T]}\left\{(1-\rho) R\left(\vw_{t}\right)+\rho R(\bar{\vw})\right\} \leq \E \min _{t \in[T]}\left\{(1-\rho) R\left(\vw_{t}\right)+\rho R\left(\vv_{t}\right)\right\} \leq R(\bar{\vw})+\varepsilon_T,
$$
which implies:  $\E \min _{t \in[T]} R\left(\vw_{t}\right) \leq R(\bar{\vw})+\frac{\varepsilon_T}{1-\rho}$.
In this case, we can simply set $\rho=0.5$, and define $\varepsilon'_T=\frac{\varepsilon_T}{1-\rho} = 2\varepsilon_T$ similarly as above. The proof is completed.}

\end{proof}

\subsection{Proof of Corollary~\ref{cor:ifo_zo_2SP}}\label{proof:ifo_zo_2SP}
\begin{proof}[Proof of Corollary~\ref{cor:ifo_zo_2SP}]

Let $\varepsilon \in \R_+^*$. Let us find $T$ to ensure that $\E \min_{t \in \{1, .., T \}} \wili{(1-\rho)R(\vw_t) + \rho R(\vv_t)} - R(\bar{\vw}) \leq \varepsilon  \wili{+ Z \mu^2}$

This will be enforced if:

\begin{align*}
  &\wili{4} \alpha^2 \wili{\frac{1}{\rho}} L_{s'} \kappa_s \omega^T\left( \|\bar{\vw} - \vw_{0}\|^2 + \frac{4}{3} \wili{\frac{\eta \| \nabla R(\bar{\vw})\|^2}{4 \kappa_s L_{s'}}}\right) \leq \varepsilon\\
  &\iff T \log(\omega) \leq \log \left(\frac{\varepsilon}{\wili{4} \alpha^2 \wili{\frac{1}{\rho}} L_{s'} \kappa_s \left( \|\bar{\vw} - \vw_{0}\|^2 + \frac{4}{3} \wili{\frac{\eta \| \nabla R(\bar{\vw})\|^2}{4 \kappa_s L_{s'}}} \right)}\right)\\
  &\iff T \geq \frac{1}{\log(\frac{1}{\omega})} \log \left(\frac{\wili{4} \alpha^2 \wili{\frac{1}{\rho}} L_{s'} \kappa_s \left( \|\bar{\vw} - \vw_{0}\|^2 + \frac{4}{3} \wili{\frac{\eta \| \nabla R(\bar{\vw})\|^2}{4 \kappa_s L_{s'}}}\right)}{\varepsilon}\right).
\end{align*}

Therefore, let us take:

\begin{equation}\label{eq:takeTzo_2SP}
  T := \left\lceil \frac{1}{\log(\frac{1}{\omega})} \log \left(\frac{\wili{4} \alpha^2 \wili{\frac{1}{\rho}}  L_{s'} \kappa_s \left( \|\bar{\vw} - \vw_{0}\|^2 + \frac{4}{3} \wili{\frac{\eta \| \nabla R(\bar{\vw})\|^2}{4 \kappa_s L_{s'}}}\right)}{\varepsilon}\right)  \right\rceil.
  \end{equation}

  We can now derive the \wili{\#IZO }and \#HT. First, we have one hard-thresholding operation at each iteration, therefore \#HT$=T$. Using the fact that $\frac{1}{\log(\frac{1}{\omega})} = \frac{1}{- \log(\omega)} = \frac{1}{- \log(1 - \frac{1}{\wili{8}\alpha \wili{\frac{1}{\rho}} \kappa_s})} \leq  \frac{1}{\frac{1}{\wili{8}\alpha \wili{\frac{1}{\rho}} \kappa_s}} = \wili{8}\alpha \wili{\frac{1}{\rho}} \kappa_s$ (since by property of the logarithm, for all $x\in (-\infty, -1): \log(1-x) \leq -x$ ), \wili{and the fact that $\alpha = \frac{C}{\wili{L_{s'}}}$ is independent of $\kappa_s$,} we obtain that $\text{\#HT} = \mathcal{O}(\wili{\kappa_s} \log\left(\frac{1}{\varepsilon}\right))$.

  We now turn to computing the \wili{\#IZO}. At each iteration $t$ we have \wili{$q_t$ function evaluations}, therefore: 
  \begin{align*}
  \wili{\text{\#IZO}} &= \sum_{t=0}^{T-1} q_t \\
  &\leq \sum_{t=0}^{T-1} \left(\frac{\tau}{\omega^t} + 1\right)\\
  &= T +  \tau \frac{\left(\frac{1}{\omega}\right)^T - 1}{\frac{1}{\omega} - 1}\\
  &\leq  T +  \frac{\tau}{{\frac{1}{\omega} - 1}} \left(\frac{1}{\omega}\right)^T \\
  &=  T +  \frac{\tau}{{\frac{1}{\omega} - 1}} \exp\left(T \log\left(\frac{1}{\omega}\right)\right) \\
  &\overset{(a)}{\leq} 1 +  \frac{1}{\log(\frac{1}{\omega})} \log \left(\frac{\wili{4} \alpha^2 \wili{\frac{1}{\rho}} L_{s'} \kappa_s \left( \|\bar{\vw} - \vw_{0}\|^2 + \frac{4}{3} \wili{\frac{\eta \| \nabla R(\bar{\vw})\|^2}{4 \kappa_s L_{s'}}}\right)}{\varepsilon}\right)  \\
  & ~~~~~ +  \frac{\tau}{{\frac{1}{\omega} - 1}} \exp\left(    \log\left(\frac{1}{\omega}  \right) \left[  \frac{1}{\log(\frac{1}{\omega})}  \log \left(\frac{\wili{4} \alpha^2 \wili{\frac{1}{\rho}} L_{s'} \kappa_s \left( \|\bar{\vw} - \vw_{0}\|^2 + \frac{4}{3} \wili{\frac{\eta \| \nabla R(\bar{\vw})\|^2}{4 \kappa_s L_{s'}}}\right)}{\varepsilon}\right) \right.\right.\\
  &~~~~\left.\left.+ 1 \right]\right) \\ 
  &= 1 +  \frac{1}{\log(\frac{1}{\omega})} \log \left(\frac{\wili{4} \alpha^2 \wili{\frac{1}{\rho}} L_{s'} \kappa_s \left( \|\bar{\vw} - \vw_{0}\|^2 + \frac{4}{3} \wili{\frac{\eta \| \nabla R(\bar{\vw})\|^2}{4 \kappa_s L_{s'}}}\right)}{\varepsilon}\right)  \\
  &~~~~+  \frac{\frac{\tau}{\omega}   }{{\frac{1}{\omega} - 1}} \frac{\wili{4} \alpha^2 \wili{\frac{1}{\rho}} L_{s'} \kappa_s \left( \|\bar{\vw} - \vw_{0}\|^2 + \frac{4}{3} \wili{\frac{\eta \| \nabla R(\bar{\vw})\|^2}{4 \kappa_s L_{s'}}}\right)}{\varepsilon} \\
  &= 1 +  \frac{1}{\log(\frac{1}{\omega})} \log \left(\frac{\wili{4} \alpha^2 \wili{\frac{1}{\rho}} L_{s'} \kappa_s \left( \|\bar{\vw} - \vw_{0}\|^2 + \frac{4}{3} \wili{\frac{\eta \| \nabla R(\bar{\vw})\|^2}{4 \kappa_s L_{s'}}}\right)}{\varepsilon}\right)  \\
  &~~~~+  \frac{\tau }{{1 - \omega}} \frac{\wili{4} \alpha^2 \wili{\frac{1}{\rho}} L_{s'} \kappa_s \left( \|\bar{\vw} - \vw_{0}\|^2 + \frac{4}{3} \wili{\frac{\eta \| \nabla R(\bar{\vw})\|^2}{4 \kappa_s L_{s'}}}\right)}{\varepsilon} \\
  &= 1 +  \frac{1}{\log(\frac{1}{\omega})} \log \left(\frac{\wili{4} \alpha^2 \wili{\frac{1}{\rho}} L_{s'} \kappa_s \left( \|\bar{\vw} - \vw_{0}\|^2 + \frac{4}{3} \wili{\frac{\eta \| \nabla R(\bar{\vw})\|^2}{4 \kappa_s L_{s'}}}\right)}{\varepsilon}\right)  \\
  &~~~~+\tau \frac{\wili{32} \alpha^3 \wili{\frac{1}{\rho^2}} L_{s'} \kappa_s^2  \left( \|\bar{\vw} - \vw_{0}\|^2 + \frac{4}{3} \wili{\frac{\eta \| \nabla R(\bar{\vw})\|^2}{4 \kappa_s L_{s'}}}\right) }{\varepsilon},
  %
  \end{align*}
where (a) follows from \eqref{eq:takeTzo_2SP}.

\wil{
  And we recall that $\tau = 16 \kappa_s \frac{\varepsilon_F}{(\alpha - 1)} $, which implies that: 



$$ \tau \frac{\wili{32} \alpha^3 \wili{\frac{1}{\rho^2}} L_{s'} \kappa_s^2  \left( \|\bar{\vw} - \vw_{0}\|^2 + \frac{4}{3}\wili{\frac{\eta \| \nabla R(\bar{\vw})\|^2}{2 \gamma \kappa_s L_{s'}}} \right) }{\varepsilon} = \mathcal{O}\left(\frac{\varepsilon_F}{\varepsilon} \left(\kappa_s^3 L_{s'} + \frac{\kappa_s}{ \nu_s }\right)\right). 
$$

}

Therefore, overall, the \wili{IZO} (query complexity) is in $\mathcal{O}\left(\frac{\varepsilon_F}{\varepsilon} 
 \kappa_s^3 L_{s'} \right) $. The proof is completed.
\end{proof}


\section{Differences Between Two-Step Projection and Euclidean Projection}\label{sec:recall}
In this section, we describe the differences between the two-step projection and the Euclidean projection onto the mixed constraints $\setc \cap \Bz$. One can encounter several possible cases: 
\begin{itemize}
  \item \textbf{Case (i):} the two-step projection (2SP) and the Euclidean projection onto $\setc \cap \Bz$ are identical (see e.g. Remark~\ref{rem:sameproj}): in that case, the contribution of our paper are on the theoretical side: Theorems \ref{thrm:risk_convergence_additional}, \ref{thm:hsg_2SP}, and \ref{thm:zo_2SP} give global convergence guarantee which therefore in this case apply to the usual (non-convex) projected gradient descent algorithm with Euclidean projection.
  \item \textbf{Case (ii):} the 2SP and the Euclidean projection onto the mixed constraints are different: this case can be declined into several sub-cases as described below: 
  \begin{itemize}
    \item Case (a): the Euclidean projection onto the mixed constraint $\setc \cap \Bz$ is unknown: in that case, the 2SP can allow to fill such gap, since the 2SP only requires the knowledge of the projection onto $\setc$, which is often known and easy to do. 
    \item Case (b): the Euclidean projection onto the mixed constraint $\setc \cap \Bz$ is known, but computationally expensive: in that case, the 2SP can provide a simpler and faster alternative to the Euclidean projection, while still enjoying some convergence guarantees as shown in this paper.
    \item Case (c): the Euclidean projection onto the mixed constraint $\setc \cap \Bz$ is known and is efficient enough (e.g. when $\setc$ belongs to the set of positive symmetric sets such as in \cite{lu2015optimization}). In such cases, it is unclear whether the 2SP can improve upon Euclidean projection since, 
    at the iteration level, using the Euclidean projection is optimal (indeed, a (Euclidean) projected gradient descent step minimizes a quadratic upper bound on the objective value under constraints (derived from the smoothness of $R$)), and the 2SP is therefore suboptimal in that sense (at the iteration level). 
  \end{itemize}
\end{itemize}

\section{Experiments}\label{sec:xps}

In this section, we provide some experiments to validate experimentally our theoretical results. Our experiments take 2h30 to run in total on a MacBook Pro with a 2.6GHz 6-core Intel Core i7 and 16GB of memory. Before describing our experiments, we provide a short discussion about the settings and algorithms that we will illustrate. For constraints $\setc$ for which the Euclidean projection onto $\Bz \cap \setc$ has a closed form equal to the 2SP, our algorithm is identical to a vanilla non-convex projected gradient descent baseline (see Remark~\ref{rem:sameproj}). In such case, our contribution in this paper is on the theoretical side, by providing some global guarantees on the optimization, instead of the local guarantees from existing work (cf. Table 1). 
Additionally, there are case in which there exists a closed form for projection onto $\setc \cap \Bz$, different from the 2SP (e.g. when $\setc = \R_+^d$, cf. \cite{lu2015optimization}). Although our framework allows us to get approximate global convergence results when using the 2SP, still, at the iteration level, a gradient step followed by Euclidean projection (not 2SP) is optimal, since it minimizes a constrained quadratic upper bound on $R$. Therefore, we may not expect much improvement of the 2SP over the Euclidean projection in such case, except on the computational side. With this in mind, we provide below the outline of our experiments:

\begin{itemize}
    \item In Section \ref{sec:synthetic}, we illustrate on a synthetic example the trade-off between sparsity and optimality that is introduced by the extra constraint $\setc$, and that is balanced by the parameter $\rho$.
    \item In Section \ref{sec:indtrack}, we consider a portfolio index tracking problem where the goal is to illustrate a real-life application of our methods.
    \item In section \ref{sec:logreg}, we consider a multi-class logistic regression on a real life dataset, to illustrate in more details in particular the stochastic and the zeroth-order versions of our method.
\end{itemize}

For reproducibility, our code is available at \url{https://github.com/wdevazelhes/2SP\_icml2025}.

\subsection{\qw{Synthetic Experiments: Illustrating the Sparsity/Optimality Trade-Off}}\label{sec:synthetic}

\qw{In the section below, we provide a synthetic experiment to illustrate our Theorem~\ref{thrm:risk_convergence_additional}, i.e. the trade-off between sparsity and optimality that is introduced by the extra constraint $\setc$, and that is balanced by $\rho \in (0, 0.5]$.
We consider the synthetic linear regression example from \cite{axiotis2022iterative} (Section E), with the risk below:
$$R(\vw) := \frac{1}{n} \left\|\bm{X} \vw-\vy\right\|_2^2,$$ and where $\bm{X}$ is diagonal with:
\begin{align*}
\bm{X}_{ii} = \begin{cases}
1 & \text{if $i\in I_1$}\\
\sqrt{\kappa} & \text{if $i\in I_2$}\\
1 & \text{if $i\in I_3$}\,,
\end{cases}
\end{align*}
where $I_1=[s],I_2=[s+1,s(\kappa+1)],I_3=[s(\kappa+1)+1,s(\kappa^2+\kappa+1)]$ for some $s \geq 1$ and $\kappa \geq 1$ (we choose $s=50$ and $\kappa=2$, which results in having $d=350$), $n$ denotes the number of rows of $\bm{X}$, and $\vy$ is defined as
\begin{align*}
y_i = \begin{cases}
\kappa \sqrt{1-4\delta} & \text{if $i\in I_1$}\\
\sqrt{\kappa}\sqrt{1-2\delta} & \text{if $i\in I_2$}\\
1 & \text{if $i\in I_3$}\,
\end{cases}
\end{align*}
for some small $\delta>0$ used for tie-breaking (we set it to $1e-4$).  We chose such an example as it is used by \cite{axiotis2022iterative} to prove a lower bound on the fundamental trade-off between sparsity and optimality proper to IHT: they use it to show that the relaxation of the sparsity $k$, of the order $k = \Omega(\kappa^2 \bar{k})$ (see also Table~\ref{tab:compalgos}) is in fact unavoidable for IHT-type algorithms.
}

\paragraph{Case without Extra Constraints.} 
\qw{First, we illustrate our Theorem~\ref{thrm:risk_convergence} which considers vanilla IHT, without extra constraints. In Figure~\ref{fig:nocons}, on the one hand, we plot in blue, for every $k \in [d]$, the value of $R(\hat{\vw}_k)$ where $\hat{\vw}_k$ is the result of running vanilla IHT with sparsity $k$ up to convergence. Then, on the other hand, we go through every value of $\bar{k} \in [d]$, and for each of them, we plot a point $(K(\bar{k}), R(\bar{\vw}_{\bar{k}}) )$, where $K(\bar{k})$ denotes the value of $k$ required in our Theorem~\ref{thrm:risk_convergence}, i.e.: $K(\bar{k}) := 4 \kappa^2 \bar{k}$, and $\bar{\vw}_{\bar{k}} := \min_{\vw \in \R^d: \|\vw\|_0 \leq \bar{k}} R(\vw)$. Therefore, each of such point $R(\bar{\vw}_{\bar{k}})$ constitutes an upper bound on the value of $R(\hat{\vw}_{K(\bar{k})})$, as we can indeed  observe on Figure~\ref{fig:nocons}. }

\begin{figure}[h]
    \centering
    \includegraphics[scale=0.5]{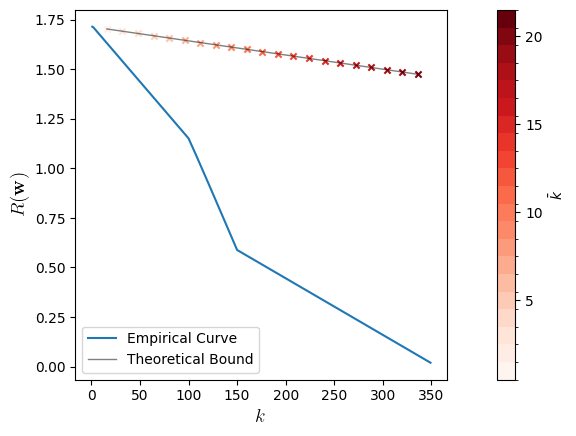}
    \caption{Illustration of Theorem~\ref{thrm:risk_convergence} (i.e. $\setc=\R^d$).}
    \label{fig:nocons}
\end{figure}

\paragraph{Case with Extra Constraints.} \qw{We now illustrate the influence of the extra constraint $\setc$ on the problem. We consider for $\setc$ an $\ell_{\infty}$ norm constraint of radius $\lambda > 0$, that is: $\setc = \{ \vw \in \R^d: \forall i \in [d], |w_i| \leq \lambda \}$.  In this new setting, we also go through every value of $\bar{k} \in [d]$, but this time, each of those values actually defines a curve parameterized by $\rho$, according to our Theorem~\ref{thrm:risk_convergence_additional}: for each $\bar{k}$ we plot the parametric curve $(K(\bar{k}, \rho), (1 + 2 \rho)R(\bar{\vw}_{\bar{k}}))$, where, similarly as above, $K(\bar{k}, \rho)$ denotes the required value of $k$ according to Theorem~\ref{thrm:risk_convergence_additional} (i.e., $K(\bar{k}, \rho) = \frac{4 (1 - \rho)^2 \bar{k} \kappa^2}{\rho^2} $), and $\bar{\vw}_{\bar{k}} := \min_{\vw \in \R^d: \|\vw\|_0 \leq \bar{k}} R(\vw)$, and where $\rho$ ranges in $(0, 0.5]$. We present the results for several values of $\lambda$ in Figure~\ref{fig:withcons} below. Note 
 \textit{that a priori}, the curves are allowed to cross, i.e. for a given $k$ on the x-axis, one could have a point from a curve of small $\bar{k}$ (i.e. lighter shade of red) which could potentially also belong to a curve of larger $\bar{k}$ (let us denote it $\bar{k}'$) (darker shade of red), which would necessarily have a larger $\rho$ (let us denote it $\rho'$), but for which the overall $(1 + 2 \rho')R(\bar{\vw}_{\bar{k}'})$ could be equal to $(1 + 2 \rho)R(\bar{\vw}_{\bar{k}})$ (since the problem will be less constrained with $\bar{k}'$ than with $k$). However, interestingly, this is not the case here due to the simplicity of the structure of the example. We can also observe that similarly as in the case where $\setc = \R^d$, the bound is a bit tighter in the small $k$ regime (i.e. when $k\in [50, 100]$).
}

\begin{figure}[h]
  \centering
  \begin{minipage}{0.49\linewidth}
      \centering
      \includegraphics[width=\textwidth]{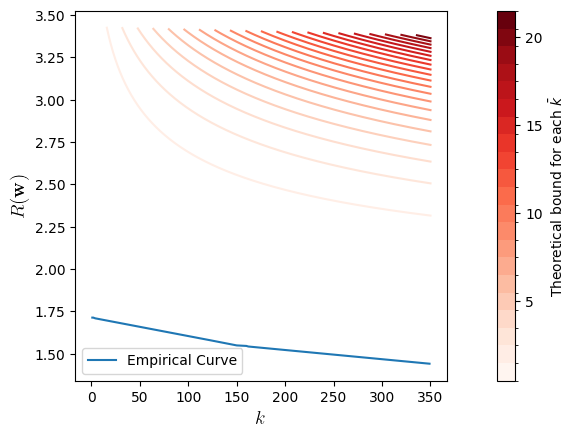}
      \caption{$\lambda=0.1$}
  \end{minipage}
  \begin{minipage}{0.49\linewidth}
      \centering
      \includegraphics[width=\textwidth]{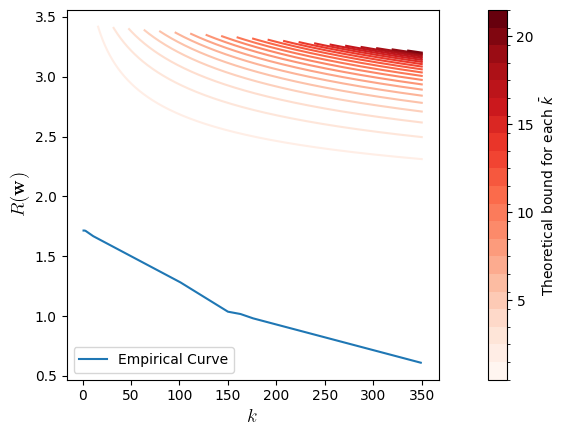}
      \caption{$\lambda=0.5$}
  \end{minipage}
    \vspace{10pt}
  \begin{minipage}{0.49\linewidth}
      \centering
      \includegraphics[width=\textwidth]{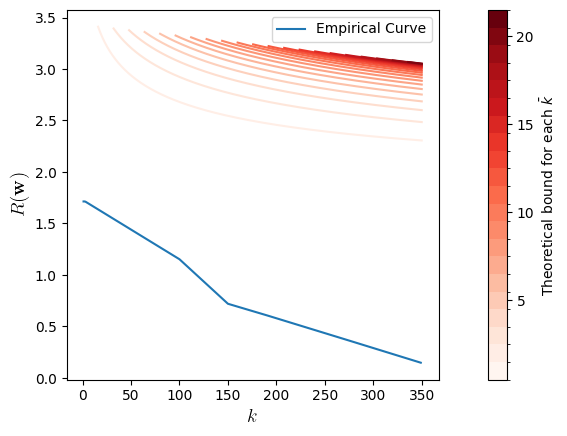}
      \caption{$\lambda=1$}
  \end{minipage}
  \begin{minipage}{0.49\linewidth}
      \centering
      \includegraphics[width=\textwidth]{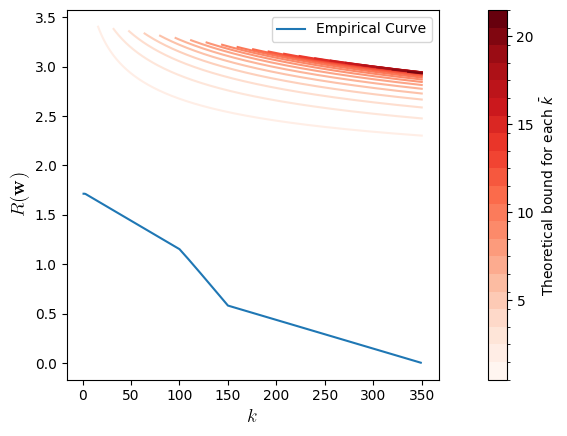}
      \caption{$\lambda=2$}
  \end{minipage}
  \caption{Illustration of Theorem~\ref{thrm:risk_convergence} (with $\setc$ an $\ell_{\infty}$ ball of radius $\lambda$).}\label{fig:withcons}
\end{figure}

\subsection{Real Data Experiment: Portfolio Index Tracking} \label{sec:indtrack}
We now consider the following index tracking problem, originally presented in \cite{takeda2013simultaneous}, and used as well in \cite{lu2015optimization,beck2016minimization}. It is also similar to the portfolio optimization problem presented in \cite{kyrillidis2013sparse}. We seek to reproduce the performance of an index fund (such as S$\&$P500), by investing only in a few key $k$ assets, in order to limit transaction costs. The general problem can be formulated as a linear regression problem:
\begin{equation}
  \label{eq:index_track}
\min_{\vw \in \Bz \cap \setc} \|\bm{A} \vw - \vy \|^2
\end{equation}
where $\vw$ represents the amount invested in each asset. For each $i \in [n]$ denoting a timestep 
, the $i$-th row of $\bm{A}$ denotes the returns of the $d$ stocks at timestep $i$, and $y_i$ the return of the index fund. 
In our scenario, we seek to limit to a value $D >0$ the amount of transactions in each of $c$ activity sector (group) of the portfolio (e.g. Industrials, Healthcare, etc.), denoted as $G_i$ for $i \in [c] $. 
 We ensure such constraint through an $\ell_1$ norm constraint on each group: $\setc = \{\vw \in \R^d : \forall i \in [c], \| \vw_{G_i}\|_1 \leq D\}$, where $\vw_{G_i}$ is the restriction of $\vw$ to group $G_i$ (i.e. for $j \in [d]$, ${w_{G_i}}_j = w_j$ if $j\in G_i$ and $0$ otherwise). 
 In our case,  $\vy$ denotes the daily returns of a given portfolio index (e.g. S\&P500) for a given time period (e.g. a given year), and $\bm{A}$ the returns of the corresponding $d$ assets (over $c$ sectors) of the index during such period. 
 
\paragraph{Baselines.} Up to our knowledge, there is no efficient closed form for the Euclidean projection onto $\Bz \cap \setc$ (although such projection could be done by enumerating all sparse supports sets and computing the restricted projection onto those sets, such method would have exponential complexity), but the two-step projection can easily be done by projecting onto the $\ell_1$ ball for each sector independently. 
We compare our algorithm (FG-HT-2SP) to two naive baselines: (a) the first one. called "$\text{PGD}(\Gamma) + \text{final} \Pi_{\mathcal{B}_0}$", consists in only ensuring the constraints in $\setc$, followed at the end of training by a simple hard-thresholding step to keep the $k$ largest components of $\vw$ in absolute value, and \qw{(b) the second one, called "$\text{PGD}(\mathcal{B}_0) + \text{final} \Pi_{\Gamma}$", consists in running vanilla IHT, followed at the end of training by a simple projection onto $\Gamma$ to keep $\vw$ in $\Gamma \cap \mathcal{B}_0$}. We learn the weights of the portfolio on 80\% of the considered period, and evaluate the out of sample (test set) performance on the remaining 20\% (shaded area in the figure). 





\paragraph{Datasets.}

We compare our algorithms on three portfolio indices datasets:

\begin{itemize}
    \item \textbf{S\&P500}: We take $k=15$ and $D=50$. $\vy$ denotes the daily returns from January 1, 2021, to December 31, 2022, and  $\bm{A}$ denotes the returns of the corresponding $d=497$ assets (over $c=11$ sectors). We plot our results in Figure~\ref{fig:indtracking}. 
    \item  \textbf{HSI}: We take $k=15$ and $D=1000$. $\vy$ denotes the daily returns from January 1, 2021, to December 31, 2022,  and  $\bm{A}$ denotes the returns of the corresponding $d=72$ assets (over $c=4$ sectors). We plot our results in Figure~\ref{fig:hsi}. 
    \item \textbf{CSI300}: We take $k=15$ and $D=100$. $\vy$ denotes the daily returns from March 1, 2021 (due to missing values in early 2021), to December 31, 2022, and  $\bm{A}$ denotes the returns of the corresponding $d=291$ assets (over $c=10$ sectors). We plot our results in Figure~\ref{fig:csi}. 
\end{itemize}
The data for those three indices is scrapped from the web using the \texttt{beautifulsoup}\footnote{\url{https://pypi.org/project/beautifulsoup4/}} library to gather information about the index, and the \texttt{yfinance}\footnote{\url{https://github.com/ranaroussi/yfinance}} library to scrap the returns of such stocks during the considered time period. We provide in Table \ref{tab:dimsindex} below the respective dimensions of the train-sets used for the experiments (which constitutes, as we recall, 80\% of the total dataset).
\begin{table}[htpb]
    \centering
    \begin{tabular}{|l|c|c|}
    \hline
        \textbf{\qw{INDEX}} & $\qw{\bm{n}}$ & $\qw{\bm{d}}$ \\
        \hline
        \qw{S\&P500} & \qw{402} & \qw{497} \\
        \qw{CSI300} & \qw{353} & \qw{291} \\
        \qw{HSI} & \qw{394} & \qw{72} \\
        \hline
    \end{tabular}
    \caption{\qw{Number of samples ($n$) and dimension ($d$) of the training sets for the index tracking experiment}}
    \label{tab:dimsindex}
\end{table}
\paragraph{Results.}
As we can observe on Figure~\ref{fig:totalindtrack}, overall, the true index (blue curve) is more successfully tracked by our method (FG-HT-2SP, green curve), on the train-set of S\&P500 and CSI300 and on the test-set of HSI and CSI300.
Additionally, we have observed that for S\&P500, our algorithm solution non-zero weights spans $9$ of the $11$ sectors for the S\&P500 index, $7$ sectors out of $10$ for the CSI300 index, and $3$ of the $4$ sectors the one for the HSI index. Therefore, such portfolios are well diversified, as successfully enforced by our constraint.
\begin{figure}[htpb]
  \centering
  \begin{minipage}{0.5\textwidth}
    \centering
    \includegraphics[width=\linewidth]{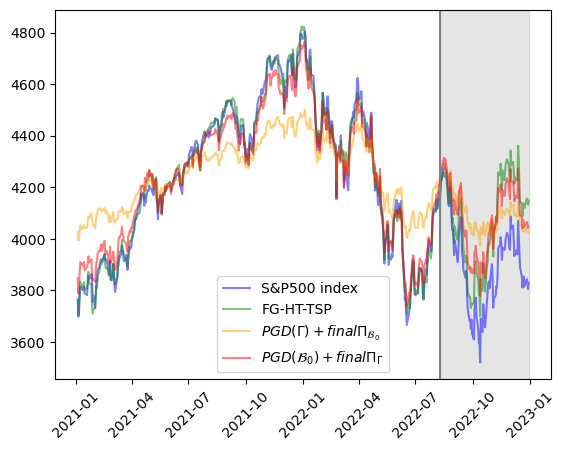}
    \caption{S\&P500}
    \label{fig:indtracking}
  \end{minipage}%
  \begin{minipage}{0.5\textwidth}
    \centering
    \includegraphics[width=\linewidth]{./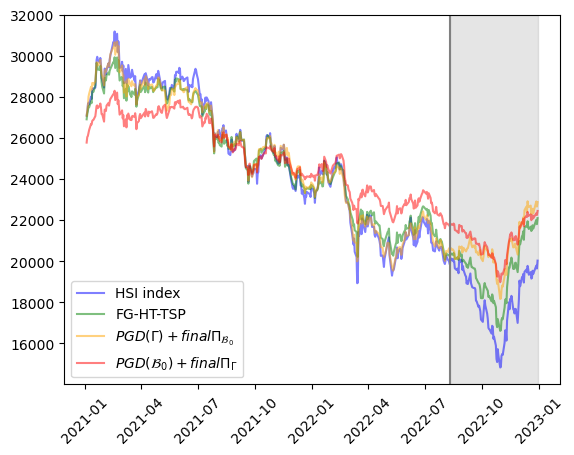}
    \caption{HSI}
    \label{fig:hsi}
  \end{minipage}\\
  \begin{minipage}{0.5\textwidth}
    \centering
    \includegraphics[width=\linewidth]{./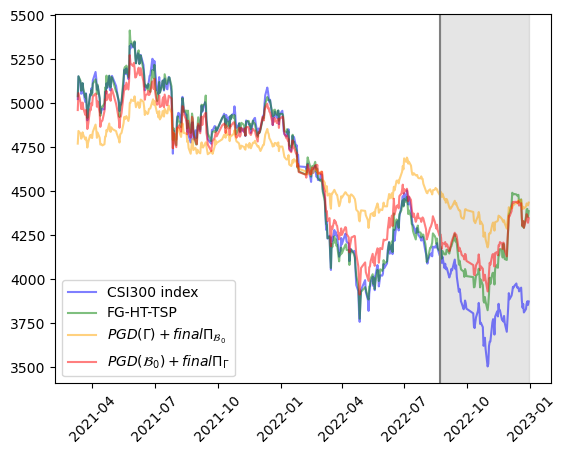}
    \caption{CSI300}
    \label{fig:csi}
  \end{minipage}
  \caption{Index tracking with sector constraints for various indices}
  \label{fig:totalindtrack}
\end{figure}

\paragraph{\qw{On the Verification of Assumptions \ref{ass:RSC} to \ref{assumpt:projectionbis}:}} \qw{Note that such index tracking experiments verify Assumptions \ref{ass:RSC}, \ref{ass:RSS} and \ref{assumpt:projectionbis}}:

\begin{itemize}
    \item \qw{\textbf{Assumption~\ref{ass:RSC}} is verified since the cost function is quadratic, with a design matrix of size  $n> d$ (except in the case of S\&P500). As can be expected with such matrices in general, the Hessian $\bm{H} = 2 \bm{A}^{\top} \bm{A}$ is positive-definite (we have indeed verified in our code that it is). 
    Therefore the RSC constant is bounded below by $\lambda_{\min}$ where $\lambda_{\min}$ is the smallest eigenvalue of $2 \boldsymbol{A}^{\top} \boldsymbol{A}$. Note that for S\&P500, strong convexity is not verified since $d > n$: however, since we take $k=15$, with high probability (i.e. unless we can find $s=2k=30$ columns of $\bm{A}$ that are exactly linearly dependent), RSC should be verified.}
    \item \qw{\textbf{Assumption \ref{ass:RSS}  and Assumption~\ref{ass:RSSbis}} are both verified since the cost function is quadratic, therefore the (strong) RSS constant is bounded above by $2 \| \boldsymbol{A}\|_s^2$, where $\| \cdot \|_s$ denotes the spectral norm.}
    \item \qw{\textbf{Definition \ref{assumpt:projectionbis}} is verified since projection onto $\Gamma$ can be done group-wise, and for each group the projection is onto an $\ell_1$ ball, which is a convex symmetric set (which is support-preserving from Remark \ref{rem:dec}), therefore, overall, $\Gamma$ is support-preserving).}
\end{itemize}

\subsection{Real Data Experiment: Multiclass Logistic Regression}\label{sec:logreg}

We now consider the multiclass logistic regression problem with class group-wise $\ell_2$ norm constraint as follows. We have $R_i(\vw)=\sum_{j=1}^c\left[\frac{\lambda}{c}\left\|\vw_j\right\|_2^2-\mathbf{1}\left\{y_i=j\right\} \log \frac{\exp \left(\vx_i^{\top} \vw_j\right)}{\sum_{l=1}^c \exp \left(\vx_i^{\top} \vw_l\right)}\right]$, where $y_i$ is the target output of $\vx_i$, $c$ is the number of classes, and $\vw_j$ is the weight vector specific to class $j$. 
 In addition to the sparsity constraint $\Bz$, we enforce the following additional constraint $\setc = \{\vw \in \R^d: \forall j \in [c]: \| \vw_j \|_2\leq D\}$, for some constant $D \in \R_+$, where $d=p \times c$, with $p$ the number of features of the samples $\vx_i$. 
More precisely, in such multiclass logistic regression, we seek to ensure an extra regularization not only on the whole global weight vector $\vw$ (with the used squared $\ell_2$ penalty), but also on each weight vector related to each class (through $\setc$), in order to prevent a potential class-wise overfitting. 

Up to our knowledge, there is no known closed form for the Euclidean projection onto such $\setc \cap \Bz$. However, the two-step projection (2SP) can be done easily: once the first projection is done (projection onto $\Bz$, i.e. hard-thresholding) and the sparse support $S$ is identified as per Section \ref{sec:algorithm}, the projection onto $\setc$ restricted to $S$ can be easily done since $\setc$ is class-wise decomposable, and therefore it suffices to project, for each $j \in [c]$, each $\vw_j$ onto the $\ell_2$ ball of radius $D$. 

We have the smoothness constant $L$ as below (see \cite{bohning1992multinomial} for a derivation): 

\begin{equation}
    L = \sigma_{\max}\left( \frac{1}{2 n} \left(\bm{I}_{c \times c} - \frac{1}{c}\bm{1}_c \bm{1}_c^{\top}\right) \otimes \bm{X}^{\top} \bm{X} + 2 \lambda \bm{I}_{d \times d}  \right)
    \label{eq:smoothness}
\end{equation}

Where $\otimes$ denotes the Kronecker product, $\sigma_{\max}$ the largest singular value of a matrix, $\bm{I}_{m \times m}$ the identity matrix of size $m \times m$ for some $m$, and $\bm{1}_c$ the vector $[1, 1, .., 1]^{\top} \in \R^c$ .

We consider the \texttt{dna} dataset from the LibSVM dataset repository \citep{libsvm}, and we choose $D=0.5$, $\lambda = 10$. For the stochastic case we take $B=1e^5$, and for the stochastic and ZO case we take $\alpha=2$. Note that in the stochastic case, if the growing batch-size required by Theorem~\ref{thm:hsg_2SP} becomes larger than $n$, we keep it fixed to $n$ (i.e. in such case we take the whole dataset at each step). In the zeroth-order case, we take $\mu = 1e-6$. We set set all other hyperparameters as per Theorems \ref{thrm:risk_convergence_additional}, \ref{thm:hsg_2SP} and \ref{thm:zo_2SP}.
In Figures \ref{fig:lotsfigs1}, \ref{fig:lotsfigs2}, \ref{fig:lotsfigs3} and \ref{fig:lotsfigs4}, we plot the number of calls to a gradient $\nabla R_i$ (IFO: iterative first order oracle), and number of hard-thresholding operations (NHT), for various values of $k$ and $D$ 
(for the zeroth-order case, we plot the IZO (number of calls to the function $R$) instead of the IFO).
We can observe that HSG-HT-2SP allows a smaller IFO than FG-HT-2SP in early iterations, since it does not need to compute a full gradient at each iteration. 

In addition, to illustrate the theoretical improvement of our results on zeroth-order, even in the case where there is no additional constraint, we compare in Figures \ref{fig:3figs1}, \ref{fig:3figs2} and \ref{fig:3figs3} our algorithm HZO-HT with ZOHT \citep{de2022zeroth}, choosing for both algorithm an initial number of random direction as prescribed by our Theorem~\ref{thm:zo_2SP}, and choosing, for the learning rate, in our case the one prescribed by Theorem \ref{thm:zo_2SP}, and for ZOHT, the one prescribed by Theorem 1 from \cite{de2022zeroth} (and in both cases we fix $s=3k$ as per Theorem~\ref{thm:zo_2SP}): we can see that, in addition to being able to obtain a convergence in risk without system error, contrary to ZOHT (cf. Table \ref{tab:compalgos}), our Theorem \ref{thm:zo_2SP} also prescribes a better (larger) learning rate (i.e. less conservative), leading to faster convergence. 

\begin{figure}[H]
    \begin{minipage}{0.32\linewidth}
      \raisebox{0.2cm}{\includegraphics[width=\linewidth]{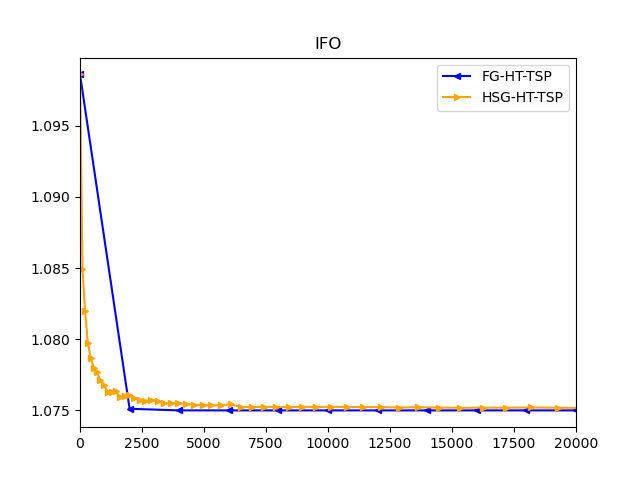}}
      \caption{\#IFO}
    \end{minipage}
    \begin{minipage}{0.32\linewidth}
      \includegraphics[width=\linewidth]{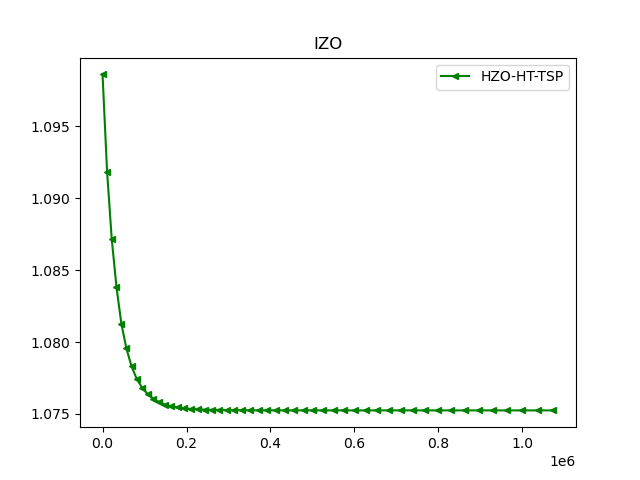}
      \caption{\#IZO}
    \end{minipage}
    \begin{minipage}{0.32\linewidth}
      \includegraphics[width=\linewidth]{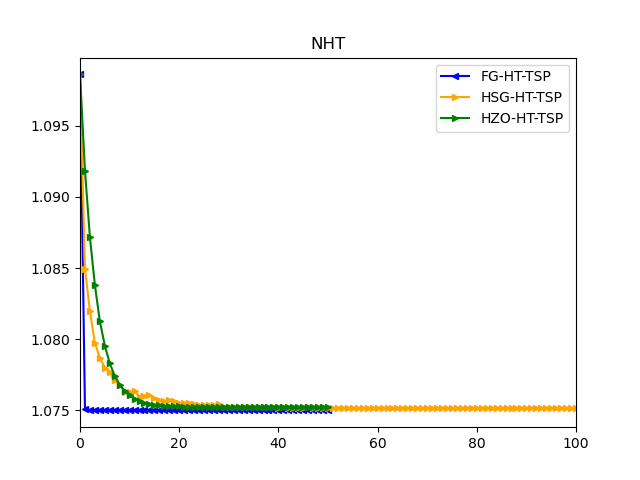}
      \caption{\#NHT}
    \end{minipage}
  \caption{Multiclass Logistic Regression with 2SP, $k=50$, $D=0.5$}
  \label{fig:lotsfigs1}
\end{figure}
\begin{figure}[H]
  \begin{minipage}{0.32\linewidth}
    \raisebox{0.2cm}{\includegraphics[width=\linewidth]{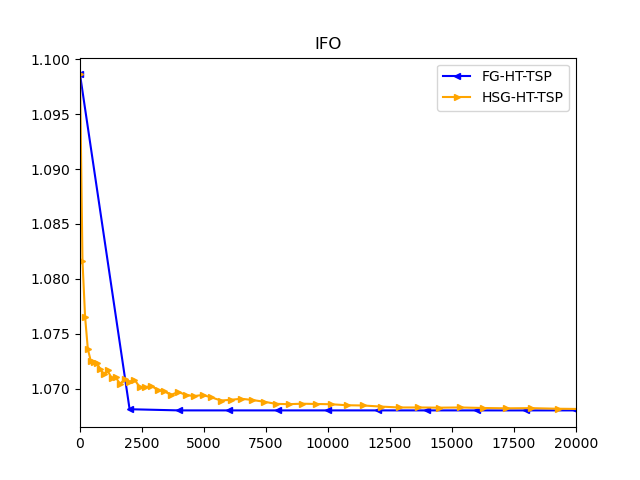}}
    \caption{\#IFO}
  \end{minipage}
  \begin{minipage}{0.32\linewidth}
    \includegraphics[width=\linewidth]{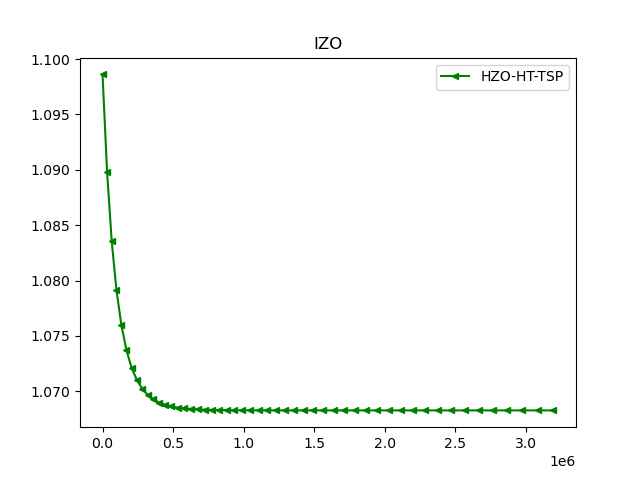}
    \caption{\#IZO}
  \end{minipage}
  \begin{minipage}{0.32\linewidth}
    \includegraphics[width=\linewidth]{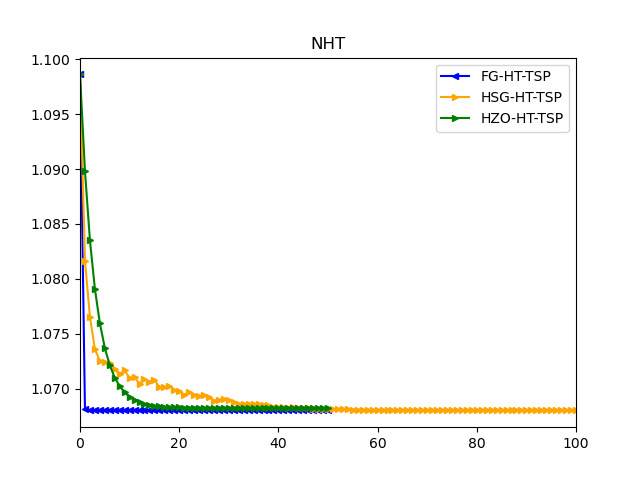}
    \caption{\#NHT}
  \end{minipage}
\caption{Multiclass Logistic Regression with 2SP, $k=150$, $D=0.5$}
\label{fig:lotsfigs2}
\end{figure}
\begin{figure}[H]
\centering
  \begin{minipage}{0.32\linewidth}
    \raisebox{0.2cm}{\includegraphics[width=\linewidth]{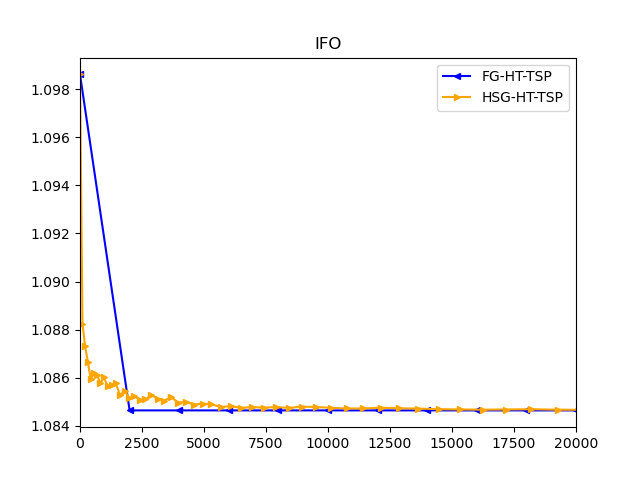}}
    \caption{\#IFO}
  \end{minipage}
  \begin{minipage}{0.32\linewidth}
    \includegraphics[width=\linewidth]{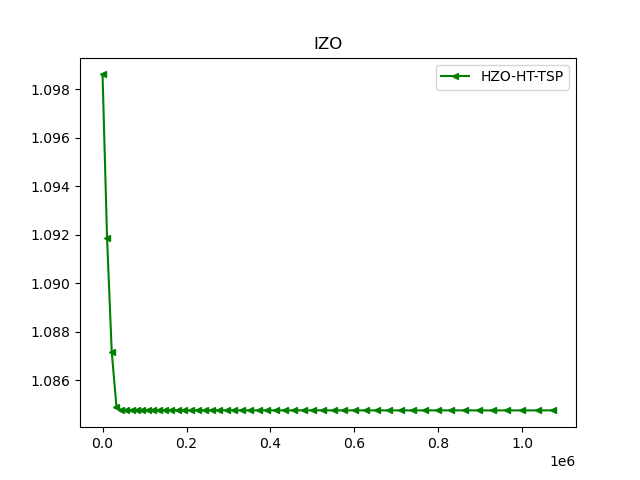}
    \caption{\#IZO}
  \end{minipage}
  \begin{minipage}{0.32\linewidth}
    \includegraphics[width=\linewidth]{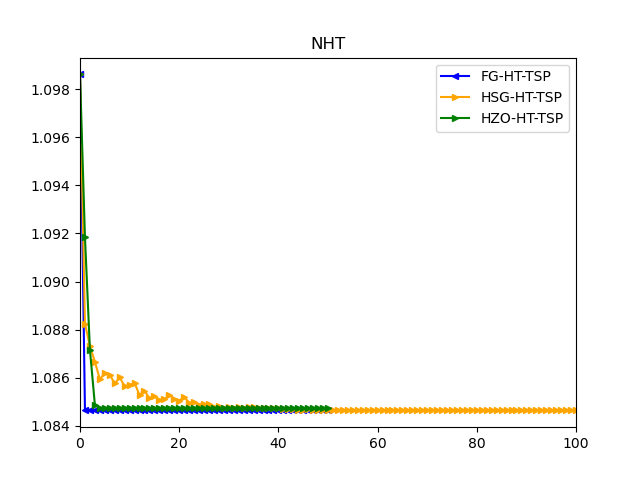}
    \caption{\#NHT}
  \end{minipage}
\caption{Multiclass Logistic Regression with 2SP, $k=50$, $D=0.01$}
\label{fig:lotsfigs3}
\end{figure}
\begin{figure}[H]
  \begin{minipage}{0.32\linewidth}
    \raisebox{0.2cm}{\includegraphics[width=\linewidth]{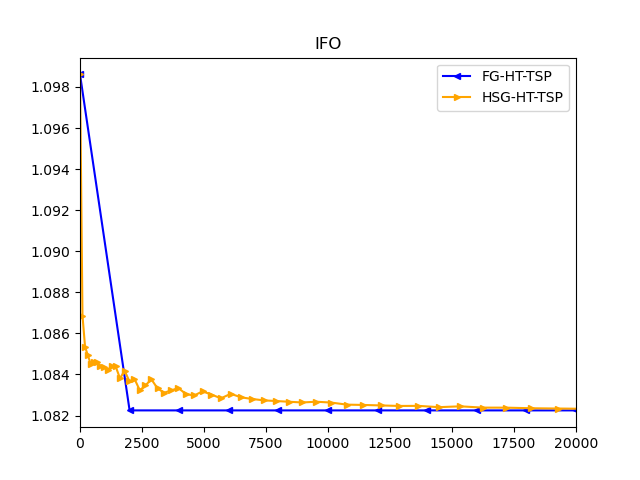}}
    \caption{\#IFO}
  \end{minipage}
  \begin{minipage}{0.32\linewidth}
    \includegraphics[width=\linewidth]{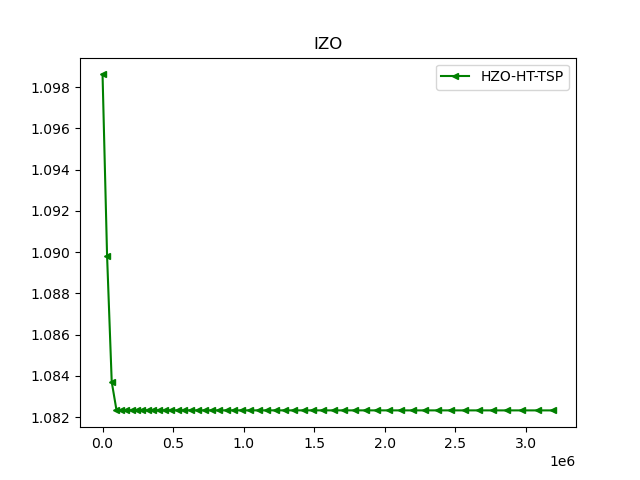}
    \caption{\#IZO}
  \end{minipage}
  \begin{minipage}{0.32\linewidth}
    \includegraphics[width=\linewidth]{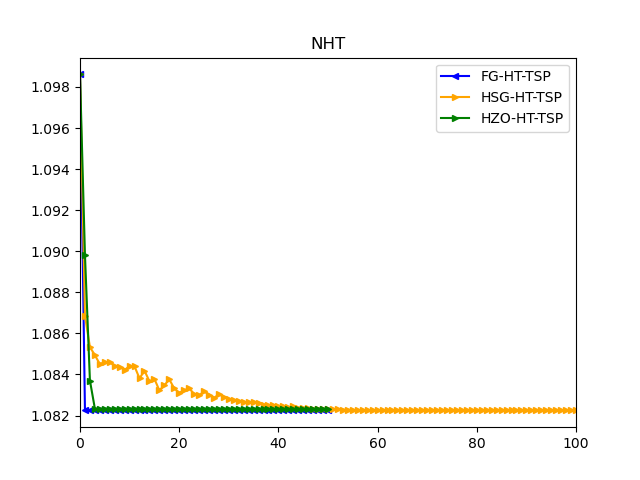}
    \caption{\#NHT}
  \end{minipage}
\caption{Multiclass Logistic Regression with 2SP, $k=150$, $D=0.01$}
\label{fig:lotsfigs4}
\end{figure}

\begin{figure}[htpb]
  \centering
  \begin{minipage}{0.49\linewidth}
   \includegraphics[width=\linewidth]{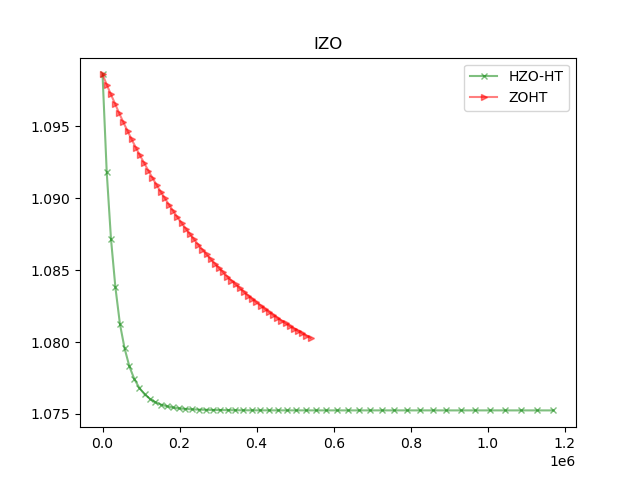}
    \caption{\#IZO}
  \end{minipage}
  \begin{minipage}{0.49\linewidth}
    \includegraphics[width=\linewidth]{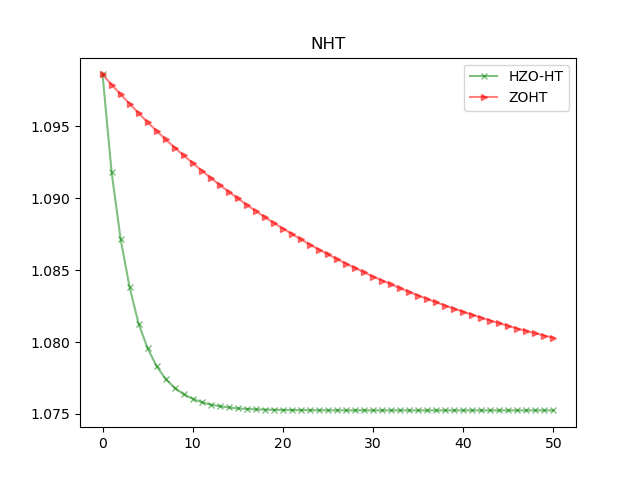}
    \caption{\#NHT}
  \end{minipage}
  \caption{Multiclass Logistic Regression: HZO-HT vs. ZOHT, $k=50$}
  \label{fig:3figs1}
\end{figure}
\begin{figure}[htpb]
  \centering
  \begin{minipage}{0.49\linewidth}
   \includegraphics[width=\linewidth]{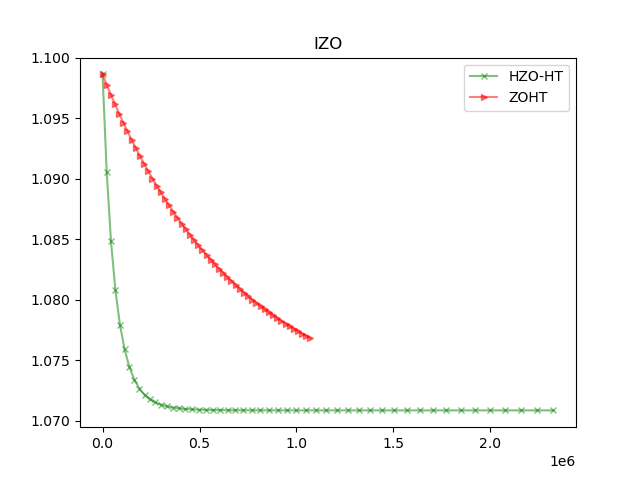}
    \caption{\#IZO}
  \end{minipage}
  \begin{minipage}{0.49\linewidth}
    \includegraphics[width=\linewidth]{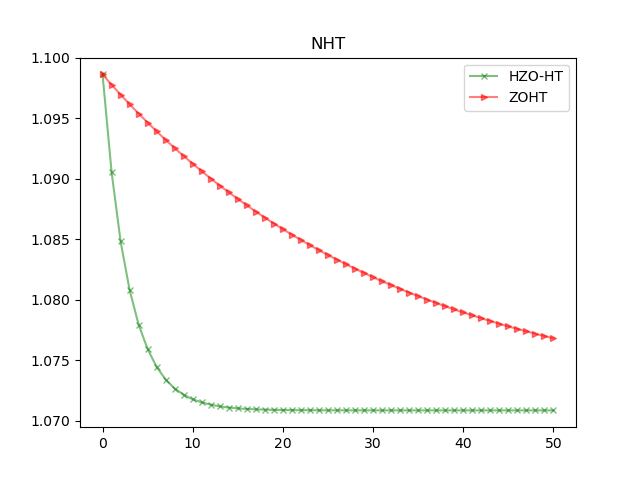}
    \caption{\#NHT}
  \end{minipage}
  \caption{Multiclass Logistic Regression: HZO-HT \\vs. ZOHT, $k=100$}
  \label{fig:3figs2}
\end{figure}
\begin{figure}[htpb]
  \centering
  \begin{minipage}{0.49\linewidth}
   \includegraphics[width=\linewidth]{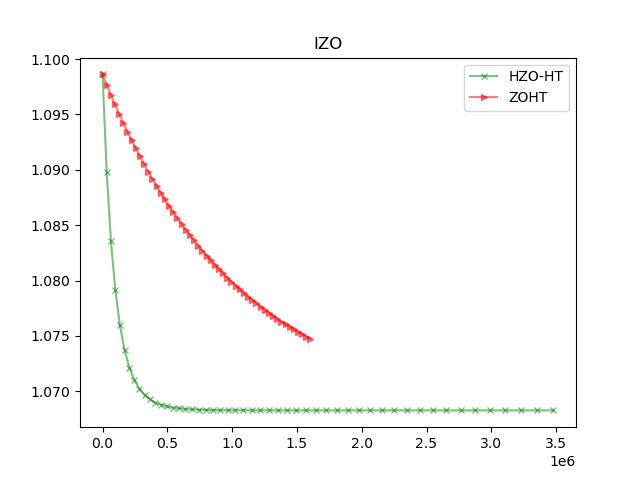}
    \caption{\#IZO}
  \end{minipage}
  \begin{minipage}{0.49\linewidth}
    \includegraphics[width=\linewidth]{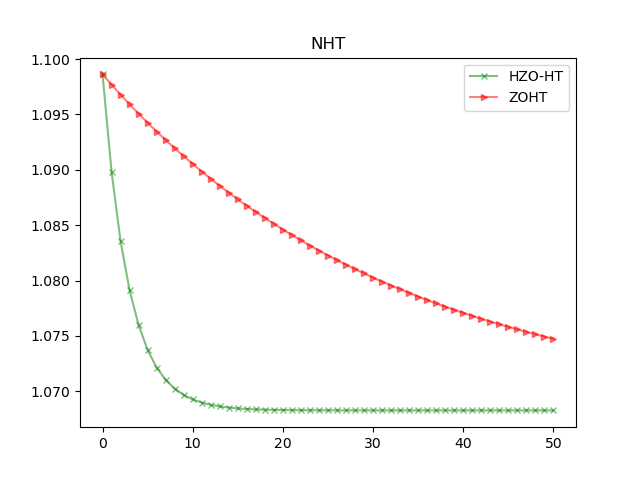}
    \caption{\#NHT}
  \end{minipage}
  \caption{Multiclass Logistic Regression: HZO-HT \\vs. ZOHT, $k=150$}
  \label{fig:3figs3}
\end{figure}

\paragraph{\qw{On the Verification of Assumptions \ref{ass:RSC} to \ref{assumpt:projectionbis}:}} \qw{Note that such logistic regression experiments verify Assumptions \ref{ass:RSC}, \ref{ass:RSS}, \ref{ass:RSSbis} and \ref{assumpt:projectionbis}}:

\begin{itemize}
    \item \qw{\textbf{Assumption \ref{ass:RSC}} is verified thanks to the added squared $\ell_2$ regularization, which makes the problem strongly convex and hence also restricted strongly convex.}
    \item \qw{\textbf{Assumption \ref{ass:RSS}  and Assumption \ref{ass:RSSbis}} are both verified since the problem is smooth with a constant $L$ as described above in equation \ref{eq:smoothness}, and therefore such constant is also a valid (strong) restricted-smoothness constant. }
    \item \qw{\textbf{Definition \ref{assumpt:projectionbis}} is verified since, since, similarly as in the index tracking experiments from Section \ref{sec:indtrack}, projection onto $\Gamma$ can be done group-wise, and for each group the projection is onto an $\ell_1$ ball, which is a convex sign-free set (which is support-preserving from Remark \ref{rem:dec}), therefore, overall, $\Gamma$ is support-preserving.}
\end{itemize}

\end{document}